\documentclass[reqno,12pt]{amsart}
\usepackage{mathtools,enumerate}
\usepackage{amssymb,latexsym}
\usepackage{fullpage}
\usepackage{color}
\theoremstyle{plain}

\theoremstyle{remark}

\numberwithin{equation}{section}
\usepackage{amscd} 

\usepackage[makeroom]{cancel}
\usepackage[all,knot]{xy} 
\usepackage{multicol} 
\usepackage{verbatim} 
\usepackage{mathtools}  
\usepackage{bookmark} 
\usepackage{ytableau}
\usepackage{microtype}
\usepackage{bbold}  
\usepackage{tikz}
\usetikzlibrary{decorations.markings}
 \usepackage{graphicx}\DeclareGraphicsRule{.bmp}{bmp}{}{} 
\usepackage[all]{xy}
\usepackage{xypic}

\swapnumbers
\usepackage{lineno}
\usepackage{tikz-cd}
\newtheorem{thm}{Theorem}[subsection]

\newtheorem{lem}[thm]{Lemma}
\newtheorem{cor}[thm]{Corollary}
\newtheorem{conj}[thm]{Conjecture}
\newtheorem{prop}[thm]{Proposition}

\newcommand{\germ}{\mathfrak}

\newcommand{\End}{\mathop{\mathrm{End}}\nolimits}
 
\newcommand{\tiep}{T_{i,e}'}
\newcommand{\tiepp}{T_{i,e}''}
\newcommand{\qbinom}[2]{\genfrac{[}{]}{0pt}{}{#1}{#2}}
\newcommand{\angles}[2]{\langle{#1},{#2}\rangle}
\newcommand{\bui}{\mathbf U_i}
\newcommand{\bu}{\mathbf U}
\newcommand{\G}{\Gamma}
\newcommand{\ru}{\mathbf U_R}
\newcommand{\ra}{\to}
\newcommand{\brho}{\mathbf \rho}

\newcommand{\corref}[1]{Corollary~\ref{#1}}
\newcommand{\secref}[1]{\S\ref{#1}}
\newcommand{\lemref}[1]{Lemma~\ref{#1}}
\newcommand{\eqnref}[1]{~(\ref{#1})}

\ifx\xyloaded\undefined \input xy \fi
\xyprovide{poly}{Polygon feature}{\stripRCS$Revision: 3.11 $}
\xyrequire{arrow}\xycatcodes
\author{Ben L. Cox}
\address{Department of Mathematics \\
The University of Charleston \\
66 George Street  \\
Charleston SC 29424 \\ FAX: 843-953-1410}
\author{Thomas J. Enright}
\address{Department of Mathematics \\
UC San Diego \\
9500 Gilman Dr. La Jolla, CA 92093}
\title{Representations of quantum groups defined over commutative rings III.
}
\begin{document}

\maketitle
\begin{abstract}  We survey some of our old results given in \cite{MR1327136} and \cite{MR2586983} and present some new ones in the last three sections. 
\end{abstract}   

\section{Introduction and Summary of Results.} 

\subsection{}

For the convenience of the reader we survey below material that was developed in \cite{MR1327136} and \cite{MR2586983}.

Let $v$ be an indeterminate and $\mathbb k$ a field of characteristic zero. Let
$\bu$ be the quantized enveloping algebra defined over $\mathbb k(v)$ with generators
$K^{\pm 1},E,F$ and relations 
$$ 
[E,F]=\frac{K-K^{-1}}{ v-v^{-1}} , \quad KEK^{-1}=v^2E\quad
\text{and} \quad KFK^{-1}=v^{-2}F.
$$ Let $\bu^0$ be the subalgebra generated by $K^{\pm 1}$ and let $B$ be
the subalgebra generated by $\bu^0$ and $E$. More precisely we are
following the notation given in \cite{MR2586983} where we take
$I=\{i\}$, $i\cdot i=2$, $Y=\mathbb Z[I]\cong \mathbb Z$, 
$X=\hom(\mathbb Z[I], \mathbb Z)\cong \mathbb Z$, $F=F_i$, $E=E_i$, and
$K=K_i$.

Let $R$ be the power series ring in $T-1$ with coefficients in $\mathbb k(v)$ i.e. 
\begin{equation}
R=k(v)[[T-1]]:=\lim_\leftarrow \frac{\mathbb k(v)[T,T^{-1}]}{ (T-1)^i}.
\end{equation}
Set $\mathcal K$ equal to the field of fractions of $R$. Let $s$ be the
involution of $R$ induced by $T\ra T^{-1}$, i.e. the involution that sends
$T$ to $T^{-1}=1/(1+(T-1))=\sum_{i\geq 0}(-1)^i(T-1)^i$. Let the subscript
$R$ denote the extension of scalars from $\mathbb k(v)$ to $R$ , e.g. $\ru =
R\otimes_{\mathbb k(v)} \bu$. For any representation $(\pi,A)$ of $\ru$ we can
twist the representation in two ways by composing with automorphisms of
$\ru$. The first is $\pi \circ (s\otimes 1)$ while the second is $\pi
\circ (1\otimes \Theta)$ for any automorphsim $\Theta$ of $\bu$. We designate the corresponding
$\ru$-modules by $A^s$ and $A^{\Theta}$. Twisting the action by both $s$
and $\Theta$ we obtain the composite $(A^s)^{\Theta}=(A^{\Theta})^s$ which
we denote by
$A^{s\Theta}$.

Let $m$ denote the homomorphism of $\ru^0$ onto $R$ with $m(K)=T$. For
$\lambda \in \mathbb Z$ let $m+\lambda$ denote the homomorphism of
$\bu^0$ to
$R$ with $(m+\lambda )(K)=Tv^\lambda$. We use the additive notation
$m+\lambda$ to indicate that this map originated in the classical setting
from an addition of two algebra homomorphisms. It however is not a sum of
two homomorphisms but rather a product. Let
$R_{m+\lambda}$ be the corresponding $_RB$-module and define the Verma
module
 \begin{equation}
_RM(m+\lambda )=\ru\otimes_{_RB} R_{m+\lambda-1} .
\end{equation}

Let $\rho_1:\mathbf U\to \mathbf U$ be the algebra isomorphism
determined by the assignment 
\begin{equation}
\rho_1(E)=-vF,\quad \rho_1(F)=-v^{-1}E,\quad
\rho_1(K)=K^{-1} 
\end{equation} 
for all $i\in I$ and $\mu\in Y$.   Define also an algebra
anti-automorphism $\brho:\mathbf U\to \mathbf U$ by 
\begin{equation}
\brho(E)=vKF,\quad \brho(F)=vK^{-1}E,\quad
\brho(K_\mu)=K_\mu. 
\end{equation}
These maps are related through the antipode $S$ of  $\mathbf U$ by
$\brho=\brho_1S$. 

For $\ru$-modules $M,N$ and $\mathcal F$, let $\mathbb P(M,N)$ and $\mathbb
P(M,N,\mathcal F)$ denote the space of $R$-bilinear maps of $M\times N$ to $R$
and $\mathcal F$ respectively, with the following invariance condition: 
\begin{equation}
\sum x_{(1)}*\phi(Sx_{(3)}\cdot a,\varrho(x_{(2)})b)=\mathbf e(x) 
\phi(a,b)
\end{equation} where $\Delta\otimes 1\circ \Delta (x)=\sum x_{(1)}\otimes
x_{(2)}\otimes x_{(3)}$ and $\mathbf e:\bu\to k(v)$ is the counit.  If we let
$\hom_{\ru}(A,B)$ denote the set of module $\ru$-module homomorphisms, then one
can check on generators of $\ru$ that
$\mathbb P(M,N,\mathcal F)\cong \hom_{\ru} (M\otimes_R N^{\rho_1},{_R\mathcal
F}^\brho)$ (see \cite[3.10.6]{MR1359532}).  Let $\mathbb P(N)=\mathbb
P(N,N)$ denote the
$R$-module of invariant forms on $N$. 

For the rest of the introduction we let $M$ denote the $\ru$ Verma
module with highest weight $Tv^{-1}$ i.e. $M=M(m)$ and let $\mathcal
F$ be any finite dimensional $\bu$-module.  A natural parameterization for
$\mathbb P(M\otimes {_R\mathcal F})$ was given in \cite{MR1327136}.
Fix an invariant form $\phi_M$ on $M$ normalized as in \eqnref{}. For each
$\ru$-module homomorphism $\beta:{_R\mathcal F}\otimes_R\mathcal
F^{\rho_1}\ra\ru$ define what we call the induced form
$\chi_{\beta,\phi_M}$ by the formula, for
$e,f\in _R\mathcal F, \ m,n\in M$,
\begin{equation}
\chi_{\beta,\phi_M}(m\otimes e,n\otimes f)=\phi_M( m,\beta(e\otimes f)*n).
\end{equation}
\begin{prop}[\cite{MR2586983}]
Suppose $\beta:{_R\mathcal F}\otimes {_R\mathcal F^{\rho_1}}\ra \ru$ is a module
homomorphism with $\ru$ having the adjoint action.    Then $M\otimes
{_R\mathcal F}$ decomposes as the $\chi_{\beta,\phi_M}$-orthogonal sum of
indecomposible $\ru$-modules.
\end{prop}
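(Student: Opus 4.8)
The plan is to combine the standard Verma (``$\Delta$-'') flag on $M\otimes{}_R\mathcal F$ with a Krull--Schmidt argument over the complete local ring $R$, and then to upgrade the resulting decomposition into indecomposables to a $\chi_{\beta,\phi_M}$-orthogonal one by symmetrizing the corresponding idempotent decomposition. First I would record the tensor identity: $M\otimes{}_R\mathcal F$ carries a filtration by $\ru$-submodules whose successive quotients are the Verma modules ${}_RM(m+\mu)$, one for each weight $\mu$ of $\mathcal F$ counted with multiplicity. In particular $M\otimes{}_R\mathcal F$ is free of finite rank over the $R$-subalgebra generated by $F$, hence finitely generated over $\ru$, so $E:=\End_{\ru}(M\otimes{}_R\mathcal F)$ embeds $R$-linearly into a finitely generated free $R$-module and is therefore a finite $R$-algebra. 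Since $R=\mathbb k(v)[[T-1]]$ is $(T-1)$-adically complete, $E$ is semiperfect; a decomposition $1=e_1+\cdots+e_r$ of $1$ into primitive orthogonal idempotents of $E$ yields $M\otimes{}_R\mathcal F=\bigoplus_j e_j(M\otimes{}_R\mathcal F)$ into indecomposable $\ru$-modules, and it remains only to arrange $\chi_{\beta,\phi_M}$-orthogonality of the summands.

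The key structural input I would establish next is that $M\otimes{}_R\mathcal F$ is a \emph{tilting} module over $R$. The highest weight $Tv^{-1}$ of $M=M(m)$ is generic in $R$ in the sense that the relevant Shapovalov-type factors $Tv^{-i}-T^{-1}v^{i}$ $(i\ge 1)$ are units in $R$ (their reductions $v^{-i}-v^{i}$ modulo $(T-1)$ are nonzero, since $v$ is an indeterminate); hence the contravariant form $\phi_M$ on $M$ is nondegenerate, so $M$ is irreducible over $R$ and isomorphic to its contravariant dual. Since every finite dimensional $\bu$-module is self-dual under the $\rho_1$-twist, it follows that $(M\otimes{}_R\mathcal F)^{\vee}\cong M^{\vee}\otimes\mathcal F^{\vee}\cong M\otimes{}_R\mathcal F$, so $M\otimes{}_R\mathcal F$ has a costandard flag as well as a standard flag and is therefore a direct sum of indecomposable tilting modules. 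Each indecomposable tilting is self-dual, since its contravariant dual is again an indecomposable tilting with the same standard-module multiplicities.

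Now I would transport the contravariant duality to $E$. The form $\chi:=\chi_{\beta,\phi_M}$ corresponds, via the identification $\mathbb P(N)\cong\Hom_{\ru}(N\otimes_R N^{\rho_1},{}_RR^{\brho})$ recalled above, to a $\ru$-module homomorphism $\Psi:M\otimes{}_R\mathcal F\to(M\otimes{}_R\mathcal F)^{\vee}$; after splitting off the radical of $\chi$ as a direct summand (which one verifies is a sub-tilting-module complemented by the nondegeneracy on its complement) we may assume $\Psi$ is an isomorphism, and then $\theta\mapsto\theta^{\dagger}:=\Psi^{-1}\theta^{\vee}\Psi$ is an $R$-linear anti-automorphism of $E$ satisfying $\chi(\theta a,b)=\chi(a,\theta^{\dagger}b)$, with $(\theta^{\dagger})^{\dagger}=\theta$ since $\chi$ is symmetric (a consequence of the symmetry of $\phi_M$ and the relation $\brho=\brho_1 S$, as in \cite{MR1327136}). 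Because each indecomposable summand of $M\otimes{}_R\mathcal F$ is self-dual, $\dagger$ preserves the isomorphism type of the summands, hence restricts to an anti-automorphism of each homogeneous block $M_{d}\bigl(\End_{\ru}(T)\bigr)$ of $E$, where $\End_{\ru}(T)$ is local with residue field $\mathbb k(v)$. Over the complete local ring $R$, and with $\chi$ symmetric, such an anti-automorphism admits a complete set of self-adjoint primitive orthogonal idempotents: reduce modulo $(T-1)$, diagonalize the induced symmetric form over $\mathbb k(v)$ (possible in characteristic zero), and lift the idempotents $(T-1)$-adically, averaging against $\dagger$ at each stage. This produces primitive orthogonal idempotents $f_1,\dots,f_r$ with $f_j^{\dagger}=f_j$, whence for $i\ne j$
\[
\chi_{\beta,\phi_M}(f_i a,f_j b)=\chi_{\beta,\phi_M}(a,f_i^{\dagger}f_j b)=\chi_{\beta,\phi_M}(a,f_if_j b)=0,
\]
and $M\otimes{}_R\mathcal F=\bigoplus_j f_j(M\otimes{}_R\mathcal F)$ is the desired $\chi_{\beta,\phi_M}$-orthogonal decomposition into indecomposables.

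I expect the main obstacle to lie in the middle step: verifying that $M\otimes{}_R\mathcal F$ genuinely is tilting over $R$ — equivalently, that $\dagger$ cannot interchange two non-isomorphic indecomposable summands — since this is where the specific choice of highest weight $Tv^{-1}$ and the fine structure of the deformed principal block are essential, and also where one must be careful about $\ru$-modules that are $R$-free but no longer split along residue-field decompositions (and about the radical of $\chi$). A secondary technical point is the $(T-1)$-adic lifting of self-adjoint idempotents through $R$; this should be routine once the symmetry of $\chi$ and the matrix-algebra description of the homogeneous blocks of $E$ are in hand.
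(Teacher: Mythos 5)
Your route is genuinely different from the paper's. The paper works with the generalized Casimir eigenspace decomposition $M\otimes{}_R\mathcal F=\sum_iP_i$, exhibits an explicit $R$-basis for the highest weight space of $(M\otimes_R\mathcal F)^{(0)}$ and for the $E$-invariant weight spaces of weight $m+t$, $t<-1$, shows (via the $s$-linear map $\G$) that the $\ru$-submodules generated by these vectors give the decomposition into copies of $M(m)$ and $P(m+t)$, and then orthogonalizes those finitely many weight vectors by Gram--Schmidt over the discrete valuation ring $R$. That last step works whether or not the form is nondegenerate, because over a DVR with residue characteristic $\neq 2$ one can always diagonalize a symmetric bilinear form on a free module, regardless of how the Gram determinant divides $(T-1)$. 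You instead import tilting-module theory over $R$ and try to symmetrize an idempotent decomposition of $E=\End_{\ru}(M\otimes{}_R\mathcal F)$; this is more conceptual but it is doing work the paper simply sidesteps by having explicit generators in hand.

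The serious gap is in your construction of $\dagger$. You need $\Psi:M\otimes{}_R\mathcal F\to(M\otimes{}_R\mathcal F)^{\vee}$ to be an $R$-isomorphism so that $\theta\mapsto\Psi^{-1}\theta^{\vee}\Psi$ lands back in $E$. Over $R$ the typical situation — and in fact the whole point of the subsequent filtration theory — is that $\Psi$ is \emph{injective but not surjective}: the Gram matrix of $\chi_{\beta,\phi_M}$ has nonzero determinant that is a proper power of $(T-1)$. In that case $\mathrm{rad}(\chi)=\ker\Psi=0$, so there is literally no radical to split off, yet $\Psi^{-1}$ is not defined on $E$, and $\theta^{\dagger}$ only makes sense after inverting $(T-1)$, i.e.\ on $E_{\mathcal K}$. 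The self-adjoint primitive idempotents you then produce live in $E_{\mathcal K}$, not in $E$, and the decomposition into indecomposables over $\mathcal K$ is strictly finer than the one over $R$ (over $\mathcal K$ every $P(m+t)$ splits into two Verma modules), so there is no obvious way to descend. Your own caveat about $R$-free modules that "no longer split along residue-field decompositions" is pointing at exactly this, but it is not a secondary technical point — it is the heart of the matter and the place where the paper's DVR Gram--Schmidt argument is doing the real work. One additional note: your argument, like the paper's actual proof, uses symmetry of $\chi$ (or $\chi=\pm\chi^\sharp$); as stated in the introduction the proposition suppresses this hypothesis, but the proof in \cite{MR2586983} carries it, and you should assume it too.
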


This last result has a number of intriguing consequences which are
formulated in the context of induced filtrations.
For any
$R$-module $B$ set $\overline B=B/(T-1)\cdot B$ and for any filtration
$B=B_0\supset B_1\supset ...\supset B_r$,  let 
$\overline B=\overline B_0\supset \overline B_1\supset ...\supset
\overline B_r$ be the induced filtration of $\overline B$, with
$\overline B_i=(\overline B_i+(T-1)\cdot B)/ (T-1)\cdot B$.

Now an invariant form $\chi$ on an $\ru$-module $B$ gives a
filtration on
$B$ by setting
\begin{equation}\label{filtration}
B_i=\{v\in B | \chi(v,B)\subset
(T-1)^i\cdot R\}.
\end{equation}
\begin{prop}[\cite{MR2586983}]
Suppose $\mathcal F$ is a finite dimensional $\bu$-module and
$\phi$ is an invariant form on $M\otimes {_R\mathcal F}$. Let
$M\otimes {_R\mathcal F}=B_0\supset B_1\supset ...\supset B_r$ be the
filtration \eqnref{filtration} and 
$\overline B_0\supset \overline B_1\supset ...\supset
\overline B_r$ the induced filtration on $\overline{M\otimes {_R\mathcal F}}$.
Then
\begin{enumerate}
\item  The $\bu$-module $\overline{B_i}/\overline{B_{i+1}}$
is finite dimensional for $i$ odd.
\item  The $\bu$-module $\overline B_i/\overline B_{i+1}$ is
both free and cofree as a $\mathbb k(v)[F]$-module, for $i$ even.
\end{enumerate}
\end{prop}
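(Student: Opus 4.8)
The plan is to peel the problem off summand by summand and then reduce everything to an explicit computation of the contravariant form on weight spaces over $R$.

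\emph{Reduction to an indecomposable summand.} First I would apply the preceding Proposition with $\phi=\chi_{\beta,\phi_M}$ (every invariant form on $M\otimes{_R\mathcal F}$ is of this type), writing $M\otimes{_R\mathcal F}=\bigoplus_k N_k$ as a $\phi$-orthogonal sum of indecomposable $\ru$-modules. Since the $N_k$ are mutually $\phi$-orthogonal, one checks directly that $B_i=\bigoplus_k (N_k)_i$, where $(N_k)_i$ denotes the filtration \eqnref{filtration} for the form $\phi|_{N_k}$; passing to $B/(T-1)B$ (an additive operation) gives $\overline{B_i}/\overline{B_{i+1}}=\bigoplus_k \overline{(N_k)_i}/\overline{(N_k)_{i+1}}$. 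So it is enough to prove (1) and (2) for a single indecomposable $\phi$-orthogonal summand $N$.

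\emph{The form on $N$.} Over the fraction field $\mathcal K$ the module $N\otimes_R\mathcal K$ splits as a sum of at most two Verma modules ${_R}M(m\pm\nu)\otimes_R\mathcal K$, the weights $m\pm\nu$ being those that become linked at $T=1$; as these are non-isomorphic irreducibles over $\mathcal K$, $\phi$ is block-diagonal there, each block a nonzero scalar multiple of the Shapovalov form normalised through $\phi_M$. Restricting to the $R$-lattice, on each weight space $N_\eta$ (finite free over $R$) the form is given by a symmetric $R$-matrix whose determinant is, up to a unit, a product of $q$-integers $[k]$ and of deformed brackets $\frac{Tv^{a}-T^{-1}v^{-a}}{v-v^{-1}}$. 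The $[k]$ are units in $R$; the bracket is a unit when $a\neq 0$, while for $a=0$ it equals $\frac{T-T^{-1}}{v-v^{-1}}=(T-1)\,\frac{1+T^{-1}}{v-v^{-1}}$ with $1+T^{-1}$ a unit, so it contributes valuation exactly $1$. Hence every elementary divisor of $\phi|_{N_\eta}$ is a power of $T-1$, and this power increases by exactly one as $\eta$ descends past each ``critical'' step — the step at which the relevant Verma acquires a singular vector at $T=1$; the number of critical steps already passed, read modulo $2$, is the parity in the statement.

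\emph{Identifying the layers.} The submodule $\overline B_i$ is spanned, weight space by weight space, by the vectors whose elementary divisor has valuation $\geq i$; that this is $\bu$-stable, and the exact shape of the layers, follows once $N$ is pinned down. By the analysis of \cite{MR1327136} and \cite{MR2586983} each $N$ is either an irreducible Verma ${_R}M(m+\mu)$ with $m+\mu$ antidominant or on the wall at $T=1$, on which $\phi$ is unimodular, or an indecomposable two-step lattice whose reduction $\overline N$ is a tilting module carrying both a $\Delta$- and a $\nabla$-flag. Passing a critical step cuts off on $\overline N$ the finite-dimensional composition factor $L(v^{c-1})$ lying between that step and the next, of dimension equal to the number of intervening steps; these caps are precisely the $\overline B_i/\overline B_{i+1}$ with $i$ odd, which proves (1). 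The complementary, even, layers are the remaining pieces: sums of irreducible Verma modules ${_R}M(m+\mu)$ (reduced mod $T-1$) with $m+\mu$ antidominant, together with, when tilting summands occur, modules again admitting a $\Delta$- and a $\nabla$-flag. Such a module is free over $\mathbb k(v)[F]$: a Verma module is $\mathbb k(v)[F]$-free of rank one because $F$ acts injectively on it, and an extension of $F$-torsion-free weight modules bounded above is again $F$-torsion-free, hence free; dually it is cofree, because its weight-graded dual again has a $\Delta$-flag (an irreducible Verma, and an indecomposable tilting module, being self-dual) and so is itself free. This proves (2).

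\emph{The main obstacle.} The easy part is the valuation bookkeeping of the second step: the single identity $T-T^{-1}=(T-1)(1+T^{-1})$ is what forces the alternation between ``a finite-dimensional cap is removed'' and ``nothing new degenerates''. The delicate part is the third step — upgrading the weight-space information to a statement about $\bu$-modules. One must verify that $\overline B_i$ is exactly the $\bu$-submodule cut out by the valuation condition, identify the two-step lattices $N$ and their reductions precisely (this is where the earlier papers carry the load), and exploit the self-duality of $\phi$ to force the even layers to be self-dual, hence $\Delta$- and $\nabla$-filtered. A Jantzen-type sum formula, $\sum_{i\geq 1}\mathrm{ch}\,\overline B_i=\sum_\eta \mathrm{val}_{T=1}(\det\phi|_{N_\eta})\,e^{\eta}$, is the natural tool for pinning the filtration down, and the degenerate configurations — weights of $\mathcal F$ landing on the wall, or $\mathcal F$ with nonzero zero-weight space — need to be checked separately.
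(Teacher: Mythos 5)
Your reduction to a $\phi$-orthogonal indecomposable summand $N$ matches the paper's first move, and the valuation-theoretic framing is sensible, but there is a real gap at the step ``Identifying the layers,'' and it sits precisely where the paper's argument does its work.

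The crux of the paper's proof (see the Lemma in \S 9.3 and the Corollary in \S 9.4, which this Proposition is citing) is the rank-two weight space $\mathfrak L$ of $P_r$ sitting between $(T-1)\bigl(M_+\oplus M_-\bigr)$ and $M_+\oplus M_-$. In the basis $\{w_{r,-r-1},\,(T-1)^{-1}(w_{r,-r-1}+w_{-r,-r-1})\}$ the form is
\begin{equation*}
\begin{pmatrix} p & (T-1)^{-1}p \\ (T-1)^{-1}p & (T-1)^{-2}(p+q) \end{pmatrix},
\end{equation*}
and the resulting filtration type $(a,b,c)$ depends not only on the orders $d,d'$ of $p,q$ but also on whether $p+q\equiv 0\bmod (T-1)^{d+1}$. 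When $d\neq d'$ one gets type $(\min\{d,d'\}-2,\,d-1,\,\max\{d,d'\})$, which does \emph{not} produce the stated parity alternation. Your claim that ``this power increases by exactly one as $\eta$ descends past each critical step'' is asserted, not proved, and as the above shows it can fail; the elementary divisors of $\phi|_{\mathfrak L}$ cannot be read off from the determinant (and even the determinant picks up extra $(T-1)$-factors from the index of the lattice $\mathfrak L$ inside $(M_+\oplus M_-)_\eta$, so ``product of brackets and $q$-integers up to a unit'' is not quite right either).

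What closes the gap in the paper is the hypothesis $\phi^\sharp=\pm\phi$. The $\mathbb Z_2$-symmetry forces $q=\pm s\,p$, hence $d=d'$, and the sign together with the parity of $d$ pins down the type as $(d-2,d-1,d)$ or $(d-1,d-1,d-1)$; that dichotomy is exactly what becomes ``finite dimensional for $i$ odd / free and cofree for $i$ even.'' You never invoke this symmetry. The Proposition as reproduced in the introduction omits the hypothesis, but its source (\S 9.4--9.5) includes it, and without it the statement is in fact false as your own Shapovalov-order bookkeeping would show if carried out on $\mathfrak L$. So: keep the reduction, but replace the ``valuation jumps by one'' heuristic with the explicit $2\times 2$ computation on $\mathfrak L$ and use $\phi^\sharp=\pm\phi$ to constrain $(d,d')$ and the order of $p+q$. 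The tilting/$\Delta$-$\nabla$-flag and Jantzen-sum language you invoke for step three can be made to work, but it is not needed and does not by itself substitute for that computation.
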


A final result is cast in the
language of hereditary filtrations which we now describe.  Let
$\mathcal E$ denote the simple two dimensional
$\bu$-module with highest weight $v$ and set $P=M\otimes {_R\mathcal E}$. Then
$P$ is isomorphic to the basic module $P_1=P(m+1)$ as defined through the equations \eqnref{P_iF}. Set
$M_\pm=M(m\pm 1)$. The construction of $P$ gives
inclusions and a short exact sequence:
\begin{equation}\label{ses-inclusions}
(T-1)M_+\oplus (T-1)M_- \subset P\subset
M_+\oplus M_- ,\quad 0\ra (T-1) M_+\ra P\ra M_- \ra 0.
\end{equation}

Let $\mathcal F$ be any finite dimensional $\bu$-module and set 
\begin{equation}\label{mathbbABDP}
\mathbb A=(T-1)\cdot M_+\otimes _R\mathcal F ,\quad \mathbb B=(T-1)\cdot
M_- \otimes _R\mathcal F ,\quad \mathbb D=
M_- \otimes_R\mathcal F ,\quad \mathbb P=P\otimes_R\mathcal F.
\end{equation}
 Then \eqnref{ses-inclusions},
gives inclusions and the short exact sequence:
\begin{equation}
\mathbb A\oplus \mathbb B\subset \mathbb P \subset (T-1)^{-1}(\mathbb A\oplus \mathbb B),\quad\quad 0\to
\mathbb A\to \mathbb P\ { \overset \Pi\to}\ \mathbb D\to 0 .
\end{equation}

Now fix an induced invariant form $\chi=\chi_{\beta,\phi_P}$ on $\mathbb P$ where $\beta$
is a homomorphism of $_R\mathcal F\otimes_R \mathcal F^{\rho_1}$ into
$\ru$ and 
$\phi_P$ is an invariant form on $P$. Filter $\mathbb A,\mathbb B$ and $\mathbb P$ using \eqnref{filtration} and 
\eqnref{mathbbABDP}. Then we say that $\chi$ gives a {\it hereditary filtration} if,
for all i,
\begin{equation}
\mathbb A_i\cap
\mathbb B=\mathbb B_i.
\end{equation}
For any weight module $N$ for $\bu$, let $N_{\le h}$ denote the 
span of the weight spaces with weights $t\le h$. We say that 
$\chi$ gives a {\it weakly hereditary filtration} if, for some $h$
and for all $i$,
$\mathbb A_{i,\le h}\cap \mathbb B=\mathbb B_{i,\le h}$.

There is an action of the Weyl $\{1,s\}$ group of $\germ{sl}_2$ on the space of forms $\chi_{\beta,\phi}$ where the lifted form $\chi_{\beta,\phi}^\sharp$ satisfies $\chi_{\beta,\phi}^\sharp=\chi_{s\beta,\phi}$ (see \cite[Theorem 38]{MR2586983}).  We say that $\chi$ is even if  $\chi_{\beta,\phi}^\sharp=\chi_{\beta,\phi}$ and odd if  $\chi_{\beta,\phi}^\sharp=-\chi_{\beta,\phi}$.  Our aim is to prove our conjecture that
\begin{conj}
Suppose $\chi$ is either even or odd.
Then $\chi$ gives a weakly hereditary filtration.
\end{conj}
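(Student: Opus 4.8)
The plan is to derive the conjecture from the two Propositions of \cite{MR2586983} quoted above, together with the explicit amalgam geometry of \eqnref{ses-inclusions}--\eqnref{mathbbABDP} and the Weyl symmetry $\chi^\sharp=\chi_{s\beta,\phi}$; the decisive point is that an $\sharp$-eigenform must interchange, up to sign, the two Verma constituents $M_+$ and $M_-$ of $P$, and this should force the filtrations \eqnref{filtration} on the $\mathbb A$-side and on the $\mathbb B$-side to coincide below a suitable weight $h$. First I would reduce to a single Verma module: since $P=M\otimes_R{}_R\mathcal E$, we have $\mathbb P=P\otimes_R\mathcal F\cong M\otimes_R{}_R\mathcal G$ with $\mathcal G=\mathcal E\otimes\mathcal F$ finite dimensional; choosing the auxiliary form $\phi_P$ to be an induced form $\chi_{\gamma,\phi_M}$, the form $\chi=\chi_{\beta,\phi_P}$ becomes an induced form $\chi_{\widetilde\beta,\phi_M}$ on $M\otimes_R{}_R\mathcal G$, with $\widetilde\beta\colon{}_R\mathcal G\otimes_R\mathcal G^{\rho_1}\to\ru$ assembled functorially from $\beta$ and $\gamma$. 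Unwinding the coproduct/antipode conventions in the definition of $\mathbb P(M,N,\mathcal F)$ and the description of $\sharp$ in \cite[Theorem 38]{MR2586983}, I would check that $\chi^\sharp=\pm\chi$ is equivalent to a parity condition on $\widetilde\beta$ for the Weyl action on $\Hom({}_R\mathcal G\otimes_R\mathcal G^{\rho_1},\ru)$; combined with the second Proposition this should confine, outside finitely many weights, the induced filtration to its even graded pieces (those free and cofree over $\mathbb k(v)[F]$), the finite-dimensional odd pieces occurring only near the top.

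Next I would use the first Proposition: $M\otimes_R{}_R\mathcal G$ is the $\chi$-orthogonal direct sum of indecomposable $\ru$-submodules, each a $(T-1)$-rescaled twist of a Verma module $M(m+\lambda)$ or of a basic module $P(m+\lambda)$. I would arrange this decomposition so as to be compatible with the sandwich $\mathbb A\oplus\mathbb B\subset\mathbb P\subset(T-1)^{-1}(\mathbb A\oplus\mathbb B)$, reducing modulo $(T-1)$ and treating by hand the basic blocks, which are the only ones carrying genuine amalgam structure. Then the reflection $s$, together with the automorphism $\Theta$ for which $M_+^{s\Theta}\cong M_-$, furnishes an $\ru$-semilinear isomorphism $\mathbb P^{s\Theta}\cong\mathbb P$ exchanging $\mathbb A$ and $\mathbb B$ up to a submodule supported in the finitely many resonant weights; since $\chi^\sharp=\pm\chi$, this isomorphism carries $\chi$ to $\pm\chi$. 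Hence $\chi|_{\mathbb A}$ and $\chi|_{\mathbb B}$ are, below the resonant range, intertwined by an $\ru$-semilinear isomorphism, so their $(T-1)$-adic filtrations \eqnref{filtration} correspond term by term, giving $\mathbb A_i\cap\mathbb B=\mathbb B_i$ in that range. Since $\mathcal G=\mathcal E\otimes\mathcal F$ is finite dimensional, only finitely many blocks and finitely many resonant weights --- those where a Shapovalov/Jantzen factor of some block vanishes at $T=1$, or where the $s\Theta$-matching of $\mathbb A$ with $\mathbb B$ breaks down --- can occur; choosing $h$ below all of them, the above yields $\mathbb A_{i,\le h}\cap\mathbb B=\mathbb B_{i,\le h}$ for all $i$, which is the assertion.

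I expect the main obstacle to be the claim that $\sharp$ is \emph{valuation-exact}: the reflection on forms is defined through an indirect lifting procedure, and one must rule out that the semilinear isomorphism realizing $\chi^\sharp=\pm\chi$ sends the $\mathbb A$-data to the $\mathbb B$-data only up to a correction of positive $(T-1)$-valuation, which would weaken the equality of filtrations to a one-sided inclusion and could destroy even weak heredity. A secondary difficulty is the simultaneous compatibility of the orthogonal decomposition with the amalgam sandwich, which I would handle by a careful analysis of the basic blocks $P(m+\lambda)$ modulo $(T-1)$.
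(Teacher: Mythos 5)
The paper does not prove this statement: it is posed explicitly as an open conjecture (``Our aim is to prove our conjecture that\dots''), and what follows in the text is partial machinery toward it rather than a completed argument. Concretely, in the section on Filtrations and Wall-crossing the paper shows, for a $\mathbb Z_2$-invariant or skew-invariant (i.e.\ even or odd) form, that the $s$-linear map $\Gamma$ built from the cycle $\Psi$ and the Lusztig automorphism $T'_{-1}$ is an $s$-isometry of $\mathbb L\cap\mathbb A^i$ onto $\mathbb L\cap\mathbb B^i$, whence $\overline{\mathbb A}^i\cap\pi(\mathbb L)=\overline{\mathbb B}^i\cap\pi(\mathbb L)$ for all $i$; it then \emph{renames} that identity ``weakly hereditary.'' That is a priori weaker than the introduction's condition $\mathbb A_{i,\le h}\cap\mathbb B=\mathbb B_{i,\le h}$, because $\mathbb L=\sum_r\mathfrak L^r$ is one weight space per Casimir eigenvalue, not the full span of weight spaces with weights $\le h$, so the conjecture as stated in the introduction is not closed by that Corollary.

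Your proposal closely parallels the paper's actual line of attack. The semilinear map exchanging the $M_+$- and $M_-$-sides while carrying $\chi$ to $\pm\chi$ is precisely the role of $\Gamma$ in the paper, and your appeal to $\chi^\sharp_{\beta,\phi}=\chi_{s\beta,\phi}$ and to a parity condition on $\beta$ is the same symmetry the paper exploits. Your preliminary reduction of $P\otimes_R\mathcal F$ to $M\otimes_R(\mathcal E\otimes\mathcal F)$, using that every invariant pairing on $M\otimes_R\mathcal E\times M\otimes_R\mathcal F$ is induced by the Shapovalov form, is a legitimate simplification the paper does not carry out explicitly and is worth retaining. The obstacle you flag is, as far as I can see, exactly where the problem currently stands: the $s$-isometry is controlled only on the one-weight-per-block subspace $\mathbb L$, and the step from there to an isometry of the full weight-truncated modules with no drop in $(T-1)$-valuation --- your ``valuation-exactness of $\sharp$'' --- is the unproven content separating the paper's Corollary from the conjecture in the introduction's sense. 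So the proposal is a reasonable attack plan rather than a proof, and its honestly flagged gap coincides with the gap left open by the paper.
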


\medskip
\section{$q$-Calculus.}
\subsection{Definitions}
As many before us have done, we define
\begin{align*}
[m]&:=\frac{v^m-v^{-m}}{v-v^{-1}},  \\
[m]!&:=[m]\cdot [m-1]\cdots [1] \\
[0]!&:=1  \\
\qbinom{m}{ n}&=\frac{[m]!}{[n]![m-n]!}\quad \text{for} \quad n\leq m 
\\ \\
\qbinom{m}{ n}&=\begin{cases}0 &\quad {\rm if}\quad m<n\quad{\rm
or}\quad n<0 , \\
1 &\quad {\rm if}\quad n=m\quad{\rm
or}\quad m=0. 
\end{cases}
\end{align*}

For $r\in Z$ define
\begin{gather}
[T;r]:=\frac{v^{r}T- v^{-r}T^{-1}}{ v-v^{-1}},   \\ \\
[T;r]_{(j)}:=[T;r][T;r-1]\cdots [T;r-j+1],
\quad\quad [T;r]^{(j)}:=[T;r+1]\cdots [T;r+j]
\quad\text{if}\quad j>0,\notag\\ 
[T;r]_{(0)}:= [T;r]^{(0)}:=1,
\notag \\ \notag \\
\qbinom{T;r}{ j}:=\begin{cases}[T;r]_{(j)}/[j]!
& \text{if}\quad j\geq 0 \\
0&  \text{if}\quad j<0.\end{cases}
\end{gather}
Observe that 
\begin{equation}\label{r1}
[T;r]^{(j)} = [T;r+j]_{(j)}.
\end{equation}
and 
\begin{equation}\label{r2}
(-1)^j[T;r]^{(j)}=[T^{-1};-r-1]\cdots
[T^{-1};-r-1-j+1] =  [T^{-1};-r-1]_{(j)}.
\end{equation}
Moreover note that $[T;\lambda]^{(k)}$ is invertable in $R$ for $\lambda\geq
0$ and $[T;\lambda+1]_{(k)}$ is invertible provided
and $\lambda+1> k$ or $\lambda<0$ ($k\geq 0$).  In fact
\begin{equation}\label{cong1}
[r]![T;r]_{(r)}^{-1}\cong T^r\mod (T-1).
\end{equation}
Indeed the map sending $T\mapsto 1$ defines a surjection of $R$ onto $\mathbb k(v)$
and under this map $[T;r]_{(r)}\mapsto [r]!$. Moreover for $k\neq 0$,
\begin{align*}
[T;k]^{-1}
&=T[k]^{-1}\left(
\frac{1}{1-(T-1)\frac{v^{k}}{1-v^k}}\right)\left(
\frac{1}{1+(T-1)\frac{v^{k}}{1+v^k}}\right) 
\end{align*}
and the two fractions on the right can be written as power series in $T-1$
with $1$ as their leading coefficient.
\color{black}

\subsection{Identities}
Two useful formulae for us will be

\begin{equation}\label{Ma1}
\qbinom{s-u}{ r}=\sum_p
(-1)^pv^{\pm (p(s-u-r+1)+ru)}\qbinom{u}{ p}\qbinom{s-p}{
r-p}
\end{equation}
\begin{equation}\label{Ma2}
\qbinom{u+v+r-1}{ r}=\sum_p
v^{\pm (p(u+v)-ru)}\qbinom{u+p-1}{ p}\qbinom{v+r-p-1}{r-p}
\end{equation}
which come from \cite{MR1337274}, equations 1.160a and 1.161a respectively.

\section{$\bui$ Automorphisms and Intertwining Maps.} 
\label{automorphism}
\subsection{} Following Lusztig, \cite[Chapter
5]{MR1227098}, we let
$\mathcal C'$ denote the category whose objects are $\mathbb Z$- graded
$\bu$-modules $M=\oplus _{n\in\mathbb Z}M^n$ such that 

\begin{enumerate}[(i)]
\item  $E,F$ act locally nilpotently on $M$,
\item  $Km=v^nm$ for all $m\in M^n$.
\end{enumerate}

Fix $e=\pm 1$ and let $M\in \mathcal C'$.  Define Lusztig's automorphisms
$  T_i',  T_i'':M\to M$ by
\begin{equation}
 T_i'(m):=\sum_{a,b,c;a-b+c=n}(-1)^bv^{e(-ac+b)}F^{(a)}E^{(b)}F^{(c)}m, 
\end{equation}
and
\begin{equation}
 T_i''(m):=\sum_{a,b,c;-a+b-c=n}(-1)^bv^{e(-ac+b)}E^{(a)}F^{(b)}E^{(c)}m
\end{equation}
for $m\in M^n$.  In the above $E^{(a)}:=E^a/[a]!$ is the $a$-th {\it
divided power of $E$}.

Lusztig defined automorphisms $T''_{i,e}$ and $ T'_{i,e}$ on $\bu$ by
$$
T'_e(E^{(p)})=(-1)^pv^{ep(p-1)}K^{ep}F^{(p)},\quad \quad
T'_e(F^{(p)})=(-1)^pv^{-ep(p-1)}E^{(p)}K^{-ep}
$$
and
$$
T''_{-e}(E^{(p)})=(-1)^pv^{ep(p-1)}F^{(p)}K^{-ep},\quad \quad
T''_{-e}(F^{(p)})=(-1)^pv^{-ep(p-1)}K^{ep}E^{(p)}.
$$

One can check on generators that $\rho_1\circ T_{-1}'=T_{-1}'\circ
\rho_1$.  
In order to distinguish the algebra homomorphisms above
from their module homomorphism counterparts we will sometimes use the
notation 
$T_{e,\rm{mod}}'$ and  $T_{e,\rm{mod}}''$to denote the later.  If
$M$ is in
$\mathcal C'$,
$x\in \bu$ and $m\in M$, then we have

\begin{equation}
\Theta (x\cdot m) =\Theta(x) \Theta m 
\end{equation}
for $\Theta=\tiep$ or $\Theta=\tiepp$ (see \cite[37.1.2]{MR1227098}). The last identity can be interpreted to say that
$\Theta$ and
$\Theta\otimes s$ are intertwining maps;

\begin{equation}
\Theta:M \ra M^\Theta\quad \quad \quad \quad
\Theta\otimes s\ :{_R}M\ra {_R}M^{\Theta\otimes s}.
\end{equation}

To simplify notation we shall sometimes write $s\Theta$
in place of $\Theta\otimes s$. \smallskip

We now describe the explicit action of $\Theta$ on $M$.
\begin{lem}(\cite[Prop.5.2.2]{MR954661}). \label{symmetries} Let
$m\geq 0$ and
$j,h\in [0,m]$ be such that $j+h=m$.
\begin{enumerate}[(a)]
\item If $\eta\in M^m$ is such that $E\eta=0$, then
$\tiep(F^{(j)}\eta)=(-1)^jv^{e(jh+j)}F^{(h)}\eta$.
\item If $\zeta\in M^{-m}$ is such that $F\zeta=0$, then
$\tiepp(E^{(j)}\zeta) =(-1)^jv^{e(jh+j)}E^{(h)}\zeta$.
\end{enumerate}
\end{lem}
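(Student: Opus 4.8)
The plan is to prove (a) by a direct computation from the defining sum for the module operator $\tiep$, first reducing the general pair $(j,h)$ to the case $j=0$, and then to treat (b) identically after interchanging the roles of $E$ and $F$ (and negating all weights, so that $T_i'$ is replaced by $T_i''$ and, in the displayed algebra‑automorphism formula, $e$ by $-e$, since $\tiepp=T_{i,e}''$ while it is $T_{i,-e}''$ that is written above).

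For the reduction I would use the intertwining identity $\tiep(x\cdot m)=\tiep(x)\cdot\tiep(m)$ (with $\tiep(x)$ the algebra automorphism) together with the displayed formula $\tiep(F^{(j)})=(-1)^jv^{-ej(j-1)}E^{(j)}K^{-ej}$. Since $F^{(j)}\eta=F^{(j)}\cdot\eta$,
\begin{equation*}
\tiep(F^{(j)}\eta)=(-1)^jv^{-ej(j-1)}E^{(j)}K^{-ej}\cdot\tiep(\eta),
\end{equation*}
so everything comes down to the case $j=0$, namely $\tiep(\eta)=F^{(m)}\eta$. Granting that: $F^{(m)}\eta$ has weight $-m$, so $K^{-ej}F^{(m)}\eta=v^{ejm}F^{(m)}\eta$, while the commutation of divided powers in $\bu$ applied to $\eta$ (using $E\eta=0$, $K\eta=v^m\eta$) gives $E^{(j)}F^{(m)}\eta=\qbinom{j}{j}F^{(m-j)}\eta=F^{(h)}\eta$. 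Assembling the powers of $v$, $v^{-ej(j-1)+ejm}=v^{ej(m-j+1)}=v^{e(jh+j)}$, which is the asserted coefficient. This part is routine.

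The substance lies in the case $j=0$, where I would simply expand
\begin{equation*}
\tiep(\eta)=\sum_{a-b+c=m}(-1)^bv^{e(-ac+b)}F^{(a)}E^{(b)}F^{(c)}\eta .
\end{equation*}
Moving each $E^{(b)}$ past $F^{(c)}$ by the divided-power commutation relation and using $E\eta=0$ annihilates all terms with $b>c$ and replaces $E^{(b)}F^{(c)}\eta$ by $\qbinom{m+b-c}{b}F^{(c-b)}\eta$; then $F^{(a)}F^{(c-b)}=\qbinom{a+c-b}{a}F^{(a+c-b)}$, and the constraint $a-b+c=m$ forces $a+c-b=m$ and $m+b-c=a$. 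Hence every surviving term is a scalar multiple of $F^{(m)}\eta$, and
\begin{equation*}
\tiep(\eta)=\Bigl(\sum_{0\le b\le a\le m}(-1)^bv^{e(a^2-am-ab+b)}\qbinom{m}{a}\qbinom{a}{b}\Bigr)F^{(m)}\eta=\Bigl(\sum_{a=0}^m\qbinom{m}{a}v^{ea(a-m)}\sum_{b=0}^a(-1)^bv^{eb(1-a)}\qbinom{a}{b}\Bigr)F^{(m)}\eta .
\end{equation*}
The inner sum is $\delta_{a,0}$ by \eqnref{Ma1} with $s=u=r=a$ (choosing the sign there to be $e$), so the coefficient collapses to $1$ and $\tiep(\eta)=F^{(m)}\eta$, as required. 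The same computation, with $E$ and $F$ exchanged, proves the base case $\tiepp(\zeta)=E^{(m)}\zeta$ needed for (b).

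The step I expect to be the main obstacle is the evaluation of that scalar: keeping straight which terms of the triple divided-power sum actually contribute (the conditions $b\le c$ and $b\le a$ are both enforced automatically by the vanishing of the relevant $q$-binomials), using the weight constraint to linearize the quadratic exponent $-ac$ in $b$, and then recognizing the one-parameter alternating $q$-binomial sum that results as a specialization of \eqnref{Ma1}. Everything else — the reduction to $j=0$ and the mirror-image treatment of (b) — is bookkeeping.
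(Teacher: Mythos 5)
Your proof is correct. There is nothing in the paper to compare against: this lemma is quoted from \cite[Prop.~5.2.2]{MR954661} and the paper supplies no argument of its own, so your write-up is effectively a reconstruction of Lusztig's original computation.

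Two small remarks. First, your reduction to $j=0$ via $\tiep(F^{(j)}\eta)=T'_e(F^{(j)})\,\tiep(\eta)$ together with $T'_e(F^{(j)})=(-1)^jv^{-ej(j-1)}E^{(j)}K^{-ej}$ is clean and correct, and the exponent check $-ej(j-1)+ejm=e(jh+j)$ works out. Second, for the base case the collapse of the triple sum is fine: with $c=m-a+b$ the constraints $b\le c$ and $b\le a$ both reduce to $0\le b\le a\le m$, the exponent becomes $e\bigl(a(a-m)+b(1-a)\bigr)$, and the inner sum
\begin{equation*}
\sum_{b=0}^{a}(-1)^b v^{eb(1-a)}\qbinom{a}{b}=\delta_{a,0}
\end{equation*}
does follow from \eqnref{Ma1} with $s=u=r=a$ as you say. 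It is worth noting, though, that invoking \eqnref{Ma1} here is a bit heavier than needed: this one-parameter sum is just the Gauss $q$-binomial product
\begin{equation*}
\sum_{b=0}^{a}(-1)^b v^{b(a-1)}\qbinom{a}{b}x^b=\prod_{i=0}^{a-1}\bigl(1-v^{2i}x\bigr)
\end{equation*}
at $x=v^{\mp 2(a-1)}$, whose $i=a-1$ factor vanishes for $a\ge 1$. Either way the coefficient collapses to $1$ and $\tiep(\eta)=F^{(m)}\eta$. The mirror treatment of (b), swapping $E\leftrightarrow F$, $T'\leftrightarrow T''$, and $e\to-e$ in the displayed automorphism formula so that you are using $T''_e$ rather than $T''_{-e}$, is exactly right.
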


Let $F(\bu)$ denote the ad-locally finite submodule of $\bu$.  We know
from \cite{MR1198203} that $F(\bu)$ is tensor
product of harmonic elements $\mathcal H$ and the center $Z(\bu)$.  Here
$\mathcal H=\oplus_{m\in\mathbb Z} \mathcal H_{2m}$ and $\mathcal
H_{2m}=\text{ad}\, \bu (EK^{-1})$.

 There is another category that
we will need and it is defined as follows: Let
$M$ be a
$\ru$-module.  One says that
$M$ is {\it
$_R\bu^0$-semisimple} if $M$ is the direct sum of $R$-modules
$M^{\mu}$ where $K$ acts by $T \ v^\mu$, $\mu\in\mathbb Z$; i.e. by
weight $m+\mu$.  Then $\mathcal C_{R}$ denotes the category of $\ru$-modules
$M$ for which $E$ acts locally nilpotently and  $M$ is
$_R\bu^0$-semisimple.

For $M$ and $N$ two objects in $\mathcal C'$ or one of them is in 
$_R\mathcal C$,
Lusztig defined
the linear map
$L:M\otimes N\to M\otimes N$ given by
\begin{equation}\label{L}
L(x\otimes y)=\sum_n(-1)^nv^{-n(n-1)/2}\{n\}F^{(n)}x\otimes E^{(n)}y
\end{equation} 
where $\{n\}:=\prod_{a=1}^n(v^a-v^{-a})$ and $\{0\}:=1$.  
One can show
$$ 
L^{-1}(x\otimes y)=\sum_nv^{n(n-1)/2}\{n\}F^{(n)}x\otimes E^{(n)}y.
$$ 

\begin{lem}\label{LLemma1}(\cite{MR1227098}).
Let $M$ and $N$ be two objects in $\mathcal C'$.  Then
$ T_{1}'' L(z)=(T_{1}''\otimes T_{1}'')(z)$ for all $z\in
M\otimes N$. 
\end{lem}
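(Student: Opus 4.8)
The plan is to reduce to simple finite-dimensional modules in rank one and then to the $q$-Vandermonde identities of \S2. First I would note that each of $z\mapsto T_1''L(z)$, the operator $T_1''$ on $M\otimes N$, and $z\mapsto(T_1''\otimes T_1'')(z)$ is $\mathbb k(v)$-linear and preserves every decomposition $M=\bigoplus_\alpha M_\alpha$, $N=\bigoplus_\beta N_\beta$ into $\bu$-submodules — the formula for $T_1''$ uses only the divided powers $E^{(p)},F^{(q)},E^{(r)}$ and hence preserves submodules, and the same is visibly true for $L$ and for $T_1''\otimes T_1''$ — and that on $M_\alpha\otimes N_\beta$ each restricts to the analogous operator for the pair $(M_\alpha,N_\beta)$. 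Since on objects of $\mathcal C'$ the generators $E,F$ act locally nilpotently while $K$ acts semisimply with eigenvalues $v^n$, both $M$ and $N$ are direct sums of finite-dimensional simple $U_v(\germ{sl}_2)$-modules, so it suffices to take $M=V(m)$, $N=V(m')$ simple, with weight bases $v_i=F^{(i)}v_0$, $w_j=F^{(j)}w_0$ ($v_0,w_0$ highest weight), and to check the identity on the basis $\{v_a\otimes w_b\}$.

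Next I would compute both sides. On the right, $T_1''\otimes T_1''$ acts factorwise, and writing $v_a=E^{(m-a)}v_m$, $w_b=E^{(m'-b)}w_{m'}$ in terms of the lowest weight vectors and applying \lemref{symmetries}(b) expresses $(T_1''\otimes T_1'')(v_a\otimes w_b)$ as a single basis vector of weight $-(m+m')+2(a+b)$ with an explicit $v$-power coefficient. On the left, the divided-power relations along a weight string give
\[
L(v_a\otimes w_b)=\sum_{n\ge 0}(-1)^nv^{-n(n-1)/2}\{n\}\qbinom{a+n}{n}\qbinom{m'-b+n}{n}\,v_{a+n}\otimes w_{b-n},
\]
a vector of weight $m+m'-2(a+b)$; applying $T_1''$ for the tensor-product module to this means expanding each $v_{a+n}\otimes w_{b-n}$ in the basis $\{F^{(a+b-k)}u_k\}_k$ of that weight space, where $u_k$ is the highest weight vector of the $V(m+m'-2k)$-summand of $M\otimes N$, and then applying \lemref{symmetries}(b) (via the identity $F^{(t)}u_k=E^{(d_k-t)}(\text{lowest weight vector of }V(d_k))$, which converts part (b) into its highest-weight form) to each summand; this produces a vector of weight $-(m+m')+2(a+b)$.

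Equating the two, and inserting the quantum Clebsch--Gordan coefficients relating the bases $\{v_i\otimes w_j\}$ and $\{F^{(a+b-k)}u_k\}$ — themselves products of $q$-binomial coefficients — the equality of all coefficients collapses to the $q$-Vandermonde identities \eqnref{Ma1} and \eqnref{Ma2}. I expect this last bookkeeping — arranging the resulting sum over $n$, over the divided powers inside $T_1''$ (realized through the coproduct), and over the Clebsch--Gordan expansion so that \eqnref{Ma1}--\eqnref{Ma2} apply — to be the main obstacle; everything before it is formal. A more conceptual route that avoids the explicit $q$-calculus is to use the intertwining properties of the operator $L$ from \cite{MR1227098}, where $L$ comes from the quasi-$R$-matrix of quantum $\germ{sl}_2$, to identify $T_1''L$ with $T_1''\otimes T_1''$ directly as morphisms of $\bu$-modules after the appropriate twist; but that merely relocates the work into the theory of the quasi-$R$-matrix.
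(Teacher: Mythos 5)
The paper does not prove this lemma itself---it quotes it from \cite{MR1227098}---so your plan can only be assessed on its own terms. The reduction to $M=V(m)$, $N=V(m')$ simple is sound: each of $T_1''$ on a factor, $T_1''$ on $M\otimes N$, and $L$ is assembled from divided powers of $E$ and $F$ acting factorwise or through the coproduct, hence preserves any decomposition of $M$ and $N$ into $\bu$-submodules, and objects of $\mathcal C'$ in rank one are semisimple. Your displayed formula for $L(v_a\otimes w_b)$ is also correct.

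But what you have is a strategy, not a proof. The only step that actually establishes the identity---expanding each $v_{a+n}\otimes w_{b-n}$ in the Clebsch--Gordan basis $\{F^{(a+b-k)}u_k\}$, applying \lemref{symmetries}(b) to each summand, and matching the resulting double superposition of $q$-binomials and $\{n\}$-factors against $(T_1''\otimes T_1'')(v_a\otimes w_b)$ coefficient by coefficient---is precisely the step you defer with only the expectation that it ``collapses to'' \eqnref{Ma1}--\eqnref{Ma2}. Nothing you have written pins down how the $n$-sum from $L$ interacts with the $k$-sum from the Clebsch--Gordan expansion and the internal $l$-sum in each Clebsch--Gordan coefficient, what absorbs the $\{n\}$ factor, or why the $v$-exponents line up with the very specific patterns of those two identities; a sign or exponent shift could just as easily derail the argument as close it. You would also reduce the bookkeeping substantially by verifying the identity on vectors $F^{(j)}\eta\otimes F^{(k)}\zeta$ with $\eta,\zeta$ highest weight, where both sides can be compared against a single $q$-binomial identity without any detour through the Clebsch--Gordan basis; or, better still, develop the conceptual route you only mention in passing, since the lemma is really the assertion that $L$ measures the failure of $T_1''\otimes T_1''$ to intertwine the coproduct with its twist, and that follows from the defining intertwining property of the quasi-$R$-matrix with essentially no explicit $q$-calculus.
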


\begin{lem}\label{LLemma2}
Let $M$ be a module in $\mathcal C_R$ and $N$ a module in
$\mathcal C'$.  Then for $x\in M^t$ and $y\in N^s$ we have
\begin{align*}
FL(x\otimes y) &=L(x\otimes Fy+v^sFx\otimes y) \\
EL(x\otimes y) &=L(Ex\otimes y+v^{-t}T^{-1}x\otimes Ey) .
\end{align*}
\end{lem}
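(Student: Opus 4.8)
The plan is to prove the two operator identities by direct computation, reducing to the single‑tensor‑factor case handled by Lemma~\ref{LLemma1} and by exploiting the defining formula \eqnref{L} for $L$ together with the $q$‑binomial identities of Section~2. I would start from the formula
\begin{equation*}
L(x\otimes y)=\sum_n(-1)^nv^{-n(n-1)/2}\{n\}F^{(n)}x\otimes E^{(n)}y,
\end{equation*}
apply the comultiplication formula for the action of $F$ (resp.\ $E$) on a tensor product in $\mathcal C_R\otimes\mathcal C'$, and then compare coefficients of $F^{(n)}x\otimes E^{(n)}y$ on both sides. The key point is that the weight of $F^{(n)}x$ is $t-2n$ and the weight of $E^{(n)}y$ is $s+2n$, so the ``$K$‑factor'' coming from $\Delta(F)=F\otimes 1+K^{-1}\otimes F$ acting on the $R$‑graded module $M\in\mathcal C_R$ produces the $v^s$ (and the $v^{-t}T^{-1}$ for $E$, since on $M$ one has $K$ acting as $Tv^\mu$ rather than $v^\mu$). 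This is exactly where the asymmetry between the two factors — one in $\mathcal C_R$, one in $\mathcal C'$ — enters, and it is the reason the second identity carries a factor $T^{-1}$ that is absent from the classical Lemma~\ref{LLemma1}.

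More concretely, for the first identity I would write $FL(x\otimes y)=\sum_n(-1)^nv^{-n(n-1)/2}\{n\}\bigl(FF^{(n)}x\otimes E^{(n)}y+v^{-(t-2n)}F^{(n)}x\otimes FE^{(n)}y\bigr)$, using $K^{-1}$ acting on $F^{(n)}x$ by $v^{-(t-2n)}$. Then I use $FF^{(n)}=[n+1]F^{(n+1)}$ and the standard commutation of $F$ past $E^{(n)}$ inside $N\in\mathcal C'$, reindex the sums, and collect the coefficient of each basis monomial $F^{(a)}x\otimes E^{(b)}y$. The claim is that what results is precisely $L$ applied to $x\otimes Fy+v^sFx\otimes y$; verifying this amounts to a finite $q$‑binomial identity, for which either \eqnref{Ma1} or \eqnref{Ma2}, or an elementary Pascal‑type recursion for $\{n\}$ and $[n]$, suffices. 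The computation for $E$ is entirely parallel, with the roles of $E$ and $F$ interchanged and with $K$ (rather than $K^{-1}$) acting on the left tensor factor, which is where the $T^{-1}$ is produced: on $M\in\mathcal C_R$ we have $K\cdot F^{(n)}x=Tv^{t-2n}F^{(n)}x$, hence the $v^{-t}T^{-1}$ in the stated formula.

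An alternative, cleaner route — and the one I would actually prefer — is to avoid the bare‑hands coefficient comparison and instead deduce Lemma~\ref{LLemma2} from Lemma~\ref{LLemma1} by a base‑change/specialization argument. One regards $M\in\mathcal C_R$ as built from a module in $\mathcal C'$ by twisting the $\bu^0$‑action through $K\mapsto TK$, i.e.\ one checks that the operator $L$ and the operators $E,F$ transform in a controlled way under this twist; since Lemma~\ref{LLemma1} (equivalently, the intertwining property of $T_1''$ together with $T_1''L=(T_1''\otimes T_1'')$) encodes the $T=1$ case, the general identity follows by tracking the single extra power of $T$ that the twist inserts on the $M$‑factor. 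The main obstacle in either approach is purely bookkeeping: getting the exponents of $v$ and the single power of $T^{\pm1}$ correct, since the coproduct is not cocommutative and the two tensor factors play genuinely different roles; once the correct ansatz is in hand, the verification is a routine induction on the weight using $FF^{(n)}=[n+1]F^{(n+1)}$, $EF^{(n)}=F^{(n)}E+F^{(n-1)}[K;1-n]$ (the analogue inside $\mathcal C'$), and the elementary identity $v^a-v^{-a}=[a](v-v^{-1})$ relating $\{n\}$ to $[n]!$.
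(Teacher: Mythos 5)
The paper states Lemma~\ref{LLemma2} without giving a proof in the text, so there is nothing to compare against directly; what follows is an assessment of your argument on its own terms.

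Your overall plan --- expand $FL(x\otimes y)$ through the coproduct, use $FF^{(n)}=[n+1]F^{(n+1)}$ and the $E,F$ commutation relations, reindex, and match coefficients of $F^{(a)}x\otimes E^{(b)}y$ --- is sound and is the standard computation. But there is a concrete error in your set-up, and it is one that will make the coefficient match fail rather than a harmless notational slip. You write $\Delta(F)=F\otimes 1+K^{-1}\otimes F$ and then compute ``$K^{-1}$ acting on $F^{(n)}x$ by $v^{-(t-2n)}$.'' In the conventions of \cite{MR1227098}, which the paper is explicitly following, $\Delta(E)=E\otimes 1+K\otimes E$ and $\Delta(F)=1\otimes F+F\otimes K^{-1}$: the $K^{-1}$ in $\Delta(F)$ lands on the \emph{second} tensor slot. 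On $F^{(n)}x\otimes E^{(n)}y$ it therefore hits $E^{(n)}y\in N^{s+2n}$ with $N\in\mathcal C'$, producing $v^{-(s+2n)}$ and no power of $T$. That is precisely why the first identity of the lemma carries no $T$ while the second does: in $\Delta(E)$ the $K$ lands on the first slot $F^{(n)}x\in M^{t-2n}$ with $M\in\mathcal C_R$, where $K$ acts as $Tv^{t-2n}$. Your prose identifies this asymmetry correctly for the $E$ case, but the displayed formula for $FL$ contradicts it: with the coproduct you wrote, $K^{-1}$ acting on $F^{(n)}x$ would give $T^{-1}v^{-(t-2n)}$, and you have silently dropped the $T^{-1}$ (which, had it been there, would have produced a spurious $T^{-1}$ in the first identity); and the exponent $-(t-2n)$ should in any case be $-(s+2n)$. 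Once the coproduct is corrected, the reindexing and $q$-binomial bookkeeping you describe does close the argument.

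A secondary point: your alternative route, deducing Lemma~\ref{LLemma2} from Lemma~\ref{LLemma1} by a ``$T\mapsto 1$'' specialization, mischaracterizes Lemma~\ref{LLemma1}. That lemma asserts $T_1''L=(T_1''\otimes T_1'')$ on a tensor product of two $\mathcal C'$-modules; it is an intertwining statement for the braid-group operator $T_1''$, not a statement about how $E$ and $F$ pass through $L$. The genuine ``$T=1$'' analogue of Lemma~\ref{LLemma2} is the corresponding $E,F$ commutation identity for $L$ with both factors in $\mathcal C'$ (also in \cite{MR1227098}), and that, not Lemma~\ref{LLemma1}, would be the correct input for the twist argument you sketch.
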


\begin{cor}\label{Lisomorphism}
Let $M$ be a module in $\mathcal C_R$ and $\mathcal E$ a module in
$\mathcal C'$.  Then $L$ defines an isomorphism of
the $\bu$-module
$M^{T_{-1}'}\otimes \mathcal E^{T_{-1}'}$ onto $(M\otimes\mathcal
E)^{T_{-1}'}$.
\end{cor}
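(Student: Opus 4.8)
The plan is as follows. Because $L$ has the explicit two–sided inverse recorded above (and the defining sums are finite, since $E$ acts locally nilpotently on $\mathcal E\in\mathcal C'$), $L$ is already an $R$-linear bijection of $M\otimes\mathcal E$ onto itself; so it suffices to check that $L$ intertwines the two $\bu$-module structures on $M^{T_{-1}'}\otimes\mathcal E^{T_{-1}'}$ and on $(M\otimes\mathcal E)^{T_{-1}'}$, and for this it is enough to test the generators $K,E,F$. Throughout I will use $T_{-1}'(K)=K^{-1}$, $T_{-1}'(E)=-K^{-1}F$ and $T_{-1}'(F)=-EK$, which follow by specializing the divided–power formulas of \secref{automorphism} to $p=1$ together with the Weyl group action on $\bu^0$, and the coproduct rules $\Delta(K)=K\otimes K$, $\Delta(E)=E\otimes 1+K\otimes E$, $\Delta(F)=F\otimes K^{-1}+1\otimes F$ (with the conventions of \cite{MR1227098}, in which \lemref{LLemma2} is stated).

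The generator $K$ is immediate: since $T_{-1}'(K)=K^{-1}$, on both modules $K$ acts as the operator $K^{-1}\otimes K^{-1}$ on $M\otimes\mathcal E$, i.e.\ by the scalar $T^{-1}v^{-(t+s)}$ on vectors of total $\bu^0$-weight $t+s$; and every term $F^{(n)}x\otimes E^{(n)}y$ of $L(x\otimes y)$ has the same total weight as $x\otimes y$, so $L$ commutes with this operator.

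For the generator $F$, fix $x\in M^t$ and $y\in\mathcal E^s$. A direct unwinding of $\Delta(F)$ with the twisted actions shows that $F$ acts on $M^{T_{-1}'}\otimes\mathcal E^{T_{-1}'}$ by $x\otimes y\mapsto -v^{s}\bigl(Tv^{t}\,Ex\otimes y+x\otimes Ey\bigr)$, whereas $F$ acts on $(M\otimes\mathcal E)^{T_{-1}'}$ as the operator $-EK$ applied through $\Delta$ on $M\otimes\mathcal E$; since $K$ acts on $L(x\otimes y)$ by the scalar $Tv^{t+s}$, this operator sends $L(x\otimes y)$ to $-Tv^{t+s}\bigl(E\cdot L(x\otimes y)\bigr)$. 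Now the second identity of \lemref{LLemma2} gives $E\cdot L(x\otimes y)=L\bigl(Ex\otimes y+v^{-t}T^{-1}x\otimes Ey\bigr)$, so this becomes $-Tv^{t+s}L(Ex\otimes y)-v^{s}L(x\otimes Ey)$, which is exactly $L$ applied to the expression above. The generator $E$ is handled the same way, now using $T_{-1}'(E)=-K^{-1}F$, the coproduct $\Delta(E)$, and the first identity of \lemref{LLemma2} for $F\cdot L(x\otimes y)$; the $T^{-1}$'s produced by the weight shifts under $K^{-1}$ again cancel against the one in the lemma.

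I expect the only real obstacle to be the bookkeeping of the powers of $v$ and of $T$. The statement is the $\mathcal C_R$-analogue of \lemref{LLemma1}, but since $M\in\mathcal C_R$ the automorphism $T_{-1}'$ is not even defined on $M$ as an operator, so one cannot argue by transport of structure and must instead pass through \lemref{LLemma2}. Twisting by $T_{-1}'$ turns $K$ into $K^{-1}$ and hence injects factors $T^{-1}$ coming from the $\bu^0$-weights of $M\in\mathcal C_R$, and the whole identity comes down to these cancelling precisely against the lone $T^{-1}$ that distinguishes the $E$-formula of \lemref{LLemma2} from its classical ($\mathcal C'$) counterpart. Once both sides are written on a vector $x\otimes y$ with $x\in M^t$, $y\in\mathcal E^s$, the verification for each generator is a short algebraic check requiring no further input.
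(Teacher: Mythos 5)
Your proof is correct, and it is essentially the argument the paper has in mind: the paper's remark following the corollary is only the terse hint that the statement "follows through the use of Lemma~\ref{LLemma1}, however one must take into account that $T_e'$ may not be defined on $M$," and the generator-by-generator verification via Lemma~\ref{LLemma2} is exactly the workaround that makes this precise when $T_{-1}'$ is not available as an operator on $M\in\mathcal C_R$. Your bookkeeping on $K$, $E$, $F$ checks out (in particular the $T^{\pm 1}$'s coming from $K^{\pm1}$ on the $\mathcal C_R$-factor do cancel against the $T^{-1}$ in the $E$-identity of Lemma~\ref{LLemma2}), and your opening observation that $L$ and its stated inverse are well-defined $R$-linear maps because $E$ acts locally nilpotently on $\mathcal E\in\mathcal C'$ is the right justification for bijectivity.
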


\corref{Lisomorphism} through the use of \lemref{LLemma1}, however one
must take into account that
$T_e'$ may not be defined on $M$. 

Set 
\begin{equation}\label{mathcalLinv}
\mathcal L^{-1}=\sum_p(-1)^pv^{3\frac{p(p-1)}{
2}}\{p\}E^{(p)}K^pF^{(p)},
\end{equation}
and
\begin{align}\label{mathcalL}
\mathcal L
&=\sum_p
    v^{-3\frac{p(p-1)}{2}}\{p\}E^{(p)}K^{-p}F^{(p)}.
\end{align}
and note that $\mathcal L$ and $\mathcal L^{-1}$ are well
defined operators on lowest weight modules.  

\begin{lem} Suppose that $M$ and $N$ be highest weight
modules with $\psi_M:M^{sT'_{-1}}_\pi\to M$, $\psi_N:N^{sT'_{-1}}_\pi\to
N$  homomorphisms and 
$\phi$ a $\rho$-invariant form on $M\times N$.  Then

\begin{equation}\label{Ll}
\phi\circ(\psi_M\otimes \psi_N)\circ L^{-1}=\phi\circ
(\psi_M\otimes \psi_N) \circ (\mathcal  L^{-1}\otimes 1).
\end{equation}

\end{lem}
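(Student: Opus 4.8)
The plan is to treat both sides of \eqnref{Ll} as $R$-bilinear forms on the source of $\psi_M\otimes\psi_N$ and to prove equality by reducing to a decomposable tensor $x\otimes y$, transporting all operators onto the first tensor factor, and recognizing the surviving single-variable operator as $\mathcal L^{-1}$. By \corref{Lisomorphism} (with \lemref{LLemma1}) the map $L^{-1}$ is the comultiplication of the $sT'_{-1}$-twist, so $(\psi_M\otimes\psi_N)\circ L^{-1}$ is a $\bu$-module map into $M\otimes N$ and $\phi\circ(\psi_M\otimes\psi_N)\circ L^{-1}$ is an invariant form on $M^{sT'_{-1}}_\pi\otimes N^{sT'_{-1}}_\pi$. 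Since $F$ acts locally nilpotently on $M^{sT'_{-1}}_\pi$, that module is generated over $\bu$ by its lowest-weight vector $x_0$ (the highest-weight vector of $M$), and $L^{-1}(x_0\otimes y)=x_0\otimes y$, $\mathcal L^{-1}x_0=x_0$; so the two sides already agree on $x_0\otimes y$, and it remains to match their behaviour under the $\bu$-action — equivalently, to verify \eqnref{Ll} directly on a general decomposable tensor.

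For the direct verification, expand $L^{-1}$ via the formula following \eqnref{L}:
\[
\phi\big((\psi_M\otimes\psi_N)L^{-1}(x\otimes y)\big)=\sum_n v^{n(n-1)/2}\{n\}\,\phi\big(\psi_M(F^{(n)}x),\psi_N(E^{(n)}y)\big).
\]
Using that $\psi_M,\psi_N$ intertwine the $sT'_{-1}$-twisted and native actions (the module content of \lemref{symmetries} and the discussion of $T'_{e,\mathrm{mod}}$), together with the formulas for $T'_{-1}$ on $\bu$ and the relations $KE=v^2EK$, $KF=v^{-2}FK$, each summand becomes, up to a tracked power of $v$, $\phi(a_n\cdot\psi_M(x),\,b_n\cdot\psi_N(y))$ with $a_n,b_n$ explicit monomials in $E,F,K$. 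Then apply the invariance of $\phi$ to move $b_n$ across the form onto the first slot — using $\brho^2=\mathrm{id}$ and $\brho(E)^{(n)}=v^{n^2}K^nF^{(n)}$, $\brho(F)^{(n)}=v^{n^2}K^{-n}E^{(n)}$, possibly picking up extra powers of $K$. The second slot is then left carrying $\psi_N(y)$ alone (the ``$\otimes 1$'' of the right-hand side), and the claim reduces to an identity between two $v$-series of operators on $M$ applied to $\psi_M(x)$, the right-hand one being $\sum_p(-1)^pv^{3p(p-1)/2}\{p\}E^{(p)}K^pF^{(p)}$.

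The main obstacle is exactly this operator identity. After the transport the left-hand series naturally presents ``unbalanced'' divided powers — for example $\brho(E^{(n)})F^{(n)}=v^{n^2}K^n\qbinom{2n}{n}F^{(2n)}$ — whereas $\mathcal L^{-1}$ is the ``balanced'' series in $E^{(p)}K^pF^{(p)}$, so passing between the two shapes is precisely a $q$-binomial rearrangement: this is where one of the identities \eqnref{Ma1}--\eqnref{Ma2} of \S2 enters, after writing $E^{(p)}F^{(p)}$ on a weight vector of $M$ in terms of the operators $[T;r]$ and $\qbinom{T;r}{j}$ of \S2. A subsidiary but delicate point is finiteness: on the lowest-weight-type modules $M^{sT'_{-1}}_\pi$ and $N^{sT'_{-1}}_\pi$ all sums truncate, but one must justify interchanging $\psi_M,\psi_N$ with a priori infinite sums and check that the truncations on the two sides are compatible; given the first paragraph it suffices to test the resulting combinatorial identity on the vectors $F^{(k)}x_0$.
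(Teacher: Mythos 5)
The outline is reasonable — transport all operators to one tensor slot through the intertwining properties of $\psi_M,\psi_N$ and the $\brho$-invariance of $\phi$, then identify the surviving single-variable series with $\mathcal L^{-1}$ — but as written the proposal stops at naming the tools and never closes either of the two routes it opens. In the first paragraph you establish that $\phi\circ(\psi_M\otimes\psi_N)\circ L^{-1}$ is an invariant pairing, which is fine; but you never establish the corresponding fact for the \emph{right}-hand side. The operator $\mathcal L^{-1}\otimes 1$ does not commute with the coproduct action on the tensor product, so $\phi\circ(\psi_M\otimes\psi_N)\circ(\mathcal L^{-1}\otimes 1)$ is not obviously invariant, and consequently ``both sides agree on $x_0\otimes y$'' does not by itself give equality. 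What the argument actually needs is the companion intertwining statement in the same subsection, $T'_{-1}(u)\,\mathcal L^{-1}=\mathcal L^{-1}\,T_1'(u)$, which is precisely the device that controls how $\mathcal L^{-1}$ sits against the module action; you never invoke it, so the reduction to a single generator is unsupported.

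The direct computation in the remaining paragraphs is likewise an outline, not a verification. Your transport formulas are correct as far as they go ($\brho(E^{(n)})=v^{n^2}K^nF^{(n)}$, $\brho(F^{(n)})=v^{n^2}K^{-n}E^{(n)}$, and the images under $(T'_{-1})^{-1}$), but the hedge ``possibly picking up extra powers of $K$'' is exactly where the content lies: the $K$-powers produce weight-dependent, $T$-dependent scalars, and the $s$-twist built into $\psi_M,\psi_N$ acts nontrivially on those scalars (for instance $s([T;r]_{(r)})=(-1)^r[T;-1]_{(r)}$). After transporting both factors to one slot one is left comparing two $v,T$-weighted series in ``unbalanced'' versus ``balanced'' divided powers, and the claim that they agree is precisely the $q$-binomial identity you say enters via \eqnref{Ma1}--\eqnref{Ma2} but do not write down. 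Until that identity is stated and checked (say on the basis $F^{(-k)}\eta$ using \lemref{binomiallemma}, where the $[T;\cdot]$-coefficients are explicit), the core of the lemma remains unproved. To turn this into a proof you should (i) use the commutation $T'_{-1}(u)\mathcal L^{-1}=\mathcal L^{-1}T_1'(u)$ to make the invariance of the right-hand side precise, and (ii) carry the weight/$T$-bookkeeping and the $q$-binomial rearrangement all the way through on a basis vector, rather than leaving both as named-but-unexecuted steps.
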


\begin{lem} $T'_{-1}(u)\mathcal  L^{-1}=\mathcal  L^{-1}T_1'(u)$ as
operators on $M_\pi$ for all $u\in{_R\mathbf U}$.
\end{lem}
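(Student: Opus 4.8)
The plan is to reduce the identity to the generators $E$, $F$, $K^{\pm1}$ of ${_R\mathbf U}$, using the fact that both sides are (anti-)multiplicative in $u$ in a compatible way: since $T'_{-1}$ and $T'_1$ are algebra automorphisms, if the identity holds for $u_1$ and $u_2$ it holds for $u_1u_2$, so it suffices to verify it on generators. Concretely, I would fix a lowest-weight module $M$ (on which $\mathcal L^{-1}$ is a well-defined operator, by the remark following \eqnref{mathcalL}) and check, for each generator $u$, that $T'_{-1}(u)\,\mathcal L^{-1} m=\mathcal L^{-1}\,T'_1(u)\, m$ for all $m\in M_\pi$. The key is to commute each generator $u$ past the series $\mathcal L^{-1}=\sum_p(-1)^pv^{3p(p-1)/2}\{p\}E^{(p)}K^pF^{(p)}$ term by term, using the defining relations $KEK^{-1}=v^2E$, $KFK^{-1}=v^{-2}F$, and the commutator $[E,F]=(K-K^{-1})/(v-v^{-1})$ together with its divided-power consequences $E^{(p)}F=FE^{(p)}+E^{(p-1)}\cdot(\text{weight-dependent factor})$ and the analogue with $E,F$ swapped.

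For the toral generator $K$ the computation is essentially immediate: $K$ commutes with each summand $E^{(p)}K^pF^{(p)}$ since the $v$-powers from $KE^{(p)}K^{-1}$ and $KF^{(p)}K^{-1}$ cancel, and one checks $T'_{-1}(K)=K^{-1}=T'_1(K)$ from the generator formulas, so both sides agree. The substance is in $E$ and $F$. Here I would push $E$ (resp. $F$) through $\mathcal L^{-1}$: moving $E$ past $F^{(p)}$ produces a lower term proportional to $F^{(p-1)}$ times a Laurent polynomial in $K$, moving it past $K^p$ rescales by a power of $v$, and moving it past $E^{(p)}$ is trivial; collecting terms and reindexing $p\mapsto p-1$, the telescoping of the coefficients $(-1)^pv^{3p(p-1)/2}\{p\}$ should reproduce exactly $\mathcal L^{-1}$ applied to $T'_1(E)=-vFK^{-1}$-type expressions on one side and $T'_{-1}(E)$ on the other (reading off $T'_1,T'_{-1}$ on $E,F$ from the divided-power formulas with $p=1$, which give $T'_e(E)=-K^eF$, $T'_e(F)=-EK^{-e}$). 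I expect the weight-dependent $K$-factors produced in the commutation to be precisely what converts the exponent $+p$ into the sign/shift distinguishing $e=1$ from $e=-1$, so the two automorphisms appear naturally on opposite sides.

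The main obstacle will be the bookkeeping of the $v$-powers and the $K$-dependent coefficients in the divided-power commutation formula for $E^{(p)}F$ (and $EF^{(p)}$): one must show that the three sources of $v$-powers — the quadratic exponent $3p(p-1)/2$ in $\{p\}$-prefactor of $\mathcal L^{-1}$, the quadratic exponents hidden in the divided-power commutators, and the linear shifts from conjugating by $K^p$ — combine after reindexing into exactly the shifted prefactor for $p-1$ together with the extra factor encoding $T'_{\mp1}$. This is a finite but delicate manipulation; I would organize it by first establishing the single auxiliary identity $E\,\mathcal L^{-1}=\mathcal L^{-1}\cdot(-vFK^{-1})$ (and its $F$-analogue, and the trivial $K$-case) as operators on $M_\pi$, and then invoke multiplicativity of the automorphisms to conclude for all $u\in{_R\mathbf U}$. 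The fact, noted already in the excerpt, that $\rho_1\circ T'_{-1}=T'_{-1}\circ\rho_1$, may be used to derive the $F$-case from the $E$-case, cutting the work roughly in half.
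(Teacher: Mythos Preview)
The paper states this lemma without proof, so there is no argument in the text to compare against. Your plan---reduce to generators via multiplicativity of the algebra automorphisms $T'_{\pm 1}$, dispose of $K$ by a weight count, and handle $E$ and $F$ by commuting them term-by-term through the series $\mathcal L^{-1}=\sum_p(-1)^pv^{3p(p-1)/2}\{p\}E^{(p)}K^pF^{(p)}$---is the natural direct verification and will succeed.

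Two details need correction. First, the auxiliary identity you write, ``$E\,\mathcal L^{-1}=\mathcal L^{-1}\cdot(-vFK^{-1})$,'' does not match the lemma. For $u=E$ the statement reads $T'_{-1}(E)\,\mathcal L^{-1}=\mathcal L^{-1}\,T'_1(E)$, i.e.\ $(-K^{-1}F)\,\mathcal L^{-1}=\mathcal L^{-1}\,(-KF)$; neither side has a bare $E$, and $-vFK^{-1}$ is not $T'_e(E)$ for either $e$. You should instead verify $K^{-1}F\,\mathcal L^{-1}=\mathcal L^{-1}\,KF$ and $EK\,\mathcal L^{-1}=\mathcal L^{-1}\,EK^{-1}$ directly. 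Second, the $\rho_1$-shortcut is not as free as you suggest: one computes $\rho_1(E^{(p)}K^pF^{(p)})=F^{(p)}K^{-p}E^{(p)}$, so $\rho_1(\mathcal L^{-1})$ is not literally $\mathcal L^{-1}$, and some rewriting on the lowest-weight module $M_\pi$ (using $F$-nilpotence to reorder) is needed before the symmetry can be invoked. Neither point is fatal; once the target identities are written correctly the telescoping you describe goes through.
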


%

\section{Invariant Forms and Liftings} 

\subsection{} Elements of $\mathbb P(M,N)$ are called {\it invariant pairings} and for $M=N
$, set
$\mathbb P(M) =\mathbb P(M,M)$ and call the elements {\it invariant forms} on
$M$.

\begin{lem}
Let $M$ and $N$ be finite dimensional $\ru$-modules in
$\mathcal C'$ and   $\phi$ an invariant pairing. Then, for $m\in M,n\in N$, $
\phi(\tiepp m,\tiepp n)=\phi(m,n)$.
\end{lem}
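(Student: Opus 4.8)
The plan is to reduce the identity $\phi(\tiepp m,\tiepp n)=\phi(m,n)$ to the case of weight vectors and then to exploit the explicit formulas of \lemref{symmetries} together with the $\rho$-invariance of $\phi$. Since $M$ and $N$ are finite dimensional objects of $\mathcal C'$, both decompose as direct sums of $E$-highest-weight components; by bilinearity it suffices to check the identity on pairs $m=F^{(j)}\eta$, $n=F^{(k)}\eta'$ where $E\eta=E\eta'=0$, say $\eta\in M^{m_0}$, $\eta'\in N^{m_0'}$. Then \lemref{symmetries}(a) gives $\tiepp$ acting on such vectors — or rather its close relative; one first checks that on a finite dimensional module $\tiep$ and $\tiepp$ are related by a controlled normalization, so that it is enough to verify the statement for one of them, and \lemref{symmetries}(a) supplies $\tiep(F^{(j)}\eta)=(-1)^j v^{e(jh+j)}F^{(h)}\eta$ with $h=m_0-j$. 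Substituting these expressions turns the left-hand side into a scalar multiple of $\phi(F^{(h)}\eta,F^{(h')}\eta')$, and the task becomes an identity comparing $\phi(F^{(j)}\eta,F^{(k)}\eta')$ with $\phi(F^{(h)}\eta,F^{(h')}\eta')$ up to an explicit power of $v$.

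Next I would use the $\rho$-invariance \eqnref{} — the defining property of $\mathbb P(M,N)$ — to relate $\phi(F^{(j)}\eta,F^{(k)}\eta')$ to $\phi(\eta,\cdot)$ by moving the divided powers of $F$ across using $\brho(F)=vK^{-1}E$. Because $E$ kills $\eta'$, each time a power of $E$ is moved onto $\eta'$ it must either annihilate it or be compensated, so the pairing $\phi(F^{(j)}\eta,F^{(k)}\eta')$ is nonzero only when the weights match, i.e. $j-k$ is constrained by $m_0-m_0'$, and in that case it equals $\phi(\eta,\eta')$ times an explicit rational function in $v$ coming from the commutation relations $[E,F]=(K-K^{-1})/(v-v^{-1})$ evaluated on the weight-$m_0$ vector $\eta$ (this is the familiar $\germ{sl}_2$ computation giving a product of $[\,\cdot\,]$-factors). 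Doing the same for the right-hand side with $h,h'$ in place of $j,k$, one gets $\phi(\eta,\eta')$ times another such rational function. The theorem then reduces to the purely combinatorial claim that the ratio of these two $q$-factorial expressions equals the ratio $(-1)^{j+k}v^{-e(jh+j)}v^{-e(j'h'+j')}$ of the scalars produced by \lemref{symmetries}, which is precisely the statement that $\tiep$ (hence $\tiepp$) is an isometry for invariant forms; this last identity I would verify by a direct $q$-calculus manipulation, using the symmetry $F^{(j)}\eta\leftrightarrow F^{(h)}\eta$ under $j\leftrightarrow h$ that is built into the weight-$m_0$ string.

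The main obstacle I expect is bookkeeping the weight constraints and the precise scalar: one has to be careful that $\phi(F^{(j)}\eta,F^{(k)}\eta')$ is supported only on the "matching" part of the string and to track the exact power of $v$ and sign — in particular the appearance of $T$ (i.e. $K$ acting as $Tv^\mu$ in the $_R$-setting) should not actually intrude here since $M,N$ are assumed to be ordinary $\bu$-modules in $\mathcal C'$, so $K$ acts by $v^n$ and the factors are honest elements of $\mathbb k(v)$. A cleaner route, which I would try first, is to avoid the explicit string computation altogether: observe that $\tiepp\colon N\to N^{\tiepp}$ is an intertwiner (the displayed intertwining property $\Theta(x\cdot m)=\Theta(x)\Theta m$), and that composing the pairing $\phi$ with $\tiepp\otimes\tiepp$ yields an element of $\mathbb P(M^{\tiepp},N^{\tiepp})$; since $\tiepp$ fixes the relevant structure (it is an automorphism of $\bu$ normalizing $B$ up to the involution built into $\mathcal C'$), $\mathbb P(M,N)$ is one-dimensional on each isotypic block and the two forms $\phi$ and $\phi\circ(\tiepp\otimes\tiepp)$ must be proportional; evaluating the proportionality constant on a single convenient pair of highest/lowest weight vectors, using \lemref{symmetries}, pins it to $1$. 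Establishing that one-dimensionality and that $\tiepp\otimes\tiepp$ lands back in $\mathbb P(M,N)$ (rather than some twist) is then the real content, and I would do it by checking the invariance condition \eqnref{} on the generators $E,F,K$, exactly as the paper does elsewhere for $\rho_1$ and $\brho$.
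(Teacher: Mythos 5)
Your first route is essentially the paper's own argument: reduce by complete reducibility and bilinearity to pairs of vectors on a weight string, apply \lemref{symmetries} for the explicit action of $\tiepp$, and then use the $\varrho$-invariance condition to compute $\phi(F^{(j)}\eta,F^{(k)}\eta')$ explicitly as a $q$-factorial expression; the remark immediately following the lemma, which records that the proof yields $\phi(F^{(j)}\eta,F^{(j)}\eta)=v^{j^2-\nu j}\qbinom{\nu}{j}$, confirms this is the computation the paper has in mind. One small simplification: you do not need the preliminary step of relating $\tiepp$ to $\tiep$ by a ``controlled normalization'' and then checking the statement only for $\tiep$. Justifying that reduction would itself require care, since $T''_{i,-e}=(T'_{i,e})^{-1}$ differs from $T'_{i,e}$ by a weight-dependent factor rather than a global scalar. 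Instead, \lemref{symmetries}(b) already gives the action of $\tiepp$ directly on the string $\{E^{(j)}\zeta\}$ built from a lowest weight vector $\zeta$, and since $E^{(j)}\zeta=F^{(\nu-j)}\eta$ in an irreducible of highest weight $v^\nu$, this is the same string, so you can carry out the entire computation for $\tiepp$ from the start with no detour. Your proposed ``cleaner route'' (showing $\phi\circ(\tiepp\otimes\tiepp)\in\mathbb P(M,N)$ and invoking one-dimensionality of invariant pairings on each isotypic block, then pinning the scalar) is a genuinely different argument not taken by the paper; you correctly identify its real content as the compatibility check of $T''_{i,e}$ with $S$, $\Delta$, and $\varrho$ needed to conclude the composite lands back in $\mathbb P(M,N)$ rather than a twist, and that verification is not obviously lighter than the direct $q$-calculus on strings.
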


Note that if $\phi(\eta,\eta)=1$, then for $0\leq j\leq \nu$, the
proof above shows that 
\begin{equation}\label{normalizedforms}
\phi(F^{(j)}\eta,F^{(j)}\eta)=v^{j^2-\nu j}{\qbinom \nu j}
\end{equation}

For the proof of some future results we must be explicit about the 
definition of $_f\mathcal R$.    Recall a
$\bu$-module
$M$ is said to be {\it integrable} if for any $m\in M$ and all $i\in I$, there exists a
positive integer
$N$ such that $E^{(n)}_im=0=F^{(n)}_im$ for all $n\geq N$ , and $M=\oplus
_{\lambda\in X}M^\lambda$ where for any
$\mu\in Y, \lambda\in X$ and $m\in M^\lambda$ one has
$K_\mu m=v^{\angles{\mu ,\lambda}m}$. Let $\bu_0^\times$ denote the set of units of
$\bu_0$ and let
$f:X\times X\to \bu_0^\times$ be a function such that  
 \begin{equation}
f(\zeta+\nu,\zeta'+\nu')=f(\zeta,\zeta')v^{ -\sum
\nu_i\angles{i,\zeta'}(i\cdot i/2)-\sum
\nu'_i\angles{i,\zeta}(i\cdot i/2) -\nu\cdot
\nu'}\tilde K_\nu
\end{equation}
for all $\zeta,\zeta'\in X$ and all $\nu,\nu'\in X$ (see
\cite[32.1.3]{MR1227098}). Here 
$\tilde K_\nu=\prod_iK_{(i\cdot i/2)\nu_ii}$.   

\begin{thm}(\cite[32.1.5]{MR1227098}). If
$\mathcal E$ is an integrable
$\ru $ module and $A\in \mathcal C_{R}$, then for each $f$ satisfying (3.1.1), there
exists an isomorphism $_f\mathcal R:A\otimes
\mathcal E\to \mathcal E\otimes A$.
\end{thm}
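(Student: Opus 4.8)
This is Lusztig's theorem \cite[\S 32.1]{MR1227098}, so the plan is to recall his construction of the $\mathcal R$-matrix, specialised to our rank-one algebra $\bu=U_v(\germ{sl}_2)$, and to record that extending scalars from $\mathbb k(v)$ to $R$ introduces nothing new. I would set ${}_f\mathcal R=\tau\circ\tilde f\circ L^{-1}$, where $L$ is the operator of~\eqnref{L} (in rank one this is precisely the quasi-$\mathcal R$-matrix), $L^{-1}$ is its inverse as displayed just after~\eqnref{L}, the map $\tau\colon A\otimes\mathcal E\to\mathcal E\otimes A$ is the flip $x\otimes y\mapsto y\otimes x$, and $\tilde f\colon A\otimes\mathcal E\to A\otimes\mathcal E$ is the diagonal operator built from $f$, acting on the summand $A^t\otimes\mathcal E^s$ as the invertible scalar of $R$ by which $f(t,s)$ acts on $A^t$. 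First I would check that each of these three maps is well defined and $R$-linear: since $\mathcal E$ is integrable, $E^{(n)}$ annihilates every vector of $\mathcal E$ once $n$ is large, so the a priori infinite sum defining $L^{-1}$ terminates on each vector of $A\otimes\mathcal E$; and $\tilde f$ makes sense because $A\in\mathcal C_R$ and $\mathcal E$ (regarded as an object of $\mathcal C'$ over $R$) are weight-decomposed. Thus ${}_f\mathcal R$ is a well-defined $R$-linear map $A\otimes\mathcal E\to\mathcal E\otimes A$.

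Next I would show ${}_f\mathcal R$ commutes with the $\ru$-action, using two identities. The first is \lemref{LLemma2}: read with $A\in\mathcal C_R$ in the first factor and $\mathcal E$ in the second, it says exactly that $\Delta(u)\circ L=L\circ\bar\Delta(u)$ on $A\otimes\mathcal E$ for $u=E,F$ (and trivially for $u=K$, since $L$ preserves the total weight and $\Delta(K)=\bar\Delta(K)$), hence $\bar\Delta(u)\circ L^{-1}=L^{-1}\circ\Delta(u)$ for all $u\in\ru$, where $\bar\Delta$ is the bar-conjugated comultiplication. The second is the identity $\tilde f\circ\bar\Delta(u)=\Delta^{\mathrm{op}}(u)\circ\tilde f$ on $A\otimes\mathcal E$, which one verifies on the generators $E$ and $F$: moving such a generator past $\tilde f$ shifts the first or the second weight by the simple root, while $\Delta^{\mathrm{op}}$ (unlike $\bar\Delta$) carries an extra $K$ on the shifted factor, and equating coefficients of the resulting terms reduces precisely to the shift rule (3.1.1) for $f$ — this is the one step where the hypothesis on $f$ is used. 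Combining these with the tautological $\tau\circ\Delta^{\mathrm{op}}(u)=\Delta(u)\circ\tau$ (valid for the flip $A\otimes\mathcal E\to\mathcal E\otimes A$) yields ${}_f\mathcal R\circ\Delta_{A\otimes\mathcal E}(u)=\Delta_{\mathcal E\otimes A}(u)\circ{}_f\mathcal R$ for all $u\in\ru$. Bijectivity is then immediate: $L^{-1}$ is invertible (inverse $L$), $\tau$ is a bijection, and $\tilde f$ is invertible because $f$ is $\bu_0^\times$-valued; alternatively, on each weight space ${}_f\mathcal R$ is triangular for the grading with an invertible scalar on the diagonal.

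The main obstacle is the identity $\tilde f\circ\bar\Delta=\Delta^{\mathrm{op}}\circ\tilde f$: this is where (3.1.1) does its work, and proving it amounts to matching, generator by generator and term by term, the $v$-powers and the $\tilde K_\nu$ produced by $\Delta^{\mathrm{op}}(E_i)$ and $\Delta^{\mathrm{op}}(F_i)$ against the shift formula for $f$. In general rank this is Lusztig's computation in \cite[\S 32.1]{MR1227098}; the only new feature in our setting is that $A\in\mathcal C_R$ has weights of the form $m+\mu$, so $K$ acts there by $Tv^\mu$ and the values of $f$, the elements $\tilde K_\nu$, and the scalars occurring in $L^{-1}$ must be interpreted in $\ru^0$ rather than $\bu^0$. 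But the shapes of (3.1.1) and of $\Delta$, $\bar\Delta$, $\Delta^{\mathrm{op}}$ are unchanged, and the local nilpotence of $E$ on $A$ that makes the sums terminate is unaffected by the scalar extension, so Lusztig's argument carries over verbatim after tensoring with $R$.
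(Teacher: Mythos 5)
The paper does not really prove this theorem; it records the definition ${}_f\mathcal R=\chi\circ\textstyle\prod_f\circ\tau$ (flip first, then multiply by $f$, then apply the quasi-$\mathcal R$-matrix $\chi$ on the flipped module $\mathcal E\otimes A$) and defers the verification that this is a $\ru$-module homomorphism to \cite[32.1.5]{MR1227098} and \cite[3.14]{MR1359532}, with the remark that the only change is that one factor is in $\mathcal C_R$ rather than ${}_R\mathcal C'$. Your proposal uses the opposite ordering, ${}_f\mathcal R=\tau\circ\tilde f\circ L^{-1}$: the quasi-$\mathcal R$-matrix \emph{inverse} is applied first, still on $A\otimes\mathcal E$, then $\tilde f$, then the flip. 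That is a genuinely different composition (it is not obtained from the paper's by conjugating with $\tau$, since $\tau\,L^{-1}\,\tau^{-1}$ has $E$'s on the left and $F$'s on the right, whereas $\chi$ does not), so you are not reproducing Lusztig's map but proposing an alternative one.

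The gap is in the step you yourself flag as delicate: the identity $\tilde f\circ\bar\Delta=\Delta^{\mathrm{op}}\circ\tilde f$ on $A\otimes\mathcal E$. What Lusztig's argument actually uses, after the flip, is the inverse relation $\tilde f\circ\Delta^{\mathrm{op}}=\bar\Delta\circ\tilde f$ on $\mathcal E\otimes A$, and the normalisation (3.1.1) is tailored to \emph{that} ordering (with the finite-dimensional weight in the first slot and the $\mathcal C_R$ weight in the second, and $\tilde K_\nu$ compensating precisely for the $T$-part of the $\mathcal C_R$ weight). If one runs the intertwining check in your ordering, one finds that the scalar function needed is not $f(\mathrm{wt}_A,\mathrm{wt}_{\mathcal E})$ evaluated on $A$ but rather the \emph{reciprocal} of $f(\mathrm{wt}_{\mathcal E},\mathrm{wt}_A)$, and the reciprocal of a function satisfying (3.1.1) satisfies (3.1.1) only with the $v$-exponent and the $\tilde K_\nu$ inverted. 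So, as written, the generator-by-generator computation does not close, and the hypothesis (3.1.1) does not do the work you assign to it. You either need to keep Lusztig's composition $\chi\circ\textstyle\prod_f\circ\tau$ (which is what the paper does, and for which the cited computation applies verbatim after extending scalars), or, if you insist on flipping last, replace $\tilde f$ by the diagonal operator built from $1/f$ with arguments reversed and re-derive the shift rule it satisfies. Everything else in your write-up — local nilpotence of $E$ making the sum for $L^{-1}$ finite, Lemma LLemma2 giving $\Delta\circ L=L\circ\bar\Delta$ with the $\mathcal C_R$ factor on the left, $\tau\circ\Delta^{\mathrm{op}}=\Delta\circ\tau$, and invertibility of each of the three factors — is sound and matches the paper's intent.
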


The map $\tau:A\otimes B\to B\otimes A$ for any two modules $A$ and $B$ denotes the
twist map
$\tau(a\otimes b)=b\otimes a$.   Define $\prod _f:\in\End_R({_R\mathcal E}\otimes{_R\mathcal F}\otimes_R M)$ by 
$\prod_f(e\otimes e'\otimes  m)= f(\lambda,\lambda')e\otimes e'
\otimes  m$ for $m\in M^{\lambda'}$ and
$e\otimes e'\in (\mathcal E\otimes{_R\mathcal F})^{\lambda}$.  Lastly we define
$\chi\in
\End_R({_R\mathcal E\otimes{_R\mathcal F}}\otimes_R M)$ by
$$
\chi(e\otimes e'\otimes m)=
\sum_\nu\sum_{b,b'\in
\mathbf B_\nu}p_{b,b'}b^-(e\otimes e')\otimes {b'}^+m
$$  where $p_{b,b'}=p_{b',b}\in R$, and $\mathbf B_\nu$ is a subset of
$\mathfrak f$.   Then $_f\mathcal R$ is defined to be equal to $\chi\circ
\prod_f\circ \tau$.  The proof that it is an $\bu$-module homomorphism is
almost exactly the same as in \cite{MR1227098}, 32.1.5, or
\cite{MR1359532}, 3.14], which the exception that one must take
into account that $M$ is in the category
$\mathcal C_R$ instead of $_R\mathcal C'$.

Let $\mathcal E$ and $\mathcal F$ be finite dimensional $\bu$-modules and 
$\tau: _R\mathcal E\otimes _R\mathcal F^{\rho_1}  \rightarrow \bu$, a
$\bu$-module homomorphism into $\bu$, where $\bu$ is a module under
the adjoint action.
Suppose $\phi$
is a pairing of $M$ and $N$.  Define $\psi_{\tau,\phi}$ to be  the
invariant pairing of $ M\otimes _R\mathcal E$ and $N\otimes _R\mathcal F$ defined
by
the formula, for $e\in \mathcal E ,f\in  \mathcal F,m\in M,$ and $n\in N$, 
\begin{equation}\label{inducedpairing}
\psi_{\tau,\phi} (m\otimes e,n \otimes f)=
\phi( m,\tau(e\otimes f)*n)\ ,  
\end{equation}
Here  ${\mathcal F}^{\rho_1}$ is a twist of the representation
$\mathcal F$ by $\rho_1$.  We call the pairing $\psi_{\tau,\phi}$ the
{\it pairing induced} by
$\tau$ and $\phi$. In the cases when $M,N$ and $\phi$ are fixed we write
$\psi_\tau$ in place of $\psi_{\tau,\phi}$ and say this pairing is induced
by $\tau$.

Let us check that $\psi_{\tau,\phi}$ indeed is a $\brho$-invariant
pairing:  For $x\in\bu$ 
\begin{align*}
\sum \psi_{\tau,\phi} (S(x_{(2)})(m\otimes e),&\brho(x_{(1)})(n \otimes
f)) \\ 
&=\sum \psi_{\tau,\phi} (S(x_{(4)})m\otimes
S(x_{(3)})e),\brho(x_{(1)})n
\otimes
\brho(x_{(2)})f)) \\
&=\sum \phi(S(x_{(4)})m,
\tau(S(x_{(3)})e\otimes 
\brho(x_{(2)})f)*(\brho(x_{(1)})n)) \\
&=\sum \phi(S(x_{(4)})m,
\sum \brho(x_{(3)})\rho_1(\tau(e\otimes
f))\brho(x_{(1)}S(x_{(2)}))n)
\\ 
&=\sum \phi(S(x_{(2)})m,
\brho(x_{(1)})(\tau(e\otimes f)*n))) \\
&=\mathbf e(x)\phi( m,\tau(e\otimes f)*n)\ ,  
\end{align*}
The first equality is due to the act that 
\begin{equation}\label{diagonalS}
S\otimes S\circ \Delta =\tau
\circ \Delta \circ S
\end{equation}
where $\tau:\bu\otimes \bu\to \bu\otimes \bu$ is
the twist map and the fact that $\brho\otimes
\brho\circ \Delta=\Delta\circ\brho$.  The second equality follows from
the definition of $\psi_{\tau,\phi}$.  The third equality is obtained
from \eqnref{diagonalS} and the fact that $\brho$ is an
anti-automorphism.  The last equality is due to the assumption that
$\phi$ is $\brho$-invariant.


\subsection{} 
A result from \cite{MR1327136} shows that in the setting of Verma
modules the collection of maps $\tau$ is a natural set of parameters for
invariant forms. 

\begin{prop}[\cite{MR1327136}]
Suppose $\bu$ is of finite type and $\mathcal E$ and
$\mathcal F$ are finite dimensional $\bu$-modules. Let  $M$ be an
$_R\bu$-Verma module  and $\phi$ the Shapovalov form on $M$.
Suppose the Shapovalov form on $M$ is nondegenerate. Then
every invariant pairing of
$M\otimes _R\mathcal E$ and $M\otimes _R\mathcal F$ is induced by $\phi$.
\end{prop}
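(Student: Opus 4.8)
The plan is to reduce the statement to a statement about the ``top'' weight space and to exploit the fact that, for a Verma module $M=M(m+\lambda)$ with nondegenerate Shapovalov form $\phi$, every element of $M\otimes{_R\mathcal E}$ is determined by where it sends, against $\phi$, the highest-weight generators. Concretely, let $\psi$ be an arbitrary invariant pairing of $M\otimes{_R\mathcal E}$ with $M\otimes{_R\mathcal F}$. Writing $m_0$ for the canonical generator of $M$, I would first observe that the invariance condition in \eqnref{} together with local nilpotence of $E$ on $\mathcal E$ and $\mathcal F$ forces $\psi$ to be completely determined by the values $\psi(m_0\otimes e, m_0\otimes f)$ as $e,f$ range over $\mathcal E,\mathcal F$: every other element of $M\otimes{_R\mathcal E}$ is obtained by applying $F$'s (i.e. elements of the negative part of $\ru$) to $m_0\otimes(\text{highest weight vectors})$, and invariance moves these $F$'s across to the other side. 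This is the standard Verma-module rigidity argument; the only subtlety is keeping track of the $\rho_1$-twist on $\mathcal F$ and the anti-automorphism $\brho$ when moving operators across, which is exactly the bookkeeping already carried out in the displayed verification that $\psi_{\tau,\phi}$ is $\brho$-invariant.

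Next I would produce the candidate $\tau$. The bilinear assignment $(e,f)\mapsto \psi(m_0\otimes e, m_0\otimes f)$ lives a priori in $\mathbb P({_R\mathcal E},{_R\mathcal F},R)\cong\Hom_{\ru}({_R\mathcal E}\otimes_R\mathcal F^{\rho_1}, {_R R}^{\brho})$, by the isomorphism quoted in the introduction. But $R$ with the $\brho$-twisted action is not quite $\bu$ with the adjoint action, so the point is to lift this to a map landing in $F(\ru)$ (the ad-locally finite part). Here I would use the structure result quoted in the excerpt, $F(\bu)=\mathcal H\otimes Z(\bu)$ with $\mathcal H_{2m}=\mathrm{ad}\,\bu(EK^{-1})$: since $\mathcal E,\mathcal F$ are finite-dimensional, $\Hom_{\bu}(\mathcal E\otimes\mathcal F^{\rho_1},\bu)$ factors through $F(\bu)$, and evaluation of an ad-finite element against the generator of a Verma module of highest weight $m+\lambda$ (which is ``generic'' because the Shapovalov form is nondegenerate, hence no embedding of another Verma module occurs) is injective on each graded piece. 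This genericity is what lets me invert: given the $R$-valued form I can read off the unique $\tau\colon{_R\mathcal E}\otimes_R\mathcal F^{\rho_1}\to F(\ru)\subset\ru$ whose ``evaluation at the generator'' reproduces it.

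Finally I would check that $\psi_{\tau,\phi}=\psi$. By construction the two invariant pairings agree on all pairs $m_0\otimes e,\, m_0\otimes f$; by the rigidity observation of the first paragraph an invariant pairing out of $M\otimes{_R\mathcal E}$ is determined by its restriction to these pairs; hence the two coincide. One should also record that $\psi_\tau$ really is a pairing of the stated modules and is $\brho$-invariant — but that is precisely the computation already displayed just above the Proposition, so it may simply be cited.

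The main obstacle is the inversion step in the second paragraph: one must know that, under the hypothesis that the Shapovalov form on $M$ is nondegenerate, the map $\tau\mapsto(\,(e,f)\mapsto\phi(m_0,\tau(e\otimes f)* m_0)\,)$ from $\Hom_{\bu}(\mathcal E\otimes\mathcal F^{\rho_1},\bu)$ to $R$-valued invariant forms is \emph{surjective}, not merely injective. Injectivity is the easy direction (nondegeneracy of $\phi$ plus Verma rigidity); surjectivity requires that every ad-finite-type ``coefficient'' one needs is actually hit, which is where finiteness of type, finite-dimensionality of $\mathcal E$ and $\mathcal F$, and the harmonic decomposition $F(\bu)=\mathcal H\otimes Z(\bu)$ all enter. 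I would expect to handle this by a dimension count over $\mathcal K=\mathrm{Frac}(R)$: both sides are free $R$-modules (or at least torsion-free), they have the same rank after base change to $\mathcal K$ by a classical-limit comparison with the $U(\mathfrak{sl}_2)$ statement from \cite{MR1327136}, and injectivity over $R$ plus equality of $\mathcal K$-ranks forces surjectivity once one checks the cokernel is torsion-free — the latter following again from nondegeneracy of the Shapovalov form.
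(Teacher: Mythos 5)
The survey paper does not actually prove this statement; it is quoted verbatim from \cite{MR1327136} and no argument is reproduced here, so there is no in-text proof against which to compare. Judged on its own, your sketch has the right skeleton (restrict to the top, construct $\tau$, use rigidity to conclude), and you correctly isolate the surjectivity of $\tau\mapsto\psi_{\tau,\phi}$ as the genuine issue. But two steps are shakier than you present them, and one of them is wrong as stated.

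First, the rigidity claim needs a real argument, not just ``move the $F$'s across.'' Moving $u\in\bu^-\bu^0$ from the left slot sends it to $\brho(u)\in\bu^0\bu^+$ on the right slot, which raises rather than lowers and a priori just shuffles the problem back and forth. What makes the reduction terminate is the observation that $m_0\otimes{_R\mathcal E}$ is $\bu^0\bu^+$-stable (because $E m_0=0$, so $E(m_0\otimes e)=Km_0\otimes Ee\in m_0\otimes{_R\mathcal E}$) while, by a PBW/triangularity argument, $m_0\otimes{_R\mathcal E}$ generates $M\otimes{_R\mathcal E}$ over $\bu^-\bu^0$; only after both of these facts are used does every value of $\psi$ collapse to values on $m_0\otimes{_R\mathcal E}\times m_0\otimes{_R\mathcal F}$. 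This is a correctable omission.

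Second, and more seriously, your construction of $\tau$ begins from the claim that $(e,f)\mapsto \psi(m_0\otimes e, m_0\otimes f)$ lies in $\mathbb P({_R\mathcal E},{_R\mathcal F},R)\cong\Hom_{\ru}({_R\mathcal E}\otimes_R\mathcal F^{\rho_1}, {_RR}^{\brho})$. That is false: $m_0\otimes{_R\mathcal E}$ is not a $\ru$-submodule of $M\otimes{_R\mathcal E}$ (it is only a $\ru^0$-submodule), so the restriction of $\psi$ is merely a graded, $\ru^0$-invariant bilinear form, not a $\bu$-invariant one. Concretely, invariance of $\psi$ gives
$$\psi(m_0\otimes Fe, m_0\otimes f)=\psi(m_0\otimes e, m_0\otimes \brho(F)f)-\psi(Fm_0\otimes K^{-1}e,\,m_0\otimes f),$$
and the last term is not zero in general, so the restriction does \emph{not} satisfy the $\brho$-invariance identity. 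Consequently the cited isomorphism does not apply, and the ``read off a unique $\tau$ whose evaluation at the generator reproduces it'' step has no foundation. The repair requires a different identification: one must match the $\ru^0$-invariant forms on $(m_0\otimes{_R\mathcal E})\times(m_0\otimes{_R\mathcal F})$ with the image of $\Hom_\bu({_R\mathcal E}\otimes_R\mathcal F^{\rho_1},F(\ru))$ under the ``value at $m_0$'' map (using the Joseph--Letzter decomposition $F(\bu)\cong\mathcal H\otimes Z(\bu)$ together with nondegeneracy of $\phi$ and a rank comparison, essentially your last paragraph), and it is precisely here, not via $\mathbb P({_R\mathcal E},{_R\mathcal F},R)$, that the proof must be run. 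So the overall plan is sound, but the specific mechanism you propose for producing $\tau$ does not work and must be replaced.
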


\subsection{}

\begin{thm}(\cite[Lifting Theorem]{MR1327136}).
Let $A$ and
$B$ be modules in $\mathcal C_{R}$ and $\phi\in {\mathbb P}_{\brho }(A,B)$. Then
$\phi$ uniquely determines an invariant form
$\phi_F\in  {\mathbb P}_{\brho}(A_F,B_F)$ which is determined by the following
properties:
\begin{enumerate}
\item $\phi_F$ vanishes on the subspaces $\iota A\times B_F$ and
$A_F\times \iota B$ .
\item For each $\mu \in \mathbb Z$ with $\mu+1= r\in \mathbb N$, and any vectors $a\in A$ and $b\in B$ both of weight $m+\epsilon\mu$ with
$\epsilon\in
\{1,s\}$ and  $E\ a=E\ b=0$ , 
\begin{equation}
 \phi_F(F^{-1}a ,F^{-1} b)=  v^{-r+1}\frac{\iota\epsilon[T;0]}{  \i
\epsilon[T;r-1]\ }\ \
\phi(a,b).
\end{equation}
\end{enumerate}
\end{thm}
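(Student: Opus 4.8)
The plan is to construct $\phi_F$ by exhibiting an explicit formula on a spanning set and then verifying that the resulting form is well-defined and $\brho$-invariant; uniqueness follows because properties (1) and (2) already pin $\phi_F$ down on generators. First I would recall the structure of $A_F$: since $A,B$ lie in $\mathcal C_R$, the functor $(-)_F$ (localization at $F$, or the "$F$-completion" used in \cite{MR1327136}) embeds $A\hookrightarrow A_F$ via $\iota$, and $A_F$ is generated over $A$ by elements of the form $F^{-1}a$ with $a$ a highest-weight-type vector ($Ea=0$) of an appropriate weight; more precisely every element of $A_F$ is, modulo $\iota A$, a sum of $\ru$-translates of such $F^{-1}a$. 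So an invariant form on $A_F\times B_F$ that kills $\iota A\times B_F$ and $A_F\times\iota B$ is determined by its values $\phi_F(F^{-1}a,F^{-1}b)$ on these distinguished pairs, and property (2) prescribes exactly those values. This gives uniqueness immediately, and reduces existence to a consistency check.

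Next I would address well-definedness of the prescription in (2). The potential ambiguity is twofold: a given element of $A_F/\iota A$ may be written as $x\cdot F^{-1}a$ in more than one way, and the scalar $v^{-r+1}\iota\epsilon[T;0]/\iota\epsilon[T;r-1]$ must be compatible with the $\ru$-action under $\brho$-invariance. The strategy is to \emph{define} $\phi_F$ on all of $A_F\times B_F$ by first reducing an arbitrary pair to the distinguished case using the invariance identity, i.e. using $\sum x_{(1)}*\phi_F(Sx_{(3)}\cdot u,\varrho(x_{(2)})w)=\mathbf e(x)\phi_F(u,w)$, and then checking that the $\ru$-module relations (the Serre-type relation is vacuous in rank one, so really just the $[E,F]$ and $K$-relations) are respected by the scalar in (2). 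The key computation here is: if $a$ has weight $m+\epsilon\mu$ with $Ea=0$ and $\mu+1=r\in\mathbb N$, then in $A_F$ one has $EF^{-1}a = (\text{scalar})\,a$ up to $\iota A$, where the scalar is read off from $[E,F]=(K-K^{-1})/(v-v^{-1})$ evaluated on the weight, namely $\iota\epsilon[T;r-1]/\dots$-type expressions built from the $q$-calculus of Section 2; this is precisely where the factor $[T;0]/[T;r-1]$ and the invertibility statements around \eqnref{cong1} and the displayed formula for $[T;k]^{-1}$ enter, guaranteeing the scalar lives in $R$ (not just $\mathcal K$) after the shift by $F^{-1}$.

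Then I would verify $\brho$-invariance of the globally-defined $\phi_F$. Because $\phi_F$ is built from $\phi$ (which is $\brho$-invariant by hypothesis) by the reduction-to-distinguished-pairs procedure, invariance on distinguished pairs plus the fact that the reduction uses only the defining relations of $\ru$ propagates invariance to all of $A_F\times B_F$; concretely one checks the invariance identity on the algebra generators $E,F,K^{\pm1}$ acting on a pair $(F^{-1}a,F^{-1}b)$, using Lemma \ref{symmetries} and \eqnref{normalizedforms} to control the action of $F$-divided powers, and using that $[T;0]$ and $[T;r-1]$ commute with the weight operators. Finally, property (1) is built into the definition and needs only the remark that $\iota A$ is an $\ru$-submodule on which the localized $F$ acts injectively, so any invariant form vanishing on $\iota A\times B$ automatically vanishes on $\iota A\times B_F$ by invariance. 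The main obstacle I anticipate is the well-definedness/consistency step: showing that the single scalar prescription in (2), together with $\brho$-invariance, does not over-determine $\phi_F$ — i.e. that the two relations $EF-FE=(K-K^{-1})/(v-v^{-1})$ and $KFK^{-1}=v^{-2}F$ impose \emph{consistent} constraints on the values $\phi_F(F^{-1}a,F^{-1}b)$ across different weights $\mu$. This is a finite but delicate $q$-identity check, and I would expect it to come down to verifying a single recursion in $r$ of the form $v^{-r+1}[T;0]/[T;r-1] = v^{-r+2}[T;0]/[T;r-2]\cdot(\text{ratio of }[T;\cdot]\text{'s coming from }[E,F])$, which is exactly the content of \eqnref{r1}–\eqnref{r2}.
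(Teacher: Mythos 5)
The present paper states this Lifting Theorem only as a citation of \cite{MR1327136}; no proof is given here, so there is nothing in the paper to measure your argument against directly. Taken on its own terms, your overall strategy is the right one: pin $\phi_F$ down on the distinguished pairs $(F^{-1}a,F^{-1}b)$ with $Ea=Eb=0$, propagate by $\brho$-invariance, and then verify that the prescription in (2) is consistent with the relations $[E,F]=(K-K^{-1})/(v-v^{-1})$ and $KFK^{-1}=v^{-2}F$. You also correctly single out well-definedness as the crux, and your observation that the Serre relations are vacuous in rank one, so only two relations need checking, is exactly right.

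That said, two steps in your sketch are not yet sound as written. First, the claim that every element of $A_F$ is, modulo $\iota A$, a $\ru$-translate of some $F^{-1}a$ with $Ea=0$ is not a formal property of localization; it uses that for $A\in\mathcal C_R$ the quotient $A_\pi=A_F/\iota A$ is a sum of lowest-weight modules whose lowest-weight vectors are precisely the images of $F^{-1}a$ over primitive $a\in A$. This is the structural content encoded in \lemref{locstructure} and the basis $F^{(-k)}\eta$ of \eqnref{defofFinv}, and it should be cited or reproved, not assumed. Second, and more importantly, the assertion that the consistency of the scalar $v^{-r+1}\epsilon[T;0]/\epsilon[T;r-1]$ ``is exactly the content of \eqnref{r1}--\eqnref{r2}'' misidentifies which $q$-identities do the work. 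Formulas \eqnref{r1} and \eqnref{r2} are pure reindexing identities for the products $[T;r]_{(j)}$ and say nothing about the module action. What actually governs the recursion in $r$ is the Shapovalov-type commutation rule, e.g.\ $E^{(r)}F^{(s)}\zeta=\qbinom{T;\lambda-1+r-s}{r}F^{(s-r)}\zeta$ and its lowest-weight analogue $E^{(r)}F^{(-s)}\eta=(-1)^rv^{-r(\lambda+r+2s)}T^{-r}\qbinom{r+s}{r}F^{(-r-s)}\eta$ from \lemref{binomiallemma}; these tell you precisely how the weight-dependent scalar must transform when $F$ or $E$ is pushed across $F^{-1}$, and replacing them by \eqnref{r1}--\eqnref{r2} leaves the crucial computation undone. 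Finally, your closing remark that property (1) holds ``automatically by invariance'' is circular: vanishing on $\iota A\times B_F$ is part of what \emph{defines} the form (it forces $\phi_F$ to factor through $A_\pi\times B_\pi$), and what must actually be shown is that the resulting form is well defined and invariant, not that the vanishing is a consequence of invariance.
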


\begin{prop}  The form
$\phi_F$ induces an $\brho$-invariant bilinear map on
$A_\pi \times B_\pi$ which we denote by
$\phi_\pi$.
\end{prop}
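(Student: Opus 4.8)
The plan is to realize $\phi_\pi$ as the reduction of $\phi_F$ modulo $(T-1)$. Write $\pi$ for the specialization $T\mapsto 1$, so that $A_\pi$ is the $\bu$-module obtained from $A_F$ by killing $(T-1)$ together with the image of $\iota A$, with $\bu=\ru/(T-1)\ru$ acting through the quotient map, and similarly for $B_\pi$. By the Lifting Theorem $\phi_F\in{\mathbb P}_{\brho}(A_F,B_F)$, so it is $R$-bilinear and $R$-valued, and by its first property it vanishes on $\iota A\times B_F$ and on $A_F\times\iota B$. Being $R$-bilinear it also satisfies $\phi_F((T-1)A_F,B_F)=(T-1)\phi_F(A_F,B_F)\subseteq(T-1)R$, and likewise in the second argument. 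Hence $\phi_F$ annihilates, in its first variable, the kernel of $A_F\twoheadrightarrow A_\pi$ and, in its second variable, the kernel of $B_F\twoheadrightarrow B_\pi$; consequently $(\bar a,\bar b)\mapsto\phi_F(a,b)\bmod(T-1)$ is a well defined $\mathbb k(v)$-bilinear map $A_\pi\times B_\pi\to R/(T-1)R=\mathbb k(v)$, and this is $\phi_\pi$.

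It then remains to transport $\brho$-invariance across this reduction. The invariance of $\phi_F$ is an identity in $R$, schematically $\sum x_{(1)}*\phi_F(S(x_{(3)})\cdot a,\varrho(x_{(2)})\cdot b)=\mathbf e(x)\,\phi_F(a,b)$ for $x\in\ru$, $a\in A_F$, $b\in B_F$. Reducing both sides modulo $(T-1)$, the $\ru$-actions on $A_F$ and $B_F$ pass to the $\bu$-actions on $A_\pi$ and $B_\pi$ (a weight $m+\mu$, i.e.\ $K$ acting by $Tv^\mu$, specializes to $K$ acting by $v^\mu$, a genuine $\bu$-weight, and $E,F$ act by the reductions of their operators), while the coproduct, the antipode, the anti-automorphism $\varrho=\brho$ and the counit are defined over $\mathbb k(v)$ and carry no $T$. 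The reduced identity is therefore precisely the $\brho$-invariance relation for $\phi_\pi$ as a pairing of the $\bu$-modules $A_\pi$ and $B_\pi$; as noted in the introduction it suffices to check this on the generators $E,F,K^{\pm1}$, each of which reduces compatibly, whence $\phi_\pi\in{\mathbb P}_{\brho}(A_\pi,B_\pi)$.

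The step to watch is the first one, namely matching the kernel of $A_F\twoheadrightarrow A_\pi$ with precisely the subspace $\iota A+(T-1)A_F$ on which $\phi_F$ is forced to vanish: this is the only point where the exact definition of the functor $(-)_\pi$ enters and where the first property of the Lifting Theorem must be combined with $R$-linearity. (If one instead uses $\mathcal K$-valued liftings, as the appearance of $F^{-1}$ in $A_F$ might suggest, the genuine obstacle becomes showing that $\phi_F$ has no pole along $T=1$; this follows from the formula in the second property of the Lifting Theorem together with the dichotomy recorded in the $q$-calculus section that $[T;k]$ is a unit of $R$ for $k\neq 0$ while $[T;0]=(T-1)(T+1)/(T(v-v^{-1}))$ lies in $(T-1)R$, so that the scalar $v^{-r+1}\,\iota\epsilon[T;0]/\iota\epsilon[T;r-1]$ lies in $R$ for every $r=\mu+1\in\mathbb N$, while every other value of $\phi_F$ is obtained from these by the invariance identity with $x=E,F$ and $R$-linearity alone.)
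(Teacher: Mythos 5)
Your proof misidentifies the module $A_\pi$. In this paper $A_\pi$ is defined through the short exact sequence $0\to A\to A_F\to A_\pi\to 0$ (see the paragraph at the end of \S 6.2 on localization), so $A_\pi = A_F/\iota A$ is a genuine $\ru$-module over $R$, not a specialization at $T=1$. One can see that $A_\pi$ retains its $R$-structure later in the paper, where a lowest weight vector of $M_\pi$ is said to have weight $Tv$; the indeterminate $T$ is still present. Consequently $\phi_\pi$ must be an $R$-valued, $R$-bilinear form on $A_\pi\times B_\pi$, not the $\mathbb k(v)$-valued form on a $\mathbb k(v)$-module you construct.

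Once $A_\pi = A_F/\iota A$ is in hand the argument is actually shorter than what you wrote. Property (1) of the Lifting Theorem says $\phi_F$ vanishes on $\iota A\times B_F$ and on $A_F\times\iota B$, so $\phi_\pi(\bar a,\bar b):=\phi_F(a,b)$ is well defined as an $R$-bilinear map $A_\pi\times B_\pi\to R$; there is no $(T-1)$-divisibility to check, nor anything to quotient by in the target. The $\brho$-invariance then descends because the quotient maps $A_F\to A_\pi$ and $B_F\to B_\pi$ are $\ru$-module homomorphisms, so each term in the invariance identity for $\phi_F$ passes through the projection. Your second paragraph has the right shape for this final step, but it is applied to the wrong quotient, and the closing parenthetical worry about poles along $T=1$ never arises, since $\phi_F$ is already $R$-valued by the statement of the Lifting Theorem (it belongs to $\mathbb P_{\brho}(A_F,B_F)$, a space of $R$-bilinear $R$-valued maps).
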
        

\subsection{} At times the subscript notation for lifted forms will be
inconvenient and so we shall also use the symbol $loc$ for the localization
of both forms and modules. We write $loc(\phi)$ and $loc(A)$ in place of
$\phi_F$ and $A_F$. 

For invariant forms we find that induction and localization commute in the
following sense.

\section{Quantum Clebsch-Gordan decomposition}

\subsection{Basis and Symmetries} \label{basisandsymmetries} For
$m\in\mathbb Z$, let
$\mathcal F_m$ denote the finite dimenisonal irreducible module of
highest weight
$v^m$ with highest weight vector $u^{(m)}$.  For $k$ any non-negative
integer set
$u^{(m)}_k=F^{(k)}u^{(m)}$ and
$u^{(m)}_{-1}=0$.

In particular 
$$
\theta^{-1}(u^{(m)}_j)=T_1''(u^{(m)}_j)=(-1)^{m-j}v^{(m-j)(j+1)}u^{(m)}_{m-j}
$$
and
\begin{equation}\label{irred}
K^pu^{(m)}_j=v^{p(m-2j)}u^{(m)}_j,\quad \quad F^{(p)}u^{(m)}_j=
{\qbinom {p+j} j}u^{(m)}_{j+p},\quad\quad
 E^{(p)}u^{(m)}_j=\qbinom{m+p-j}{
p} u^{(m)}_{j-p}.
\end{equation}

\begin{lem} \label{CG}[Clebsch-Gordan, \cite{MR2586983}] For any two non-negative integers
$m\geq n$, there is an isomorphism of
$\bu$-modules
$$
\mathcal F_{m+n}\oplus \mathcal
F_{m+n-2}\oplus \cdots \oplus \mathcal F_{m-n}\cong\mathcal F_m\otimes
\mathcal F_n.
$$
Moreover the isomorphism is defined on highest weight vectors by
$$
\varPhi(u^{(m+n-2p)})=\sum_{k=0}^p(-1)^k
\frac{[n-p+k]![m-k]!}{[n-p]![m]!} v^{(k-p)(m-p-k+1)}u_k^{(m)}\otimes
u_{p-k}^{(n)}.
$$
\end{lem}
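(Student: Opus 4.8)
The plan is to prove the Clebsch--Gordan decomposition by an explicit highest-weight-vector argument, since everything in sight is finite-dimensional and semisimple. First I would recall that $\mathcal F_m\otimes\mathcal F_n$ is an integrable $\bu$-module of dimension $(m+1)(n+1)$, hence a direct sum of irreducibles $\mathcal F_\ell$; counting weights (the weight $v^{m+n}$ appears once, $v^{m+n-2}$ twice, and so on, each new value appearing with multiplicity one more until it saturates) shows the only possibility is $\mathcal F_{m+n}\oplus\mathcal F_{m+n-2}\oplus\cdots\oplus\mathcal F_{m-n}$, with each summand occurring exactly once. So the content of the lemma is the explicit formula for $\varPhi(u^{(m+n-2p)})$, i.e. producing, for each $p$ with $0\le p\le n$, an explicit highest weight vector of weight $v^{m+n-2p}$ in $\mathcal F_m\otimes\mathcal F_n$.

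The key computation is then to verify two things about the proposed vector $w_p:=\sum_{k=0}^p(-1)^k\frac{[n-p+k]![m-k]!}{[n-p]![m]!}v^{(k-p)(m-p-k+1)}u_k^{(m)}\otimes u_{p-k}^{(n)}$. First, $w_p$ has weight $v^{m+n-2p}$: this is immediate since $u_k^{(m)}\otimes u_{p-k}^{(n)}$ has weight $v^{(m-2k)+(n-2(p-k))}=v^{m+n-2p}$ for every $k$, using \eqnref{irred} and the fact that $K$ acts diagonally on tensor products via $\Delta(K)=K\otimes K$. Second, and this is the heart of the matter, $E\cdot w_p=0$. Here one applies $\Delta(E)=E\otimes 1+K\otimes E$ (in Lusztig's conventions; I would use the coproduct normalization fixed in the paper) to each term, uses the formulas $E u_k^{(m)}=[m-k+1]u_{k-1}^{(m)}$ and $Ku_j^{(n)}=v^{n-2j}u_j^{(n)}$ from \eqnref{irred}, and collects the coefficient of each basis vector $u_k^{(m)}\otimes u_{p-1-k}^{(n)}$. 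Setting that coefficient to zero produces a two-term recursion in $k$ for the scalars $c_k:=(-1)^k\frac{[n-p+k]![m-k]!}{[n-p]![m]!}v^{(k-p)(m-p-k+1)}$, of the shape $[m-k+1]\,c_k + v^{m-2k}[n-p+k]\,c_{k-1}=0$ (possibly with a shift in the exponent depending on the chosen coproduct), and one checks this recursion is satisfied by substituting the closed form; this is a direct manipulation of $q$-integers and the exponent arithmetic $(k-p)(m-p-k+1)-(k-1-p)(m-p-k+2)$. Finally, normalizing so that the $u_0^{(m)}\otimes u_p^{(n)}$ coefficient matches (it is $\frac{[n-p]![m]!}{[n-p]![m]!}v^{-p(m-p+1)}$, nonzero) and extending $\varPhi$ to the whole of $\mathcal F_{m+n-2p}$ by $\varPhi(u^{(m+n-2p)}_j):=F^{(j)}w_p$ gives a $\bu$-module map on each summand; taking the direct sum over $p$ yields $\varPhi:\bigoplus_p\mathcal F_{m+n-2p}\to\mathcal F_m\otimes\mathcal F_n$, which is injective because highest weight vectors of distinct weights are linearly independent and each restriction is injective, hence an isomorphism by dimension count.

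I expect the main obstacle to be bookkeeping rather than conceptual: getting the $q$-power exponents exactly right in the recursion, since the precise shape depends on the coproduct convention $\Delta(E)=E\otimes 1+K\otimes E$ versus $E\otimes K^{-1}+1\otimes E$ (the paper follows \cite{MR2586983}/\cite{MR1227098}), and on how $[m-k]!/[m]!$ telescopes against the $v^{(k-p)(m-p-k+1)}$ factor. It is worth double-checking the two boundary cases $k=0$ and $k=p$ of the recursion separately, and independently verifying the formula in a small case such as $m=n=1$ (where $\varPhi(u^{(2)})=u_0\otimes u_1$-type and $\varPhi(u^{(0)})=u_0^{(1)}\otimes u_1^{(1)}-v^{-1}u_1^{(1)}\otimes u_0^{(1)}$, the standard quantum singlet) to pin the sign and exponent conventions before committing to the general induction. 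Once the recursion is confirmed, the rest is the routine semisimplicity-plus-dimension argument sketched above, and one could alternatively cite the known quantum Clebsch--Gordan coefficients from \cite{MR1337274} to shortcut the verification.
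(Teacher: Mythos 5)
The paper does not prove this lemma---it simply cites it from \cite{MR2586983}---so there is no internal proof to compare against. Your approach (semisimplicity plus weight counting to get the abstract decomposition, then a direct check that the proposed vector $w_p$ is a highest weight vector via a two-term recursion in $k$) is the standard and correct way to verify the explicit Clebsch--Gordan formula, and it does go through.

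One small point of bookkeeping: with Lusztig's coproduct $\Delta(E)=E\otimes 1+K\otimes E$, the formulas $E\,u_j^{(m)}=[m-j+1]\,u_{j-1}^{(m)}$ and $K\,u_k^{(m)}=v^{m-2k}u_k^{(m)}$ give, on collecting the coefficient of $u_{k-1}^{(m)}\otimes u_{p-k}^{(n)}$ in $E w_p$, the recursion
\[
[m-k+1]\,c_k+v^{\,m-2k+2}\,[n-p+k]\,c_{k-1}=0,
\]
not $v^{m-2k}$ as you wrote; the exponent arithmetic $(k-p)(m-p-k+1)-(k-1-p)(m-p-k+2)=m-2k+2$ confirms that the closed form $c_k=(-1)^k\frac{[n-p+k]![m-k]!}{[n-p]![m]!}v^{(k-p)(m-p-k+1)}$ satisfies it. You flagged this exponent as the delicate spot, and indeed it was; with that correction the verification is complete. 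Your spot-check at $m=n=1$ is also off by an overall $v$: the formula yields $w_1=v^{-1}u_0^{(1)}\otimes u_1^{(1)}-u_1^{(1)}\otimes u_0^{(1)}$, which one verifies directly is killed by $E$. None of this affects the validity of the argument, only the arithmetic you deferred.
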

\begin{lem}  The map $\varphi:\mathcal F_m^{\rho_1}\to \mathcal F_m$ given
by $\varphi(u^{(m)}_k)=(-v)^{-k}u^{(m)}_{m-k}$ is an isomorphism.
\end{lem}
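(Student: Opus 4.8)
The plan is to show that $\varphi$ is a bijective $\bu$-module homomorphism. Bijectivity is automatic: $\{u^{(m)}_k:0\le k\le m\}$ is a basis of $\mathcal F_m$, the assignment $k\mapsto m-k$ permutes $\{0,\dots,m\}$, and each scalar $(-v)^{-k}$ is a unit of $\mathbb k(v)$, so $\varphi$ sends a basis to a basis. Hence the only content is that $\varphi$ intertwines the two $\bu$-actions. Since $\bu$ is generated as an algebra by $E,F,K^{\pm1}$, it suffices to verify $\varphi(x\cdot w)=x\cdot\varphi(w)$ for $x\in\{E,F,K^{\pm1}\}$ and for $w$ running over the basis $u^{(m)}_k$. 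Recall that on $\mathcal F_m^{\rho_1}$ an element $x\in\bu$ acts by $\rho_1(x)$, and observe first that $\rho_1$ is an involution (directly, $\rho_1^2(E)=E$, $\rho_1^2(F)=F$, $\rho_1^2(K)=K$), so the twisted action is unambiguous.

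Each of the three checks is then a short computation using only the explicit formulas \eqnref{irred}. For $K$: on $\mathcal F_m^{\rho_1}$ we have $K\cdot u^{(m)}_k=K^{-1}u^{(m)}_k=v^{2k-m}u^{(m)}_k$, so $\varphi(K\cdot u^{(m)}_k)=v^{2k-m}(-v)^{-k}u^{(m)}_{m-k}$, which equals $K\cdot\varphi(u^{(m)}_k)=(-v)^{-k}v^{m-2(m-k)}u^{(m)}_{m-k}$ since $m-2(m-k)=2k-m$. For $E$: here $E$ acts by $\rho_1(E)=-vF$, so $E\cdot u^{(m)}_k=-v\,[k+1]\,u^{(m)}_{k+1}$ and, using $-v\cdot(-v)^{-(k+1)}=(-v)^{-k}$, one gets $\varphi(E\cdot u^{(m)}_k)=[k+1](-v)^{-k}u^{(m)}_{m-k-1}$, while $E\cdot\varphi(u^{(m)}_k)=(-v)^{-k}E^{(1)}u^{(m)}_{m-k}=(-v)^{-k}[k+1]u^{(m)}_{m-k-1}$. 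The $F$ case is symmetric: $F$ acts by $\rho_1(F)=-v^{-1}E$, and using $-v^{-1}\cdot(-v)^{-(k-1)}=(-v)^{-k}$ both $\varphi(F\cdot u^{(m)}_k)$ and $F\cdot\varphi(u^{(m)}_k)$ come out to $[m-k+1](-v)^{-k}u^{(m)}_{m-k+1}$. The $K^{-1}$ relation follows formally from the $K$ relation. This proves $\varphi$ is a $\bu$-homomorphism, hence an isomorphism.

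As a sanity check on the shape of $\varphi$, one sees its formula is forced. The lowest weight vector $u^{(m)}_m$ of $\mathcal F_m$ becomes a highest weight vector of $\mathcal F_m^{\rho_1}$, since $E\cdot u^{(m)}_m=-vF u^{(m)}_m=0$ and $K\cdot u^{(m)}_m=K^{-1}u^{(m)}_m=v^m u^{(m)}_m$, so $\mathcal F_m^{\rho_1}$ is an $(m+1)$-dimensional irreducible of highest weight $v^m$, hence isomorphic to $\mathcal F_m$, the isomorphism being unique up to scalar. Since $\rho_1(F^{(p)})=(-v)^{-p}E^{(p)}$, inside $\mathcal F_m^{\rho_1}$ we have $F^{(m-k)}\cdot u^{(m)}_m=(-v)^{-(m-k)}E^{(m-k)}u^{(m)}_m=(-v)^{-(m-k)}u^{(m)}_k$, so any module map $\mathcal F_m^{\rho_1}\to\mathcal F_m$ with $u^{(m)}_m\mapsto(-v)^{-m}u^{(m)}$ must send $u^{(m)}_k=(-v)^{m-k}F^{(m-k)}\cdot u^{(m)}_m$ to $(-v)^{-k}F^{(m-k)}u^{(m)}=(-v)^{-k}u^{(m)}_{m-k}$, i.e. it is exactly $\varphi$.

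There is no genuine obstacle: the argument is entirely elementary. The only thing requiring attention is the bookkeeping of signs and powers of $-v$ (together with reducing the $q$-binomials $\qbinom{k+1}{k}$ and $\qbinom{m+1-k}{1}$ appearing in \eqnref{irred} to the $q$-integers $[k+1]$ and $[m+1-k]$), and the preliminary remark $\rho_1^2=\mathrm{id}$, which is what makes the twisted module well specified in the first place.
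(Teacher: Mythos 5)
Your proof is correct and is exactly the direct verification the paper leaves to the reader (the lemma is stated without proof). One small remark: the twisted module $\mathcal F_m^{\rho_1}$ is already well-defined for any algebra automorphism $\rho_1$ of $\bu$, so the preliminary observation that $\rho_1^2=\mathrm{id}$, while true, is not what makes the statement unambiguous; your check on $E$, $F$, $K^{\pm1}$ via \eqnref{irred} together with the highest-weight-vector argument pinning down $\varphi$ up to scalar is all that is needed.
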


\begin{cor}\label{CG2} Let
$m\geq n$ be two non-negative integers. Then there is an isomorphism of
$\bu$-modules
$$
\mathcal F_{m+n}\oplus \mathcal
F_{m+n-2}\oplus \cdots \oplus \mathcal F_{m-n}\cong\mathcal F_m\otimes
\mathcal F_n^{\rho_1}.
$$
Moreover the isomorphism is defined on highest weight vectors by
\begin{align*}
\Phi(u^{(m+n-2p)})&=\sum_{k=0}^p(-1)^{n-p}\frac{[n-p+k]![m-k]!}{[n-p]![m]!}
v^{k\,\left( 2 + m -k\right)  + n - 2\,p - m\,p + p^2}u_k^{(m)}\otimes
u_{n-p+k}^{(n)} \\
\color{red}
&=(-1)^{n-p}v^{ n +p(p- 2 - m)}\sum_{k=0}^p 
  v^{k\,\left( 2 + m -k\right)} \qbinom{n-p+k}{ k}
   \qbinom{m}{ k}^{-1} u_k^{(m)}\otimes u_{n-p+k}^{(n)}.
\end{align*}
\color{black}

(the action on the second factor $u^{(n)}_l$ is twisted by the
automorphism $\rho_1$).
\end{cor}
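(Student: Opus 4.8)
The plan is to derive Corollary~\ref{CG2} from Lemma~\ref{CG} by transporting the Clebsch--Gordan isomorphism through the isomorphism $\varphi:\mathcal F_n^{\rho_1}\to\mathcal F_n$ of the preceding lemma. First I would set up the composite: if $\varPhi:\bigoplus_p\mathcal F_{m+n-2p}\to\mathcal F_m\otimes\mathcal F_n$ is the map of Lemma~\ref{CG}, then $\Phi:=(1\otimes\varphi^{-1})\circ\varPhi$ is a $\bu$-module isomorphism onto $\mathcal F_m\otimes\mathcal F_n^{\rho_1}$, since $1\otimes\varphi^{-1}:\mathcal F_m\otimes\mathcal F_n\to\mathcal F_m\otimes\mathcal F_n^{\rho_1}$ is an isomorphism by the lemma. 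It remains only to compute the image of the highest weight vectors under this composite and simplify.

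The computation proceeds as follows. From Lemma~\ref{CG},
$$
\varPhi(u^{(m+n-2p)})=\sum_{k=0}^p(-1)^k\frac{[n-p+k]![m-k]!}{[n-p]![m]!}\,v^{(k-p)(m-p-k+1)}\,u_k^{(m)}\otimes u_{p-k}^{(n)}.
$$
Applying $1\otimes\varphi^{-1}$ and using that $\varphi^{-1}(u^{(n)}_{p-k})=(-v)^{p-k}u^{(n)}_{n-(p-k)}=(-v)^{p-k}u^{(n)}_{n-p+k}$, I would replace $u^{(n)}_{p-k}$ by $(-1)^{p-k}v^{p-k}u^{(n)}_{n-p+k}$, which turns the sign $(-1)^k(-1)^{p-k}=(-1)^p$ into a global $(-1)^{n-p}$ after absorbing a factor $(-1)^{2p-n}=(-1)^{n}$ — more carefully, $(-1)^p=(-1)^{n-p}(-1)^{2p-n}=(-1)^{n-p}(-1)^n$, so one checks the remaining sign bookkeeping against the claimed $(-1)^{n-p}$ and folds any leftover $(-1)^n$ into the overall constant. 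The exponent of $v$ in the $k$-th term becomes $(k-p)(m-p-k+1)+(p-k)$; expanding this and separating the part depending on $k$ from the part depending only on $m,n,p$ gives the grouped prefactor $(-1)^{n-p}v^{\,n+p(p-2-m)}$ and the summand exponent $k(2+m-k)$, matching the second displayed line. Finally, to pass from the first line to the second I would rewrite the factorial ratio $\tfrac{[n-p+k]![m-k]!}{[n-p]![m]!}=\qbinom{n-p+k}{k}\qbinom{m}{k}^{-1}$, which follows from $[n-p+k]!/[n-p]!=[k]!\,\qbinom{n-p+k}{k}$ and $[m-k]!/[m]!=([k]!\,\qbinom{m}{k})^{-1}$.

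I expect the main obstacle to be purely bookkeeping: getting the sign $(-1)^{n-p}$ and the $v$-exponent $n+p(p-2-m)+k(2+m-k)$ to come out exactly as stated, since the exponent arithmetic $(k-p)(m-p-k+1)+(p-k)=k(2+m-k)+[\,n+p(p-2-m)\,]-n$ requires care, and one must double-check that the surviving constant $(-1)^n v^{-n}$-type discrepancy (if any) is absorbed consistently into the grouped constant rather than left dangling. No conceptual difficulty arises: that $\Phi$ is a $\bu$-module isomorphism is immediate from functoriality of $-\otimes-$ applied to the two isomorphisms already in hand, and the parenthetical remark about the $\rho_1$-twist on the second factor is just a restatement of what $\mathcal F_n^{\rho_1}$ means.
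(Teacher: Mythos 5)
Your overall strategy is correct and is surely the intended one: compose the Clebsch--Gordan map $\varPhi$ of Lemma~\ref{CG} with $1\otimes\varphi^{-1}$, where $\varphi:\mathcal F_n^{\rho_1}\to\mathcal F_n$ is the isomorphism of the preceding lemma, and then simplify. The binomial rewrite $\tfrac{[n-p+k]![m-k]!}{[n-p]![m]!}=\qbinom{n-p+k}{k}\qbinom{m}{k}^{-1}$ is also right.

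However, your formula for $\varphi^{-1}$ is wrong, and this is not a harmless slip: it is the source of the ``leftover $(-1)^n v^{-n}$-type discrepancy'' that you then try to hand-wave away. From $\varphi(u^{(n)}_k)=(-v)^{-k}u^{(n)}_{n-k}$ one gets, by setting $k=n-j$, that $\varphi(u^{(n)}_{n-j})=(-v)^{-(n-j)}u^{(n)}_j$, hence
$$
\varphi^{-1}\bigl(u^{(n)}_j\bigr)=(-v)^{\,n-j}\,u^{(n)}_{n-j},
$$
not $(-v)^{j}u^{(n)}_{n-j}$ as you wrote. (Equivalently, $\varphi^2=(-v)^{-n}\,\mathrm{id}$, so $\varphi^{-1}=(-v)^{n}\varphi$.) With $j=p-k$ the extra factor is $(-v)^{\,n-p+k}$, not $(-v)^{\,p-k}$. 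Using the correct factor, the sign in the $k$-th term is $(-1)^k(-1)^{n-p+k}=(-1)^{n-p}$ exactly, with nothing left to ``absorb,'' and the exponent becomes
$$
(k-p)(m-p-k+1)+(n-p+k)=k(2+m-k)+n-2p-mp+p^2,
$$
which matches the corollary. By contrast, the identity you assert, $(k-p)(m-p-k+1)+(p-k)=k(2+m-k)+p(p-2-m)$, is false: the two sides differ by $2k-2p$. So the discrepancy you anticipated is real, it depends on $k$ as well, and it cannot be folded into an overall constant; it simply goes away once $\varphi^{-1}$ is computed correctly.

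Fix the inverse formula and the rest of your argument goes through as written.
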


\begin{lem}\label{CGspecialcase}\cite{MR2586983}. For $0\leq k\leq \min\{n-p,m+n-2p\}$,
\begin{align*}
&\left[\begin{matrix} m & n & m+n-2p
 \\ 0 & p+k & k \end{matrix}\right]= v^{-p(m-p+1)}{\qbinom {p+k}
{p}}\\ 
 \notag 
\end{align*}
and for $\max\{0,m-p\}\leq k\leq m+n-2p$
\begin{align*}
&\left[\begin{matrix} m & n & m+n-2p
  \\ m & p+k-m & k \end{matrix}\right]  \\
&\quad \quad =v^{p(p-1)-m(m +n  -p  - k)}
 \sum_{l=0}^{\min\{p,m\}}
(-1)^lv^{l\,\left( 1 + m + n - 2\,p - k \right)} 
{\qbinom {n-p+l} l} {\qbinom {p+k-m} {p-l}} \notag \\
&\quad \quad=(-1)^pv^{(p-m)(m +n) +m k}
 {\qbinom {m+n-p-k} {p}},\quad \text{if }\quad n\leq m.
\notag
\end{align*}
\end{lem}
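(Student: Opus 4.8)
\smallskip

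\noindent\textbf{Proof plan.} The plan is to reduce everything to the highest-weight formula of \lemref{CG} together with the coproduct of the divided power $F^{(k)}$. Recall that the bracketed symbol in the statement is the coefficient of $u_{j_1}^{(m)}\otimes u_{j_2}^{(n)}$ in $\varPhi(u_k^{(m+n-2p)})$ (so $j_1+j_2=p+k$), where $\varPhi$ is the embedding of \lemref{CG}. First I would use $\varPhi(u_k^{(m+n-2p)})=F^{(k)}\varPhi(u^{(m+n-2p)})$ and expand the right side via $\Delta(F^{(k)})=\sum_{i+j=k}v^{ij}F^{(i)}\otimes F^{(j)}K^{-i}$ and the action formulas \eqnref{irred}; writing $\varPhi(u^{(m+n-2p)})=\sum_{a+b=p}A_{a,b}\,u_a^{(m)}\otimes u_b^{(n)}$ with $A_{a,b}=(-1)^a\frac{[n-p+a]!\,[m-a]!}{[n-p]!\,[m]!}\,v^{(a-p)(m-p-a+1)}$ as in \lemref{CG}, this yields
\[
\varPhi(u_k^{(m+n-2p)})=\sum_{a+b=p}A_{a,b}\sum_{i+j=k}v^{\,ij-i(n-2b)}\qbinom{i+a}{a}\qbinom{j+b}{b}\,u_{i+a}^{(m)}\otimes u_{j+b}^{(n)},
\]
so that fixing $u_{j_1}^{(m)}\otimes u_{j_2}^{(n)}$ leaves a sum over the single index $a$ (with $i=j_1-a$, $b=p-a$, $j=k-j_1+a$).

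Next I would treat the two entries separately. For $j_1=0$ the constraint $i+a=0$ forces $i=a=0$, so exactly one term survives; substituting $A_{0,p}=v^{-p(m-p+1)}$ gives the first identity, and the requirement $\qbinom{p+k}{p}\neq 0$ together with $k\le m+n-2p$ is exactly the stated range. For $j_1=m$ the constraint reads $i=m-a$, so $a$ runs over $0\le a\le\min\{m,p\}$ (the terms with $a<m-k$ dropping out since $\qbinom{p+k-m}{p-a}=0$ there, using $\max\{0,m-p\}\le k$). After absorbing $\qbinom{m}{a}$ into $A_{a,p-a}$, one has $A_{a,p-a}\qbinom{m}{a}=(-1)^a\qbinom{n-p+a}{a}v^{(a-p)(m-p-a+1)}$, while the coproduct and the $K$-weight contribute $v^{(m-a)(k-m-a-n+2p)}$; renaming $a=l$, the first displayed equality follows provided
\[
(l-p)(m-p-l+1)+(m-l)(k-m-l-n+2p)=p(p-1)-m(m+n-p-k)+l(1+m+n-2p-k),
\]
a one-line polynomial identity that I would verify directly.

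For the closed form (the second equality) I would invoke the $q$-Chu--Vandermonde identity \eqnref{Ma1}. Applying the reflection $\qbinom{a}{b}=(-1)^b\qbinom{b-a-1}{b}$ to each factor, $\qbinom{n-p+l}{l}=(-1)^l\qbinom{p-n-1}{l}$ and $\qbinom{p+k-m}{p-l}=(-1)^{p-l}\qbinom{(m-k-1)-l}{p-l}$, the alternating sum becomes
\[
(-1)^p\sum_l(-1)^l v^{\,l(1+m+n-2p-k)}\qbinom{p-n-1}{l}\qbinom{(m-k-1)-l}{p-l},
\]
which is exactly the right side of \eqnref{Ma1} with $u=p-n-1$, $s=m-k-1$, $r=p$ (and one checks $s-u-r+1=1+m+n-2p-k$); the identity then evaluates the sum as $v^{p(n-p+1)}\qbinom{m+n-p-k}{p}$, and multiplying by the prefactor $v^{p(p-1)-m(m+n-p-k)}$ gives $(-1)^p v^{(p-m)(m+n)+mk}\qbinom{m+n-p-k}{p}$. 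The hypothesis $n\le m$ is used only here: since $p\le n$ in the Clebsch--Gordan decomposition, $n\le m$ forces $p\le m$, which is what makes the summation range governed by $\qbinom{p+k-m}{p-l}$ alone, so that extending the sum to all $l\ge 0$ to apply \eqnref{Ma1} introduces no spurious terms; for $p>m$ the closed form genuinely fails.

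I expect the main obstacle to be not any single step but keeping the signs and $v$-powers straight throughout — in particular fixing a coproduct convention for $\Delta(F^{(k)})$ compatible with the normalization of $\varPhi$ in \lemref{CG} and with \eqnref{irred}, and then matching the reflected alternating sum to the precise shape of \eqnref{Ma1}. Once those identifications are pinned down, the remaining verifications are routine.
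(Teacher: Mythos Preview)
Your approach is correct and is essentially the standard one: expand $F^{(k)}\varPhi(u^{(m+n-2p)})$ via the coproduct, read off the extreme coefficients, and collapse the $j_1=m$ sum with $q$-Chu--Vandermonde \eqnref{Ma1}. The paper itself does not prove this lemma here --- it is quoted from \cite{MR2586983} --- so there is no in-text argument to compare against, but your derivation matches the intended one, including the role of \eqnref{Ma1} (which the paper imports precisely for identities of this type). One small correction: in the first case the constraint $0\le k\le\min\{n-p,m+n-2p\}$ comes not from nonvanishing of $\qbinom{p+k}{p}$ (which is always nonzero for $k\ge0$) but from $p+k\le n$ and $k\le\dim\mathcal F_{m+n-2p}-1$; otherwise your bookkeeping is accurate.
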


Consider now the $\rho$-invariant forms \eqnref{normalizedforms} on
$\mathcal F_m$ and
$\mathcal F_n$, both denoted by $(,)$, normalized so that their
highest weight vectors have norm $1$.  Define the symmetric invariant
bilinear form on $\mathcal F_m\otimes \mathcal
F_n$ given by the tensor product of the two forms (the resulting
pairing is $\rho$-invariant).  In this case

\begin{align*}
(u^{(m+n-2p)},u^{(m+n-2p)})&=
\sum_{k=0}^p\left(\frac{[n-p+k]![m-k]!}{[n-p]![m]!}
v^{(k-p)(m-p-k+1)}\right)^2(u_k^{(m)},u_k^{(m)})(u_{p-k}^{(n)},u_{p-k}^{(n)})
\\ 
&= \sum_{k=0}^p\left(\frac{[n-p+k]![m-k]!}{[n-p]![m]!}\right)^2
v^{2(k-p)(m-p-k+1)+k^2-m k+(p-k)^2-n(p- k)}
{\qbinom m k} {\qbinom n {p-k}} \\ 
&= \sum_{k=0}^p\frac{[n]![n-p+k]![m-k]!}{[n-p]!^2[p-k]![k]![m]!}
v^{k\,\left( 2 + m + n - 2\,p \right)  + 
  p\,\left(3\,p -2 - 2\,m - n  \right)}  \\
&=
v^{p\,\left(3\,p -2 - 2\,m - n \right)}\frac{[n]!}{[n-p]!^2[m]!}
\sum_{k=0}^p\frac{[n-p+k]![m-p+(p-k)]!}{[k]![p-k]!}
v^{k\,\left( 2 + m + n - 2\,p \right)} \\
&=
v^{p\,\left(2\,p - 2\,m
-1\right)}\frac{[n]![m+n-p+1]![m-p]!}{[m]![p]![m+n-2p+1]![n-p]!}.
\end{align*}

where we have used formula \eqnref{Ma2}.

The same proof that gave us \eqnref{normalizedforms} now implies
\begin{equation*}
(u^{(m+n-2p)}_k,u^{(m+n-2p)}_k)=
v^{p\,\left(2\,p - 2\,m -1\right)-(m+n-2p-k) k}
\left[\begin{matrix} n \\  p
\end{matrix}\right]\left[\begin{matrix} m+n-p+1
 \\  p \end{matrix}\right]{\qbinom
{m+n-2p} k}\left[\begin{matrix} m \\  p
\end{matrix}\right]^{-1}.
\end{equation*}
\begin{prop}[\cite{MR2586983}]
\begin{enumerate}[(i).]
\item The basis $\{u_k^{(m+n-2p)}\}$
of \hbox{$\mathcal F_m\otimes{\mathcal F_n}$} is orthogonal.
\item For $0\leq i\leq m$, and $0\leq j\leq n$, 
\begin{align*}
u_i^{(m)}\otimes u_j^{(n)}
  &=v^{in +jm -2ij} 
 \sum_p v^{(1 + m - n - p) p}
  \frac{
     \left[\begin{matrix} m \\ i 
     \end{matrix}\right]\left[\begin{matrix} n \\
     j\end{matrix}\right] 
  \left[\begin{matrix} m
  \\  p \end{matrix}\right]\left[\begin{matrix} m & n & m+n-2p
  \\ i & j & {i+j-p} \end{matrix}\right]}
  {\left[\begin{matrix} n
  \\  p \end{matrix}\right]
  \left[\begin{matrix} m+n-p+1
  \\  p \end{matrix}\right]\left[\begin{matrix} m+n-2p
  \\  {i+j-p} \end{matrix}\right]}u_{i+j-p}^{(m+n-2p)}.
\end{align*}
\end{enumerate}
\end{prop}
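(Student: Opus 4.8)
The plan is to deduce both assertions from the $\rho$-invariance of the tensor product form on $\mathcal F_m\otimes\mathcal F_n$, the Clebsch--Gordan decomposition of \lemref{CG}, and the norm computations displayed immediately before the statement.

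For part (i): since $\brho(K)=K$, under any $\rho$-invariant form weight vectors of distinct weight are orthogonal, because $v^\mu(u,w)=(Ku,w)=(u,Kw)=v^\nu(u,w)$ forces $(u,w)=0$ when $Ku=v^\mu u$, $Kw=v^\nu w$ and $\mu\ne\nu$. Now take two basis vectors $u_k^{(m+n-2p)}$ and $u_{k'}^{(m+n-2p')}$ with $(p,k)\ne(p',k')$. If their weights differ, they are orthogonal by the previous sentence. Otherwise $m+n-2p-2k=m+n-2p'-2k'$ together with $(p,k)\ne(p',k')$ forces $p\ne p'$, so the two vectors lie in the distinct irreducible summands $\mathcal F_{m+n-2p}$ and $\mathcal F_{m+n-2p'}$; since these have different highest weights, a $\brho$-invariant pairing between them vanishes by Schur's lemma, so again the two vectors are orthogonal. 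This disposes of all cross terms; orthogonality of the $u_k^{(m+n-2p)}$ inside a fixed summand is once more the weight argument, which is precisely the reasoning behind the displayed value of $(u^{(m+n-2p)}_k,u^{(m+n-2p)}_k)$.

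For part (ii): the weight of $u_i^{(m)}\otimes u_j^{(n)}$ equals $v^{(m+n-2p)-2(i+j-p)}$ for every $p$, so inside the summand $\mathcal F_{m+n-2p}$ the only basis vector of this weight is $u_{i+j-p}^{(m+n-2p)}$ (and there is none when $i+j-p\notin[0,m+n-2p]$, which is consistent with the Clebsch--Gordan symbols vanishing there). Hence $u_i^{(m)}\otimes u_j^{(n)}=\sum_p c_p\,u_{i+j-p}^{(m+n-2p)}$ with $c_p\in\mathbb k(v)$, and by part (i), since the relevant norm is nonzero,
$$
c_p=\frac{\bigl(u_i^{(m)}\otimes u_j^{(n)},\,u_{i+j-p}^{(m+n-2p)}\bigr)}{\bigl(u_{i+j-p}^{(m+n-2p)},\,u_{i+j-p}^{(m+n-2p)}\bigr)}.
$$
To evaluate the numerator I would expand $u_{i+j-p}^{(m+n-2p)}$ over the basis $\{u_{i'}^{(m)}\otimes u_{j'}^{(n)}\}$ — the coefficients are by definition the Clebsch--Gordan symbols appearing in the statement, obtainable from \lemref{CG} and evaluated in \lemref{CGspecialcase} — and use orthogonality of that basis (immediate from the weight argument applied to each tensor factor) to retain only the term $(i',j')=(i,j)$. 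That term contributes the symbol with bottom row $(i,j,i+j-p)$ times $(u_i^{(m)},u_i^{(m)})(u_j^{(n)},u_j^{(n)})=v^{i^2-mi}\qbinom{m}{i}\,v^{j^2-nj}\qbinom{n}{j}$, using \eqnref{normalizedforms}. Dividing by the displayed formula for $(u^{(m+n-2p)}_{i+j-p},u^{(m+n-2p)}_{i+j-p})$ and cancelling $q$-binomials gives the claimed quotient, the final check being that
$$
i^2-mi+j^2-nj-p(2p-2m-1)+(m+n-2p-(i+j-p))(i+j-p)=(in+jm-2ij)+(1+m-n-p)p,
$$
which is exactly the exponent of $v$ in the asserted formula. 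I expect the only real obstacles to be this last piece of exponent bookkeeping and confirming that the sign and normalization of the Clebsch--Gordan symbol used here match the conventions fixed in \lemref{CG} and \corref{CG2}; everything else is the weight argument together with norms already in hand.
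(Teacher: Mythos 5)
Your proposal is correct and matches the route the paper prepares: the norm computations displayed immediately before the proposition (for $(u^{(m+n-2p)},u^{(m+n-2p)})$ and $(u^{(m+n-2p)}_k,u^{(m+n-2p)}_k)$) are exactly the ingredients for the orthogonality-based coefficient extraction $c_p=(u_i^{(m)}\otimes u_j^{(n)},u_{i+j-p}^{(m+n-2p)})/(u_{i+j-p}^{(m+n-2p)},u_{i+j-p}^{(m+n-2p)})$, with the Clebsch--Gordan symbol entering as the $(i,j)$ coefficient in the expansion of $u_{i+j-p}^{(m+n-2p)}$. Your exponent bookkeeping also checks out: expanding $i^2-mi+j^2-nj-p(2p-2m-1)+(m+n-p-i-j)(i+j-p)$ does reduce to $in+jm-2ij+(1+m-n-p)p$.
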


In \hbox{$\mathcal F_m\otimes{\mathcal F_n}^{\rho_1}$} 
(recall $\phi(u^{(n)}_j)=(-v)^{-j} u^{(n)}_{n-j}$ )
\begin{align}
u^{(m)}_i\otimes u_j^{(n)}
&=(-1)^{j}
v^{i( 2j - n )     + mn-j(1 +m) } \\
 &\quad \times \sum_p v^{(1+m-n-p)p}
  \frac{\left[\begin{matrix} m \\ i 
     \end{matrix}\right]\left[\begin{matrix} n \\
     j\end{matrix}\right] 
     \left[\begin{matrix} m \\ p\end{matrix}\right]
   \left[\begin{matrix} m & n & m+n-2p
  \\ i & n-j & i+n-j-p \end{matrix}\right]}{
  \left[\begin{matrix} m+n-p+1 \\ p \end{matrix}\right]
  \left[\begin{matrix} n \\  p \end{matrix}\right]
  \left[\begin{matrix} m+n-2p
  \\  i+n-j-p \end{matrix}\right]}u_{i+n-j-p}^{(m+n-2p)}.\notag
\end{align}

In particular 
\begin{align}\label{CGdecomp}
u^{(m)}\otimes u_j^{(n)}&=(-1)^{j}
v^{ m n-j(1 +m)}  \sum_{p=0}^{\min\{n-j,m+j\}} v^{-np}
  \frac{\left[\begin{matrix} n \\  j \end{matrix}\right]
  \left[\begin{matrix} m \\ p\end{matrix}\right]
   \left[\begin{matrix}{n-j} \\ {p}\end{matrix}\right]}{
  \left[\begin{matrix} m+n-p+1 \\ p \end{matrix}\right]
  \left[\begin{matrix} n \\  p \end{matrix}\right]
  \left[\begin{matrix} m+n-2p
  \\ n-j-p \end{matrix}\right]}u_{n-j-p}^{(m+n-2p)} .
\end{align}
as 
\begin{equation*}
\left[\begin{matrix} m & n & m+n-2p
\\ 0 & p+r & r \end{matrix}\right]
=v^{(-p)(m-p+1)}{\qbinom {p+r} {p}}.
\end{equation*}
for $0\leq n-j-p\leq m+n-2p$, i.e. for $p\leq \{n-j,m+j\}$
\begin{align}
u^{(m)}_m\otimes u_j^{(n)}
&=(-1)^{j}
v^{ m(m+j - n) -j} \\
 &\quad \times \sum_p v^{(1+m-n-p)p}
  \frac{\left[\begin{matrix} n \\
     j\end{matrix}\right] 
     \left[\begin{matrix} m \\ p\end{matrix}\right]
   \left[\begin{matrix} m & n & m+n-2p
  \\ m & n-j & m+n-j-p \end{matrix}\right]}{
  \left[\begin{matrix} m+n-p+1 \\ p \end{matrix}\right]
  \left[\begin{matrix} n \\  p \end{matrix}\right]
  \left[\begin{matrix} m+n-2p
  \\  m+n-j-p \end{matrix}\right]}u_{m+n-j-p}^{(m+n-2p)}\\
&=(-1)^{j}
v^{-j} \\
 &\quad \times \sum_p (-1)^pv^{(1+m-p)p}
  \frac{\left[\begin{matrix} n \\
     j\end{matrix}\right] 
     \left[\begin{matrix} m \\ p\end{matrix}\right]
   \left[\begin{matrix} j \\ p \end{matrix}\right]}{
  \left[\begin{matrix} m+n-p+1 \\ p \end{matrix}\right]
  \left[\begin{matrix} n \\  p \end{matrix}\right]
  \left[\begin{matrix} m+n-2p
  \\  m+n-j-p \end{matrix}\right]}u_{m+n-j-p}^{(m+n-2p)},\notag
\end{align}
if $m\geq n$, as
\begin{align*}
&\left[\begin{matrix} m & n & m+n-2p
  \\ m & p+k-m & k \end{matrix}\right]  =(-1)^pv^{(p-m)(m +n) +mk}
 {\qbinom {m+n-p-k} {p}}.
\notag
\end{align*}

\section{Basis and the Intertwining map $\mathcal L$}
\subsection{A Basis} For $s\geq 1$,
and any lowest weight vector $\eta$ of weight $Tv^{\lambda+\rho}$, set
\begin{equation}\label{defofFinv}
F^{(-k)}\eta:=T'_{-1}(F^{(k)})\eta
=v^{k(k-1)}F^{-k}K^{k}[K;-1]^{(k)}\eta
=v^{k(\lambda+k)}T^k[T;\lambda]^{(k)}F^{-k}\eta
\end{equation}
as
$$
T'_{-1}(E^{(k)})=(-1)^kv^{-k(k-1)}K^{-k}F^{(k)},\quad
\quad T'_{-1}(F^{(k)})=(-1)^kv^{k(k-1)}E^{(k)}K^{k}.
$$

\begin{lem}[\cite{MR2586983}] \label{binomiallemma} Suppose $r,s\in\mathbb Z$, $s>
r\geq 0$, $\zeta$ is a highest weight vector of weight
$Tv^{\lambda-\rho}$ and $\eta$ is a lowest weight vector of
weight $Tv^{\lambda+\rho}$.  Then
\begin{equation}\label{hw1}
E^{(r)}F^{(s)}\zeta
=\qbinom{T;\lambda-1+r-s}{ r}F^{(s-r)}\zeta,
\quad F^{(r)}F^{(s)}\zeta
=\qbinom{r+s}{ r}F^{(s-r)}\zeta, 
\end{equation}
and
\begin{align}
F^{(r)}F^{(-s)}\eta
&=v^{r(\lambda+2s-r)}T^r
 \qbinom{T;\lambda+s}{ r} F^{(r-s)}\eta, \\
E^{(r)}F^{(-s)}\eta
&=(-1)^rv^{-r(\lambda+r+2s)}T^{-r}
 \qbinom{r+s}{ r}
  F^{(-r-s)}\eta, 
\end{align}
\end{lem}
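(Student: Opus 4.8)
The plan is to verify all four identities in \lemref{binomiallemma} by direct computation inside $\ru$, reducing everything to two base cases---the action of a single divided power $E^{(r)}$ or $F^{(r)}$ on the vectors $F^{(s)}\zeta$ and $F^{(-s)}\eta$---and then iterating. For \eqnref{hw1}, the first identity $E^{(r)}F^{(s)}\zeta=\qbinom{T;\lambda-1+r-s}{r}F^{(s-r)}\zeta$ is the Shapovalov-type computation in a Verma module: one proves it by induction on $r$, commuting one $E$ past the string $F^{(s)}$ using the relation $[E,F]=(K-K^{-1})/(v-v^{-1})$ in its divided-power form $E F^{(s)} = F^{(s)} E + F^{(s-1)}[K;1-s]$ (with $[K;c]$ acting on a weight-$Tv^{\lambda-\rho}$ vector as $[T;\lambda-1+c]$ after accounting for the weight shift caused by the $F$'s), and then using the $q$-Pascal recursion for $\qbinom{T;r}{j}$, namely $\qbinom{T;r}{j}=\qbinom{T;r-1}{j-1}v^{?}\cdots + \cdots$, to collapse the sum. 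The second identity $F^{(r)}F^{(s)}\zeta=\qbinom{r+s}{r}F^{(s-r)}\zeta$ is just the divided-power multiplication rule $F^{(r)}F^{(s)}=\qbinom{r+s}{r}F^{(r+s)}$ combined with the fact that on a highest weight vector $F^{(a)}\zeta$ is (up to the stated binomial) $F^a\zeta/[a]!$; here $F^{(s-r)}\zeta$ means $F^{(s-r)}\zeta$ in the obvious sense when $s\ge r$, which is the hypothesis.

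For the two statements involving the negative divided power, the strategy is to substitute the \emph{definition} \eqnref{defofFinv}, $F^{(-k)}\eta = v^{k(\lambda+k)}T^k[T;\lambda]^{(k)}F^{-k}\eta$, and reduce to honest computations with $F^{-k}$ acting on the lowest weight vector $\eta$ (whose weight is $Tv^{\lambda+\rho}$, so $K$ acts as $Tv^{\lambda+1}$ after the usual $\rho$-shift convention). For $F^{(r)}F^{(-s)}\eta$, one applies $F^{(r)}=F^r/[r]!$ to $v^{s(\lambda+s)}T^s[T;\lambda]^{(s)}F^{-s}\eta$, getting $v^{s(\lambda+s)}T^s[T;\lambda]^{(s)}\qbinom{r-s+\text{sign}}{\cdot}F^{r-s}\eta$ up to a power of $v$ from moving nothing (the $F$'s commute), then rewrites the answer back in terms of $F^{(r-s)}\eta$ using \eqnref{defofFinv} again (if $r\ge s$ this is $F^{(r-s)}\eta$; if $r<s$ it is the negative divided power $F^{(r-s)}\eta=F^{(-(s-r))}\eta$, and the definition must be invoked with $k=s-r$). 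The bookkeeping of the $v$-powers and the $T$-powers, together with the identity $[T;\lambda]^{(s)}/[T;\lambda]^{(r-s)} \leftrightarrow \qbinom{T;\lambda+s}{r}[r]!$-type relations following from \eqnref{r1}, is what produces the stated $v^{r(\lambda+2s-r)}T^r\qbinom{T;\lambda+s}{r}$. The computation of $E^{(r)}F^{(-s)}\eta$ is entirely parallel but uses $E^{(r)}F^{-s}\eta$; since $\eta$ is a lowest weight vector, $E$ acting on $F^{-s}\eta$ is governed by the $\germ{sl}_2$ relation in the "lowering" direction, and one gets a pure binomial $\qbinom{r+s}{r}$ with no $[T;\cdot]$ because on a lowest weight string the $[K;c]$ terms evaluate to ordinary $q$-integers---this is exactly the mirror of the first identity in \eqnref{hw1}.

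The main obstacle I expect is purely combinatorial: keeping the exponents of $v$ and $T$ straight through the repeated conversions between $F^{(-k)}\eta$ and $v^{k(\lambda+k)}T^k[T;\lambda]^{(k)}F^{-k}\eta$, especially because the weight $\lambda$ appearing inside $[T;\lambda+s]$ shifts each time one $F$ is applied, so the "$\lambda$" in a subidentity is not the original $\lambda$ but $\lambda$ shifted by the number of raising/lowering operators already applied. I would organize this by proving each identity by induction on $r$ with $s$ fixed, at each step commuting exactly one generator past one $F^{\pm1}$ and invoking the single-variable $q$-Pascal recursion for $\qbinom{T;r}{j}$ (whose validity is immediate from the definition $\qbinom{T;r}{j}=[T;r]_{(j)}/[j]!$ and the telescoping $[T;r]_{(j)}=[T;r][T;r-1]_{(j-1)}$); the base case $r=0$ is trivial in all four cases, and the inductive step is a one-line $q$-binomial identity once the weight-shift is tracked correctly. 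Alternatively, since \lemref{binomiallemma} is quoted from \cite{MR2586983}, one may simply cite that reference; but the self-contained argument above is the one I would write out.
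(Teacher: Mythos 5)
The paper itself gives no proof of \lemref{binomiallemma}: it is quoted from \cite{MR2586983} and appears only as a cited statement, so there is nothing in this text to compare your argument against. Your plan---verify the two highest-weight identities by inducting on $r$ with the divided-power commutation $EF^{(s)}=F^{(s)}E+F^{(s-1)}[K;1-s]$ and the $q$-Pascal recursion, and handle the two lowest-weight identities by unwinding the definition \eqnref{defofFinv} and converting back at the end---is the standard and correct route, and it is surely how one would reconstruct the omitted proof.

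There is, however, one concrete gap you should not paper over. The second identity in \eqnref{hw1} as printed, $F^{(r)}F^{(s)}\zeta=\qbinom{r+s}{r}F^{(s-r)}\zeta$, cannot be right: the left side lowers the weight of $\zeta$ by $2(r+s)$ while $F^{(s-r)}\zeta$ sits $2(s-r)$ below $\zeta$, so the two sides lie in different weight spaces unless $r=0$. The correct statement is exactly the divided-power multiplication rule you invoke, $F^{(r)}F^{(s)}\zeta=\qbinom{r+s}{r}F^{(r+s)}\zeta$. You notice this tension, but the remark about ``$F^{(s-r)}\zeta$ in the obvious sense when $s\geq r$'' does not reconcile the mismatch --- a proof must explicitly flag the misprint and prove the corrected identity. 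Your explanation of the fourth identity is also slightly off: the appearance of a bare $\qbinom{r+s}{r}$ there is not because ``the $[K;c]$ terms evaluate to ordinary $q$-integers'' on a lowest-weight string --- the result still carries the $T$- and $\lambda$-dependent prefactor $v^{-r(\lambda+r+2s)}T^{-r}$. Rather, when you expand $F^{(-s)}\eta$ and $F^{(-r-s)}\eta$ via \eqnref{defofFinv}, the $[T;\lambda]^{(\cdot)}$ factors on the two sides cancel up to a ratio that telescopes against the $[K;\cdot]$ terms generated by the commutation, leaving only the integer binomial and a monomial in $v,T$. That cancellation is the mechanism; it is worth stating correctly, since the parallel cancellation in the third identity is only partial and leaves a genuine $\qbinom{T;\lambda+s}{r}$ behind.
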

Define indexing sets $ I_\lambda$ and $ I_{-\lambda}$ by $ I_\lambda =
\lbrace n-2,n-4,...\rbrace$, $ I_{-\lambda} = \lbrace -n,-n-2,...\rbrace. $  
One should compare the previous result with
\begin{lem}\label{locstructure}\cite[2.2]{MR2586983} Now for
integers $j \in  I_\lambda$ (resp.
$ I_{-\lambda}$) set $ k_j = {\frac{n-2-j}{ 2}}$ and $l_j = {\frac{-n-j}{2}}
$ and define basis vectors for $M(m+\l)$ and $M(m-\l)$ by 
$\mathbf v_j= F ^{k_j} \otimes 1_{m+\lambda}$ and $\mathbf v_{j,s}= F ^{l_j}\otimes 1_{m-\lambda
-\rho}$. The action of $\ru$ is given by 

\begin{align}
K\mathbf v_j&= Tv^{\lambda-1-2k_j}\mathbf v_j, \quad F \mathbf w_{\lambda,j} = \mathbf w_{\lambda,j-2} \\
K\mathbf v_{j,s}&=Tv^{-\lambda-1-2l_j}\mathbf v_{j,s}\quad  F\mathbf w_{\lambda,j} =\mathbf w_{\lambda,j-2} \\
E\mathbf v_j&=  [k_j][T; -l_j] \mathbf w_{\lambda,j+2}\quad ,\quad    E\mathbf v_j = [l_j][T; -k_j]
\mathbf w_{-\lambda,j+2}. 
\end{align}
\end{lem}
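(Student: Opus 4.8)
The plan is to prove the lemma by a direct computation with the Poincar\'e--Birkhoff--Witt basis of the two Verma modules, using the triangular decomposition of $\ru$ and the defining relations of $\ru$ (recall we are in the $\germ{sl}_2$ case). Recall $_RM(m+\lambda)=\ru\otimes_{_RB} R_{m+\lambda-1}$; write $1_{m+\lambda}$ for the canonical highest weight generator, so that $E\cdot 1_{m+\lambda}=0$ and $K\cdot 1_{m+\lambda}=Tv^{\lambda-1}\,1_{m+\lambda}$. Since $\ru^-=R[F]$, the Verma module is free of rank one over $R[F]$, so $\{F^{k}\otimes 1_{m+\lambda}:k\geq 0\}$ is an $R$-basis; reindexing by $k=k_j=(n-2-j)/2$, $j\in I_\lambda$, produces $\{\mathbf v_j\}$, and likewise $k=l_j=(-n-j)/2$, $j\in I_{-\lambda}$, produces the basis $\{\mathbf v_{j,s}\}$ of $_RM(m-\lambda)$. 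At the outset one records $k_{j-2}=k_j+1$, $k_{j+2}=k_j-1$ (and the analogues for $l_j$), and that under the normalization $n=\lambda+1$ in force here one has $\lambda-k_j=-l_j$ and $-\lambda-l_j=-k_j$; this normalization is just the statement that the label $j$ equals the $v$-exponent of the $K$-weight of $\mathbf v_j$, resp.\ of $\mathbf v_{j,s}$.

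The $K$-action and the $F$-action are immediate. From $KFK^{-1}=v^{-2}F$ one gets $K\mathbf v_j=v^{-2k_j}F^{k_j}K\otimes 1_{m+\lambda}=Tv^{\lambda-1-2k_j}\mathbf v_j$, and similarly $K\mathbf v_{j,s}=Tv^{-\lambda-1-2l_j}\mathbf v_{j,s}$. For the lowering operator $F\mathbf v_j=F^{k_j+1}\otimes 1_{m+\lambda}=\mathbf v_{j-2}$ since $k_{j-2}=k_j+1$, and likewise $F\mathbf v_{j,s}=\mathbf v_{j-2,s}$.

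Only the $E$-action carries any content, and here I would appeal to \lemref{binomiallemma}. Taking $r=1$ in its first identity with $\zeta=1_{m+\lambda}$ (a highest weight vector of weight $Tv^{\lambda-\rho}$), and passing from divided to ordinary powers ($E^{(1)}=E$, $F^{(s)}=F^{s}/[s]!$), one obtains $EF^{s}\,1_{m+\lambda}=[s]\,[T;\lambda-s]\,F^{s-1}\,1_{m+\lambda}$ (valid for all $s\geq 0$, the case $s=0$ being $E\cdot 1_{m+\lambda}=0$). Evaluating at $s=k_j$ and using $F^{k_j-1}\otimes 1_{m+\lambda}=\mathbf v_{j+2}$ gives $E\mathbf v_j=[k_j]\,[T;\lambda-k_j]\,\mathbf v_{j+2}=[k_j]\,[T;-l_j]\,\mathbf v_{j+2}$. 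The same argument applied to $_RM(m-\lambda)$, whose generator has weight $Tv^{-\lambda-\rho}$, yields $E\mathbf v_{j,s}=[l_j]\,[T;-\lambda-l_j]\,\mathbf v_{j+2,s}=[l_j]\,[T;-k_j]\,\mathbf v_{j+2,s}$, which is the second of the two $E$-formulas in the statement. If one prefers to avoid \lemref{binomiallemma}, one first proves by induction on $k$ the identity $EF^{k}=F^{k}E+[k]F^{k-1}\frac{v^{1-k}K-v^{k-1}K^{-1}}{v-v^{-1}}$ in $\ru$ (from $[E,F]=\frac{K-K^{-1}}{v-v^{-1}}$ and $KFK^{-1}=v^{-2}F$), applies it to the generator, uses $E\cdot 1=0$, and recognizes the scalar $\frac{v^{1-k}Tv^{\lambda-1}-v^{k-1}T^{-1}v^{1-\lambda}}{v-v^{-1}}$ as $[T;\lambda-k]$.

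I do not expect a genuine obstacle: this is a routine verification, and is in any case quoted from \cite{MR2586983}. The only points demanding attention are bookkeeping — keeping the reindexing $j\leftrightarrow k_j,l_j$ and the normalization $n=\lambda+1$ consistent so that each scalar multiplies the correct basis vector, and recognizing the various $K$-eigenvalue combinations that arise as instances of $[T;r]=\frac{v^{r}T-v^{-r}T^{-1}}{v-v^{-1}}$, which is precisely the step that upgrades the classical $\germ{sl}_2$ formulas to their $R$-deformed counterparts. With those identifications made, the three displayed lines of the lemma read off directly.
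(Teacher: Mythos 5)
Your verification is correct and is the natural (indeed essentially the only) route: read off $K$ and $F$ directly from the PBW description of a rank-one $R[F]$-module and then compute $E$ via the commutation identity $[E,F^{k}]=[k]F^{k-1}\frac{v^{1-k}K-v^{k-1}K^{-1}}{v-v^{-1}}$ applied to the highest-weight generator, after which the scalar is recognized as $[k][T;\lambda-k]$. The paper itself gives no proof here — the lemma is quoted from \cite[2.2]{MR2586983} — so there is nothing internal to compare against; your only nontrivial contribution is correctly inferring the normalization $\lambda=n-1$ (equivalently $k_j-l_j=\lambda$), which is implicit in the way $I_\lambda,I_{-\lambda}$ are parameterized by $n$ while the Verma modules are labeled by $\lambda$, and reading past the two evident misprints in the statement ($\mathbf v$ versus $\mathbf w$, and the repeated $E\mathbf v_j$ that should be $E\mathbf v_{j,s}$). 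Nothing is missing.
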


\begin{cor}\label{Lonbasis}  For $k\geq 0$  and $0\leq j\leq n$ we
have
\begin{align*}
\mathcal L&(F^{(-k)}\eta \otimes u_j^{(n)}) \\
&= \sum_{q=j-n}^{j}   
  \sum_{p=0}^{n-j+k}\sum_{t=q}^k\,
  (-1)^{t-q} v^{-\frac{3p(p-1)}{2}+p( 2j+ 2p- n ) }\{p\} \\ 
&\quad \times 
   v^{t( -1 + 2j - 2k - n - \lambda)  + 
      q(1 + 4k - p - 2q + 2\lambda )}  \\   
&\quad
\times
   T^{2q-t}{\qbinom {p-t+j} j}\qbinom{T;\lambda+k}{ t}
 \qbinom{k-q}{ t-q}\qbinom{n+q-j }{ p+q-t}
  F^{(q-k)}
   \eta\otimes  u^{(n)}_{j-q}.
\end{align*}
\end{cor}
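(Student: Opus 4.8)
The plan is to compute $\mathcal L(F^{(-k)}\eta\otimes u_j^{(n)})$ directly from the defining formula \eqnref{mathcalL}, namely $\mathcal L=\sum_p v^{-3p(p-1)/2}\{p\}E^{(p)}K^{-p}F^{(p)}$, by expanding the three factors $F^{(p)}$, $K^{-p}$, $E^{(p)}$ in turn on the tensor factors, using the coproduct to distribute $E^{(p)}$ and $F^{(p)}$ across $F^{(-k)}\eta$ and $u_j^{(n)}$, and then re-collecting terms. First I would apply $F^{(p)}$: using the quantum binomial expansion of $\Delta(F^{(p)})$ together with the action formulas \eqnref{irred} on $u_j^{(n)}$ and the identities for $F^{(r)}F^{(-s)}\eta$ in \lemref{binomiallemma}, one writes $F^{(p)}(F^{(-k)}\eta\otimes u_j^{(n)})$ as a sum over a single summation index (call it $q$, recording how much of the power lands on the $\eta$ factor). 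Next I would apply $K^{-p}$, which is diagonal and merely contributes a power of $Tv^{\bullet}$ read off from the weights of the two tensor slots. Finally I would apply $E^{(p)}$ via its coproduct; here each slot contributes a divided power $E^{(a)}$ resp. $E^{(b)}$ with $a+b=p$, evaluated on $F^{(q-k)}\eta$ (using the second displayed formula of \lemref{binomiallemma}, $E^{(r)}F^{(-s)}\eta=(-1)^rv^{\bullet}T^{-r}\qbinom{r+s}{r}F^{(-r-s)}\eta$) and on $u_{j-q}^{(n)}$ (using $E^{(p)}u_j^{(n)}=\qbinom{n+p-j}{p}u_{j-p}^{(n)}$ from \eqnref{irred}). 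Renaming the split of the $E$-power introduces the index $t$ (with $t-q$ tracking the part that stays on $u^{(n)}$), and $p$ remains as the outer divided-power index. This produces exactly the triple sum over $p,q,t$ in the statement.

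The bookkeeping of $v$-powers and $T$-powers is the routine but delicate part: each of the three steps contributes an exponent that is quadratic in the indices (from the $v^{-3p(p-1)/2}$ prefactor, from the $v^{k(\lambda+k)}T^k[T;\lambda]^{(k)}$ normalization in \eqnref{defofFinv}, from the weight-dependent factors $v^{r(\lambda+2s-r)}T^r$ and $v^{-r(\lambda+r+2s)}T^{-r}$ in \lemref{binomiallemma}, and from the $K^{-p}$ action), and these must be summed and simplified to match the stated exponents $-\tfrac{3p(p-1)}{2}+p(2j+2p-n)$ and $t(-1+2j-2k-n-\lambda)+q(1+4k-p-2q+2\lambda)$ for the $v$'s, and $2q-t$ for the power of $T$. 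Along the way the products of $q$-binomial coefficients coming from $\Delta(F^{(p)})$, $\Delta(E^{(p)})$, and the $\qbinom{r+s}{r}$-type factors must be combined; I expect the only nontrivial combinatorial input needed is the Vandermonde-type identity \eqnref{Ma1} (or a direct cancellation of $[p]!$'s against the divided-power denominators) to bring the coefficient into the form $\{p\}\,\qbinom{p-t+j}{j}\qbinom{T;\lambda+k}{t}\qbinom{k-q}{t-q}\qbinom{n+q-j}{p+q-t}$ displayed.

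Finally I would pin down the summation ranges. The inner sums are forced by the supports of the $q$-binomials: $\qbinom{k-q}{t-q}$ and the requirement $t\ge q\ge 0$ give $q\le t\le k$; $\qbinom{n+q-j}{p+q-t}$ together with $\qbinom{p-t+j}{j}$ and $0\le j-q\le n$ give the ranges $j-n\le q\le j$ and $0\le p\le n-j+k$. So the ranges $\sum_{q=j-n}^{j}\sum_{p=0}^{n-j+k}\sum_{t=q}^{k}$ are exactly the ones coming out of the computation, with all other terms vanishing because some binomial coefficient is zero.

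The main obstacle will be the third step: correctly applying $\Delta(E^{(p)})=\sum_{a+b=p}v^{\bullet}E^{(a)}K^{\bullet}\otimes E^{(b)}$ (with the right $v$- and $K$-weight corrections in the coproduct, which on the first slot $F^{(q-k)}\eta$ involve $[T;\,\bullet]$-type factors through \lemref{binomiallemma}) and then reorganizing the resulting double sum over $(a,b)$ and the earlier index $q$ into the single clean index $t$ while simultaneously telescoping the $q$-binomial products. Everything else — the $F^{(p)}$ step, the diagonal $K^{-p}$ step, and the determination of the summation bounds — is mechanical given \lemref{binomiallemma} and \eqnref{irred}.
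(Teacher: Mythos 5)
Your overall plan is the right one and is essentially how the computation goes: expand $\mathcal L = \sum_p v^{-3p(p-1)/2}\{p\}E^{(p)}K^{-p}F^{(p)}$ through the tensor-product module structure (coproduct on each of $F^{(p)}$ and $E^{(p)}$), evaluate the divided-power actions on the two slots with \eqnref{irred} and \lemref{binomiallemma}, and collect terms. The factor $\{p\}$ then comes straight from $\mathcal L$, and the four $q$-binomials in the answer are exactly the four coefficients produced by the four divided-power actions: $F$ on the $\eta$-slot gives $\qbinom{T;\lambda+k}{t}$ (together with a $T^t$), $F$ on the $u^{(n)}$-slot gives $\qbinom{p-t+j}{j}$, $E$ on the $\eta$-slot gives $(-1)^{t-q}\qbinom{k-q}{t-q}$ (with a $T^{-(t-q)}$), and $E$ on the $u^{(n)}$-slot gives $\qbinom{n+q-j}{p+q-t}$.

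Two corrections to what you wrote, though. First, your index identifications do not match the statement. Writing $a$, $p-a$ for the $F$-powers landing on the $\eta$- and $u^{(n)}$-slots and $c$, $p-c$ for the $E$-powers, the coefficient $\qbinom{T;\lambda+k}{t}$ forces $t=a$ (not $q$); the paper's $q$ is $a-c$, the net change in the negative $F$-degree on the $\eta$-slot, and the $E$-power landing on $u^{(n)}$ is $p+q-t$, not $t-q$. In particular, the intermediate state after the $F^{(p)}$-step has the $u^{(n)}$-slot at $u_{j+p-t}^{(n)}$ --- by \eqnref{irred} the $F$-action \emph{raises} the subscript --- not at $u_{j-q}^{(n)}$ as you assert; it only comes down to $u_{j-q}^{(n)}$ after the subsequent $E^{(p+q-t)}$. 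Second, you should not need \eqnref{Ma1} or any Vandermonde-type re-summation here: each $q$-binomial in the answer is produced directly by a single application of \lemref{binomiallemma} or \eqnref{irred}, and the only remaining work is the accumulation of the $v$- and $T$-exponents from \lemref{binomiallemma}, the diagonal $K^{-p}$ step, and the $K^{\pm\bullet}$ factors in the two coproducts, which must collapse to the stated $-\tfrac{3p(p-1)}{2}+p(2j+2p-n)+t(-1+2j-2k-n-\lambda)+q(1+4k-p-2q+2\lambda)$ and $2q-t$ --- and this, as you rightly flag, is the genuinely delicate part.
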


\subsection{}  The
articles \cite{MR1327136} and
\cite{MR2586983} study noncommutative localization of highest
weight modules. This article may be viewed as an extension of what
was begun there. For any $\bu$-module $A$ let $A_F$ denote the
localization of
$A$ with respect to the multiplicative set in $\bu$ generated by $F$. If
$F$ acts without torsion on $A$ (we shall assume this throughout) then $A$
injects into $A_F$ and we have the short exact sequence of $\bu$-modules:
$0\ra A\ra A_F\ra A_\pi \ra 0$. 

\section{Maps into the Harmonics.}  
\subsection{Harmonics}  We know from \cite{MR1198203} that
$F(\bu)\cong \mathcal H\otimes Z(\bu)$ where $\mathcal
H=\oplus_{n\in\mathbb N}\mathcal L_{2n}$ is the space of {\it
harmonics}, and
$\mathcal L_{2n}\cong
\mathcal F_{2n}$.  We would now like to give an explicit basis of
$\mathcal L_{2n}$.

\begin{lem} For $n\in\mathbb N$, we can take
\begin{equation}
\mathcal H_{2n}=\oplus_{p=0}^{n}k(v)\text{ad}\,F^{(p)}(E^{(n)}K^{-n}),
\end{equation}
where
\begin{align}	
\text{ad}\,F^{(p)}(E^{(n)}K^{-n})
=\sum_{i=0}^n
 F^{(p-i)}\sum_{m=0}^{p-i}
  (-1)^{m+i}v^{( 1-p+2n )(m+i)  }
 \qbinom{ p-i }{ m} 
\qbinom{K; i-m-n}{ i}E^{(n-i)}K^{p-n} 
\end{align}
\end{lem}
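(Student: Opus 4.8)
The plan is to compute $\mathrm{ad}\,F^{(p)}(E^{(n)}K^{-n})$ directly by expanding the adjoint action of the divided power $F^{(p)}$ in terms of ordinary products and then repeatedly commuting $F$'s past $E^{(n)}$ and $K^{-n}$ using the quantum Serre and commutation relations. Recall that for the adjoint action with our Hopf structure one has $\mathrm{ad}\,F(u) = Fu - K^{-1}uK F$ (up to the normalization fixed in the earlier sections), and more generally $\mathrm{ad}\,F^{(p)}(u)$ is a signed $v$-weighted sum $\sum_{j} (\pm) v^{?}\, F^{(p-j)} u\, (\text{something in }\bu^0)\, F^{(j)}$ obtained by iterating this twisted Leibniz rule; the coefficients are $q$-binomials $\qbinom{p-i}{m}$ after collecting terms, exactly as in the right-hand side. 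So the first step is to write out $\mathrm{ad}\,F^{(p)}$ as such a sum, being careful about which Hopf-algebra conventions the paper uses (the ones implicit in $\varrho$, $S$ and the formula for $\mathbb P$).

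Second, I would move the trailing $F^{(j)}$ leftward through $E^{(n)}K^{-n}$. Commuting $F^{(j)}$ past $K^{-n}$ is immediate and only contributes a power of $v$. Commuting $F^{(j)}$ past $E^{(n)}$ is governed by the standard identity
\begin{equation*}
F^{(j)}E^{(n)} = \sum_{i\ge 0} E^{(n-i)}\,\qbinom{K;\,2i-j-n}{i}\,F^{(j-i)}
\end{equation*}
(the quantum analogue of the $\mathfrak{sl}_2$ relation, with the $K$-dependent middle factor a $q$-binomial in $K$), valid in $\bu$ and already in the spirit of \lemref{binomiallemma}. Substituting this in, one lands on terms of the shape $F^{(p-i-?)}E^{(n-i)}(\text{element of }\bu^0)F^{(?)}$, and then I would use $F^{(a)}F^{(b)} = \qbinom{a+b}{a}F^{(a+b)}$ to recombine the two $F$-divided-powers into a single $F^{(p-i)}$ sitting on the far left, producing the outer $F^{(p-i)}$ and the inner factor $\sum_{m} (-1)^{m+i} v^{(1-p+2n)(m+i)}\qbinom{p-i}{m}\qbinom{K;\,i-m-n}{i}E^{(n-i)}K^{p-n}$.

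Third, I would verify the exponents. The powers of $v$ accumulate from three sources: the twisting $K^{\mp 1}$-conjugations inside $\mathrm{ad}\,F^{(p)}$ acting on the weight of $E^{(n)}K^{-n}$ (which has $K$-weight $2n$ coming from $E^{(n)}$ shifted by $K^{-n}$, net $0$ — this is exactly why $E^{(n)}K^{-n}$ is the right harmonic generator and why the bookkeeping closes), the commutation of $F^{(j)}$ past $E^{(n)}$, and past $K^{-n}$. Collecting them should give the clean exponent $(1-p+2n)(m+i)$; I expect this to match after using $[a][b]$-type identities and the shift $\qbinom{K;r}{i}$ conventions. Finally I would confirm the range $p=0,\dots,n$ for the direct sum: since $E^{(n-i)}$ vanishes for $i>n$ and the $q$-binomials force $0\le i\le \min(p,n)$, and since $\mathrm{ad}$ raises $F$-degree, the span is as claimed; linear independence (hence the direct sum) follows because distinct $p$ give elements of distinct $F$-leading degree, or alternatively from $\mathcal{H}_{2n}\cong\mathcal F_{2n}$ being $(2n+1)$-dimensional with these $n+1$ elements of highest weight together with their $E$-string — actually one only needs that these $n+1$ vectors are the highest-weight-to-lowest images, matching $\dim$.

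The main obstacle will be pinning down the precise Hopf-algebraic normalization of $\mathrm{ad}\,F^{(p)}$ — the signs, the $K^{\pm}$-twists, and the resulting $v$-powers — so that the intermediate sums telescope into the stated $q$-binomial coefficients rather than some messier but equivalent expression; this is the kind of computation where an off-by-$v^{\text{small}}$ error is easy and the identity only looks clean after applying $q$-Pascal (the $\qbinom{p-i}{m}$ collection) at just the right stage. Everything else is a routine, if lengthy, manipulation using \eqnref{hw1}-type relations and the divided-power product rule.
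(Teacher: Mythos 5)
The paper states this lemma without proof, so there is no internal argument to compare your plan against. That said, the route you propose — expand $\mathrm{ad}\,F^{(p)}$ via the divided-power coproduct $\Delta(F^{(p)})=\sum_j v^{-j(p-j)}F^{(p-j)}\otimes K^{-(p-j)}F^{(j)}$ and the antipode, push the trailing $F^{(j)}$ factors leftward past $E^{(n)}K^{-n}$, then reabsorb $F^{(p-j)}F^{(j-i)}$ into $\qbinom{p-i}{j-i}F^{(p-i)}$ — is the only sensible computation, and it accounts for every structural feature of the target formula: the outer $i$ from the $E$--$F$ commutation, the inner $m=j-i$ from the coproduct index, $\qbinom{p-i}{m}$ from the $F$-recombination, $K^{p-n}$ from $S(K^{-(p-j)})$ shifted through $K^{-n}$, and the sign $(-1)^{m+i}=(-1)^j$. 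So the strategy is sound.

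Two things you should tighten. First, the commutation identity as you wrote it, $F^{(j)}E^{(n)}=\sum_i E^{(n-i)}\qbinom{K;2i-j-n}{i}F^{(j-i)}$, is off: transposing Lusztig's $E^{(a)}F^{(b)}=\sum_t F^{(b-t)}\qbinom{K;2t-a-b}{t}E^{(a-t)}$ through the $E\leftrightarrow F$, $K\mapsto K^{-1}$ symmetry gives $F^{(j)}E^{(n)}=\sum_i(-1)^i E^{(n-i)}\qbinom{K;j+n-i-1}{i}F^{(j-i)}$, equivalently $\sum_i E^{(n-i)}\qbinom{K^{-1};2i-j-n}{i}F^{(j-i)}$. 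So either the $K$ should be $K^{-1}$, or there is an explicit $(-1)^i$; this matters because it changes how the $(-1)^{m+i}$ in the statement decomposes between the antipode and the commutation, and you will not close the $v$-power bookkeeping without getting it right. Second, your discussion of the direct-sum claim is muddled: the vectors $\mathrm{ad}\,F^{(p)}(E^{(n)}K^{-n})$ for $p\geq 1$ are not ad-highest-weight vectors, and the printed range $p=0,\dots,n$ produces only $n+1$ vectors, which cannot span $\mathcal H_{2n}\cong\mathcal F_{2n}$ of dimension $2n+1$. The range must be $p=0,\dots,2n$ (with $\mathrm{ad}\,F^{(p)}$ annihilating the generator for $p>2n$); linear independence is then immediate from your own observation that distinct $p$ land in distinct ad-weight spaces $2n-2p$ — no dimension count or $E$-string argument is needed or correct.
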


For $m$, $n$, $r$ integers with $m+n$ even, $0\leq 2r\leq m+n$, we let
$\beta^{m,n}_{2r}:\mathcal F_m\otimes\mathcal F_n^{\rho_1}\to
\mathcal H$, be the
$\bu$-module homomorphism determined by 
\begin{equation}
\beta^{m,n}_{2r}(u^{(m+n-2q)})=\delta_{2r,m+n-2q}E^{(r)}
K^{-r}.
\end{equation}
so that $\text{im}\enspace\beta^{m,n}_{2r}=\mathcal L_{2r}$.

\begin{prop}\label{betaonabasis}Suppose $\eta$ is a lowest weight
vector of weight
$Tv^{\lambda+\rho}$.    Then
\begin{align*}
\rho_1&\Big(\beta^{m,n}_{2r}(u^{(m)}_i\otimes u^{(n)}_{j})
 \Big) F^{(-c)}\eta\\
&=(-1)^jv^{i(2j - n)+ mn-j(1 +m) +(1+ \frac{m-3n}{2}+r)
(\frac{m+n}{2}-r)} 
\\   \\
&\quad \times 
     \frac{\left[\begin{matrix} m \\  i \end{matrix}\right]
     \left[\begin{matrix} m \\  \frac{m+n}{2}-r \end{matrix}\right]
     \left[\begin{matrix} n \\  j \end{matrix}\right]
  \left[\begin{matrix} m & n & 2r
  \\ i & {n-j} & {i-j+\frac{n-m}{2}+r} \end{matrix}\right]}
  {\left[\begin{matrix} n
  \\  \frac{m+n}{2}-r \end{matrix}\right]
  \left[\begin{matrix} \frac{m+n}{2}+r+1
  \\  \frac{m+n}{2}-r \end{matrix}\right]\left[\begin{matrix} 2r
  \\  {i-j+\frac{n-m}{2}+r} \end{matrix}\right]} \\   \\
&\quad \times 
(v^{2\lambda+2+i-j+\frac{n-m}{2}+4c}T^2)^{j-i+\frac{m-n}{2}}\\   \\
&\quad \times 
 \sum_{l=0}^{i-j+\frac{n-m}{2}+r} (-1)^{r-l}
  v^{l\left(r+j-i+\frac{m-n}{2}+1 \right) }
 \qbinom{i-j+\frac{n-m}{2}+c}{ i-j+\frac{n-m}{2}+r-l}
 \qbinom{T;\lambda+l+c}{ r}
 \qbinom{l+c}{ l}
  F^{(j-i+\frac{m-n}{2}-c)}\eta
\end{align*}
and\begin{align*}
\rho_1&\Big(T_{-1}'\beta^{m,n}_{2r}(u^{(m)}_i\otimes u^{(n)}_{j})
 \Big) F^{(-c)}\eta\\
&=(-1)^jv^{i(2j - n)+ mn-j(1 +m) +(1+ \frac{m-3n}{2}+r)
(\frac{m+n}{2}-r)} \\   \\
&\quad \times 
     \frac{\left[\begin{matrix} m \\  i \end{matrix}\right]
     \left[\begin{matrix} m \\  \frac{m+n}{2}-r \end{matrix}\right]
     \left[\begin{matrix} n \\  j \end{matrix}\right]
  \left[\begin{matrix} m & n & 2r
  \\ i & {n-j} & {i-j+\frac{n-m}{2}+r} \end{matrix}\right]}
  {\left[\begin{matrix} n
  \\  \frac{m+n}{2}-r \end{matrix}\right]
  \left[\begin{matrix} \frac{m+n}{2}+r+1
  \\  \frac{m+n}{2}-r \end{matrix}\right]\left[\begin{matrix} 2r
  \\  {i-j+\frac{n-m}{2}+r} \end{matrix}\right]} (v^{ 2\,c
+\lambda}T)^{i-j+\frac{n-m}{2}}\\   \\
&\quad\times  \sum_{l=0}^{i-j+\frac{n-m}{2}+r}(-1)^{r-l}
  v^{  l (r +j-i+\frac{m-n}{2}+1) }
    \left[\begin{matrix}T;\lambda+c \\ l \end{matrix}\right]
    \left[\begin{matrix}T;\lambda+r+c-l \\ i-j+\frac{n-m}{2}+r-l \end{matrix}\right]
    \left[\begin{matrix}r+c-l \\ r \end{matrix}\right]
  F^{(i-j+\frac{n-m}{2}-c)}\eta  
\end{align*}
\end{prop}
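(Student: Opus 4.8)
The plan is to reduce the computation to three ingredients already available: the Clebsch--Gordan expansion of $u^{(m)}_i\otimes u^{(n)}_j$, the explicit basis of the harmonics, and \lemref{binomiallemma}. First I would apply the decomposition of $\mathcal F_m\otimes\mathcal F_n^{\rho_1}$ from \corref{CG2}, in the form of the expansion of $u^{(m)}_i\otimes u^{(n)}_j$ recorded just before \eqnref{CGdecomp}, to write $u^{(m)}_i\otimes u^{(n)}_j=\sum_p\kappa_p\,u^{(m+n-2p)}_{i+n-j-p}$ for suitable $\kappa_p\in R$. Since $\beta^{m,n}_{2r}$ is a $\bu$-module map that annihilates every summand of the decomposition except $\mathcal F_{2r}$, only $p_0=\tfrac{m+n}{2}-r$ survives, and $\kappa_{p_0}$ is exactly the scalar prefactor in the statement (the product of $q$-binomials and the Clebsch--Gordan symbol, times $(-1)^j v^{i(2j-n)+mn-j(1+m)+(1+\frac{m-3n}{2}+r)(\frac{m+n}{2}-r)}$), as one reads off by setting $p=p_0$ in that expansion. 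Writing $a=i-j+\tfrac{n-m}{2}+r$, so that $u^{(2r)}_a=F^{(a)}u^{(2r)}$ and $\beta^{m,n}_{2r}(u^{(2r)}_a)=\text{ad}\,F^{(a)}(E^{(r)}K^{-r})$, it then remains to prove that
\[
\rho_1\bigl(\text{ad}\,F^{(a)}(E^{(r)}K^{-r})\bigr)F^{(-c)}\eta
=\bigl(v^{2\lambda+2+a-r+4c}T^2\bigr)^{r-a}\sum_{l}(\cdots)\,F^{(r-a-c)}\eta,
\]
where $r-a=j-i+\tfrac{m-n}{2}$ and the sum is the one in the statement.

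For this I would expand $\text{ad}\,F^{(a)}(E^{(r)}K^{-r})$ by the explicit expansion of $\text{ad}\,F^{(p)}(E^{(n)}K^{-n})$ given above (with $n\mapsto r$, $p\mapsto a$), apply $\rho_1$ term by term using $\rho_1(F^{(p)})=(-1)^pv^{-p}E^{(p)}$, $\rho_1(E^{(p)})=(-1)^pv^pF^{(p)}$, $\rho_1(K^{j})=K^{-j}$ and $\rho_1\bigl(\qbinom{K;s}{i}\bigr)=(-1)^i\qbinom{K;i-s-1}{i}$ (the last coming from $\rho_1([K;t])=-[K;-t]$ and \eqnref{r1}), and then act on $F^{(-c)}\eta$ from the right. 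Acting right to left, each power of $K$ and each $\qbinom{K;\cdot}{\cdot}$ becomes a scalar of $R$ as soon as it meets a weight vector (concretely, $\qbinom{K;t}{i}$ on a vector of weight $Tv^{w}$ becomes $\qbinom{T;t+w}{i}$), while the divided powers of $E$ and $F$ are evaluated by \lemref{binomiallemma}. One checks that every term lands on the single vector $F^{(r-a-c)}\eta$ and that the factor $T^{2(r-a)}$ is produced by the three $K$-evaluations; what remains is a double sum, over the two summation indices of the harmonic basis formula, of products of two ``small'' $q$-binomials in $T$ and numerical $q$-binomials in $v$.

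The main obstacle is the collapse of this double sum to the single sum over $l$ in the statement. The inner sum is an alternating sum of the shape $\sum_{m'}(-1)^{m'}v^{\bullet}\qbinom{a-i'}{m'}\qbinom{T;\bullet+m'}{i'}$, which is killed by the $q$-analogue of $\sum_m(-1)^m\binom{N}{m}\binom{x+m}{i}=(-1)^N\binom{x}{i-N}$, leaving a single sum; the surviving product of $T$-binomials is then amalgamated into $\qbinom{T;\lambda+l+c}{r}$ by the $T$-deformed $q$-Vandermonde identity \eqnref{Ma2} (with \eqnref{Ma1} for the numerical factors), after which a careful comparison of the powers of $v$ and $T$ --- the step most prone to sign and off-by-one slips --- gives the closed form. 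For the second identity I would run the same argument with $\rho_1\beta^{m,n}_{2r}$ replaced by $T'_{-1}\rho_1\beta^{m,n}_{2r}=\rho_1 T'_{-1}\beta^{m,n}_{2r}$, using $\rho_1\circ T'_{-1}=T'_{-1}\circ\rho_1$: one applies Lusztig's formulas $T'_{-1}(F^{(p)})=(-1)^pv^{p(p-1)}E^{(p)}K^{p}$, $T'_{-1}(E^{(p)})=(-1)^pv^{-p(p-1)}K^{-p}F^{(p)}$ to the expression obtained above, evaluates on $F^{(-c)}\eta$ via \eqnref{defofFinv} and \lemref{binomiallemma}, and observes that the output now lands on $F^{(a-r-c)}\eta=F^{(i-j+\frac{n-m}{2}-c)}\eta$ because $T'_{-1}$ reverses the adjoint weight; the same $q$-binomial identities then produce the stated form. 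Alternatively, since each side is determined by its value on a lowest weight vector, one may transport the first formula through $T'_{-1}$ using \lemref{symmetries}.
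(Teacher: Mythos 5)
The paper states this proposition without proof, so there is nothing to compare against directly; I can only judge your approach on its own merits. Your outline is the natural route using the machinery built up in the paper, and the structural claims that are easiest to falsify do hold. The Clebsch--Gordan expansion before \eqnref{CGdecomp} does give, at $p_0=\tfrac{m+n}{2}-r$ (the only term $\beta^{m,n}_{2r}$ does not annihilate), exactly the prefactor $(-1)^jv^{i(2j-n)+mn-j(1+m)}v^{(1+\frac{m-3n}{2}+r)(\frac{m+n}{2}-r)}$ times the stated $q$-binomial ratio, with $a=i-j+\tfrac{n-m}{2}+r$ the surviving CG index. With $s$ the outer index of the harmonic-basis expansion, the factors $K^{-(a-r)}$, $F^{(r-s)}$, $E^{(a-s)}$ acting on $F^{(-c)}\eta$ contribute $T^{r-a}\cdot T^{r-s}\cdot T^{-(a-s)}=T^{2(r-a)}$, independent of $s$, which matches $(T^2)^{j-i+\frac{m-n}{2}}$; the adjoint-weight count lands the first expression on $F^{(r-a-c)}\eta$ and the second (after $T'_{-1}$ reverses the adjoint weight) on $F^{(a-r-c)}\eta$, exactly as stated. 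Your auxiliary identity $\rho_1\bigl(\qbinom{K;s}{i}\bigr)=(-1)^i\qbinom{K;i-s-1}{i}$ is correct, following from $\rho_1([K;t])=-[K;-t]$ and \eqnref{r1}, and $\rho_1\circ T'_{-1}=T'_{-1}\circ\rho_1$ is stated in the paper.

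What you have not carried out --- and it is the bulk of the work --- is the collapse of the double sum coming from the harmonic expansion to the single sum over $l$ and the reconciliation of all the powers of $v$. The mechanism you sketch (an alternating $q$-Vandermonde kills one sum, then \eqnref{Ma1}/\eqnref{Ma2} reassemble the $T$-binomials into $\qbinom{T;\lambda+l+c}{r}$) is plausible, but be aware of a range mismatch: the outer harmonic index $s$ runs over $[\max(0,r-c),r]$ once vanishing terms are discarded, while $l$ runs effectively over $[\max(0,r-c),a]$; so the surviving index must in general be a nontrivial linear combination of the two harmonic indices rather than one of them simply persisting. This is a bookkeeping issue, not a conceptual one, but it is exactly where ``sign and off-by-one slips'' live, so the proof cannot be considered complete until it is done. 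The route to the second formula via Lusztig's $T'_{-1}$ on divided powers is right; the alternative you float (transporting the first formula via \lemref{symmetries}) is too vague as stated, since both sides are scalar identities rather than intertwiners, and I would drop it.
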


\section{Symmetry Properties of Induced Forms}

\subsection{Twisted action of $R$.}  We shall twist by an automorphism
of 
$\ru$ in the setting of $\ru$-modules. Let $\Theta$ be an automorphism of
$\ru$. Then for any $\ru$-module
${_R\mathcal E}$ define a new $\ru $-module ${_R\mathcal E}$ with set equal to
that of ${_R\mathcal E}$ and action given by: for $e\in {_R\mathcal E}$ and $x\in\ru $
the action of $x$ on $e$ equals $\Theta(x)e$. For any
$\ru$-module $A$, let $A^{s\Theta}$ denote the module with action $\delta_i $ on
$A^{s\Theta}$ defined as follows: For
$a\in A$ and $x\in\ru$, 
$$ x\delta_i a= s\Theta(x)\ a.
$$
\smallskip

\begin{lem}
Suppose 
$\phi$ is a
$\brho$-invariant $R$-valued pairing of $\ru $-modules
$A$ and $B$.  Then $s\circ
\phi$ is a 
$\brho$-invariant pairing of
$A^s$ and
$B^s$ as well as $A^{s\tiepp}$ and $B^{sT_{-e}''}$. Also
$\phi$ itself is $\brho$-invariant pairing of these two pairs taking values
in the $R$-module $R^s$.
\end{lem}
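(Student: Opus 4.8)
The plan is to verify the claimed invariances directly from the defining identity
\[
\sum x_{(1)}*\phi(Sx_{(3)}\cdot a,\varrho(x_{(2)})b)=\mathbf e(x)\phi(a,b),
\]
using the behaviour of the antipode $S$, the anti-automorphism $\brho$, and the comultiplication under the twist by $s$ and by $T''_{-e}$. First I would treat the simplest case: that $s\circ\phi$ is a $\brho$-invariant pairing of $A^s$ and $B^s$. Since $s$ is an $R$-algebra involution of $R$ and $\ru = R\otimes_{\mathbb k(v)}\bu$, the twist by $s$ only affects the $R$-coefficients and commutes with all the Hopf-algebra structure maps $\Delta$, $S$, $\mathbf e$ (which are defined over $\mathbb k(v)$). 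Applying $s$ to both sides of the invariance identity for $\phi$ and using that $s$ is a ring homomorphism intertwining the $R$-module actions on $A^s$, $B^s$ (by definition $x\delta a = s(x)a$ on the $R$-coefficient, but $\bu$-part unchanged), one obtains exactly the invariance identity for $s\circ\phi$ on $A^s\times B^s$. The same computation, read without applying $s$, shows $\phi$ itself is a $\brho$-invariant pairing $A^s\times B^s\to R^s$, since the only change is reinterpreting the target $R$ with its $s$-twisted module structure.

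Next I would handle the $T''_{-e}$-twisted statement, which is the substantive one. The key input is the intertwining property recorded earlier in \secref{automorphism}: for $\Theta=T''_{-e,\mathrm{mod}}$ one has $\Theta(x\cdot m)=\Theta(x)\Theta(m)$, i.e. $\Theta:M\to M^\Theta$ is a module map, equivalently $s\Theta:{_R}M\to{_R}M^{s\Theta}$. Combined with the already-established Lemma that $\phi(T''_{i,e}m,T''_{i,e}n)=\phi(m,n)$ for finite-dimensional modules (which is the unnormalized shadow of this statement), the idea is: the pairing $s\circ\phi$ on $A^{s\Theta}\times B^{sT''_{-e}}$ should be pulled back through the intertwiner $s\Theta$ to recover a $\brho$-invariant pairing of $A$ and $B$, which is $\pm\phi$ up to the scalars already computed. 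Concretely I would write, for $a\in A^{sT''_{-e}}$ and $b\in B^{sT''_{-e}}$ (say for $e=+1$; the sign convention matching $T'_{i,e}$ versus $T''_{i,e}$ must be watched), $s\phi(a,b) := s(\phi(a,b))$ and check the invariance identity by substituting $x\mapsto \Theta^{-1}(\text{generators})$ — noting that $\Theta$ is a Hopf-algebra automorphism only up to the twist recorded in Lusztig (it does not literally commute with $\Delta$, but the failure is governed by the quasi-$\mathcal R$-matrix $L$ of \eqnref{L}, Lemma~\ref{LLemma1}), so the cleaner route is to verify the identity on the algebra generators $E,F,K^{\pm1}$ directly.

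Thus the core verification reduces to checking, on each of $E$, $F$, $K$, that
\[
\sum x_{(1)}*(s\phi)(Sx_{(3)}\delta_i a,\varrho(x_{(2)})\delta_i b)=\mathbf e(x)(s\phi)(a,b)
\]
where $\delta_i$ denotes the $sT''_{-e}$-twisted action. For $x=K$ this is immediate since $K$ is grouplike, $S(K)=K^{-1}$, $\brho(K)=K$, and $K$ acts on the weight spaces by $Tv^\mu$ whose $s$-image $T^{-1}v^\mu$ is exactly what the twisted action prescribes. For $x=E$ and $x=F$ one uses $\Delta(E)=E\otimes 1+K\otimes E$, $\Delta(F)=F\otimes K^{-1}+1\otimes F$, the formulas $S(E)=-K^{-1}E$, $S(F)=-FK$, and the explicit values $\brho(E)=vKF$, $\brho(F)=vK^{-1}E$, together with $T''_{-e}$-conjugation formulas $T''_{-e}(E)=\pm(\cdot)F K^{\pm e}$, $T''_{-e}(F)=\pm K^{\mp e}E(\cdot)$ from the displayed definition of $T''_{-e}$ (with $p=1$). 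After substituting and moving $s$ across the algebra relations (which it preserves, being induced by $T\mapsto T^{-1}$), each term matches the corresponding term of the untwisted identity for $\phi$, and the identity closes.

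I expect the main obstacle to be bookkeeping of the sign $e=\pm1$ and of the direction of the twist — that is, getting right why the second module is twisted by $T''_{-e}$ while writing $A^{sT''_{-e}}$ consistently, and confirming that the ``mixed'' term $K\otimes E$ in $\Delta(E)$ produces, after applying $S\otimes\brho$ and the $s$-twist, precisely the $T''_{-e}$-conjugated action on $B$ rather than some other power of $K$. A secondary subtlety is that $\phi$ in general takes values in $R$, not $\mathbb k(v)$, so the $*$-action of $\ru^0$ on $R$ (via $m$, $m+\lambda$) interacts with $s$; one checks $s(x*r)=s(x)*s(r)$ for $x\in\ru^0$, which is immediate from $m(K)=T\mapsto T^{-1}$ and $s$ being a ring map. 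Once these are pinned down, the verification is a finite check on three generators and the last sentence of the statement (that $\phi$ is $\brho$-invariant into $R^s$) follows by simply omitting the outermost application of $s$ throughout.
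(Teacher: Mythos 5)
Your overall strategy — reduce to verifying the invariance identity directly on the algebra generators $K,E,F$, and observe that the $A^s,B^s$ case and the ``$\phi$ into $R^s$'' case are immediate from the fact that $s$ fixes $\bu$ and commutes with $\Delta$, $S$, $\brho$, $\mathbf e$ — is the natural one, and the computation does close. (The paper, being a survey, states this lemma without proof, citing [CE10], so there is no in-paper argument to compare against.) For instance with $x=E$, $a$ of weight $m+\nu+2$ in $A$, $b$ of weight $m+\nu$ in $B$, the twisted identity for $s\circ\phi$ on $A^{s\tiepp}\times B^{sT''_{i,-e}}$ reduces, after inserting $T''_{e}(K^{-1})=K$, $T''_{e}(E)=-FK^{e}$, $T''_{-e}(K)=K^{-1}$, $T''_{-e}(F)=-K^{e}E$ and cancelling a common factor of $(Tv^{\nu+2})^{e}$, to $v\phi(a,Eb)=Tv^{\nu+2}\phi(Fa,b)$, which is precisely the untwisted $F$-invariance of $\phi$; the $x=F$ case similarly collapses to the untwisted $E$-invariance.

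Two points should be tightened. First, the paragraph about pulling back through the module-level intertwiner $\Theta$ and invoking the lemma $\phi(\tiepp m,\tiepp n)=\phi(m,n)$ is a false start you then (rightly) abandon: that lemma requires $A,B$ to be finite-dimensional objects of $\mathcal C'$, whereas here $A,B$ are arbitrary $\ru$-modules and $\tiepp$ as a module map is not available; the detour should be deleted, not left as motivation. Second, and more substantively, you record the conjugation formulas only schematically as $T''_{-e}(E)=\pm(\cdot)FK^{\pm e}$, $T''_{-e}(F)=\pm K^{\mp e}E(\cdot)$. The entire content of the lemma is concentrated in exactly these signs and powers of $K$ and $v$ — from the displayed definition (with $p=1$) one gets $T''_{e}(E)=-FK^{e}$, $T''_{e}(F)=-K^{-e}E$, $T''_{e}(K)=K^{-1}$ and $T''_{-e}(E)=-FK^{-e}$, $T''_{-e}(F)=-K^{e}E$ — and it is only after writing them out and tracking the resulting scalar factors of $Tv^{\bullet}$ through each of the three Sweedler terms that one sees the $e$- and $(-e)$-twisted terms recombine into the untwisted identity. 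As written, ``each term matches'' is an assertion rather than a verification; with the explicit formulas inserted it becomes a short, complete check.
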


%
%

{\begin{lem}
Let $m$ be a lowest weight vector in $M_\pi$ of weight
$Tv$ and $\Psi(m)$ a highest weight vector in $M$ of weight
$Tv^{-1}$.  The map
$\Psi:(M_\pi)^{s\Theta}\to M$ given by
$$
\Psi(F^{-k}m)=\frac{(-1)^kv^{-k^2}T^k}{[k]![T;-1]_{(k)}}F^{k}\Psi(m)
$$
for $k\geq 0$ is an isomorphism.
\end{lem}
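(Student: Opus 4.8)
\textit{Proposal.} The plan is to verify directly that the displayed formula is a $\ru$-module isomorphism, all of the work taking place inside the localization $M_F$; the only real computation is a telescoping of $q$-binomials. Write $u_0:=\Psi(m)$ for the chosen highest weight vector of the Verma module $M=M(m)$, of weight $Tv^{-1}$. Then $M$ has $R$-basis $\{F^{a}u_0\}_{a\ge 0}$, its localization $M_F$ has $R$-basis $\{F^{a}u_0\}_{a\in\mathbb Z}$ with $F^{a}\cdot F^{b}u_0=F^{a+b}u_0$, $KF^{b}u_0=Tv^{-1-2b}F^{b}u_0$, and — by $[E,F]=\tfrac{K-K^{-1}}{v-v^{-1}}$ iterated and inverted, i.e. the extension of \eqnref{hw1} to negative exponents — $EF^{b}u_0=[b]\,[T;-b]\,F^{b-1}u_0$ for every $b\in\mathbb Z$. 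Consequently $M_\pi=M_F/M$ has $R$-basis $\{\overline{F^{-l}u_0}\}_{l\ge1}$; its lowest weight vector of weight $Tv=Tv^{2\cdot1-1}$ is $m=\overline{F^{-1}u_0}$ (indeed $Fm=\overline{u_0}=0$), and $F^{-k}m=\overline{F^{-(k+1)}u_0}$, $k\ge0$, is an $R$-basis of $M_\pi$. Throughout $\Theta=T_{-1}'$, so $T_{-1}'(E)=-K^{-1}F$ and $T_{-1}'(F)=-EK$; the argument is insensitive to the choice among Lusztig's symmetries except in the precise scalars.

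First I would check that $\Psi$ intertwines the generators $K^{\pm1},E,F$, tested on the basis $\{F^{-k}m\}$. For $K$: in $(M_\pi)^{s\Theta}$ the scalars act through $s$, and $K\cdot_{s\Theta}m=T_{-1}'(K)\cdot m=K^{-1}m=(Tv)^{-1}m$ is read as the eigenvalue $Tv^{-1}$ — the $s$ restoring $T^{-1}\mapsto T$ — matching the weight of $u_0$; applying $F^{-k}$ shifts both sides by $v^{-2k}$. For $E$ on $m$: $E\cdot_{s\Theta}m=(-K^{-1}F)\,\overline{F^{-1}u_0}=-K^{-1}\overline{u_0}=0=Eu_0$, so $m$ is a highest weight vector for the twisted action (settling $k=0$). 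For $E$ on $F^{-k}m$ with $k\ge1$ and for $F$ on $F^{-k}m$ with $k\ge0$, one computes $E\cdot_{s\Theta}F^{-k}m=(-K^{-1}F)\,\overline{F^{-(k+1)}u_0}$ and $F\cdot_{s\Theta}F^{-k}m=(-EK)\,\overline{F^{-(k+1)}u_0}$ using the three actions on $M_F$ above together with $s([T;r])=-[T;-r]$; both reduce to the single requirement that the scalars $c_k:=\dfrac{(-1)^kv^{-k^2}T^k}{[k]!\,[T;-1]_{(k)}}$ in the statement satisfy $c_k=\dfrac{-Tv^{1-2k}}{[k]\,[T;-k]}\,c_{k-1}$, and this is immediate from the definition of $c_k$, since $[k]!/[k-1]!=[k]$, $[T;-1]_{(k)}/[T;-1]_{(k-1)}=[T;-k]$, and $-k^2+(k-1)^2=1-2k$.

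It then remains only to see that $\Psi$ is bijective, and for this it is enough that each $c_k$ is a unit of $R$: $[k]!\in\mathbb k(v)^{\times}$, $T^{k}\in R^{\times}$, and $[T;-1]_{(k)}=[T;-1][T;-2]\cdots[T;-k]$ is a product of units because $[T;r]$ is invertible in $R$ for $r\ne 0$, by the $q$-calculus discussion preceding \eqnref{cong1}. Thus $\Psi$ carries the basis $\{F^{-k}m\}_{k\ge0}$ of $(M_\pi)^{s\Theta}$ to unit multiples of the basis $\{F^{k}u_0\}_{k\ge0}$ of $M$, hence is an isomorphism. (Alternatively, once $m$ is seen above to be a highest weight vector of weight $Tv^{-1}$ for the twisted action, and to generate $(M_\pi)^{s\Theta}$ — the vectors $F^{k}\cdot_{s\Theta}m=(-EK)^k\overline{F^{-1}u_0}$ being unit multiples of the basis — the universal property of the Verma module $M=M(m)$ supplies a surjection $M\to(M_\pi)^{s\Theta}$ which, by the same unit‑coefficient observation, is automatically an isomorphism; the displayed formula is then just its explicit inverse.)

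The one genuinely computational point, and the only place an arithmetic slip could enter, is the reduction in the second paragraph: deriving the negative‑exponent formula $EF^{b}u_0=[b][T;-b]F^{b-1}u_0$ with the correct sign and powers of $v$ and $T$, tracking the $s$‑twist of the scalars, and watching everything collapse against $[k]!\,[T;-1]_{(k)}$. Everything else — the $K$‑check, bijectivity, and the identification of $\Theta$ and of the relevant weights — is formal.
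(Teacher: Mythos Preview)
Your proof is correct. The paper does not give a detailed proof of this lemma; immediately after the statement it only records the reformulation
\[
\Psi(F^{(-k)}m)=F^{(k)}\Psi(m),
\]
which uses the notation $F^{(-k)}\eta:=T'_{-1}(F^{(k)})\eta$ introduced in \eqnref{defofFinv}. That reformulation is really the whole argument the paper has in mind: since in $(M_\pi)^{s\Theta}$ the element $F^{(k)}$ acts through $\Theta(F^{(k)})=T'_{-1}(F^{(k)})$, one has $F^{(k)}\cdot_{s\Theta}m=F^{(-k)}m$, so the displayed map is nothing but $F^{(k)}\cdot_{s\Theta}m\mapsto F^{(k)}\Psi(m)$, the tautological Verma-module identification once $m$ has been recognised as a highest weight vector of weight $Tv^{-1}$ for the twisted action. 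Your direct generator-by-generator verification, reducing both the $E$- and $F$-checks to the single recursion $c_k=\dfrac{-Tv^{1-2k}}{[k]\,[T;-k]}c_{k-1}$ and then observing that each $c_k$ is a unit, is exactly the honest unpacking of that one-line reformulation; and the parenthetical ``Alternatively\dots'' at the end of your proposal is precisely the short conceptual argument the paper leaves implicit.
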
}
The above can be rewritten as 
$$
\Psi(F^{(-k)}m)=F^{(k)}\Psi(m).
$$

Set

\begin{equation}
\bar \Psi:=\Psi \otimes s\Theta\circ L^{-1}:(M_\pi\otimes \mathcal E)^{s\Theta}\to
M\otimes \mathcal E.
\end{equation}
Define $t\in \End(\mathbb P(M\otimes \mathcal E, N\otimes
\mathcal F))$ by
\begin{equation}\label{twistedpairing}
t(\chi)(\bar\Psi(a),\bar\Psi(b)):=
s\circ\chi_\pi\circ L(a\otimes b)
\end{equation}
for $a\in (M\otimes \mathcal E)_\pi$, $b\in (N\otimes \mathcal F)_\pi$
and $\chi\in\mathbb P(M\otimes \mathcal E,N\otimes \mathcal F)$. 
On the left hand side one is to consider
$a\in(M\otimes \mathcal E)_\pi^{s\theta}$, and
$b\in (N\otimes\mathcal  F)^{s \theta}_\pi$ and on the
right hand side $a\in (M\otimes \mathcal E)_\pi$, $b\in (N\otimes
\mathcal F)_\pi^{\rho_1}$.  We can view $L:(M\otimes \mathcal
E)_\pi^{s\theta}\otimes (N\otimes \mathcal
F)_\pi^{ s\theta\rho_1}\to  ((M\otimes \mathcal E)_\pi\otimes
(N\otimes \mathcal F)^{\rho_1}_\pi)^{s\theta}$ as a module isomorphism (see
\lemref{Tandrho} and 
\corref{Lisomorphism}) Then
$s\circ \chi_\pi\circ L:(M\otimes \mathcal
E)_\pi^{s\theta}\otimes (N\otimes \mathcal
F)_\pi^{s\theta\rho_1}\to 
R$ is a module homorphism.  More explicitly  we can show that $t(\chi)\in
\mathbb P(M\otimes \mathcal E, N\otimes
\mathcal F)$ by the following calculation for $x\in \bu$:

\begin{align*}
\sum\chi^\#(S(x_{(2)})\bar\Psi(a),\brho(x_{(1)})\bar\Psi(b))&=
\sum\chi^\#(\bar\Psi(\theta S(x_{(2)})a),\bar\Psi(
\theta \brho(x_{(1)})b)) \\ 
  &=\sum s\circ\chi_\pi\circ L(\theta S(x_{(2)})a\otimes 
\theta \brho(x_{(1)})b) \\ 
  &= s\circ\chi_\pi(\theta S(x)L(a\otimes b)) \\ 
  &= \mathbf e(x)s\circ\chi_\pi\circ L(a\otimes b)  \\
  &=\mathbf e(x)\chi^\#(\bar\Psi(a),\bar\Psi(b))\\
\end{align*}
where the third equality is from \corref{Lisomorphism}, and the fourth
equality is due to the fact that $\chi_\pi$ is $\rho$-invariant Note
that as a linear map
$L\in\text{End}\,((M\otimes
\mathcal E)_\pi\otimes(N\otimes
\mathcal F)_\pi)$ is well defined as $F$ acts locally nilpotently on $
(M\otimes \mathcal E)_\pi$.

\subsection{} For any homomorphism
$\beta$ of
$_R\mathcal E^{s\Theta}\otimes _R(\mathcal F^{s\Theta})^{\brho_1}$ into
$F(\bu)$, define another such
$t(\beta)$ by the formula
\begin{equation}
t(\beta) = s \Theta^{-1}\circ \beta \circ L\circ (s\Theta_{_R\mathcal
E}\otimes s\Theta_{_R\mathcal F}) 
\end{equation}
where the $\Theta^{-1}$ on the left is assumed to be the module
homomorphism defined on $F(\bu)$.

\begin{thm}
Let $\lambda=0$.  Suppose
$\chi$ is the induced pairing
$\chi_{\beta,\phi}$ as defined in \eqnref{inducedpairing}, $\mathcal F_m$
and
$\mathcal F_n$ are the $X$-graded finite dimensional $\bu$-modules
given in \secref{CG} and
$\phi$ is a $\bu$-invariant pairing satisfying $s\circ
\phi_{\pi}\circ L=\phi\circ (\Psi\otimes
\Psi)$. Then 
\begin{equation}\label{hardprop}
t(\chi _{\beta,\phi}) = \chi_{\beta ,\phi}\ .
\end{equation} 
\end{thm}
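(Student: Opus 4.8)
The plan is to reduce the identity $t(\chi_{\beta,\phi}) = \chi_{\beta,\phi}$ to a statement purely about the homomorphism $\beta$ and then invoke the already-established fact that the lifted form satisfies $\chi_{\beta,\phi}^\sharp = \chi_{s\beta,\phi}$ together with the $\lambda=0$ hypothesis. More precisely, I would first unwind the definitions: by \eqnref{twistedpairing}, $t(\chi_{\beta,\phi})$ is characterized on the image of $\bar\Psi$ by $t(\chi)(\bar\Psi(a),\bar\Psi(b)) = s\circ(\chi_{\beta,\phi})_\pi\circ L(a\otimes b)$, where $\bar\Psi = \Psi\otimes s\Theta\circ L^{-1}$. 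So the target equality becomes, after substituting $a\otimes b = L^{-1}(\cdots)$ and using \lemref{LLemma1} / \corref{Lisomorphism} to move $L$ past the tensor structure, an identity comparing $s\circ(\chi_{\beta,\phi})_\pi\circ L$ with $\chi_{\beta,\phi}$ evaluated after applying $\Psi\otimes\Psi$. The hypothesis $s\circ\phi_\pi\circ L = \phi\circ(\Psi\otimes\Psi)$ is precisely the input that lets us strip the form $\phi$ out of the picture and leaves us comparing the $\beta$-part of the two induced pairings.

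Next I would carry out the $\beta$-side bookkeeping. The induced pairing $\chi_{\beta,\phi}(m\otimes e, n\otimes f) = \phi(m,\beta(e\otimes f)*n)$ is built from $\beta$ and the adjoint action; the twist operator $t$ on homomorphisms is defined by $t(\beta) = s\Theta^{-1}\circ\beta\circ L\circ(s\Theta_{_R\mathcal E}\otimes s\Theta_{_R\mathcal F})$. The key compatibility I want is that localization/lifting intertwines $t$ on forms with $t$ on the parameters $\beta$, i.e. $t(\chi_{\beta,\phi}) = \chi_{t(\beta),\phi}$ up to the normalization built into $\bar\Psi$ — this should follow by the same bilinear-form manipulation used to check $\psi_{\tau,\phi}$ is $\brho$-invariant (the displayed five-line computation earlier), now performed with the extra $L$ and $s\Theta$ factors inserted, using \lemref{LLemma2} to commute $E,F$ through $L$ and Lemma on $T'_{-1}(u)\mathcal L^{-1} = \mathcal L^{-1}T_1'(u)$ to handle the $\mathcal L$ appearing when $L$ acts on a lowest-weight module. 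Then the problem collapses to showing $t(\beta) = \beta$ for the $\beta$'s in question, or more precisely that $s\beta$ and $t(\beta)$ act the same way on forms.

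The remaining and genuinely computational step is to verify this identity for $\beta = \beta^{m,n}_{2r}$ (or a linear combination thereof), using the explicit formulas in \propref{betaonabasis} for $\rho_1(\beta^{m,n}_{2r}(u^{(m)}_i\otimes u^{(n)}_j))F^{(-c)}\eta$ and $\rho_1(T'_{-1}\beta^{m,n}_{2r}(\cdots))F^{(-c)}\eta$. The point of \propref{betaonabasis} is exactly that it records both $\beta$ and $T'_{-1}\beta$ on the localized basis, and setting $\lambda=0$ makes the $T$-dependent factors $[T;\lambda+\cdots]$ collapse in the way needed for the two expressions to match after applying $s$ (which sends $T\mapsto T^{-1}$) and the $\mathcal L^{-1}\otimes 1$ replacement of $L^{-1}$ from \eqnref{Ll}. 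So I would: (i) apply \eqnref{Ll} to rewrite $s\circ\chi_\pi\circ L^{-1}$ with $\mathcal L^{-1}\otimes 1$ acting only on the first factor; (ii) use \corref{Lonbasis} to expand $\mathcal L$ on the basis $F^{(-k)}\eta\otimes u^{(n)}_j$; (iii) match the resulting $q$-binomial sum against the $l$-sum in \propref{betaonabasis} with $\lambda=0$, applying the identities \eqnref{Ma1}–\eqnref{Ma2} to reindex.

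The main obstacle I anticipate is step (iii): reconciling the triple sum from \corref{Lonbasis} (indices $p,t,q$) with the single $l$-sum in \propref{betaonabasis} after the $s$-twist, since the powers of $v$ and $T$ and the nested $q$-binomials must cancel on the nose. The $\lambda=0$ hypothesis is essential here because it is what forces $[T;\lambda]^{(k)}$-type factors to become symmetric under $s$ (by \eqnref{r2}, $(-1)^j[T;r]^{(j)} = [T^{-1};-r-1]_{(j)}$, which at $r=0$ relates $[T;0]^{(j)}$ to $[T^{-1};-1]_{(j)}$), and this symmetry is precisely what makes $t(\chi_{\beta,\phi})$ land back on $\chi_{\beta,\phi}$ rather than on some genuinely twisted form; without it one would only get the relation $\chi^\sharp_{\beta,\phi} = \chi_{s\beta,\phi}$ from \cite[Theorem 38]{MR2586983} and $s\beta\ne\beta$ in general. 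I would therefore organize the final computation around tracking the $T$-exponents through the $s$-twist and checking they vanish identically when $\lambda=0$, deferring the pure $q$-combinatorics to a separate lemma if it becomes unwieldy.
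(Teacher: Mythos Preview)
The paper does not actually supply a proof of this theorem; it is stated at the end of \S8.2 and the text immediately moves on to \S8.3. So there is no ``paper's proof'' to compare against, and I can only assess your outline on its own merits and against the surrounding machinery.

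Your two-stage reduction --- first establish $t(\chi_{\beta,\phi})=\chi_{t(\beta),\phi}$ using the hypothesis $s\circ\phi_\pi\circ L=\phi\circ(\Psi\otimes\Psi)$, then show $t(\beta)=\beta$ --- is the right architecture, and it is clearly what the paper has in mind given that it introduces $t(\beta)$ immediately before the theorem. However, your proposed execution of the second stage is substantially heavier than necessary. You plan to expand $\mathcal L$ on the basis via \corref{Lonbasis}, obtaining a triple sum in $p,t,q$, and then match it term-by-term against the $l$-sum in \propref{betaonabasis} using the identities \eqnref{Ma1}--\eqnref{Ma2}. That computation is doable but unpleasant, and it is not how the paper handles the underlying identity: later (in \S10.3, in the displayed computation beginning ``Let us consider the identity $\Theta\circ\beta(a\otimes b)=\beta(L(\Theta_{\mathcal E}a\otimes\Theta_{\mathcal E}b))$'') the paper gives a four-line proof of exactly $\Theta\circ\beta=\beta\circ L\circ(\Theta_{\mathcal E}\otimes\Theta_{\mathcal F})$ by evaluating both sides on $F^{(j)}v$ for $v$ a highest weight vector and invoking \lemref{symmetries}. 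Combined with the fact that $\beta$ and $L$ are defined over $\mathbb k(v)$ (so that conjugation by $s$ is trivial on them), this yields $t(\beta)=\beta$ without any basis expansion at all. So \propref{betaonabasis} and \corref{Lonbasis} are not needed for this step, and your anticipated ``main obstacle'' in step~(iii) evaporates.

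The genuine content therefore lies entirely in your first stage, the reduction $t(\chi_{\beta,\phi})=\chi_{t(\beta),\phi}$, and here your sketch is thinner. You say this ``should follow by the same bilinear-form manipulation'' as the $\brho$-invariance check, with \lemref{LLemma2} and the $\mathcal L^{-1}$ intertwining lemma inserted. That is the right toolkit, but you have not said how the hypothesis on $\phi$ and the specific form of $\bar\Psi=\Psi\otimes s\Theta\circ L^{-1}$ conspire to produce exactly $\chi_{t(\beta),\phi}$ on the right-hand side; in particular you need \eqnref{Ll} to replace $L^{-1}$ by $\mathcal L^{-1}\otimes 1$ inside $\phi$, and then the $\mathcal L^{-1}$ intertwining lemma to move it across $\beta(e\otimes f)$ acting on $M$. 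The $\lambda=0$ hypothesis enters here structurally --- it is what makes $M_\pi^{s\Theta}\cong M$ so that $\Psi$ exists at all --- rather than as a computational cancellation of $[T;\lambda]$-factors as you suggest. I would reorganize your write-up to spend the effort on this reduction and dispatch $t(\beta)=\beta$ by the short highest-weight argument.
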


\subsection{}  Fix a finite
dimensional $\bu$-module $\mathcal F$  with highest weight $v^n$ and let $M$ be
the Verma module of highest weight $Tv^{-1}$.  Recall from \cite[\S 2]{MR1327136},  the
modules $P(m+\lambda):=P_{m+\lambda}$. Then we have the decomposition 
$M\otimes\mathcal F =\sum_iP(m+i)$ where the sum is over the nonnegative
weights of $\mathcal F$ and  by convention we set $P(m)=M(m)$. Set
$P_i=P(m+i)$ and following the notation of [E,3.6] let $\mathbb Z_i$ equal the
set of integers with the opposite parity to $i$. For $j\in \mathbb Z_\iota$ , set
$z^i_j = \mathbf v_j + \mathbf v_{j,s}$. Then for $i\in \mathbb N^*$, the set $\{ [T;0] \mathbf v_j :j\in
\mathbb Z_\iota\} \cup \{ z^i_j : j\in \mathbb Z_\iota\}$ is an $R$ basis for the 
localization $P_{i,F}$. Also the action of ${_R}\mathbf U_i$ is given by the formulas
in \lemref{locstructure} as well  as the formulas : for all indices
$j\in
\mathbb Z_\lambda $,
\begin{align}
 K_\mu \mathbf z_j&= T\, v^{\angles {\mu}{s\lambda-\rho-l_ji'}}\mathbf z_j ,\quad   F
\mathbf z_j = \mathbf z_{j-2}\label{eqn1} \\
E \mathbf z_j &= [l_j][T;-k_j] \mathbf z_{j+2} +[k_j-l_j][T;0]\mathbf  w_{\lambda ,j+2}.\label{eqn2}
\end{align}

The corresponding picture for $P_n$ of weight vectors for $P_n$ is given by 

\[
\xymatrix{
& {\overbrace{\mathbf z_{n+1}}} \ar[d]\\
& \mathbf z_{n-1} \ar@/^/[u] \ar[d]\\
& {\vdots} \ar@/^/[u] \ar[d]\\
& \mathbf z_{-n-1}\ar@/^/[u] \ar[d]\\
\mathbf w_{-n-3,-n-1}\ar@/^/[ur] \ar[d] & {\overbrace{\mathbf z_{-n-3}}} \ar[d]\\
\mathbf w_{-n-5,-n-1} \ar@/^/[ur] \ar@/^/[u] \ar[d] & \mathbf z_{-n-5} \ar@/^/[u] \ar[d]\\
{\vdots} \ar@/^/[ur] \ar@/^/[u] & {\vdots} \ar@/^/[u] 
}
\]

Fix a positive weight $v^r$ of $\mathcal F$ and let $P=P_r$. Set 
$\germ L$ equal to
the
$m-r$th weight space of $P$. Then $\germ L$ is a free rank two $R$-module with
basis $\{\mathbf z^r_{-r-1},[T;0] \mathbf w_{r,-r-1}\}$. Define an $s$-linear map
$\G$ on $\mathfrak L$ and constants $a_{\pm r}$ by the formula: 
\begin{equation}
\G ([T;0] \mathbf w_{\epsilon r,-r-1}) = \Psi_F([T;0] \mathbf w_{\epsilon r,r+1}) = a_{\epsilon r}\ [T;0]
\mathbf w_{-\epsilon r,-r-1}.
\end{equation}

\def\ov{\overline}

This s-linear map $\G$ is the mechanism by which we analyze the symmetries
which arise through the exchange of
$\mathfrak L\cap ([T;0]\cdot M(m+r))$ and $\mathfrak L\cap ([T;0]\cdot M(m-r))$. The following is a
fundamental calculation for all which follows. Set $\overline{\G}=[r]!\
\G$.

\begin{lem} Let $\epsilon=\pm 1$.  For $a,b\in \mathfrak L\cap M(m+\epsilon r)$, we have: 
$$
\chi^\sharp(\G a,\G b)=\frac{1}{ T^r [r]![T^{-\epsilon};r]_{(r)}}\ s\chi(a,b)\quad
and \quad
\chi^\sharp(\ov\G a,\ov\G b)=u_\epsilon\ s\chi(a,b),
$$  where $u_\epsilon$ is a unit and $u_\epsilon\equiv 1 \mod (T-1)$. 

\end{lem}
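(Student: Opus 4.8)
The plan is to reduce the two identities to a single pair of vectors, transport $\chi^\sharp$ across the map $\Psi$ by which $\G$ is defined, and then read off the scalars from the Lifting Theorem and the $q$-calculus of \S2.

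First I reduce. From the structure of $P=P_r$ in \lemref{locstructure} together with the inclusions \eqnref{ses-inclusions}, the module $\mathfrak L\cap M(m+\epsilon r)$ is free of rank one over $R$, generated by $w_\epsilon:=[T;0]\,\mathbf w_{\epsilon r,-r-1}$ (recall that $[T;0]$ is $(T-1)$ times a unit of $R$, by the power-series expansions in \S2). Since $\chi$ and $\chi^\sharp$ are $R$-bilinear and $\G$ is $s$-semilinear, writing $a=c\,w_\epsilon$, $b=d\,w_\epsilon$ with $c,d\in R$ gives $\chi^\sharp(\G a,\G b)=s(c)\,s(d)\,\chi^\sharp(\G w_\epsilon,\G w_\epsilon)$ and $s\chi(a,b)=s(c)\,s(d)\,s\chi(w_\epsilon,w_\epsilon)$, so it suffices to prove the identities for $a=b=w_\epsilon$. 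By the defining formula for $\G$ we have $\G w_\epsilon=a_{\epsilon r}\,w_{-\epsilon}$ with $w_{-\epsilon}=[T;0]\,\mathbf w_{-\epsilon r,-r-1}$, and the first identity becomes $a_{\epsilon r}^{2}\,\chi^\sharp(w_{-\epsilon},w_{-\epsilon})=\big(T^{r}[r]!\,[T^{-\epsilon};r]_{(r)}\big)^{-1}\,s\chi(w_\epsilon,w_\epsilon)$.

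Next I transport $\chi^\sharp$. By construction $\G w_\epsilon=\Psi_F\big([T;0]\,\mathbf w_{\epsilon r,r+1}\big)$, where $\Psi_F=loc(\Psi)$ is the localization of the $s\Theta$-intertwiner $\Psi$; thus $\chi^\sharp(\G w_\epsilon,\G w_\epsilon)$ is a value of $\chi^\sharp=t(\chi)$ on $\Psi_F$-images, to which the defining relation \eqnref{twistedpairing} for the twisted pairing applies. Combining \eqnref{twistedpairing} with \eqnref{Ll} (which trades the braiding $L^{-1}$ for $\mathcal L^{-1}\otimes 1$), the isomorphism property of $L$ from \corref{Lisomorphism}, and the explicit action of $\mathcal L$ on $\mathbf w_{\epsilon r,r+1}$ given by \corref{Lonbasis} — a single leading term when $\epsilon=+1$, a longer but still explicit finite sum when $\epsilon=-1$ — rewrites this value as $s\circ\chi_\pi$ applied to an explicit combination of weight vectors of $M(m)\otimes\mathcal F$. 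It then remains to evaluate $\chi_\pi$ on those vectors and to keep track of the scalars introduced by $\Psi_F$ and by the inclusions expressing $w_{\pm}$ as $[T;0]$ times a weight vector of $M(m\pm r)$ (namely $F^{(r)}$ applied to the highest weight vector on the $+$ side, and the highest weight vector itself on the $-$ side).

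Two inputs then produce the closed form. First, the $\Psi$-normalization $\Psi(F^{(-k)}m)=F^{(k)}\Psi(m)$, i.e. $\Psi(F^{-k}m)=\dfrac{(-1)^{k}v^{-k^{2}}T^{k}}{[k]!\,[T;-1]_{(k)}}F^{k}\Psi(m)$: at $k=r$, using $[T;-j]=-[T^{-1};j]$ (a form of \eqnref{r2}) one has $[T;-1]_{(r)}=(-1)^{r}[T^{-1};r]_{(r)}$, which contributes the factor $v^{-r^{2}}T^{r}\big([r]!\,[T^{-1};r]_{(r)}\big)^{-1}$; for $\epsilon=-1$ the same computation passed through the involution $s$ (which sends $[T;k]$ to $-[T^{-1};k]$) produces $[T;r]_{(r)}$ in place of $[T^{-1};r]_{(r)}$, which is where the uniform $[T^{-\epsilon};r]_{(r)}$ comes from. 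Second, the Lifting Theorem supplies $\phi_F(F^{-1}a,F^{-1}b)=v^{-r+1}\dfrac{\iota\epsilon[T;0]}{\iota\epsilon[T;r-1]}\,\phi(a,b)$ at the top of the string; iterating it down the string, together with \eqnref{normalizedforms} and the divided-power identities of \lemref{binomiallemma}, absorbs the remaining powers of $v$ and $T$ and all $q$-binomials, so that the accumulated product telescopes to the coefficient $\big(T^{r}[r]!\,[T^{-\epsilon};r]_{(r)}\big)^{-1}$; combining with $s\chi(w_\epsilon,w_\epsilon)=s\big([T;0]^{2}\big)\,s\chi(\mathbf w_{\epsilon r,-r-1},\mathbf w_{\epsilon r,-r-1})$ gives the first identity. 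For the second, $\ov\G=[r]!\,\G$ and $[r]!\in\mathbb k(v)$ is fixed by $s$, so $R$-bilinearity yields $\chi^\sharp(\ov\G a,\ov\G b)=([r]!)^{2}\chi^\sharp(\G a,\G b)=u_\epsilon\,s\chi(a,b)$ with $u_\epsilon=[r]!\,T^{-r}[T^{-\epsilon};r]_{(r)}^{-1}$; since $T$ and each $[T^{-\epsilon};k]$ with $k\ge 1$ are units of $R$ and $[r]!\in\mathbb k(v)^{\times}$, $u_\epsilon$ is a unit, and from $T\equiv 1$ and $[T^{-\epsilon};k]\equiv[k]\pmod{T-1}$ we get $[T^{-\epsilon};r]_{(r)}\equiv[r]!$, whence $u_\epsilon\equiv 1\pmod{T-1}$.

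The main obstacle is the scalar bookkeeping of the third step: one must pin down the constant $a_{\epsilon r}$, control the (possibly several) terms produced by $\mathcal L$ on the extreme weight space $\mathfrak L$, and then reconcile the accumulated powers of $v$ and $T$, the $q$-factorials from \eqnref{normalizedforms} and \lemref{binomiallemma}, and the images under $s$ of the $[T;k]$-factors in the two cases $\epsilon=\pm1$ (including the effect of the $\iota\epsilon$ in the Lifting Theorem), so that the a priori unwieldy product collapses to $\big(T^{r}[r]!\,[T^{-\epsilon};r]_{(r)}\big)^{-1}$.
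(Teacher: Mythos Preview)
The paper does not give a self-contained proof of this lemma; it is stated as a ``fundamental calculation'' and then the text immediately passes to the ``delicate calculation of the constants $a_{\pm r}$'', which is recalled from \cite{MR2586983} via the two lemmas that follow. So there is no detailed argument here to compare against, only the surrounding architecture.

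Your outline identifies that architecture correctly: reduce to the rank-one generator $w_\epsilon=[T;0]\,\mathbf w_{\epsilon r,-r-1}$, use the definition of $\G$ through $\Psi_F$, and unwind $\chi^\sharp=t(\chi)$ via \eqnref{twistedpairing} and \eqnref{Ll}. The deduction of the second identity from the first, and the verification $u_\epsilon\equiv 1\bmod (T-1)$ from \eqnref{cong1}, are fine.

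The gap is exactly where you say it is. Your ``two inputs then produce the closed form'' paragraph is not a computation but a promise: you assert that the $\Psi$-normalization contributes one factor, that iterating the Lifting Theorem ``absorbs the remaining powers of $v$ and $T$'', and that ``the accumulated product telescopes'' to $\bigl(T^{r}[r]!\,[T^{-\epsilon};r]_{(r)}\bigr)^{-1}$, but none of this is written out. In the paper's organization, producing that scalar is tantamount to computing $a_{\epsilon r}^{2}$ together with the comparison of $\chi^\sharp$ on $w_{-\epsilon}$ against $s\chi$ on $w_\epsilon$; the first of these is precisely the ``delicate'' content of the next lemma (with its proof via the projections $\delta$, $\delta^\vee$ and the recursion for the $A_t$), and the second needs an explicit identification of $\Psi_F$ on $\mathfrak L$ with the $\bar\Psi$ used in \eqnref{twistedpairing}, which you invoke but do not justify. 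There is also a mismatch to watch: the Lifting Theorem as stated concerns $\phi_F$ on $A_F\times B_F$, not $\chi_\pi$ on the quotient, so ``iterating it down the string'' requires an extra step you have not supplied. Until the scalar bookkeeping is actually performed (or replaced by citing the value of $a_{\epsilon r}^2$ from the subsequent lemma together with an explicit relation $\chi^\sharp(w_{-\epsilon},w_{-\epsilon})=T^{-r}\,s\chi(w_\epsilon,w_\epsilon)$), the argument is incomplete.
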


\subsection{} Now we recall to the delicate calculation of the constants
$a_{\pm r}$.

\begin{lem}[\cite{MR2586983}] We may choose a basis for $P_r$ satisfying the relations
\eqnref{eqn1} and \eqnref{eqn2}, dependent only on the cycle $\Psi$, and such that the
constants $a_{\pm r}$ are uniquely determined by the three relations:
\begin{align}
 a_{-r}=s\ a_r, \quad a_r^2=\frac{1}{
[r]![T^{-1};r]_{(r)}}
\quad \text{and}
\quad a_r\equiv \frac{-1}{ [r]!}
\mod T-1.
\end{align}

\end{lem}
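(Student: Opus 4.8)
The plan is to pin down the three stated relations one at a time, using the $s$-linearity of $\G$ together with the preceding lemma (the fundamental calculation $\chi^\sharp(\G a,\G b)=\frac{1}{T^r[r]![T^{-\epsilon};r]_{(r)}}s\chi(a,b)$) and the explicit normalization of the cycle $\Psi$ coming from the formula $\Psi(F^{(-k)}m)=F^{(k)}\Psi(m)$. First I would fix the basis for $P_r$ provided by \lemref{locstructure} and equations \eqnref{eqn1}--\eqnref{eqn2}; the freedom in such a basis is exactly a rescaling of the pair $\{\mathbf z^r_{-r-1},[T;0]\mathbf w_{r,-r-1}\}$ by a unit of $R$, and I want to show that requiring the $a_{\pm r}$ to satisfy the displayed identities removes this freedom. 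The relation $a_{-r}=s\,a_r$ is essentially definitional: applying the defining equation of $\G$ once on $[T;0]\mathbf w_{r,-r-1}$ and once on $[T;0]\mathbf w_{-r,-r-1}$ and using that $\Psi_F$ intertwines the $s$-twist (so $\G^2$ is $R$-linear) forces $a_r\cdot s(a_{-r})$ to be the scalar by which $\G^2$ acts, and symmetry of the construction in $\epsilon\leftrightarrow-\epsilon$ gives $a_{-r}=s\,a_r$.

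Next I would derive the quadratic relation $a_r^2=\frac{1}{[r]![T^{-1};r]_{(r)}}$. Here the idea is to feed $a=b=[T;0]\mathbf w_{r,-r-1}$ into the fundamental calculation with $\epsilon=+1$: the left side becomes $\chi^\sharp(\G a,\G a)=a_r\cdot s(a_r)\cdot\chi^\sharp([T;0]\mathbf w_{-r,-r-1},[T;0]\mathbf w_{-r,-r-1})$, and by the previous relation $a_r\,s(a_r)=a_r\,a_{-r}$; meanwhile the right side is $\frac{1}{T^r[r]![T^{-1};r]_{(r)}}s\chi([T;0]\mathbf w_{r,-r-1},[T;0]\mathbf w_{r,-r-1})$. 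Using \eqnref{normalizedforms} (or rather its image on $M(m+r)$) the two norms $\chi^\sharp$ and $s\chi$ of $[T;0]\mathbf w_{\pm r,-r-1}$ differ by an explicit monomial in $T$ and $v$; combined with $[T^{-1};r]_{(r)}=(-1)^rs\big([T;r]^{(r)}\big)$ from \eqnref{r2} and the congruence \eqnref{cong1}, this should collapse to $a_r^2=\frac{1}{[r]![T^{-1};r]_{(r)}}$ after the $T^r$ factors cancel against the weight monomials. The congruence $a_r\equiv-1/[r]!\bmod(T-1)$ then follows by reducing this quadratic identity modulo $(T-1)$: there $[T^{-1};r]_{(r)}\mapsto[r]!$ by the surjection $T\mapsto1$, so $\bar a_r^{\,2}=1/[r]!^2$, and the sign is fixed by tracking the leading coefficient of $\G$ through $\Psi(F^{(-k)}m)=F^{(k)}\Psi(m)$, which carries the single sign $(-1)^r$ hidden in $F^{(-k)}$ via \eqnref{defofFinv}.

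The main obstacle I anticipate is the bookkeeping of the monomial prefactors: showing that the various powers of $v$ and $T$ appearing in the weights of $\mathbf w_{\pm r,-r-1}$, in the normalization \eqnref{normalizedforms}, in the passage $\chi\to\chi^\sharp$ (the sharp operation lifts $\beta\mapsto s\beta$), and in the definition \eqnref{defofFinv} of $F^{(-k)}$ all conspire to leave precisely $\frac{1}{T^r[r]![T^{-1};r]_{(r)}}$ with no residual unit. A clean way to organize this is to first verify the three relations after reduction mod $(T-1)$ (where everything becomes a classical $\germ{sl}_2$ computation and the answer is visibly $\bar a_r=-1/[r]!$), and then show that the remaining ambiguity is a unit $u\equiv1\bmod(T-1)$ which is killed by the quadratic constraint together with the $s$-linearity $a_{-r}=s\,a_r$; uniqueness of the basis then follows since any two admissible bases are related by such a unit, and the three relations force that unit to be $1$.
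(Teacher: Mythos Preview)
Your approach to the first two relations is reasonable in spirit, though the bookkeeping you flag as the ``main obstacle'' is genuine and not addressed: to extract $a_r^2$ from the fundamental calculation you must compare $\chi^\sharp$ on $[T;0]\mathbf w_{-r,-r-1}$ with $s\chi$ on $[T;0]\mathbf w_{r,-r-1}$, and these live in different Verma summands with different Shapovalov normalizations. The paper (in \cite{MR2586983}) handles this by fixing the normalization \eqref{normalization} on \emph{both} $M(m\pm r)$ at the weight $m-r$, which is precisely what makes the stray powers of $T$ and $v$ cancel; you would need to invoke this explicitly.

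The real gap is in the congruence $a_r\equiv -1/[r]!\bmod(T-1)$. Your claim that the sign is ``the single sign $(-1)^r$ hidden in $F^{(-k)}$ via \eqref{defofFinv}'' does not hold up: the formula $F^{(-k)}\eta = v^{k(\lambda+k)}T^k[T;\lambda]^{(k)}F^{-k}\eta$ carries no sign, and in any case $\Psi(F^{(-k)}m)=F^{(k)}\Psi(m)$ is a statement about the cycle on the Verma module $M=M(m)$ at $\lambda=0$, not about $\G$ on $P_r$. The map $\G$ is built from $\Psi_F$ \emph{after} tensoring with $\mathcal F$ and projecting to the summand $P_r$; the sign of $a_r$ therefore depends on how the highest weight vector $\mathbf w_{r,r-1}\in P_r$ sits inside $M\otimes_R\mathcal F$, and on how $\Psi_F$ moves it. This is exactly the content of the two preliminary lemmas the paper records immediately after the statement: one writes $\mathbf w_{r,r-1}=c_+\sum_t A_t\, \mathbf w_{0,-1-2t}\otimes F^{(k-t)}f_n$ by solving the recursion $E\cdot\mathbf w_{r,r-1}=0$, the other computes $\delta^\vee(\mathbf w_{r,r+1})$ and the congruence $-c_-/[l]!\equiv c_+/[k]!\bmod(T-1)$ via the projections $\delta,\delta^\vee$ onto $\mathbf w_{0,\mp 1}\otimes\mathcal F$. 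The sign then drops out as the ratio $\delta(\Psi\,\mathbf w_{r,r-1})/\delta(\mathbf w_{-r,-r-1})$ modulo $T-1$. Your proposal skips this embedding computation entirely, and without it there is no mechanism to pin down the sign; reducing the quadratic relation mod $(T-1)$ only tells you $\bar a_r=\pm 1/[r]!$.
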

\begin{cor} For $\epsilon=\pm$,
$$
\overline\G ([T;0] \mathbf w_{\epsilon r,-r-1}] = w_{r,\epsilon} [T;0] \mathbf w_{-\epsilon r,-r-1}\ ,
$$  where
$w_{r,\epsilon}$ is the unit determined by conditions: 
\begin{equation}
w_{r,\epsilon}^2=\frac{[r]!}{ [T^{-\epsilon};r]_{(r)}}\quad \text{and}
\quad w_{r,\epsilon}\equiv -1-\epsilon\alpha (r)(T-1)\ \mod\ (T-1)^2\ .
\end{equation}
 Moreover $\ov\G$ induces a $\mathbb C$-linear map on $\mathfrak L/ \ (T-1)\cdot \mathfrak L$
given by the matrix
\begin{equation}
\begin{pmatrix}
 1&-\frac{\alpha (r)T(v-v^{-1})}{ (1+T)} \\  0&1
\end{pmatrix}
\end{equation}
where $\alpha(r)=-\sum_{s=1}^r
\frac{v^s+v^{-s}}{ v^s-v^{-s}}$.  Moreover, if
$x_\epsilon\in M(m+\epsilon r)$ and $[T;0]\cdot x_\epsilon$ is an $R$-basis vector for $\mathfrak L\cap
M(m+\epsilon r)$ then $\{[T;0]\cdot x_\epsilon,x_\epsilon +\overline \G x_ \epsilon\}$ is an $R$-basis
for $\mathfrak L$ and $x_\epsilon+\overline \G x_\epsilon$ generates the
$\ru $-submodule $P_r$.
\end{cor}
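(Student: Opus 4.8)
The plan is to reduce the whole statement to the preceding Lemma, which already pins down the constants $a_{\pm r}$, together with a first-order Taylor expansion in $T-1$. Since $\overline{\G}=[r]!\,\G$ and $\G([T;0]\mathbf w_{\epsilon r,-r-1})=a_{\epsilon r}\,[T;0]\mathbf w_{-\epsilon r,-r-1}$ by the definition of $\G$, the scalar in the Corollary is forced: $w_{r,\epsilon}=[r]!\,a_{\epsilon r}$, and the displayed identity for $\overline{\G}$ is immediate. Squaring $w_{r,+}=[r]!\,a_r$ and using $a_r^2=1/([r]!\,[T^{-1};r]_{(r)})$ gives $w_{r,+}^2=[r]!/[T^{-1};r]_{(r)}$; applying the involution $s$, which sends each $[T^{-1};j]$ to $[T;j]$ and fixes $[r]!\in\mathbb k(v)$, and using $a_{-r}=s\,a_r$, gives $w_{r,-}^2=[r]!/[T;r]_{(r)}$. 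The two cases combine into $w_{r,\epsilon}^2=[r]!/[T^{-\epsilon};r]_{(r)}$.

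For the congruence modulo $(T-1)^2$ I would work with $T=1+t$. From the definition $[T^{-\epsilon};j]=(v^jT^{-\epsilon}-v^{-j}T^{\epsilon})/(v-v^{-1})$ and $T^{\pm\epsilon}=1\pm\epsilon t+O(t^2)$, one gets $[T^{-\epsilon};j]=[j]\bigl(1-\epsilon t\,(v^j+v^{-j})/(v^j-v^{-j})\bigr)+O(t^2)$. Multiplying over $j=1,\dots,r$ and recognizing the resulting sum in terms of $\alpha(r)=-\sum_{s=1}^{r}(v^s+v^{-s})/(v^s-v^{-s})$ gives an expansion of $[T^{-\epsilon};r]_{(r)}$ to order $t$; since $R$ is a complete local ring whose residue field has characteristic $\ne 2$, a unit of $R$ has a unique square root with a prescribed constant term, and the preceding Lemma gives $w_{r,\epsilon}\equiv -1\pmod t$. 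Extracting that square root of $w_{r,\epsilon}^2=[r]!/[T^{-\epsilon};r]_{(r)}$ and collecting the $(T-1)$-coefficient yields $w_{r,\epsilon}\equiv -1-\epsilon\alpha(r)(T-1)\pmod{(T-1)^2}$. Pinning down the exact constant (sign and normalization of $\alpha(r)$) is the delicate bookkeeping step here.

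For the induced map on $\mathfrak L/(T-1)\mathfrak L$, I would use the $R$-basis $\{\mathbf z^r_{-r-1},[T;0]\mathbf w_{r,-r-1}\}$ of $\mathfrak L$, record the factorization $[T;0]=(T-1)(T+1)/(T(v-v^{-1}))$ (so $[T;0]\mathbf z^r_{-r-1}\in (T-1)\mathfrak L$), and note that since $\mathbf z^r_{-r-1}$ is the sum of the $M(m+r)$- and $M(m-r)$-components one has $[T;0]\mathbf w_{-r,-r-1}=[T;0]\mathbf z^r_{-r-1}-[T;0]\mathbf w_{r,-r-1}$. Computing $\overline{\G}$ on the two basis vectors via the previous steps and the structure formulas \eqnref{eqn1}, \eqnref{eqn2}, then reducing mod $(T-1)$, produces the asserted $2\times 2$ unipotent matrix: the diagonal is the identity because $\mathbf z^r_{-r-1}$ (equivalently $x_\epsilon+\overline{\G}x_\epsilon$) is preserved to leading order, and the off-diagonal entry $-\alpha(r)T(v-v^{-1})/(1+T)$, which is just $-\alpha(r)(T-1)[T;0]^{-1}$, is exactly the correction picked up when converting between $\mathbf w_{r,-r-1}$ and $[T;0]\mathbf w_{r,-r-1}$ and feeding in the congruence of the second step.

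Finally, the transition matrix from $\{[T;0]x_\epsilon,\ x_\epsilon+\overline{\G}x_\epsilon\}$ to the standard basis of $\mathfrak L$ is unipotent modulo $(T-1)$ by the previous step, hence has unit determinant, so those two vectors form an $R$-basis of $\mathfrak L$. To see that $x_\epsilon+\overline{\G}x_\epsilon$ generates the $\ru$-submodule $P_r$, reduce mod $(T-1)$: in $\overline{P_r}$ the vector $\overline{\G}x_\epsilon$ lies in the $M(m-\epsilon r)$-component and $x_\epsilon$ in the $M(m+\epsilon r)$-component, so their sum is a weight vector lying in neither Verma component, and by the structure of $P_r$ recorded in \eqnref{ses-inclusions} together with \eqnref{eqn2} the only $\ru$-submodule containing such a vector is all of $\overline{P_r}$; Nakayama's lemma over the local ring $R$ lifts this to $P_r$. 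I expect this last generation statement to be the real obstacle, since it is not a weight-space computation but uses the global structure of the basic module $P_r$ plus the descent from the residue field back to $R$.
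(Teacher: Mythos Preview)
The paper gives no separate proof of the Corollary; the lemmas and the paragraph ``We now return to the proof of the congruence'' that follow it are actually completing the argument for the third identity in the preceding Lemma on $a_{\pm r}$, so the Corollary is recorded as a direct consequence of that Lemma. Your derivation---set $w_{r,\epsilon}=[r]!\,a_{\epsilon r}$, square and apply $s$ to obtain $w_{r,\epsilon}^2$, Taylor-expand $[T^{-\epsilon};r]_{(r)}$ to first order in $T-1$ for the congruence, then reduce $\overline\G$ on the basis $\{\mathbf z^r_{-r-1},[T;0]\mathbf w_{r,-r-1}\}$ of $\mathfrak L$ and finish the generation claim with Nakayama---is exactly the intended route.
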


\subsection{} To verify the correct choice of sign for the third identity we
shall need some preliminary lemmas. Let $M^\prime$ denote the span of all
the weight subspaces of $M$ other than the highest weight space. Let $\delta$
denote the projection of $M\otimes_R \mathcal F$ onto
$w_{0,-1}\otimes \mathcal F$ with kernel $M'\otimes_R \mathcal F$. Define constants
$c_\pm$ by the relations:
 $\delta(w_{r,r-1})\equiv c_+ w_{0,-1}\otimes F^{(k)}f_n\mod \ M'\otimes
_R\mathcal F$  and
$\delta(w_{ -r,-r-1})\equiv c_- w_{0,-1}\otimes F^{(l)}f_n \mod 
\ M'\otimes_R\mathcal F$ where $n-2k-1=r-1$ and $n-2l-1=-r-1$. For any integer
$t$ set $z_t=w_{0,-1}\otimes F^{(t)} f_n$. In a similar fashion define the
projection
$\delta^\vee$ of $M_F\otimes _R\mathcal F$ onto $w_{0,1}\otimes _R\mathcal F$ with
kernel 
$M^\vee\otimes _R\mathcal F$ and $M^\vee$ equal to the span of all weight
subspaces in $M_F$ for weights other than $m+1$.

\begin{lem} [\cite{MR2586983}] Set 
$$
A_t=\frac{[n-k+t]_{(t)}T^tv^{-t^2}}{ [t]!\
[T^{-1};t]_{(t)}}
$$
then 
$$ w_{r,r-1}=c_+\sum_{0\le t\le k}A_t w_{0,-1-2t}\otimes F^{(k-t)}f_n.
$$
\end{lem}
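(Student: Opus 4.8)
The plan is to compute $w_{r,r-1}$ explicitly from the recursive structure of $P_r$ and then read off the claimed formula by comparing leading terms under $\delta$. First I would recall that $w_{r,r-1}$ lies in the $m+(r-1)$ weight space of $P_r\subset M\otimes_R\mathcal F$, and that $w_{r,r-1}$ is a highest weight vector for the top Verma summand $M(m+r)$, equivalently $w_{r,r-1}=F^{(-?)}$-type generator; but the cleanest route is to use $w_{r,r-1}=F^{(k)}\eta_r$ where $\eta_r$ is the lowest weight vector of the $\ru$-submodule $P_r$ and then expand in the tensor basis. Since $\delta(w_{r,r-1})\equiv c_+ w_{0,-1}\otimes F^{(k)}f_n \bmod M'\otimes_R\mathcal F$ by definition, the coefficient of $w_{0,-1-2t}\otimes F^{(k-t)}f_n$ is what we must pin down, and the constant $c_+$ is fixed precisely so that the $t=0$ term has coefficient $c_+$ (note $A_0=1$ since $[n-k]_{(0)}=[0]!=[T^{-1};0]_{(0)}=1$, $T^0=1$, $v^0=1$), so the normalization is automatically consistent and the content is the value $A_t$ for $t\ge 1$.

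Second, I would obtain $w_{r,r-1}$ by applying a suitable divided power of $F$ to the highest weight vector of $P_r$, or dually by applying $E$-operators, using \lemref{binomiallemma} and the explicit $\ru$-action in \lemref{locstructure} together with the formulas \eqnref{eqn1}, \eqnref{eqn2} for the $\mathbf z_j$. The key mechanism is that $P_r = M\otimes_R\mathcal F$ restricted to the appropriate block contains $M_+=M(m+r)$ as the submodule $(T-1)M_+\oplus\cdots$, and $w_{r,r-1}$ generates it; writing $w_{r,r-1}=\sum_t \gamma_t\, w_{0,-1-2t}\otimes F^{(k-t)}f_n + (\text{terms in }M'\otimes_R\mathcal F)$ and imposing $E w_{r,r-1}=0$ (it is a highest weight vector of its Verma summand, up to the $[T;0]$ twist) gives a recursion for $\gamma_t/\gamma_{t-1}$. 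Using the comultiplication $\Delta(E)=E\otimes K + 1\otimes E$ and the action formulas on $M$ (where $E\cdot w_{0,-1-2t}$ produces a $[T;\cdot]$ factor) and on $\mathcal F$ (where $E^{(1)}F^{(k-t)}f_n$ produces a $q$-binomial $[n-(k-t)+1]$), the ratio $A_t/A_{t-1}$ should come out to $\tfrac{[n-k+t]\,T\,v^{1-2t}}{[t]\,[T^{-1};t]}$ after simplification — matching the telescoping product $A_t = \prod_{u=1}^t \tfrac{[n-k+u]\,T\,v^{1-2u}}{[u]\,[T^{-1};u]}$, which equals the stated closed form $\tfrac{[n-k+t]_{(t)}\,T^t\,v^{-t^2}}{[t]!\,[T^{-1};t]_{(t)}}$ since $\sum_{u=1}^t(1-2u) = t - t(t+1) = -t^2$.

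Third, I would double-check the $T$-power and the $v$-power bookkeeping against \eqnref{defofFinv} and \eqnref{cong1}, since these are exactly the places where factors of $T$ and $[T;\lambda]^{(k)}$ enter when one passes between $F^{(k)}\eta$ and $F^{(-k)}\eta$; here $\lambda=0$ in the ambient setup so $[T;0]$ and $[T^{-1};t]_{(t)}$ are the relevant invertible elements of $R$, consistent with the denominator in $A_t$. Finally I would verify $A_t\equiv \binom{n-k+t}{t}\bmod (T-1)$, which recovers the classical $\mathfrak{sl}_2$ Clebsch--Gordan coefficient and serves as a sanity check via \eqnref{cong1}.

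\textbf{Main obstacle.} The hard part will be the bookkeeping of the three intertwined powers — the $T$-power $T^t$, the $v$-power $v^{-t^2}$, and the placement of the $[T^{-1};t]_{(t)}$ versus $[T;t]_{(t)}$ in the denominator — because the $E$-action on $M$ in this localized/twisted setting carries $[T;r]$-type coefficients whose $r$ shifts with the weight, and an off-by-one in $r$ propagates through the whole telescoping product; reconciling the sign/exponent conventions of \lemref{binomiallemma} with those of \lemref{locstructure} and equations \eqnref{eqn1}--\eqnref{eqn2} is where I expect the real work to lie. Everything else is a routine, if lengthy, induction on $t$.
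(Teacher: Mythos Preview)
Your proposal is correct and follows exactly the paper's own argument: the paper's proof consists of the single line ``Note that $0=E\cdot w_{r,r-1}$ and solve the recursion relations in $A_t$,'' which is precisely your second paragraph --- impose $E\,w_{r,r-1}=0$, use the comultiplication to obtain a two-term recursion $\gamma_t/\gamma_{t-1}=\dfrac{[n-k+t]\,T\,v^{1-2t}}{[t]\,[T^{-1};t]}$, and telescope. Your extra sanity checks (the $A_0=1$ normalization and the $T\to 1$ classical limit) are sound but not needed for the proof.
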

\begin{proof} Note that $0=E\cdot w_{r,r-1}$ and solve the recursion
relations in $A_t$.
\end{proof}

\begin{lem} [\cite{MR2586983}] 
\begin{align}
\delta^\vee(w_{r,r+1})=c_+\ \frac{v^{(n-k)(k+1)}
[T^{-1};2k-n-1]_{(k)}}{ [T^{-1};k]_{(k)}} \ w_{0,1} \otimes F^{(k)}
f_n, \\
  -\frac{c_-}{ [l]!}\equiv \frac{c_+}{ [k]!}\mod T-1 \ \ \ 
\text{and}
\quad
\delta(\Psi w_{r,r+1})\equiv -c_-\ \frac{1}{ [r-1]!}\ z_l\mod\ \
(T-1)\cdot P_r. 
\end{align}
\end{lem}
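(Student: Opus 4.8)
We treat the three assertions in order; the first carries the computational weight, and the other two are deduced from it.

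\smallskip
\noindent\emph{The formula for $\delta^\vee(w_{r,r+1})$.} The action formulas of \lemref{locstructure} give $F\,w_{r,r+1}=w_{r,r-1}$, and since $F$ acts without torsion this reads $w_{r,r+1}=F^{-1}w_{r,r-1}$ in $M_F\otimes_R\mathcal F$. Substitute the expansion $w_{r,r-1}=c_+\sum_{0\le t\le k}A_t\,w_{0,-1-2t}\otimes F^{(k-t)}f_n$ of the preceding lemma and apply $F^{-1}$ termwise. From the comultiplication of $F$ one gets on the localization a finite expansion $F^{-1}(u\otimes y)=\sum_{p\ge 0}d_p\,F^{-1-p}u\otimes F^{(p)}y$ whose coefficients $d_p\in R$ involve powers of $v$ (depending on the weight of $y$) and $q$-integers; hence $F^{-1}(w_{0,-1-2t}\otimes F^{(k-t)}f_n)$ is a sum of terms with left tensor factor $F^{-1-p}w_{0,-1-2t}$, and a weight count singles out $p=t$ as the one index producing a term in the weight space $m+1$ of $M_F$. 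Thus $\delta^\vee$ retains only that term and
\[
\delta^\vee(w_{r,r+1})=c_+\Big(\sum_{0\le t\le k}\gamma_t\,A_t\,\qbinom{k}{t}\Big)\,w_{0,1}\otimes F^{(k)}f_n
\]
for explicit scalars $\gamma_t\in R$. Substituting $A_t$, writing $[n-k+t]_{(t)}/[t]!=\qbinom{n-k+t}{t}$, and re-expressing $[T^{-1};t]_{(t)}$ by \eqnref{r1}--\eqnref{r2}, the bracketed sum is a terminating $q$-hypergeometric sum in $t$ which collapses, by the $q$-Vandermonde identity \eqnref{Ma2} (equivalently \eqnref{Ma1}), to $v^{(n-k)(k+1)}[T^{-1};2k-n-1]_{(k)}/[T^{-1};k]_{(k)}$. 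Performing this summation, while keeping the powers of $v$ and the signs consistent, is the main obstacle of the proof.

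\smallskip
\noindent\emph{The congruence $-c_-/[l]!\equiv c_+/[k]!$ modulo $(T-1)$.} Pass to $T=1$: each $[T^{-1};j]_{(j)}$ reduces to $[j]!$ (as in \eqnref{cong1}) and the tensor vectors reduce to their classical analogues in $M(m)\otimes\mathcal F_n$, so the reduction of the first identity exhibits the classical leading coefficient of the raised highest weight vector of the $M(m+r)$-summand. Matching this against the leading term of the highest weight vector $w_{-r,-r-1}$ of the $M(m-r)$-summand — whose expansion is obtained, as in the preceding lemma, by solving $E\,w_{-r,-r-1}=0$ and whose $z_l$-coefficient is $c_-$ by definition — while keeping track of the $s$-linearity of the identifications involved (so that the sign $s[T;0]=-[T;0]$ enters), yields $-c_-/[l]!\equiv c_+/[k]!$ modulo $(T-1)$.

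\smallskip
\noindent\emph{The congruence $\delta(\Psi w_{r,r+1})\equiv -\frac{c_-}{[r-1]!}\,z_l$ modulo $(T-1)P_r$.} Apply the cycle $\Psi$, in the form $\Psi\otimes s\Theta\circ L^{-1}$ of the lemma preceding \eqnref{twistedpairing}, to the first identity refined so as to record every weight component $w_{0,j}$ of $w_{r,r+1}$ with $j\ge 1$ (the components of negative index lie in $M\subset M_F$ and vanish on passing to $M_\pi$). Since $F$ kills the lowest weight vector of $M_\pi$, $L^{-1}$ is trivial on the $j=1$ component and contributes only finitely many extra terms on the others; since $\Psi$ carries that lowest weight vector to the highest weight vector of $M$ and reverses the $M$-weight index — with normalizing $R$-unit supplied by \eqnref{defofFinv} — and $s\Theta$ reverses the index on $\mathcal F_n$, sending $F^{(k)}f_n$ to a multiple of $F^{(l)}f_n$ with $k+l=n$, every surviving term lies in $w_{0,-1}\otimes\mathcal F$ and, by weight, is a multiple of $z_l$. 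Summing these contributions and reducing modulo $(T-1)P_r$ — where the $q$-factors $[T;\bullet]_{(\bullet)}$ and the coefficient of the first identity specialize as above — identifies $\delta(\Psi w_{r,r+1})$ with $-\frac{c_-}{[r-1]!}\,z_l$; the shift from $[r]!$ to $[r-1]!$ records the one-step weight translation in $w_{r,r+1}=F^{-1}w_{r,r-1}$. This last computation is precisely what fixes the sign in the third relation, $a_r\equiv-1/[r]!$, of the lemma on $a_{\pm r}$ above.
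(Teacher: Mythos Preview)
The paper does not supply a proof of this lemma: it is quoted verbatim from \cite{MR2586983} and stated without argument. The paragraph that follows the lemma uses its third assertion to finish the sign determination $a_r\equiv -1/[r]!\bmod (T-1)$ in the earlier lemma on the constants $a_{\pm r}$, but it does not revisit the present lemma itself. There is therefore nothing in this paper to compare your sketch against.

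As a standalone outline your first part is the natural route: write $w_{r,r+1}=F^{-1}w_{r,r-1}$, expand $w_{r,r-1}$ by the preceding lemma, invert $\Delta(F)$ on the tensor product (which terminates because $\mathcal F$ is finite dimensional), and project to the weight $m+1$; the resulting single sum over $t$ is indeed of $q$-Vandermonde type. The second part is where your sketch is thinnest. The constants $c_\pm$ are defined via the projection $\delta$ onto $w_{0,-1}\otimes\mathcal F$, not via $\delta^\vee$, and the congruence $-c_-/[l]!\equiv c_+/[k]!$ is a comparison of the $z_k$- and $z_l$-coefficients of the two highest weight vectors $w_{r,r-1}$ and $w_{-r,-r-1}$; your paragraph instead tries to read $c_-$ off the first identity for $\delta^\vee(w_{r,r+1})$, which does not by itself see $w_{-r,-r-1}$. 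You would need to say how the $\delta^\vee$-computation is linked to the $\delta$-expansion of $w_{-r,-r-1}$ (e.g.\ through the cycle, or by solving the analogous recursion for $w_{-r,-r-1}$ directly and specializing $T\to 1$). For the third assertion, note that the $\Psi$ appearing in $\delta(\Psi w_{r,r+1})$ is the cycle on $M_F\otimes_R\mathcal F$ induced from the cycle on $M$; your invocation of $\bar\Psi=\Psi\otimes s\Theta\circ L^{-1}$ is the right object, but the passage from ``every surviving term lies in $w_{0,-1}\otimes\mathcal F$'' to the exact constant $-c_-/[r-1]!$ is asserted rather than computed.
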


We now return to the proof of the congruence. Since $\Psi(w_{r,r-1})=-a_r
w_{-r,-r-1}$  we can calculate the constant $a_r$ as the ratio of
$\delta(\Psi(w_{r,r-1}))$ and $\delta( w_{-r,-r-1})$.  From (4.4) we find the ratio
is congruent to $\frac{-1}{ [r-1]}\mod T-1$. This completes the proof
of Lemma 4.5.

\subsection{} Recall from \secref{automorphism} the category $_R\mathcal C_i$ and note
that any module $N$ in the category is the direct sum of generalized
eigenspaces for the Casimir element \cite{MR2586983} in the sense that 
$N=\sum N^{(\pm r)}$ where the sum is over $\mathbb N$ and $N^{(\pm r)}$
contains all highest  weight vectors in $N$ with weights $m+r-1$ and
$m-r-1$. Note that $N^{(\pm r)} $ need not be generated by its highest
weight vectors. The decomposition in (4.2), $M\otimes_R\mathcal F\equiv \sum
P_i$ where the sum is over the nonnegative weights of $\mathcal F_R$ is such a
decomposition. In this case $(M\otimes _R\mathcal F)^{(\pm i)}= P_i$.  The
Casimir element $\Omega_0$ of $_R\mathbf U$ by 
\begin{equation}\label{quantumCasimir}
\Omega_0 =F E + \frac{v\tilde K_i-2+v^{-1}\tilde K^{-1}
}{( v-v^{-1})^2}.
\end{equation}
   Let $N^{(r)}$ (resp. $N^{(-r)}$) denote the submodule of $N$ where the
Casimir element acts by the scalar 
\begin{align}
c(\lambda)&=\frac{v^{r-1}T-2+v^{-r+1}T^{-1}}{( v-v^{-1})^2
}=[\sqrt{T};(r-1)/2]^2\
\\
c(s\lambda)&= \frac{v^{-r+1}T-2+v^{r-1} T^{-1}}{ (
v-v^{-1})^2}=[\sqrt{T};(-r+1)/2]^2 . 
\end{align}

\subsection{} We now turn to the general case where $\mathcal F$ is a finite
dimensional ${_R}\mathbf U_i$-module but not   necessarily irreducible. We extend the
definition of the
$s$-linear maps $\G$ and $\overline \G$ defined in \eqnref{} as follows. Decompose
$M\otimes_R \mathcal F$  into generalized eigenspaces for the Casimir
$(M\otimes_R \mathcal F)^{(\pm r)}$ and let $\mathfrak L^r$ denote the $m-r-1$ weight
subspace of
$(M\otimes_R \mathcal F)^{(\pm r)}$. Then set $\mathbb L=\sum \mathfrak L^r$. Decompose
$\mathcal F=\sum \mathcal F_j$  into irreducible ${_R}\mathbf U_i$ modules.  Then $M\otimes_R
\mathcal F=\sum M\otimes_R \mathcal F_j$ and so we obtain $s$-linear  extensions
also denoted $\G$ and 
$\overline \G$ from $\mathbb L\cap (M\otimes_R \mathcal F_j)$ to all of $\mathfrak L$.

\begin{prop} [\cite{MR2586983}] Suppose $\phi$ is any invariant form on
$M\otimes_R \mathcal F$ with $\phi = \pm \phi^\sharp$. Let $\{w_j, j\in J\}$ be
an $R$-basis for the
 highest weight space of $(M\otimes_R \mathcal F)^{(0)}$ and
$\{u_i, i\in I\}$ a basis of weight vectors for the $E$-invariant weight
spaces of  weight $m+t$ for $t<-1$. Set
$M_j$ equal to the ${_R}\mathbf U_i$-module generated by $w_j$ and $Q_l$ the
${_R}\mathbf U_i$-module generated by $(T-1)^{-1}(u_l+
\G(u_l))$. Then $M\otimes_R \mathcal F=\sum_jM_j\oplus \sum_lQ_l$ where each
$M_j\cong M(m)$ and  if $U_l$ has weight $m-t$, then $Q_l\cong P(m+t)$.
Moreover, if the basis vectors
$w_j$ and $u_l$ are $\phi$-orthogonal then the sum is an orthogonal sum of
${_R}\mathbf U_i$-modules.

\end{prop}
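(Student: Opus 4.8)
The plan is to build the decomposition $M\otimes_R\mathcal F=\sum_j M_j\oplus\sum_l Q_l$ directly from the stated bases, using the $s$-linear map $\G$ (extended to all of $\mathbb L$ via the decomposition $\mathcal F=\sum\mathcal F_j$) as the gluing device, and then to verify orthogonality by a weight-space argument that reduces to the fundamental calculation in the preceding lemma ($\chi^\sharp(\G a,\G b)=\tfrac{1}{T^r[r]![T^{-\epsilon};r]_{(r)}}\,s\chi(a,b)$) together with the Lifting Theorem. First I would recall that $M\otimes_R\mathcal F$ decomposes into generalized Casimir eigenspaces $(M\otimes_R\mathcal F)^{(\pm r)}$, and that $(M\otimes_R\mathcal F)^{(0)}$ is exactly the part generated by the highest weight space spanned by $\{w_j\}$; each $M_j:={_R}\mathbf U_i\cdot w_j$ is a highest weight module of highest weight $Tv^{-1}$, hence a quotient of $M(m)$, and a dimension/weight count (the $w_j$ are $R$-free and $E$-annihilated) shows $M_j\cong M(m)$. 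For the $Q_l$: each $u_l$ of weight $m-t$ with $t<-1$ is an $E$-invariant vector, so $u_l$ and $\G(u_l)$ live in the two Verma summands $M(m\pm t')$ making up $P(m+t')$ (here $t'=-t>1$); the vector $(T-1)^{-1}(u_l+\G(u_l))$ is precisely the generator of the larger module $P(m+t')$ as in the Corollary following the $a_{\pm r}$ lemma (``$x_\epsilon+\overline\G x_\epsilon$ generates the $\ru$-submodule $P_r$'', after absorbing the $[r]!$ and the $(T-1)$-normalization), so $Q_l\cong P(m+t')$.

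Next I would prove the sum is direct and exhausts $M\otimes_R\mathcal F$. Reducing modulo $(T-1)$, the images of the $w_j$ and of the $u_l+\G(u_l)$ span $\overline{M\otimes_R\mathcal F}$ as a $\mathbb k(v)[F]$-module — this is where the classical Clebsch–Gordan picture (Lemma~\ref{CG}, Corollary~\ref{CG2}) and the weight diagram for $P_n$ drawn above enter: over $\mathbb k(v)$ the tensor product is a direct sum of the $\mathcal F_m$'s, and the chosen vectors hit each highest/$E$-invariant weight space once. A Nakayama-type argument over the local ring $R$ then lifts this to the statement that $\bigoplus_j M_j\oplus\bigoplus_l Q_l\to M\otimes_R\mathcal F$ is surjective; comparing ranks of weight spaces on both sides (both are finitely generated free $R$-modules in each weight) forces it to be an isomorphism.

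Finally, orthogonality. Fix $\phi$ with $\phi=\pm\phi^\sharp$. For the $M_j$--$M_k$ and $M_j$--$Q_l$ blocks one argues on the highest weight space: $\chi:=\phi$ restricted to the highest weight vectors is determined by $\phi(w_j,w_k)$, which vanishes by hypothesis for $j\ne k$, and invariance propagates this down the Verma module; the $M_j$--$Q_l$ vanishing follows because $M_j$ and $Q_l$ sit in different Casimir eigenspaces $(M\otimes_R\mathcal F)^{(0)}$ versus $(M\otimes_R\mathcal F)^{(\pm t')}$, and an invariant form pairs distinct generalized eigenspaces of the (self-adjoint) Casimir to zero. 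The genuinely delicate block is $Q_l$--$Q_{l'}$: here one writes the generators as $(T-1)^{-1}(u_l+\G u_l)$ and expands $\phi$ on these using bilinearity into the four terms $\phi(u_l,u_{l'})$, $\phi(u_l,\G u_{l'})$, $\phi(\G u_l,u_{l'})$, $\phi(\G u_l,\G u_{l'})$; the cross terms are controlled because $u_l$ lies in one Verma summand of $P$ and $\G u_{l'}$ in the other, so they are paired through the short exact sequence \eqnref{ses-inclusions}, while the diagonal terms $\phi(\G u_l,\G u_{l'})$ are rewritten via the fundamental lemma $\chi^\sharp(\G a,\G b)=u_\epsilon\,s\chi(a,b)$ (with $\overline\G$, so $u_\epsilon\equiv1$) and the hypothesis $\phi^\sharp=\pm\phi$, which makes the whole expression proportional to $\phi(u_l,u_{l'})=0$. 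The $(T-1)^{-1}$ factors and the unit $u_\epsilon$ conspire so that no denominator survives, leaving genuine vanishing in $R$. I expect this last block — tracking the $(T-1)$-powers through the lifting/localization and the $\G$-symmetry so that the cross terms cancel exactly against the diagonal terms — to be the main obstacle; everything else is bookkeeping with the bases already set up in \secref{basisandsymmetries} and the Casimir decomposition.
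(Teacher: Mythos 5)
The paper gives no proof for this proposition; it is stated with a citation to [CE10], and only the \emph{following} proposition carries a one-line proof (orthogonalize over the DVR $R$). So there is no paper proof to compare against directly, and I will instead assess your reconstruction on its own terms.

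Your overall architecture is right: Casimir eigenspace decomposition, $w_j$ generating copies of $M(m)$, $(T-1)^{-1}(u_l+\G u_l)$ generating copies of $P(m+t)$, Nakayama over the local ring $R$ to pass from the $\bmod\,(T-1)$ Clebsch--Gordan picture to a decomposition over $R$, and the four-term expansion combined with the $\G$-symmetry lemma to handle the delicate $Q_l$--$Q_{l'}$ block. Two points need sharpening, though. First, you treat $\phi(g_l,g_{l'})=0$ as essentially settling $\phi(Q_l,Q_{l'})=0$, implicitly as if $g_l$ were a highest-weight generator. It is not: although $u_l$ is $E$-invariant, $\G u_l$ is \emph{not} (it is a multiple of $[T;0]\mathbf w_{r,-r-1}=F^{(r)}$ applied to the top of the $M(m+r)$ summand), and indeed $g_l$ cannot be $E$-invariant, since otherwise $\ru g_l$ would be a highest-weight quotient of $M(m-r)$ rather than all of $P(m+r)$. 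Consequently, orthogonality of the submodules requires checking $\phi(g_l, xg_{l'})$ for all degree-zero $x$ (spanned by $F^aK^bE^a$), not just $\phi(g_l,g_{l'})$. This is still tractable---$E^ag_{l'}=(T-1)^{-1}E^a\G u_{l'}$ lies entirely in the $M(m+r)$ piece, so $F^aE^ag_{l'}$ is again proportional to $(T-1)^{-1}\G u_{l'}$ and you are returned to the same four terms---but the argument has to be made. Second, your justification of the vanishing of the cross terms $\phi(u_l,\G u_{l'})$ (``paired through the short exact sequence'') is vague. The actual mechanism is that $u_l$ and $\G u_{l'}$ lie in genuine $\Omega$-eigenspaces with distinct eigenvalues $c(\lambda)\ne c(s\lambda)$ over $\operatorname{Frac}(R)$ (they agree only $\bmod\,(T-1)$), so self-adjointness of the Casimir forces the pairing to vanish; equivalently, any invariant form on $P_r\subset M(m+r)\oplus M(m-r)$ is the restriction of an orthogonal sum $q\phi_r\oplus r\phi_{-r}$, as the paper records in the section on equivalence classes of forms. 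With these two refinements your plan is sound and in the spirit of the machinery the paper sets up around $\G$, $\chi^\sharp$, and the Casimir decomposition.
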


\begin{prop}[\cite{MR2586983}]
Suppose $\phi$ is any invariant form on
$M\otimes_R \mathcal F$ with $\phi = \pm \phi^\sharp$. Then $M\otimes_R \mathcal F$
admits an orthogonal decomposition with each summand an indecomposible
$\mathfrak a$-module and isomorphic to $M$ or some $P(m+t)$ for $t\in\mathbb N^*$.
\end{prop}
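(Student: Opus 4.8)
The plan is to deduce this Proposition directly from the previous one, which provides an orthogonal decomposition whenever one can exhibit an $R$-basis $\{w_j\}$ for the highest weight space of $(M\otimes_R\mathcal F)^{(0)}$ and a basis $\{u_l\}$ of weight vectors for the $E$-invariant weight spaces of weight $m+t$ with $t<-1$, all mutually $\phi$-orthogonal. So the entire content reduces to a linear-algebra statement: \emph{given $\phi=\pm\phi^\sharp$, the relevant weight spaces admit $\phi$-orthogonal bases of the prescribed type}. Once such bases are produced, the preceding Proposition gives $M\otimes_R\mathcal F=\sum_j M_j\oplus\sum_l Q_l$ with $M_j\cong M(m)$, $Q_l\cong P(m+t)$, and the sum orthogonal; since $M(m)$ and each $P(m+t)$ is indecomposable as an $\mathfrak a$-module, this is exactly the claimed decomposition.

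First I would fix the generalized-eigenspace decomposition $M\otimes_R\mathcal F=\bigoplus_r(M\otimes_R\mathcal F)^{(\pm r)}$ for the Casimir $\Omega_0$ from \eqnref{quantumCasimir}, which is $\phi$-orthogonal because $\Omega_0$ is central and $\phi$ is invariant (eigenspaces for distinct eigenvalues are orthogonal, and the eigenvalues $c(\lambda),c(s\lambda)$ are distinct elements of $R$ for distinct $r$). This reduces everything to a single block $(M\otimes_R\mathcal F)^{(\pm r)}$ at a time. Within a fixed block, I would work weight space by weight space: the $E$-invariant vectors of a given weight $m+t$ form a free $R$-submodule, and I need to diagonalize the restriction of $\phi$ (or the relevant pairing between $E$-invariants and their $F$-images) on that finite-rank free module. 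The symmetry hypothesis $\phi=\pm\phi^\sharp$ is what makes this restriction behave well under the $s$-linear maps $\G,\overline\G$ of the previous subsections, and in particular controls the pairing modulo $(T-1)$.

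The key steps, in order: (1) orthogonality of the Casimir eigenspaces, reducing to one block; (2) inside a block, identify the highest weight space of the $(0)$-summand and the $E$-invariant weight spaces of weight $m+t$, $t<-1$, as the relevant free $R$-modules carrying the data needed by the previous Proposition; (3) use the nondegeneracy of $\phi$ on $M\otimes_R\mathcal F$ — inherited from nondegeneracy of the Shapovalov form on $M$ via the induced-form description \eqnref{inducedpairing} and \propref{betaonabasis} — together with $\phi=\pm\phi^\sharp$ to run a Gram–Schmidt argument over the complete local ring $R$, producing an orthogonal basis of weight vectors of the required shape (here one uses that $R$ is local so that units are detected modulo $(T-1)$, and the congruences for $\G,\overline\G$, $a_{\pm r}$, $w_{r,\epsilon}$ established above guarantee the Gram matrix stays invertible); (4) feed these orthogonal bases into the previous Proposition and invoke indecomposability of $M(m)$ and $P(m+t)$ to conclude.

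The main obstacle is step (3): carrying out Gram–Schmidt over $R=\mathbb k(v)[[T-1]]$ requires that at each stage the diagonal entries of the Gram matrix be units in $R$, equivalently nonzero modulo $(T-1)$, and a priori the symmetric pairing $\phi$ could be degenerate on a particular weight space even though it is nondegenerate on the whole module. The resolution is to exploit $\phi=\pm\phi^\sharp$: the lift/$\sharp$ operation interchanges the $(T-1)$-adic filtrations on the $M(m+r)$ and $M(m-r)$ pieces (cf. the hereditary-filtration discussion and \lemref{binomiallemma}), so evenness or oddness forces the pairing between an $E$-invariant vector in the $(T-1)M_+$ part and its partner in the $M_-$ part to have the exact $(T-1)$-valuation needed for a unit to appear after the correct rescaling by powers of $[T;0]$ and $[r]!$. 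Making this precise — i.e. showing that the ``mixed'' Gram entry $\chi(x_\epsilon, \overline\G x_\epsilon)$ or $\chi(x_\epsilon + \overline\G x_\epsilon,\, x_\epsilon+\overline\G x_\epsilon)$ is a unit, using $u_\epsilon\equiv 1\bmod(T-1)$ from the fundamental calculation — is the crux, and it is precisely where the parity hypothesis is used; everything else is bookkeeping with the Clebsch–Gordan formulas of \secref{CG}.
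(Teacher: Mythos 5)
Your overall strategy — reduce to the previous Proposition by exhibiting $\phi$-orthogonal bases of the relevant weight spaces — is the same as the paper's, but your diagnosis of the difficulty in step (3), and the machinery you propose to overcome it, rests on a misconception about diagonalizing forms over $R$. You write that ``carrying out Gram--Schmidt over $R=\mathbb k(v)[[T-1]]$ requires that at each stage the diagonal entries of the Gram matrix be units,'' and you then import nondegeneracy, the $\sharp$-symmetry, and the $(T-1)$-adic congruences for $\G$, $\overline\G$, $a_{\pm r}$, $w_{r,\epsilon}$ to force a unit to appear. None of that is needed, and some of it is not even available (the Proposition assumes only that $\phi$ is invariant with $\phi=\pm\phi^\sharp$; there is no nondegeneracy hypothesis). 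The correct and much simpler fact is this: $R$ is a discrete valuation ring in which $2$ is a unit, and over such a ring \emph{every} symmetric $R$-bilinear form on a finitely generated free module admits an orthogonal basis, degenerate or not. One does not need unit pivots; one needs a diagonal entry of minimal $(T-1)$-valuation among all Gram-matrix entries, and when $2$ is invertible a change of basis $e_i\mapsto e_i+e_j$ always produces one (since $a_{ii}+2a_{ij}+a_{jj}$ then has the same valuation as the minimal off-diagonal entry $a_{ij}$). Inducting on the rank, one splits off rank-one orthogonal summands one at a time. This is the entire content of the paper's one-line proof, which says precisely ``Since $R$ is a discrete valuation ring we may choose an orthogonal $R$-basis for the free $R$-module $\mathfrak L\cap(\mathcal F_R\otimes M)^{(+)}$.''

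Consequently the hypothesis $\phi=\pm\phi^\sharp$ plays no role at all in producing the orthogonal bases; it is a hypothesis of the \emph{previous} Proposition, which is needed there to guarantee that the modules $Q_l$ generated by $(T-1)^{-1}(u_l+\G(u_l))$ are isomorphic to $P(m+t)$ and that the decomposition is a direct sum. You are importing the $\sharp$-symmetry into the wrong step. Your step (1) (orthogonality of distinct Casimir eigenspaces) is fine, and your step (4) (indecomposability of $M(m)$ and $P(m+t)$) is correct; but the detailed valuation-theoretic analysis you sketch for step (3), invoking the hereditary-filtration discussion, $\lemref{binomiallemma}$, and the congruences $u_\epsilon\equiv 1\bmod(T-1)$, is both unnecessary and not a priori sufficient as stated (you never prove the form is nondegenerate on each weight space, and in fact it need not be). Replace step (3) with the elementary DVR diagonalization argument and the proof closes immediately.
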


\begin{proof}   Since $R$ is a discrete valuation ring we may choose
an orthogonal $R$-basis for the free $R$-module $\mathfrak L\cap (F_R\otimes
M)^{(+)}$.
\end{proof}

\section{Filtrations}

\subsection{} We continue with the notation of the previous section. So
$\phi$ is an invariant form on
$M\otimes_R \mathcal F=\sum_iP_i$. For any $R$-module $B$ set $\overline
B=B/(T-1)\cdot B$ and for any  filtration $B=B_0\supset B_1\supset
...\supset B_r$, let $\overline B=\overline B_0\supset \overline B_1\supset
...\supset 
\overline B_r$ be the induced filtration of $\overline B$, with $\overline
B_i=(\overline B_i+(T-1)\cdot B)/ (T-1)\cdot B$. 

Now $\phi$ induces a filtration on $F_R\otimes M$ by 
\begin{equation}
(M\otimes_R \mathcal F)^i=\{v\in  M\otimes_R \mathcal F| \phi(v,M\otimes_R \mathcal
F)\subset (T-1)^i\cdot R\}.
\end{equation}

\subsection{}
 Let $P=P_r$ and $\mathfrak L$ equal to the
$m-r$th weight subspace of $P$. Suppose $P=P_0\supset P_1\supset ...\supset
P_t=0$ is a filtration. Then  choose constants $a,b$ and $c$ so that $a$ is
maximal with $\overline P=\overline P_a$ , $b\ge a$ maximal with
$\overline P_{a+1}/\overline P_b$ finite dimensional if such exist and
otherwise set $b=a$and $c$ maximal with
$\overline P_c\neq 0$. We say that the filtration is of type $(a,b,c)$.

\begin{lem} Set $\phi(w_{r,-r-1},w_{r,-r-1})=p$ and
$\phi(w_{-r,-r-1},w_{-r,-r-1})=q$.  Then on $\mathfrak L$, $\phi$ is represented with
respect to the basis $\{w_{r,-r-1},(T-1)^{-1}(w_{r,-r-1}+w_{-r,-r-1})\}$ by
the matrix:

$$
\begin{pmatrix}
p&(T-1)^{-1}p\\ (T-1)^{-1}p& (T-1)^{-2}(p+q) 
\end{pmatrix}
$$  
with determinant $(T-1)^{-2}pq$. Suppose $p$ has  order $d$ (i.e.
$(T-1)^d$ divides $p$ and $(T-1)^{d+1}$ does not) and $q$ has order
$d^\prime$. Then if
$d\neq d^\prime$, the filtration is of type
$(min\{d,d^\prime\}-2,d-1,max\{d,d^\prime\})$ and  if $d=d^\prime$, the
filtration is either of type $(d-1,d-1,d-1)$ or type $(d-2,d-1,d)$
depending  as $p+q\equiv 0\ mod \ (T-1)^{d+1}$ or not.
\end{lem}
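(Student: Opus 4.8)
The plan is to compute the Gram matrix of $\phi$ on $\mathfrak L$ directly in the given basis, then read off the three invariants $(a,b,c)$ of the induced filtration from the elementary divisors. First I would record the entries. By $\brho$-invariance and the weight considerations, $\phi$ pairs the $m-r$ weight space with itself, and on $\mathfrak L$ the vectors $w_{r,-r-1}$ and $w_{-r,-r-1}$ are the images under $F^{k_j}$ (resp.\ $F^{l_j}$) of the highest weight vectors of $M(m+r)$ and $M(m-r)$ inside $M\otimes_R\mathcal F$. Because the decomposition $M\otimes_R\mathcal F=\sum P_i$ places these in the summand $P=P_r$, and because $w_{r,-r-1}$ lies in $(T-1)M_+$ while $w_{-r,-r-1}$ lies outside (cf.\ the short exact sequence \eqnref{ses-inclusions}), the cross term $\phi(w_{r,-r-1},w_{-r,-r-1})$ must equal $(T-1)^{-1}\cdot(\text{something})$; in fact the key normalization, already built into $p$ and $q$ via the induced/lifted form of the Lifting Theorem, forces $\phi(w_{r,-r-1},w_{-r,-r-1})=(T-1)^{-1}p$ and $\phi(w_{-r,-r-1},w_{-r,-r-1})=(T-1)^{-2}(p+q)-(\text{stuff})$; more cleanly, writing the basis as $\{w_{r,-r-1},\,(T-1)^{-1}(w_{r,-r-1}+w_{-r,-r-1})\}$ and using bilinearity reduces everything to the three scalars $\phi(w_{r,-r-1},w_{r,-r-1})=p$, $\phi(w_{-r,-r-1},w_{-r,-r-1})=q$, and the mixed term, and the claimed matrix
$$
\begin{pmatrix} p & (T-1)^{-1}p \\ (T-1)^{-1}p & (T-1)^{-2}(p+q)\end{pmatrix}
$$
follows once one checks $\phi(w_{r,-r-1},w_{-r,-r-1})=0$ — indeed $w_{r,-r-1}$ spans $\mathfrak L\cap((T-1)M_+)$, $w_{-r,-r-1}$ reduces mod $(T-1)$ to a generator of $\overline{M_-}$, and orthogonality of the Verma summands $M_\pm$ under the parent form is exactly what the hereditary/orthogonal decomposition Propositions of the previous section supply. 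The determinant is then immediate: $p\cdot(T-1)^{-2}(p+q)-(T-1)^{-2}p^2=(T-1)^{-2}pq$.

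Next I would extract the filtration type. The filtration $\eqnref{filtration}$ on $P$, intersected with the weight space $\mathfrak L$, is governed by the divisibility of the entries of this Gram matrix. A vector $\alpha w_{r,-r-1}+\beta(T-1)^{-1}(w_{r,-r-1}+w_{-r,-r-1})$ lies in $P_i\cap\mathfrak L$ iff both $\phi$-pairings against the two basis vectors lie in $(T-1)^iR$, i.e.\ iff
$$
\alpha p+\beta(T-1)^{-1}p\in(T-1)^iR\quad\text{and}\quad \alpha(T-1)^{-1}p+\beta(T-1)^{-2}(p+q)\in(T-1)^iR.
$$
Setting $d=\mathrm{ord}(p)$, $d'=\mathrm{ord}(q)$ and solving these two congruences over the DVR $R$ in the cases $d<d'$, $d>d'$, and $d=d'$ (splitting the last on whether $\mathrm{ord}(p+q)$ equals $d$ or exceeds it) yields the $R$-rank of $P_i\cap\mathfrak L$ as a step function of $i$; the jumps occur precisely at $\min\{d,d'\}-2$, $d-1$, $\max\{d,d'\}$ (resp.\ at $d-1$ alone, or at $d-2$ and $d$). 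Comparing these jump locations with the definition of $(a,b,c)$ — $a$ maximal with $\overline P=\overline P_a$, $b$ maximal with $\overline P_{a+1}/\overline P_b$ finite-dimensional, $c$ maximal with $\overline P_c\neq 0$ — gives the stated types. One must here invoke the structure of $P_r$ from $\lemref{locstructure}$ and the surrounding formulas $\eqnref{eqn1}$–$\eqnref{eqn2}$ to see that once $\mathfrak L\cap P_i$ drops below full rank the quotient $\overline{P_{i'}}/\overline{P_{i''}}$ is finite-dimensional exactly in the predicted range (it is the $\mathbb k(v)[F]$-torsion part), matching Proposition~1.1.3(1)–(2).

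The main obstacle is the off-diagonal normalization: proving $\phi(w_{r,-r-1},w_{-r,-r-1})=0$ and that the two diagonal entries genuinely appear with the indicated powers of $(T-1)^{-1}$ rather than some unit multiple thereof. This is not a formal consequence of $\brho$-invariance alone — it depends on the precise way $\phi$ restricts through the localization/lifting machinery (the Lifting Theorem together with the identification $P\cong P_1=P(m+1)$ via $\eqnref{P_iF}$) and on the explicit $E$-action formula $\eqnref{eqn2}$, which couples $\mathbf z_j$ to $\mathbf w_{\lambda,j+2}$ with the factor $[T;0]$. I expect the cleanest route is: express $w_{\pm r,-r-1}$ in terms of the localized basis $\{[T;0]\mathbf v_j, z^r_j\}$ of $P_{r,F}$, use that $w_{r,-r-1}$ is (up to a unit) $[T;0]\mathbf w_{r,-r-1}$ while $w_{-r,-r-1}=z^r_{-r-1}-[T;0]\mathbf w_{r,-r-1}$ modulo the unit, and then invoke the evaluation of $\phi$ on $[T;0]\mathbf w_{\pm r,-r-1}$ supplied by the fundamental calculation $\chi^\sharp(\ov\G a,\ov\G b)=u_\epsilon\,s\chi(a,b)$ of the previous subsection to pin down both the vanishing and the exact orders. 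Once that normalization is secured, the rest is the elementary-divisor bookkeeping sketched above.
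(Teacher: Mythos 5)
Since the paper states this lemma without proof, I can only assess whether your approach is viable, not whether it matches the authors'.

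Your computation of the Gram matrix is correct \emph{given} the vanishing of the cross terms $\phi(w_{r,-r-1},w_{-r,-r-1})$: expanding by bilinearity yields exactly the stated matrix, and the determinant $p\cdot(T-1)^{-2}(p+q)-(T-1)^{-2}p^2=(T-1)^{-2}pq$ is immediate. However, your route to this orthogonality is more roundabout than it needs to be. You do not need the $\overline\G$-formula $\chi^\sharp(\overline\G a,\overline\G b)=u_\epsilon\,s\chi(a,b)$ (which, in any case, is stated only for forms satisfying a $\sharp$-symmetry, while the present lemma is for an arbitrary invariant form on $P_r$). The cross term vanishes for a direct, elementary reason: $w_{r,-r-1}$ is proportional to $F^{(r)}$ applied to the highest weight vector of $M(m+r)$, while $w_{-r,-r-1}$ is the $E$-annihilated highest weight vector of $M(m-r)$, so $\brho$-invariance (with $\brho(F)=vK^{-1}E$) transfers the $F^{(r)}$ onto the second argument as a multiple of $E^{(r)}$, killing it. (Since possibly only $(T-1)\cdot w_{r,-r-1}\in P_r$, one performs this with the scalar $(T-1)$ in place and cancels it at the end, using that $R$ is a domain.) Your appeal to an orthogonal-decomposition result from a later section would be circular here; the direct invariance argument avoids that.

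The genuinely substantive part of the lemma is the extraction of the filtration type $(a,b,c)$, and here your sketch leaves the real work undone. The elementary-divisor analysis of the Gram matrix on $\mathfrak L$ gives you the orders at which the rank of $\mathfrak L\cap P_i$ drops — in the case $d<d'$ these are $d-2$ and $d'$, which yield $a=\min\{d,d'\}-2$ and $c=\max\{d,d'\}$ — but it does \emph{not} produce $b=d-1$. The middle invariant $b$ is defined by a $\bu$-module condition on $\overline{P_{a+1}}/\overline{P_b}$ (finite-dimensionality), and this requires tracking how the filtration on $\mathfrak L$ propagates to the rest of the weight spaces via the structure of $\overline{P_r}$ (Verma submodule $\overline{M_+}$, quotient $\overline{M_-}$, and the finite-dimensional composition factor $\mathcal F_{r-1}$ in between). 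You flag this — "one must here invoke the structure of $P_r$..." — but do not carry it out, and since $b=d-1$ does not sit at an elementary-divisor jump of the Gram matrix, it is precisely the claim that cannot be read off from $\mathfrak L$ alone. That is the gap to fill: show that $\overline{P_{d}}$ is contained in the infinite-dimensional socle $\overline{M_-}$ of $\overline{P_r}$ while $\overline{P_{d-1}}$ still meets the finite-dimensional head, by chasing the filtration through the weight spaces above $\mathfrak L$ using the formulas of Lemma~\ref{locstructure} and \eqnref{eqn1}--\eqnref{eqn2}.
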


\subsection{}
Suppose $\phi^\sharp = \phi$ (resp. $-\phi$). In the first case
we say
$\phi$ is
$\mathbb Z_2$-invariant and in the second skew invariant. 

\begin{cor} Suppose $\phi^\sharp = \phi$ (resp. $-\phi$). and
other notation is as in (5.2). Then if
$d$ is even, $(P_r,\phi)$  has a filtration of type $(d-2,d-1,d)$ (resp.
$(d-1,d-1,d-1)$ ) and if $d$ is odd
$(P_r,\phi)$ has a filtration of type $(d-1,d-1,d-1)$ (resp.
$(d-2,d-1,d)$). 

\def\ov{\overline}
\end{cor}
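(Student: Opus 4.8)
The plan is to reduce everything to the preceding lemma by showing that the hypothesis $\phi^\sharp=\pm\phi$ forces the two orders $d,d'$ (of $p$ and $q$) to coincide, and then to resolve the remaining dichotomy — whether $p+q\equiv 0\bmod (T-1)^{d+1}$ — by a leading $(T-1)$-adic coefficient computation.

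First I would compare $p$ and $q$. Apply the fundamental computation $\chi^\sharp(\overline{\G}a,\overline{\G}b)=u_\epsilon\, s\chi(a,b)$ (with $u_\epsilon$ a unit, $u_\epsilon\equiv 1\bmod(T-1)$) to the form $\chi=\phi$, to $\epsilon=+1$, and to $a=b=w_{r,-r-1}$. Since $\overline{\G}$ is $s$-linear and $s([T;0])=-[T;0]$, the corollary giving $\overline{\G}$ on $[T;0]\,\mathbf w_{\epsilon r,-r-1}$ yields $\overline{\G}(w_{r,-r-1})=-w_{r,+}\,w_{-r,-r-1}$ with $w_{r,+}^{2}=[r]!/[T^{-1};r]_{(r)}$. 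Using $R$-bilinearity of $\phi^\sharp$ and then $\phi^\sharp=\pm\phi$, the left-hand side becomes $w_{r,+}^{2}\,\phi^\sharp(w_{-r,-r-1},w_{-r,-r-1})=\pm\,w_{r,+}^{2}\,q$, so that
$$q=\pm\,u_+\,\frac{[T^{-1};r]_{(r)}}{[r]!}\;s(p).$$
The involution $s$, the unit $u_+$, and the factor $[T^{-1};r]_{(r)}/[r]!$ — which is $\equiv 1\bmod(T-1)$ by \eqnref{cong1} — all preserve $(T-1)$-adic order; hence $q$ has order $d'=d$. So the preceding lemma applies in the case $d=d'$, leaving exactly the two outcomes: type $(d-1,d-1,d-1)$ if $p+q\equiv 0\bmod (T-1)^{d+1}$, and type $(d-2,d-1,d)$ otherwise.

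Next I would pin down which outcome occurs. Write $p=\overline p\,(T-1)^{d}+O((T-1)^{d+1})$ with $\overline p\in\mathbb k(v)^{\times}$. From $T^{-1}-1=-(T-1)/T$ one gets $s(p)\equiv(-1)^{d}\overline p\,(T-1)^{d}\bmod (T-1)^{d+1}$, while $u_+\equiv 1$ and $[T^{-1};r]_{(r)}/[r]!\equiv 1\bmod(T-1)$; feeding this into the displayed identity gives $\overline q=\pm(-1)^{d}\,\overline p$, with sign $+$ when $\phi$ is $\mathbb Z_2$-invariant ($\phi^\sharp=\phi$) and $-$ when $\phi$ is skew invariant ($\phi^\sharp=-\phi$). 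Thus $p+q\equiv 0\bmod(T-1)^{d+1}$ precisely when $\overline p+\overline q=\overline p\bigl(1\pm(-1)^{d}\bigr)=0$, i.e. precisely when $\pm(-1)^{d}=-1$. Splitting into the four cases: in the invariant case this holds iff $d$ is odd (type $(d-1,d-1,d-1)$) and fails iff $d$ is even (type $(d-2,d-1,d)$); in the skew case it holds iff $d$ is even (type $(d-1,d-1,d-1)$) and fails iff $d$ is odd (type $(d-2,d-1,d)$). This is exactly the asserted statement.

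The main obstacle is the first step, namely establishing the clean identity $q=\pm u_+\,([T^{-1};r]_{(r)}/[r]!)\,s(p)$: one must carefully combine the $s$-linearity of $\overline{\G}$ with the $R$-bilinearity of $\phi^\sharp$, use the normalization of $\overline{\G}$ on $[T;0]\,\mathbf w_{\epsilon r,-r-1}$ rather than on $\mathbf w_{\epsilon r,-r-1}$ itself (the discrepancy being the sign $s([T;0])=-[T;0]$, which cancels on squaring), and invoke $\phi^\sharp=\pm\phi$ to turn the $\phi^\sharp$-norm of the $M(m-r)$-vector into $\pm q$. Once that identity is secured, the rest is a one-line residue computation modulo $(T-1)$ together with a citation of the preceding lemma.
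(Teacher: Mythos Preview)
Your argument is correct and is exactly the approach the paper sets up: the paper states the corollary without proof, but the intended derivation is precisely to combine the fundamental identity $\chi^\sharp(\overline\Gamma a,\overline\Gamma b)=u_\epsilon\,s\chi(a,b)$ with the preceding lemma on the matrix of $\phi|_{\mathfrak L}$, which is what you do. The only remark is that your intermediate manipulation with $\overline\Gamma(w_{r,-r-1})=-w_{r,+}\,w_{-r,-r-1}$ and $w_{r,+}^{2}$ is not strictly needed---the fundamental lemma already packages everything into the unit $u_+$, so one gets $\pm q=u_+\,s(p)$ directly and the leading-coefficient computation $\overline q=\pm(-1)^d\overline p$ follows immediately; but your longer route is of course equivalent.
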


\subsection{}
\begin{cor}
 Suppose $\mathcal F$ is a finite dimensional $\bui$-module
and $\phi$ is an invariant form on $M\otimes_R \mathcal F$.  Assume
$\phi^\sharp = \phi$ (resp.
$-\phi$) and let $M\otimes_R \mathcal F=B_0\supset B_1\supset ...\supset B_r$
be the filtration  (5.1.1) and
$\overline B_0\supset \overline B_1\supset ...\supset \overline B_r$ the
induced filtration on
$\overline{M\otimes_R \mathcal F}$.  Then
\begin{enumerate}
\item {(i)} The $\bui$-module $\overline B^i/\overline B^{i+1}$ is finite
dimensional for $i$ odd (resp. even). 
\item {(ii)} The $\bui$-module $\overline B^i/\overline B^{i+1}$ is both free and
cofree as a $ k(v)[F]$-module, for $i$ even (resp. odd).
\end{enumerate}
\end{cor}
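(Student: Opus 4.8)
The plan is to reduce the statement to the indecomposable summands of $M\otimes_R\mathcal F$ and then feed each into the type dichotomy proved just above. By the Proposition furnishing the orthogonal decomposition, the hypothesis $\phi^\sharp=\pm\phi$ lets one write $M\otimes_R\mathcal F=\bigoplus_\alpha N_\alpha$ as a finite $\phi$-orthogonal direct sum of $\bui$-modules with each $N_\alpha$ isomorphic to $M=M(m)$ or to some $P(m+t_\alpha)$, $t_\alpha\in\mathbb N^*$; set $\phi_\alpha=\phi|_{N_\alpha}$. First I would record the routine reductions. The filtration $(5.1.1)$ is additive on $\phi$-orthogonal sums, $B_i=\bigoplus_\alpha B_i(N_\alpha)$, and since $(T-1)(M\otimes_R\mathcal F)=\bigoplus_\alpha(T-1)N_\alpha$ this yields $\overline B_i/\overline B_{i+1}\cong\bigoplus_\alpha\bigl(\overline{B_i(N_\alpha)}/\overline{B_{i+1}(N_\alpha)}\bigr)$ as $\bui$-modules; a finite direct sum of finite-dimensional modules is finite-dimensional, and a finite direct sum of free (resp. cofree) $k(v)[F]$-modules is again free (resp. cofree); and, since the operator of \eqnref{twistedpairing} is assembled from the $\bu$-module maps $\bar\Psi$ and $L$, it respects the Casimir grading of $M\otimes_R\mathcal F$ and the orthogonal decomposition adapted to it, so $\phi_\alpha^\sharp=\pm\phi_\alpha$ with the same sign. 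Hence it suffices to prove (i) and (ii) for each pair $(N_\alpha,\phi_\alpha)$.

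For a summand $N_\alpha\cong M(m)$, $\brho$-invariance forces $\phi_\alpha$ to pair a weight vector only with itself, and a direct computation of $E^{(k)}F^{(k)}$ on the highest weight vector (together with the $\bu^0$-action, all relevant factors being units of $R$) gives $\phi_\alpha(F^{(k)}w,F^{(k)}w)=(\text{unit})\cdot\phi_\alpha(w,w)$; thus $\phi_\alpha$ has a single order $e=\operatorname{ord}_R\phi_\alpha(w,w)$ on all of $M(m)$, so $B_i(M(m))=M(m)$ for $i\le e$ and $B_i(M(m))=(T-1)^{i-e}M(m)$ for $i>e$. Consequently the only nonzero graded piece is $\overline{B_e(M(m))}/\overline{B_{e+1}(M(m))}=\overline{M(m)}$, a free and cofree $k(v)[F]$-module of rank one. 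Finally, evaluating the identity $\chi^\sharp(\G a,\G b)=\bigl(T^r[r]![T^{-\epsilon};r]_{(r)}\bigr)^{-1}s\,\chi(a,b)$ at $r=0$ (where $\G$ is multiplication by $a_0=-1$) gives $\phi_\alpha^\sharp(w,w)=s\,\phi_\alpha(w,w)$; since $s$ acts on $(T-1)^eR/(T-1)^{e+1}R$ by $(-1)^e$, the sign $\phi^\sharp=\phi$ (resp. $-\phi$) forces $e$ even (resp. odd), so the $M(m)$-summands contribute only the free-and-cofree alternative and only at the levels demanded by (ii).

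For a summand $N_\alpha\cong P(m+t)$ I would apply the Corollary proved just above: with $d$ the order of $\phi_\alpha$ on the distinguished basis vector of the bottom $E$-invariant weight space $\mathfrak L$ of $P(m+t)$, the filtered module $(P(m+t),\phi_\alpha)$ is of type $(d-1,d-1,d-1)$ or $(d-2,d-1,d)$, the alternative being pinned down by the parity of $d$ and the sign exactly as recorded there. I would then convert ``type'' into $k(v)[F]$-module data by combining the explicit $\bui$-action on $P(m+t)$ from \lemref{locstructure} and its weight diagram --- by which $\overline{P(m+t)}$ is free of rank two over $k(v)[F]$ (its two Verma chains), and every successive quotient along the filtration is $F$-adically separated, being torsion-free (hence free of finite rank and cofree) precisely when the ranks drop and torsion (hence a finite-dimensional $\bui$-module) when they coincide --- with the rank-two matrix computed for $\mathfrak L$ in the preceding subsection. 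This exhibits, in type $(d-1,d-1,d-1)$, the sole nonzero graded piece as $\overline{P(m+t)}$ itself at level $d-1$, free and cofree; and in type $(d-2,d-1,d)$, the three consecutive graded pieces at levels $d-2,d-1,d$ as exactly one finite-dimensional $\bui$-module together with two free-and-cofree $k(v)[F]$-modules. Comparing the levels of the finite-dimensional and of the free-and-cofree pieces against the parity of $d$ dictated by the Corollary shows that for $\phi^\sharp=\phi$ every finite-dimensional graded piece occurs at an odd level and every free-and-cofree piece at an even level, with the parities reversed for $\phi^\sharp=-\phi$; summing over $\alpha$ then gives (i) and (ii).

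The main obstacle is this last translation step for the $P(m+t)$ summands: extracting from the numerical invariant $(a,b,c)$ the $k(v)[F]$-isomorphism type of each graded quotient $\overline{B_i(P(m+t))}/\overline{B_{i+1}(P(m+t))}$ --- in particular locating the single torsion (finite-dimensional) quotient inside the rank-two free module $\overline{P(m+t)}$, verifying that the remaining two quotients are torsion-free and $F$-adically separated (equivalently, that the relevant inclusions of free $k(v)[F]$-modules are pure), and checking that their levels carry the parity asserted --- which calls for tracing the filtration cut out by the rank-two form of $\mathfrak L$ through the $\bui$-action on $P(m+t)$. By contrast the orthogonal-summand reduction and the $M(m)$ case are routine, and once the summand structure is in hand the parity bookkeeping is mechanical; indeed it is precisely the information that the preceding Corollary was arranged to supply.
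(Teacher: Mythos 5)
Your overall strategy — orthogonally decompose $M\otimes_R\mathcal F$ via the Proposition into summands $\cong M$ or $P(m+t)$, observe the filtration of (8.1.1) is additive over a $\phi$-orthogonal sum, and then feed each summand into the type dichotomy of the preceding Corollary — is exactly the route the paper's own arrangement of 8.2--8.4 implies, so the architecture is right. Two points are worth flagging. First, you assert that the $\sharp$-symmetry passes to each summand ($\phi_\alpha^\sharp=\pm\phi_\alpha$ with the same sign); this is true, but the justification should appeal to the fact that $\G$ (equivalently $\bar\Psi$ and $L$) respects the Casimir eigenspace decomposition $\sum(M\otimes_R\mathcal F)^{(\pm r)}$ and that the orthogonal decomposition of the Proposition is adapted to it — a slightly more careful statement than ``assembled from $\bu$-module maps,'' since $\sharp$ is only $s$-linear and pairs different summands a priori. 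Second, and more seriously, the translation from type $(a,b,c)$ to the module-theoretic dichotomy of (i) and (ii) is the actual content of the Corollary and you do not carry it out; indeed the naive reading of the definition of $b$ in (8.2) — ``$b\ge a$ maximal with $\overline P_{a+1}/\overline P_b$ finite dimensional'' — taken at type $(d-2,d-1,d)$ gives $b=d-1$ and hence $\overline P_{d-1}/\overline P_{d-1}=0$, which does \emph{not} by itself locate a nonzero finite-dimensional graded quotient at the middle level $d-1$ as you claim. Pinning that down requires going back to the $2\times 2$ matrix of (8.2) on $\mathfrak L$ and the short exact sequence structure of $\overline{P(m+t)}$ (socle $M(-1)$, rank two over $k(v)[F]$) to identify the single torsion subquotient and the two torsion-free ones and to match their levels with the parity of $d$. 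Your treatment of the $M(m)$ summands (single order $e$, parity pinned by $a_0=-1$ and by $s$ acting by $(-1)^e$ on $(T-1)^eR/(T-1)^{e+1}R$) is correct and is the clean part of the argument.
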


\subsection{} The symmetry of $\mathbb Z_2$-invariant and skew forms can be
expressed in another form. Define the Jantzen sum of a filtration to be
$\sum_i character(\overline B_i)$. 

\begin{cor} Let notation and assumptions be as in (5.4). Then in
both cases the Jantzen sum of the filtration is invariant by the Weyl group
action on the characters which exchanges the characters of Verma modules
$M(t)$ and $M(-t)$ for all $t\in \mathbb Z$.

\end{cor}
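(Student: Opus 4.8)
The plan is to decompose $M\otimes_R\mathcal F$ into indecomposable $\phi$-orthogonal summands and to check that each contributes a term fixed by the Weyl element interchanging $M(m+r)$ and $M(m-r)$. First I would observe that the filtration \eqnref{filtration} is additive over $\phi$-orthogonal direct sums (if $N=N'\perp N''$ then $N_i=N'_i\oplus N''_i$, hence $\overline{N_i}=\overline{N'_i}\oplus\overline{N''_i}$), so the Jantzen sum $\sum_i\operatorname{ch}\overline B_i$ is additive; by the orthogonal decomposition of $M\otimes_R\mathcal F$ obtained above (available because $\phi=\pm\phi^\sharp$) it then suffices to treat a single summand, a copy of $M=M(m)$ or of $P_r=P(m+r)$ with $r\in\mathbb N^*$. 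An invariant form on $M(m)$ is an $R$-multiple $c$ of the Shapovalov form, and the latter is unimodular since its diagonal values reduce to units at $T=1$ (by \lemref{binomiallemma}); hence $\overline B_i=\overline{M(m)}$ for $i\le\operatorname{ord}(c)$ and $\overline B_i=0$ afterwards, so this summand contributes $(\operatorname{ord}(c)+1)\operatorname{ch}M(m)$, which is Weyl-fixed because $M(m)$ sits on the wall.

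For a summand $P_r$ I would reduce its defining exact sequence $0\to(T-1)M(m+r)\to P(m+r)\to M(m-r)\to0$ (the analogue of \eqnref{ses-inclusions}) modulo $(T-1)$: as $M(m\pm r)$ are $R$-free no $\operatorname{Tor}$ term appears, and using $(T-1)M(m+r)\cong M(m+r)$ one obtains $0\to\overline{M(m+r)}\to\overline{P_r}\to\overline{M(m-r)}\to0$. Hence $\operatorname{ch}\overline{P_r}=\operatorname{ch}M(m+r)+\operatorname{ch}M(m-r)$ is already Weyl-symmetric, and $\overline{P_r}$ has exactly one finite dimensional composition factor — the simple $L$ of highest weight $m+r-1$, occurring once — together with two copies of the simple module $M(m-r)$. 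I would then write the summand's Jantzen sum telescopically as $\sum_j(j+1)\operatorname{ch}(\overline B_j/\overline B_{j+1})$ and bring in the two preceding corollaries: the filtration-type classification — which genuinely uses $\phi=\pm\phi^\sharp$, via the relation forcing $q$ to be a unit times $s(p)$, so that $\operatorname{ord}(q)=\operatorname{ord}(p)=d$ and the parity of $d$ decides whether $p+q\equiv0\bmod(T-1)^{d+1}$ — together with the statement that the layers $\overline B_j/\overline B_{j+1}$ are finite dimensional (hence sums of copies of $L$) at the odd, resp.\ even, indices and free and cofree over $k(v)[F]$ at the even, resp.\ odd, indices; in the block at hand the latter condition forces each such layer to be a direct sum of copies of $M(m-r)$ and of $\overline{P_r}$.

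These two inputs constrain the bookkeeping severely: the unique copy of $L$ must sit in a finite dimensional layer, the two copies of $M(m-r)$ in the free–cofree layers, and the filtration type fixes at which telescoping indices $j$ these layers occur. In the type $(d-1,d-1,d-1)$ case this gives $\overline B_0=\cdots=\overline B_{d-1}=\overline{P_r}$, $\overline B_d=0$, so the contribution is $d\,\operatorname{ch}\overline{P_r}=d\bigl(\operatorname{ch}M(m+r)+\operatorname{ch}M(m-r)\bigr)$. In the type $(d-2,d-1,d)$ case I would use $\operatorname{ch}L=\operatorname{ch}M(m+r)-\operatorname{ch}M(m-r)$ and the alternating weights $(j+1)$ to check that the copy of $L$ enters with precisely the multiplicity needed to symmetrize the two $M(m-r)$'s, so that the contribution is again $d\bigl(\operatorname{ch}M(m+r)+\operatorname{ch}M(m-r)\bigr)$. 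Summing over all summands then expresses the Jantzen sum as a $\mathbb Z$-combination of the Weyl-symmetric characters $\operatorname{ch}M(m)$ and $\operatorname{ch}M(m+r)+\operatorname{ch}M(m-r)$, proving the corollary.

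I expect the main obstacle to be exactly the last computation — converting the filtration-type data of the preceding corollaries into the clean identity $d\bigl(\operatorname{ch}M(m+r)+\operatorname{ch}M(m-r)\bigr)$ for the $P_r$-summand. One must track precisely at which telescoping indices the lone $L$-layer and the two $M(m-r)$-layers sit and verify that the weights $(j+1)$ conspire; the structural facts that make this possible are that $\overline{P_r}$ has a unique finite dimensional composition factor appearing once, and that a module free and cofree over $k(v)[F]$ in this block is tilting, so its character lies in the span of the symmetric characters $\operatorname{ch}M(m+r)+\operatorname{ch}M(m-r)$ and $\operatorname{ch}M(m)$. The orthogonal reduction and the $M(m)$ case are routine.
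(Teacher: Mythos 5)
Your overall strategy is the right one and is what the paper implicitly relies on: reduce to a $\phi$-orthogonal sum of indecomposables via the proposition in \S 7, use additivity of the Jantzen filtration over an orthogonal decomposition, dispose of $M(m)$-summands by unimodularity of Shapovalov, and for each $P_r$-summand combine the filtration-type classification with the odd-fd/even-tilting dichotomy of Corollary 5.4 to force the three nonzero layers to be $\overline{M(m-r)},\,L,\,\overline{M(m-r)}$ at consecutive indices, giving $d\bigl(\operatorname{ch}M(m+r)+\operatorname{ch}M(m-r)\bigr)$ in both cases. The paper states this corollary without proof, so there is nothing to compare against, but your argument is the natural one and checks out.

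One claim should be tightened. You assert that a free-and-cofree layer in the $P_r$-block has character lying in the span of the Weyl-symmetric characters $\operatorname{ch}M(m+r)+\operatorname{ch}M(m-r)$ and $\operatorname{ch}M(m)$; this is not quite right. The indecomposable tiltings in that block are $\overline{P_r}$ (character $\operatorname{ch}M(m+r)+\operatorname{ch}M(m-r)$) and the irreducible Verma $\overline{M(m-r)}$ (character $\operatorname{ch}M(m-r)$), so the span of tilting characters is all of $\mathbb Z\,\operatorname{ch}M(m+r)\oplus\mathbb Z\,\operatorname{ch}M(m-r)$, not just the symmetric line; and $\operatorname{ch}M(m)$ is in a different block and cannot appear. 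What actually pins down the layers is a multiplicity count: $\overline{P_r}$ contains $L$ once and $\overline{M(m-r)}$ twice as composition factors, and since the whole of $\overline{P_r}$ cannot sit inside a proper layer, the even layers at $d-2$ and $d$ (each nonzero by the definition of the filtration type) must each be a single copy of $\overline{M(m-r)}$, forcing the odd layer at $d-1$ to be the lone copy of $L$. With that correction your telescoping computation $(d-1)\operatorname{ch}M(m-r)+d\,\operatorname{ch}L+(d+1)\operatorname{ch}M(m-r)=d\bigl(\operatorname{ch}M(m+r)+\operatorname{ch}M(m-r)\bigr)$ is exactly right, and the argument closes.
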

\def\TT{(T-1)^{-1}\cdot}
\def\A{\mathbb A}
\def\B{\mathbb B}
\def\P{\mathbb P}
\def\L{\mathbb L}
\def\D{\mathbb D}
\def\oA{\overline{\mathbb A}}
\def\oB{\overline{\mathbb B}}
\def\oP{\overline{\mathbb P}}
\def\oL{\overline{\mathbb L}}
\def\oD{\overline{\mathbb D}}
\def\hra{\hookrightarrow} 
\section{ Filtrations and Wall-crossing } 

\subsection{}\label{P}
Here we begin the study of the relationships of wall-crossing
and the theory of induced and $\mathbb Z_2$-invariant forms. As in section
four set $M$ equal to the Verma module with highest weight
$Tv^{-1}$:i.e. $M=M(m)$. Then $M_\pi^{s\Theta}$ is isomorphic to $M$ itself and
so we may choose a cycle $\Psi : M_F\ra M$ which induces the isomorphism.
For $a\in M, x\in \ru, \ \Psi(sT_{i,-1}'(x)\cdot a)=x\cdot
\Psi(a).$ Recall that $\Psi$ is not $R$ linear due to the role of $s$ in
the previous formula. Let $\mathcal E$ denote the simple two dimensional
$\bui$-module and set $P=M\otimes_R\mathcal E$. Then $P$ is isomorphic to the 
basis module $P_1=P(m+1)$ as defined in \eqnref{eqn1} and \eqnref{eqn2}. Set $M_\pm=M(m\pm 1)$. Then
the construction of $P$ gives inclusions and a short exact sequence:
\begin{equation}
(T-1)\cdot M_+\oplus (T-1)\cdot M_- \subset P\subset M_+\oplus M_- 
,
\quad 0\ra (T-1)\cdot M_+\ra P\ra M_- \ra 0.
\end{equation}

Let $\mathcal F$ be a finite dimensional $\bui$-module and set $\A= (T-1)\cdot
M_+\otimes _R\mathcal F\ ,
\quad \B=(T-1)\cdot M_- \otimes _R\mathcal F
\ ,\quad \D=M_- \otimes _R\mathcal F\ ,\quad \P=P\otimes _R\mathcal F $. Then
(6.1.1) gives: 

\begin{equation}\label{ses3}
\A\oplus \B\subset \P \subset (T-1)^{-1}(\A\oplus \B),\quad 0\ra \A\ra
\P\ { \overset \Pi\to}\ \D\ra0 
\end{equation}

\subsection{} We now turn to the study of the $s$-linear map $\G$ defined in
(4.2.2) and derived from the cycle
$\Psi_\P=\Theta_{\mathcal F}\otimes \Theta_{\mathcal E}\otimes
\Psi$. Decompose $\P$ into generalized eigenspaces for the Casimir
$\P^{(\pm r)}$ and let $\mathfrak L^r$ denote the
$Tv^{-r-1}$ weight subspace of $\P^{(\pm r)}$. Recall $\L=\sum \mathfrak L^r$. From
(4.2.2) we obtain an $s$-linear map
$\G:\L\ra \L$.
\begin{lem}  We have the following
\begin{enumerate}[i).]
\item $\G$ restricts to an $s$-linear involutive isomorphism $\G\
:\L\cap\A\cong
\L\cap\B$. 
\item  $\G$ induces the identity map mod $T-1$; i.e. $\G(e)\equiv e$
mod $(T-1)\cdot \L$.
\item Suppose $\phi$ is any $\mathbb Z_2$-invariant (resp. skew
invariant) form on $\P$. Then for all $e\in
\L$, $\ \phi(\G e, \G e)=s\phi(e,e)$ (resp. $\ \phi(\G e, \G
e)=-s\phi(e,e)$). 
\end{enumerate}
\end{lem}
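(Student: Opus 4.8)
The three assertions are really about the $s$-linear map $\G$ on $\L$, built from the cycle $\Psi_\P=\Theta_{\mathcal F}\otimes\Theta_{\mathcal E}\otimes\Psi$, together with the explicit inclusions \eqnref{ses3}. I would organize the proof around the decomposition $\P=\sum_r\P^{(\pm r)}$ into Casimir generalized eigenspaces, so that $\L=\sum_r\mathfrak L^r$ with each $\mathfrak L^r$ a free rank-two $R$-module; it suffices to verify everything on each $\mathfrak L^r$ separately. For each $r$, the summand $\P^{(\pm r)}$ is one of the basic modules $P(m+r)=P_r$ studied in Section~5, so one has at hand the explicit bases $\{\mathbf z^r_{-r-1},[T;0]\mathbf w_{r,-r-1}\}$ and the structure formulas \eqnref{eqn1}--\eqnref{eqn2}. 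The whole proof is then a matter of transporting the earlier single-$P_r$ statements (Corollary following Lemma on the $a_{\pm r}$, and the preceding Lemma on $\chi^\sharp(\G a,\G b)$) through the identifications $\A\cap\mathfrak L^r=(T-1)M_+\otimes_R\mathcal F$ restricted to that weight, and similarly for $\B$.

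\textbf{Part (i).} I would first identify, inside $\mathfrak L^r$, the rank-one $R$-submodules $\mathfrak L^r\cap\A$ and $\mathfrak L^r\cap\B$. By the short exact sequence \eqnref{ses3} and the description $\A=(T-1)M_+\otimes_R\mathcal F$, $\B=(T-1)M_-\otimes_R\mathcal F$, these are precisely the lines through $[T;0]\mathbf w_{r,-r-1}$ and $[T;0]\mathbf w_{-r,-r-1}$ respectively (up to units; the factor $[T;0]$ is exactly the ``$(T-1)$-scaling'' that distinguishes $\A$ from $M_+\otimes_R\mathcal F$, cf.\ the computations in Section~5.2). The map $\G$ was defined by $\G([T;0]\mathbf w_{\epsilon r,-r-1})=\Psi_F([T;0]\mathbf w_{\epsilon r,r+1})=a_{\epsilon r}[T;0]\mathbf w_{-\epsilon r,-r-1}$, so $\G$ manifestly carries the $\A$-line to the $\B$-line and back; that $\G^2=\mathrm{id}$ follows from $a_{-r}=s\,a_r$ and $a_r^2=1/([r]![T^{-1};r]_{(r)})=1/(T^r[T;-1]_{(r)}\cdot\text{(unit)})$ together with the $s$-linearity of $\G$ — one computes $\G^2([T;0]\mathbf w_{r,-r-1})=a_r\cdot s(a_{-r})\cdot[T;0]\mathbf w_{r,-r-1}=a_r\cdot a_r^{-1}\cdot(\text{the relation forcing this to be }1)$, and I would just cite the three defining relations for $a_{\pm r}$ to close this.

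\textbf{Part (ii).} This is the reduction mod $T-1$ of the constant: from the third relation $a_r\equiv -1/[r]! \bmod (T-1)$ and $[T;0]\equiv[0]!\cdot 1\bmod(T-1)$ — more to the point, $\mathbf w_{\epsilon r,-r-1}$ and $\mathbf w_{-\epsilon r,-r-1}$ become identified mod $T-1$ inside $\overline{M\otimes_R\mathcal F}$ once one remembers the normalization in the basis of Lemma~\ref{locstructure}; I would track the scalar carefully and match it against the statement $\overline\G$ has matrix $\begin{pmatrix}1&*\\0&1\end{pmatrix}$ from the Corollary in Section~5, whose diagonal entries are $1$, giving $\G(e)\equiv e\bmod(T-1)\L$. \textbf{Part (iii)} is then essentially the Lemma preceding Section~5.5 applied eigenspace-by-eigenspace: that result gives $\chi^\sharp(\ov\G a,\ov\G b)=u_\epsilon\,s\chi(a,b)$ with $u_\epsilon\equiv1$; but for a genuinely $\mathbb Z_2$-invariant form $\phi=\phi^\sharp$ (resp.\ skew, $\phi^\sharp=-\phi$) one has $\phi(\G e,\G e)=\phi^\sharp(\G e,\G e)=s\phi(e,e)$ (resp.\ $-s\phi(e,e)$), where the passage from $\ov\G$ to $\G$ absorbs the $[r]!$ and the unit $u_\epsilon$ because $\phi$ is being evaluated with $\G$, not $\ov\G$, and the normalization was arranged (via the defining relations for $a_{\pm r}$) precisely so that this unit is $1$. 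The one subtlety is that $\phi$ need not be a single induced form; here one invokes that $\phi=\pm\phi^\sharp$ forces the eigenspace-restrictions to have the same symmetry, and $\G$ preserves the eigenspace decomposition.

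\textbf{Main obstacle.} The delicate point is part (iii) in the case where $\mathcal F$ is not irreducible: $\mathfrak L^r$ then receives contributions from several irreducible constituents $\mathcal F_j$, and the $s$-linear extension of $\G$ was defined componentwise from each $\mathbb L\cap(M\otimes_R\mathcal F_j)$; I must check that the cross-terms $\phi(\G e_j,\G e_{j'})$ for $e_j,e_{j'}$ in different constituents also transform correctly, which requires that $\phi^\sharp$ respects the constituent decomposition up to the $\G$-twist. I expect this to follow from the fact that the $\sharp$-operation is itself built from $\mathcal L$/$L$ and the cycle, hence commutes with the Casimir decomposition, so that $\chi^\sharp(\G\cdot,\G\cdot)=s\chi(\cdot,\cdot)$ can be checked after projecting to each $\mathfrak L^r$, where the earlier single-block Lemma applies verbatim. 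Making that commutation precise — rather than the individual scalar computations, which are routine given the formulas for $a_{\pm r}$, $w_{r,\epsilon}$, and $\alpha(r)$ — is where the real work lies.
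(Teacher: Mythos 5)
The paper offers no proof of this lemma at all---it is simply stated in Section 10.2---so there is nothing to compare your argument against; I can only judge its internal coherence. Your organizing principle (reduce via the Casimir decomposition to each $\mathfrak L^r$ and import the Section 8 formulas for $a_{\pm r}$ and $\overline\G$) is the natural one and matches the way the surrounding material is set up.

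However, the computation in part (i) contains a real error. You write $\G^2([T;0]\mathbf w_{r,-r-1})=a_r\cdot s(a_{-r})\cdot[T;0]\mathbf w_{r,-r-1}$ and then claim this collapses to ``$a_r\cdot a_r^{-1}$''. Since $a_{-r}=s\,a_r$ we have $s(a_{-r})=a_r$, so $a_r\cdot s(a_{-r})=a_r^2=\dfrac{1}{[r]!\,[T^{-1};r]_{(r)}}$, which is a nontrivial unit, not $1$; equally $s(a_r)\cdot a_{-r}=a_{-r}^2=\dfrac{1}{[r]!\,[T;r]_{(r)}}\neq 1$. None of the three defining relations for $a_{\pm r}$ forces the coefficient of $\G^2$ to be $1$, so your appeal to ``the relation forcing this to be $1$'' does not exist. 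Any proof of involutivity must come from a different source---most plausibly from the structural definition $\G([T;0]\mathbf w_{\epsilon r,-r-1})=\Psi_F([T;0]\mathbf w_{\epsilon r,r+1})$ and a property of the cycle $\Psi_F$ itself, not from the three scalar identities you cite. Similarly in part (ii), you invoke the matrix $\bigl(\begin{smallmatrix}1&*\\0&1\end{smallmatrix}\bigr)$ from the Corollary in Section 8, but that matrix has a non-vanishing off-diagonal entry $-\alpha(r)T(v-v^{-1})/(1+T)$ mod $T-1$, and it describes $\overline\G=[r]!\,\G$, not $\G$; neither of those facts delivers $\G(e)\equiv e$ mod $(T-1)\L$ without further argument about the normalization and why the shear term is negligible. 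Your ``main obstacle'' paragraph correctly identifies the genuine difficulty in part (iii) (interaction of $\G$, $\sharp$, and the Casimir decomposition across reducible $\mathcal F$), but the argument there is only a sketch, and you would still need to repair the scalar bookkeeping in (i) and (ii) before (iii) can be carried through as you describe.
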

\subsection{} As before for any ${_R}\mathbf U_i$-module $N$ we let
$\overline N$ equal the quotient 
$N/T\cdot N$. From \eqnref{ses3} we obtain inclusions and a short exact sequence:

\begin{equation}
\oA \subset \oP,\quad\oB \subset
\oP,\quad 0\ra
\oA\to\oP\ \overset\Pi\to \oD\ra 0\ .
\end{equation}
Now fix an invariant form $\phi$ on $\P$ and let superscripts denote
the filtrations on $\P$,$\A ,\B$ and $\D$ induced by $\phi$ and the
restrictions of $\phi$ to $\A\times \A$ , $\B\times \B$ and $\D\times \D$
respectively. Let superscripts on $\oA,\oB$, $\oD$ and $\oP$ give the
filtrations obtained by projecting. So $\oA^i=\A^i/(T-1)\cdot
\A\cap
\A^i$,etc.

The ${_R}\mathbf U_i$-module $\overline P$ is of course also an $\bui$-module and it
has simple socle $M(-1)$. So we find that the inclusions $M_\pm\hra P$
induce an inclusion $\overline M_-\hra \overline M_+$. In turn we obtain
the inclusion $\oB\hra \oA$. 
\begin{prop} Let $\pi$ denote the natural map $\pi:\P\ra\oP$ and
suppose
$\phi$ is any $\mathbb Z_2$-invariant (resp. skew invariant) form on $\P$.
Then $\pi(\L\cap\A)=\pi(\L\cap\B)$ and for each
$i$, $\G$ induces an
$s$-isometry (resp. skew $s$-isometry)of $\L\cap \A^i$ onto $\L\cap \B^i$.

\end{prop}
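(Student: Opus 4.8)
The proposition has two assertions: first, $\pi(\L\cap\A)=\pi(\L\cap\B)$ as subspaces of $\oP$; second, that $\G$ carries each $\L\cap\A^i$ onto $\L\cap\B^i$ as an $s$-isometry (or skew $s$-isometry). I will obtain both from Lemma~6.2 together with the bookkeeping of filtrations already set up in \secref{P}.

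\textbf{Step 1: the equality $\pi(\L\cap\A)=\pi(\L\cap\B)$.} By Lemma~6.2(i), $\G$ is an $s$-linear isomorphism of $\L\cap\A$ onto $\L\cap\B$, and by Lemma~6.2(ii), $\G(e)\equiv e\mod(T-1)\cdot\L$. Hence for every $e\in\L\cap\A$ we have $\G(e)\in\L\cap\B$ with $\pi(\G e)=\pi(e)$, so $\pi(\L\cap\A)\subseteq\pi(\L\cap\B)$; applying the same argument to $\G^{-1}=\G$ (it is involutive) gives the reverse inclusion. This also reproves, at the level of $\overline\P$, the inclusion $\oB\hra\oA$ noted just before the statement, now as an equality of the images of the $m-r-1$ weight subspaces inside $\L$.

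\textbf{Step 2: $\G$ matches the filtrations.} The filtration superscripts on $\A$ and $\B$ come from restricting $\phi$. By Lemma~6.2(iii), for all $e\in\L$ one has $\phi(\G e,\G e)=\pm s\phi(e,e)$, and more is true: the same computation (polarizing, or running the argument of Lemma~6.2(iii) on a pair) gives $\phi(\G e,\G e')=\pm s\phi(e,e')$ for $e,e'\in\L$. Since $s$ is an automorphism of $R$ preserving the valuation — $s$ sends $T$ to $T^{-1}$, hence $(T-1)$ to $-(T-1)T^{-1}$, a unit multiple of $(T-1)$ — we get $\phi(\G e,\B)\subseteq(T-1)^i\cdot R$ exactly when $s\phi(e,\G^{-1}\B)=s\phi(e,\L\cap\A)\subseteq(T-1)^i\cdot R$, i.e.\ exactly when $\phi(e,\L\cap\A)\subseteq(T-1)^i\cdot R$. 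Here I must be slightly careful: the filtration $\B^i$ is defined by $\phi(v,\B)\subseteq(T-1)^i R$ with $\B$ the \emph{full} module, not just $\L\cap\B$; so I first reduce to the weight-space level, using that $\phi$ pairs a weight space only with the weight space of opposite weight and that $\L$ is the relevant ($m-r-1$) weight subspace — this is the standard reduction already implicit in Lemmas~5.2 and 6.2. Granting that reduction, $\G(\L\cap\A^i)=\L\cap\B^i$ follows, and the map is an $s$-isometry (resp.\ skew) by Lemma~6.2(iii) restricted to $\A^i$.

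\textbf{Expected main obstacle.} The genuinely delicate point is not $\G$ itself — Lemma~6.2 does the heavy lifting — but the reduction of the globally-defined filtrations $\A^i,\B^i$ to a statement purely about $\phi$ restricted to $\L\times\L$. One needs that $v\in\A$ lies in $\A^i$ iff each of its weight components does, and that the relevant pairing of the $m-r-1$ component is against $\L\cap\B$ inside $\D$ or $\A$ — compatibly with the short exact sequences \eqnref{ses3} and the decomposition of $\P$ into Casimir generalized eigenspaces $\P^{(\pm r)}$. Once the weight-space localization is in hand, the valuation-invariance of $s$ and Lemma~6.2(iii) close the argument mechanically. I would therefore spend most of the write-up on that reduction, and treat Steps 1–2 above as the short formal consequences they are.
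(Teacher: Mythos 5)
Your argument is correct, and since the paper only states this proposition (it is surveyed from [CE10] with no proof printed here) the only thing I can do is check your reasoning against the surrounding machinery, which it uses appropriately. Step~1 is exactly right: parts (i) and (ii) of the preceding Lemma immediately give $\pi\circ\G=\pi$ on $\L\cap\A$, and involutivity of $\G$ converts the resulting inclusion into an equality. Step~2 is also right in substance, and you have correctly identified the one point that requires care, namely replacing the condition $\phi(e,\A)\subset(T-1)^iR$ in the definition of $\A^i$ by the condition $\phi(e,\L\cap\A)\subset(T-1)^iR$. That reduction is genuine and rests on two orthogonality facts that you gesture at but should state explicitly: (a) the $\brho$-invariance forces $\phi$ to pair a weight component only against the component of the \emph{same} weight, since $\phi(K^{-1}a,Kb)=\phi(a,b)$; and (b) because the Casimir $\Omega_0$ is central and self-adjoint for invariant forms, distinct generalized Casimir eigenspaces $\P^{(\pm r)}$, $\P^{(\pm r')}$ are $\phi$-orthogonal. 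Together these give $\phi(e,\A)=\phi(e,\A\cap\mathfrak L^r)$ for $e\in\mathfrak L^r\cap\A$, which is the needed localization of the filtration condition. Once that is in place, the rest of your Step~2 (the pair version of Lemma (iii) by polarization, the fact that $s$ sends $(T-1)$ to a unit multiple of $(T-1)$ and so preserves the filtration, and the sign bookkeeping for the skew case) closes the argument as you say.

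Two small points of hygiene: first, write $\G^{-1}(\L\cap\B)=\L\cap\A$ rather than $\G^{-1}\B$, since $\G$ is only defined on $\L$; second, the polarization argument needs $\phi$ to be symmetric (which it is for $\brho$-invariant forms here, and in any case the pair version $\phi(\G a,\G b)=\pm\, s\phi(a,b)$ is the form in which the identity is actually established in the earlier section), but you should say one of those two things rather than leave it implicit. Neither affects the correctness of the proof; they are only presentation issues. Overall this is a faithful and essentially complete reconstruction, and your diagnosis of the ``main obstacle'' (the weight/Casimir reduction of the filtration) is exactly where the care needs to go.
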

\begin{cor} For all $i$, $\oA^i\cap \pi(\L) =\oB^i\cap \pi(\L)$.
\end{cor}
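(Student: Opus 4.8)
The plan is to deduce this directly from the preceding Proposition together with the elementary linear algebra relating the filtrations on $\L\cap\A$, $\L\cap\B$ to the projected filtrations on $\oA$, $\oB$. Write $\pi:\P\to\oP$ for the reduction mod $T-1$. The Proposition tells us two things: first, $\pi(\L\cap\A)=\pi(\L\cap\B)$ as subspaces of $\oP$ (so the two projected images coincide, call this common subspace $V$); second, for every $i$ the $s$-linear map $\G$ carries $\L\cap\A^i$ bijectively onto $\L\cap\B^i$, and does so as an $s$-isometry up to sign. By Lemma~6.2(ii), $\G$ is the identity mod $(T-1)$, so on the level of $\pi$ the map $\G$ induces the identity on $V$. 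Hence $\pi(\L\cap\A^i)=\pi(\G(\L\cap\A^i))=\pi(\L\cap\B^i)$ for every $i$.

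Next I would identify $\pi(\L\cap\A^i)$ with $\oA^i\cap\pi(\L)$, and similarly for $\B$. The inclusion $\pi(\L\cap\A^i)\subseteq \oA^i\cap\pi(\L)$ is immediate from the definitions: an element of $\L\cap\A^i$ lies in $\L$ and its image lies in $\oA^i=\A^i/((T-1)\A\cap\A^i)$, hence in $\oA^i\cap\pi(\L)$. For the reverse inclusion one takes $\bar a\in\oA^i\cap\pi(\L)$, lifts it to some $a\in\A^i$ and also to some $\ell\in\L$ with $\pi(\ell)=\bar a$; then $a-\ell\in(T-1)\P$, and I want to arrange a lift lying in $\L\cap\A^i$. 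Since $\L$ is the relevant weight subspace of $\P$ (a free $R$-module, by the discrete valuation ring structure of $R$ used repeatedly in Section~4), and $\A^i$ is defined as an intersection with $(T-1)^i R$-valued condition, one checks that $\L\cap\A^i$ is a free $R$-summand of $\A^i$ and that reduction mod $T-1$ is surjective onto $\oA^i\cap\pi(\L)$; concretely, correcting $\ell$ by an element of $(T-1)\L$ (which does not change $\pi(\ell)$) one lands inside $\A^i$ because the weight-space component of $a$ already satisfies the filtration condition. This is the bookkeeping step and is routine once the freeness is in place.

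Combining the two identifications gives
\begin{equation*}
\oA^i\cap\pi(\L)=\pi(\L\cap\A^i)=\pi(\L\cap\B^i)=\oB^i\cap\pi(\L),
\end{equation*}
which is exactly the claim. The only place where the hypothesis $\phi^\sharp=\pm\phi$ enters is through the Proposition, via Lemma~6.2(iii): without the sign condition on $\phi$ the map $\G$ need not intertwine the $\phi$-filtrations on $\A$ and $\B$, and the equality of filtered pieces fails.

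I expect the main obstacle to be the surjectivity half of the second paragraph — showing that $\pi(\L\cap\A^i)$ is not merely contained in but equal to $\oA^i\cap\pi(\L)$. This requires knowing that the filtration piece $\A^i$ restricted to the weight space $\L$ behaves well under reduction, i.e. that no ``hidden'' elements of $\oA^i\cap\pi(\L)$ fail to lift into $\A^i\cap\L$. This is where one must invoke that $R$ is a discrete valuation ring and that $\mathfrak L^r$ and hence $\L$ are free $R$-modules on which $\phi$ has a well-understood matrix (Lemma~6.2 and the matrix computation of Section~5.2, applied weight-space by weight-space via the Casimir decomposition $\P=\bigoplus_r\P^{(\pm r)}$); everything else is formal manipulation of the exact sequence \eqnref{ses3} and the definitions of the projected filtrations.
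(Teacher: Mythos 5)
Your argument is correct, and it supplies the details that the paper leaves implicit (the Corollary is stated without proof, as a direct consequence of the preceding Proposition). The first paragraph is exactly right: since $\G$ is an isomorphism of $\L\cap\A^i$ onto $\L\cap\B^i$ and $\G\equiv 1 \bmod (T-1)$, one gets $\pi(\L\cap\A^i)=\pi(\L\cap\B^i)$ at once.

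On the ``bookkeeping'' step, where you identify $\pi(\L\cap\A^i)$ with $\oA^i\cap\pi(\L)$, your instinct that this is where the content lies is sound, but the invocation of freeness over the discrete valuation ring is heavier machinery than needed. The clean way to see the reverse inclusion is via compatibility of the filtration with the weight and Casimir decompositions. Because $\phi$ is $\ru$-invariant, distinct weight spaces and distinct Casimir eigenspaces of $\A$ are $\phi$-orthogonal, so for any $a\in\A$ with decomposition $a=\sum_{r,w}a_{r,w}$ by Casimir eigenvalue $r$ and weight $w$, one has $a\in\A^i$ if and only if each component $a_{r,w}\in\A^i$. Now if $\bar a\in\oA^i\cap\pi(\L)$, lift $\bar a$ to $a\in\A^i$; since $\pi(a)\in\pi(\L)$, every component $a_{r,w}$ with $w\ne Tv^{-r-1}$ lies in $(T-1)\P\cap\A=(T-1)\A$. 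Discarding those components leaves $a'=\sum_r a_{r,Tv^{-r-1}}\in\L\cap\A^i$ with $\pi(a')=\bar a$, which is exactly the desired lift. This replaces your ``correct $\ell$ by an element of $(T-1)\L$'' step with something more transparent and avoids having to argue that the corrected element actually lands in $\A^i$. The rest — in particular your observation about where the hypothesis $\phi^\sharp=\pm\phi$ enters — matches the intent of the text.
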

 For any form $\phi$ on $\P$ we say $\phi$ is {\it weakly
hereditary} whenever the identity of the corollary holds. We say $\phi$ is
{\it hereditary} if for all $i$, $\oA^i\cap \B=\oB^i$.

\subsection{}
The relationship between the filtrations of $\A$ and $\P$ is
more delicate than that between $\A$ and $\B$. 
\begin{lem}
 Assume that $\phi$ is induced from an invariant form on
$P$. Then $\A$ and $\B$ are orthogonal submodules,and $\A^i\subset
\P^{i-1}$ , $\B^i\subset \P^{i-1}$,  and $ \Pi(\P^i)\subset \D^{i-1}$. Also
$\D^i=(T-1)^{-1}\B^{i+2}$. 
\end{lem}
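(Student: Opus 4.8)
The plan is to establish the four assertions in the order: orthogonality of $\A$ and $\B$; the inclusions $\A^i\subseteq\P^{i-1}$ and $\B^i\subseteq\P^{i-1}$; the identity $\D^i=(T-1)^{-1}\B^{i+2}$; and $\Pi(\P^i)\subseteq\D^{i-1}$. Throughout I would write $\phi$ also for its $\mathcal K$-bilinear extension to the localizations, the filtration on $\D$ being the one that $\phi$ induces via the identification $\D=(T-1)^{-1}\B$. With that convention, every assertion after the orthogonality statement is bookkeeping with the sandwich $\A\oplus\B\subseteq\P\subseteq(T-1)^{-1}(\A\oplus\B)$ of \eqnref{ses3} and the definitions of the filtrations, so the real work is the orthogonality step.

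For orthogonality I would use that $\phi$ is induced, say $\phi=\chi_{\beta,\phi_P}$ with $\phi_P$ an invariant form on $P$ and $\beta\colon{_R\mathcal F}\otimes{_R\mathcal F}^{\rho_1}\to\ru$, so that $\phi(a\otimes e,\ b\otimes f)=\phi_P(a,\ \beta(e\otimes f)*b)$. Since $b\in(T-1)M_-$ and $(T-1)M_-$ is a $\ru$-submodule of $P$ (by the inclusions of \secref{P}), the vector $\beta(e\otimes f)*b$ again lies in $(T-1)M_-$; hence it suffices to show that $\phi_P$ vanishes on $(T-1)M_+\times(T-1)M_-$. For this I would invoke the general fact that the $\phi_P$-perpendicular of a $\ru$-submodule of $P$ is again a $\ru$-submodule, regarding $\phi_P$ as the module map $P\to(P^{\rho_1})^{\ast}$ that it determines and taking the preimage of the annihilator of $(T-1)M_-$. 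This perpendicular contains the vector $w_+$ of weight $Tv^{0}=T$ that generates $(T-1)M_+$ over $\ru$ (namely $(T-1)$ times a highest weight vector of the Verma module $M_+$), because an invariant form pairs only vectors of equal weight while all weights of $(T-1)M_-$ lie in $\{Tv^{-2},Tv^{-4},\dots\}$. So the perpendicular contains $\ru w_+=(T-1)M_+$, which gives $\phi_P((T-1)M_+,(T-1)M_-)=0$; with the symmetry of $\phi$ this says that $\A$ and $\B$ are orthogonal.

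Granting orthogonality, the other three statements are short. For $\A^i\subseteq\P^{i-1}$: given $v\in\A^i$ and $p\in\P$, use \eqnref{ses3} to write $(T-1)p=a+b$ with $a\in\A$, $b\in\B$; then $(T-1)\phi(v,p)=\phi(v,a)+\phi(v,b)=\phi(v,a)\in(T-1)^{i}R$ by orthogonality and the definition of $\A^i$, so $\phi(v,p)\in(T-1)^{i-1}R$ and $v\in\P^{i-1}$; interchanging $\A$ and $\B$ gives $\B^i\subseteq\P^{i-1}$. For $\D^i=(T-1)^{-1}\B^{i+2}$: since $\B=(T-1)\D$, for $d\in\D$ one has $\phi(d,\D)=(T-1)^{-2}\phi((T-1)d,\B)$, so $\phi(d,\D)\subseteq(T-1)^{i}R$ exactly when $(T-1)d\in\B^{i+2}$, i.e. $\D^i=(T-1)^{-1}\B^{i+2}$. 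For $\Pi(\P^i)\subseteq\D^{i-1}$: since $\ker\Pi=\A$ and $\phi(\A,\D)=(T-1)^{-1}\phi(\A,\B)=0$, the value $\phi(\Pi p,d)$ is independent of the chosen lift of $\Pi p$ and equals $\phi(p,d)$; hence for $p\in\P^i$ we get $\phi(\Pi p,\D)=\phi(p,\D)=\phi(p,(T-1)^{-1}\B)=(T-1)^{-1}\phi(p,\B)\subseteq(T-1)^{-1}\phi(p,\P)\subseteq(T-1)^{i-1}R$, whence $\Pi p\in\D^{i-1}$.

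I expect the main obstacle to be the orthogonality step, and within it the claim that the $\phi_P$-perpendicular of a $\ru$-submodule is again a $\ru$-submodule, which must be justified carefully in the present noncommutative setting where the twist by $\rho_1$ is built into the definition of $\mathbb P$; the accompanying fact that $(T-1)M_+$ is generated over $\ru$ by the single weight-$T$ vector $w_+$ is immediate, since $M_+$ is cyclic and $T-1$ is central. Once orthogonality is in place, the remaining assertions follow formally from the sandwich inclusions and the definitions, as sketched above.
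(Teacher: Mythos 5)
Your proof is correct, and in fact the paper states this lemma without giving a proof, so there is no written argument to compare against; still, a few remarks are worth making. The crux is the orthogonality $\phi(\A,\B)=0$, and your reduction via the formula $\phi(a\otimes e,b\otimes f)=\phi_P(a,\beta(e\otimes f)*b)$, together with the observation that $(T-1)M_-$ is a $\ru$-submodule of $P$, correctly brings it down to $\phi_P\bigl((T-1)M_+,(T-1)M_-\bigr)=0$. For that last vanishing you could spare yourself the perp-of-a-submodule argument: the paper has already recorded in the discussion at the start of Section~8.2 that, since $P_n\subset M_n\oplus M_{-n}$, every invariant form on $P_n$ is the restriction of an orthogonal sum $q\phi_n\oplus r\phi_{-n}$ of Shapovalov forms on the two Verma modules, and specializing to $n=1$ gives $\phi_P\bigl((T-1)M_+,(T-1)M_-\bigr)=0$ immediately. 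Your alternative weight-plus-submodule derivation is also valid: the invariance identity $\phi(xa,b)=\phi(a,\brho(x)b)$ (checked on the generators $E,F,K$) shows the $\phi_P$-perpendicular of a $\ru$-submodule is again a $\ru$-submodule, and the weight-$T$ generator $w_+$ of $(T-1)M_+$ has no weight-mate in $(T-1)M_-$, whose weights are $Tv^{-2},Tv^{-4},\dots$; but since you yourself flag this as the delicate point, you should actually write out that invariance relation rather than gesture at it. The remaining three assertions are, as you say, bookkeeping from the sandwich $\A\oplus\B\subset\P\subset(T-1)^{-1}(\A\oplus\B)$ together with $\B=(T-1)\D$ and $\ker\Pi=\A$, and your verifications are complete. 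The one interpretive point you handle correctly is reading ``the restriction of $\phi$ to $\D\times\D$'' as the $(T-1)^{-2}$-rescaled form obtained from the restriction to $\B\times\B$, which is forced since $\D\not\subset\P$; this is exactly what makes the identity $\D^i=(T-1)^{-1}\B^{i+2}$ come out.
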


\begin{lem}
Assume that $\phi$ is induced from an invariant form
on $P$ and is $\mathbb Z_2$-invariant or skew invariant. Then 
$$
\L\cap\D^i\subset\Pi(\P^i)\quad \text{ and }\quad \oL\cap
\oP^i\cap\oA\subset \oA^i\ .
$$
\end{lem}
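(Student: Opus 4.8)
The plan is to verify the two containments separately, using the structural results from Section 6 together with the $s$-linear map $\G$.

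For the first containment $\L\cap\D^i\subset\Pi(\P^i)$, I would start from the short exact sequence $0\ra\A\ra\P\overset\Pi\to\D\ra0$ and the orthogonality of $\A$ and $\B$ established in the preceding lemma, which also gives $\A^i\subset\P^{i-1}$, $\B^i\subset\P^{i-1}$ and $\D^i=(T-1)^{-1}\B^{i+2}$. Take $d\in\L\cap\D^i$. Since $\D=M_-\otimes_R\mathcal F$ and $\B=(T-1)\cdot\D$, the identity $\D^i=(T-1)^{-1}\B^{i+2}$ shows $(T-1)\cdot d\in\B^{i+2}\subset\L\cap\B$, and then applying $\G$ (which carries $\L\cap\B^{i+2}$ onto $\L\cap\A^{i+2}$ $s$-isometrically by the Proposition above) we get an element of $\L\cap\A^{i+2}\subset\P^{i+1}$. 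The point is then to lift $d$ to $\P$: choosing any $\tilde d\in\P$ with $\Pi(\tilde d)=d$, the ambiguity lies in $\A$, and I want to adjust $\tilde d$ by an element of $\A$ so that the result lies in $\P^i$. Concretely, I would use that $\tilde d$ together with $\G$ applied to $(T-1)\cdot d$ (suitably normalized by $(T-1)^{-1}$, which is legitimate inside $\L$ because of the structure of $\P$ inside $(T-1)^{-1}(\A\oplus\B)$) produces a canonical lift; pairing this lift against $\P$ and splitting the pairing along $\A\oplus\B$ using orthogonality, one checks that the pairing values land in $(T-1)^i R$ because the $\D$-component contributes via $\phi|_\D$ (giving $(T-1)^i$ since $d\in\D^i$) and the $\A$-component contributes via the $\G$-image which lies in $\A^{i+2}$, more than enough. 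The $\mathbb Z_2$-invariance (resp.\ skew invariance) is what makes $\G$ an $s$-isometry (resp.\ skew $s$-isometry) and hence what makes these two contributions compatible.

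For the second containment $\oL\cap\oP^i\cap\oA\subset\oA^i$, I would work modulo $T-1$ throughout and use the inclusion $\oB\hra\oA$ coming from the socle structure of $\overline P$, together with the weakly hereditary identity $\oA^i\cap\pi(\L)=\oB^i\cap\pi(\L)$ from the Corollary. Take $\bar x\in\oL\cap\oP^i\cap\oA$ and lift it to $x\in\L\cap\P^i$ whose image in $\oP$ lies in $\oA$; that means $x\in\A+(T-1)\cdot\P$. I want to show $\bar x\in\oA^i$, i.e.\ that the $\A$-representative of $\bar x$ pairs into $(T-1)^iR$ against $\A$. Since $x\in\P^i$, it pairs into $(T-1)^iR$ against all of $\P$, in particular against $\A$; the subtlety is only that replacing $x$ by its $\A$-component introduces an error in $(T-1)\cdot\P$, whose pairing against $\A\subset\P^{0}$ could a priori be of order only $(T-1)$. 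Here is where I would again invoke $\G$: the element $x$, being in $\L$ and mapping into $\oA$, can be written (after the $(T-1)^{-1}$-adjustment allowed in $\L$) in terms of an element of $\L\cap\A$ plus $\G$ of an element of $\L\cap\B$, and the orthogonality of $\A$ and $\B$ plus the $s$-isometry property of $\G$ forces the error term to actually pair into $(T-1)^iR$ as well. Combining with $\oB\hra\oA$ and the weakly hereditary corollary closes the argument.

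The main obstacle I anticipate is the bookkeeping of orders of vanishing across the non-$R$-linear map $\G$ and the factor $(T-1)^{-1}$ relating $\P$ to $\A\oplus\B$: one must be careful that applying $\G$ to an element of $\L\cap\B^{j}$ and then dividing by $(T-1)$ (or multiplying, depending on direction) keeps track of whether the resulting filtration degree shifts by the expected amount, and the shifts $\A^i\subset\P^{i-1}$, $\D^i=(T-1)^{-1}\B^{i+2}$ mean the indices do not match naively — there is a genuine two-step discrepancy that the $\mathbb Z_2$- or skew symmetry of $\phi$ must absorb. I expect the $\mathbb Z_2$-invariant and skew-invariant cases to run in parallel with only sign changes, exactly as in the Corollary of (5.4), so I would prove them simultaneously, carrying the sign $\pm$ through the $\G$-isometry.
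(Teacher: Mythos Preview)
The paper states this lemma without proof (the lemma closes Section~9 and the text moves directly to Section~10), so there is no argument in the paper to compare against.

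On the merits of your sketch: the strategy for the first containment is sound and can be made precise exactly as you indicate. Given $d\in\L\cap\D^i$, one has $(T-1)d\in\L\cap\B^{i+2}$ by the preceding lemma, and the $s$-isometry of $\G$ gives $\G((T-1)d)\in\L\cap\A^{i+2}$. The candidate lift
\[
\tilde d=(T-1)^{-1}\bigl((T-1)d+\G((T-1)d)\bigr)
\]
does lie in $\P$: for a basis element $u$ of $\L\cap\B$ one has $(T-1)^{-1}(u+\G u)\in\P$ by the description of generators in Proposition~4.8, and since $s(r)-r\in(T-1)R$ for every $r\in R$, the $s$-linearity of $\G$ gives $(T-1)^{-1}(ru+\G(ru))\in\P$ for arbitrary scalars $r$, hence for every $b\in\L\cap\B$. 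Your pairing computation (splitting $(T-1)p=a_p+b_p$ along $\A\oplus\B$ and using orthogonality) then shows $\phi(\tilde d,\P)\subset(T-1)^iR$, so $\tilde d\in\P^i$ with $\Pi(\tilde d)=d$. This part is essentially complete.

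The second containment is where your sketch is thin. You correctly isolate the obstacle: passing from $x\in\L\cap\P^i$ to its $\A$-representative $a$ introduces an error in $(T-1)\P$, and pairing that error against $\A$ a priori only lands in $(T-1)R$, not $(T-1)^iR$. Your proposed fix --- decompose via $\G$ and invoke orthogonality and the weakly-hereditary corollary --- is the right toolkit, but you have not actually exhibited the cancellation. One clean way to close the gap is to note that $\bar x\in\oA$ forces $\Pi(x)\in(T-1)\D$, so in fact $\Pi(x)\in\L\cap\D$ and $(T-1)\Pi(x)\in\L\cap\B$; then the same canonical lift $\tilde x=(T-1)^{-1}\bigl((T-1)\Pi(x)+\G((T-1)\Pi(x))\bigr)$ lies in $\P$ and satisfies $\Pi(\tilde x)=\Pi(x)$, so $x-\tilde x\in\L\cap\A$. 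Now the $\A$-filtration level of $x-\tilde x$ can be read off from $\phi(x-\tilde x,\A)=\phi(x,\A)-\phi(\tilde x,\A)$, and both terms are controlled (the first by $x\in\P^i$, the second by the explicit $\A$-component $\G((T-1)\Pi(x))$ whose filtration degree you can compute via the $s$-isometry). You should spell this step out rather than gesture at it; as written, the second half of your proposal is a plan rather than a proof.
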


\section{Equivalence Classes of Forms}

\subsection{}
In this section we describe explicitly all the
equivalence classes of invariant forms on $B\otimes \mathcal E_R$ where 
$\mathcal E$ is the irreducible two dimensional ${_R}\mathbf U_i$-module and
$B$ is one of the indecomposible modules $M(\ru,m+b),\ b\in \mathbb Z$
or
$P(\ru,m+b),\ b\in \mathbb N^*$. Two invariant forms $\chi$ and
$\chi^\rho$ on a $\ru$-module $A$ are equivalent if there exists an
$\ru$-module automorphism $\kappa:A\ra A$ with $\chi(a,b)=\chi^\rho(\kappa
a,\kappa b)$. 

For $n\in \mathbb N$ let $\phi_{\pm n}$ denote the Shapovalov form on
$M({_R}\mathbf U_i,m\pm n)$ normalized by the identities:
\begin{equation}\label{normalization}
\phi_n(\mathbf v_{n,-n-1},\mathbf v_{n,-n-1})=1=\phi_{-n}(\mathbf v_{-n,-n-1},\mathbf v_{-n,-n-1})
\ .
\end{equation}

{\bf Caution}: This is not the obvious normalization. But we will
find it to be the most convenient.
\begin{lem}  The equivalence classes of invariant forms on
$M(\ru,m+n)$ (resp. $M(\ru, m-n)$ are represented by the forms
$\ \ T^r\cdot \phi_n,\ r\in \mathbb N^*$ (resp. 
$T^r\cdot \phi_{-n},\ r\in \mathbb N$).

\end{lem}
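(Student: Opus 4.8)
The plan is to identify the automorphism group of the module, then the $R$-module of invariant forms, and finally read off the orbits. \emph{Automorphisms.} Since $M:=M(\ru,m+n)$ is generated over $\ru$ by its highest weight vector $\zeta$, an $\ru$-endomorphism of $M$ is determined by the image of $\zeta$, which must lie in the one-dimensional, $E$-annihilated highest weight space $R\zeta$; hence $\End_{\ru}M\cong R$ acting by scalars, and $\Aut_{\ru}M\cong R^{\times}$. An automorphism $\kappa_u$ ($u\in R^{\times}$) carries an invariant form $\chi$ to the form $(a,b)\mapsto\chi(ua,ub)$, which is $\chi$ rescaled by a unit of $R$; so two invariant forms on $M$ are equivalent essentially iff one is a unit multiple of the other. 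The same discussion applies to $M(\ru,m-n)$.

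\emph{The invariant forms.} By the Proposition of \cite{MR1327136} quoted above, every invariant pairing of a Verma module with itself is induced by its Shapovalov form, so the invariant forms on $M(\ru,m+n)$ are the $\mathcal K$-multiples $c\,\phi_n$, and one must decide for which $c$ the form $c\,\phi_n$ is $R$-valued. I would use \lemref{binomiallemma}: since $\zeta$ has weight $Tv^{n-1}$, one has $E^{(j)}F^{(j)}\zeta=\qbinom{T;n-1}{j}\zeta$, hence, up to units of $R$ (contributed by $K$-eigenvalues and by $[j]!$), $\phi_n(F^{(j)}\zeta,F^{(j)}\zeta)=\qbinom{T;n-1}{j}\,\phi_n(\zeta,\zeta)$. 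Among the factors $[T;n-1],[T;n-2],\dots$ occurring in these binomials, exactly $[T;0]$ fails to be a unit of $R$, and it has order one (\S2); so $\phi_n(F^{(j)}\zeta,F^{(j)}\zeta)$ is a unit times $\phi_n(\zeta,\zeta)$ for $j<n$ and a unit times $[T;0]\,\phi_n(\zeta,\zeta)$ for $j\ge n$. As $\mathbf v_{n,-n-1}=F^{(n)}\zeta$ and the normalization forces $\phi_n(\mathbf v_{n,-n-1},\mathbf v_{n,-n-1})=1$, this yields $\mathrm{ord}\,\phi_n(\zeta,\zeta)=-1$, and therefore $c\,\phi_n$ is $R$-valued exactly when $\mathrm{ord}\,c\ge 1$. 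For $M(\ru,m-n)$ the parallel computation — now $\mathbf v_{-n,-n-1}$ is itself the highest weight vector, and none of the relevant $[T;\cdot]$ is a non-unit because $M(\ru,m-n)$ specializes at $T=1$ to an irreducible antidominant Verma module — shows that $\phi_{-n}$ is already $R$-valued and nondegenerate modulo $T-1$, so $c\,\phi_{-n}$ is $R$-valued exactly when $\mathrm{ord}\,c\ge 0$.

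\emph{Orbits.} Putting these together, a nonzero invariant form on $M(\ru,m+n)$ is $c\,\phi_n$ with $\mathrm{ord}\,c\ge 1$; writing $c=(T-1)^{r}w$ with $r\ge 1$ and $w\in R^{\times}$ and absorbing $w$ by an automorphism $\kappa_u$ together with a scalar rescaling of $\phi_n$, its equivalence class is that of $T^{r}\phi_n$, and different values of $r$ give different classes since $r=\mathrm{ord}\,\chi(\mathbf v_{n,-n-1},\mathbf v_{n,-n-1})$ is an invariant of the class. The same argument for $M(\ru,m-n)$ now allows $r\ge 0$; the extra class $r=0$ is exactly the assertion that the Shapovalov form of $M(\ru,m-n)$ is nondegenerate, which the order-one degeneracy on the $(+)$ side precludes there. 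This produces the two families $T^{r}\phi_{n}$, $r\in\mathbb N^{*}$, and $T^{r}\phi_{-n}$, $r\in\mathbb N$.

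\emph{Main obstacle.} The heart of the argument is the middle step, the computation of the degeneracy of the normalized Shapovalov form: one must keep precise track of which scalars $[T;k]$ and which $K$-eigenvalue and $[j]!$ prefactors are units of $R$, relying on the $q$-calculus fact that $[T;0]$ is the unique non-unit among the $[T;k]$ (and is a uniformizer) and on \lemref{binomiallemma} to evaluate $E^{(j)}F^{(j)}\zeta$ exactly. A secondary point to nail down is that the unit ambiguity coming from the automorphisms really collapses — that $\Aut_{\ru}$ acting together with the scalar rescalings of $\phi_{\pm n}$ is transitive on leading units — so that the order $r$ is a complete invariant of the equivalence class.
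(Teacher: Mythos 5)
The paper states this lemma without proof, so there is no official argument to compare against; the question is only whether your reasoning is sound. The structural outline is right and matches what the surrounding material suggests: identify $\End_{\ru}M\cong R$ and hence $\Aut_{\ru}M\cong R^\times$ (correct, since each weight space of a Verma module is rank one over $R$ and the only $E$-annihilated weight vector is the highest one), observe that the invariant forms form a rank-one $R$-module sitting inside $\mathcal K\cdot\phi_n$, and then pin down the fractional ideal of admissible multiples by computing the order of $\phi_n(\zeta,\zeta)$ relative to the normalization \eqnref{normalization}. Your use of \lemref{binomiallemma} to get $E^{(j)}F^{(j)}\zeta=\qbinom{T;n-1}{j}\zeta$ and the observation that $[T;0]$ is the unique non-unit (a uniformizer) appearing in $[T;n-1]_{(j)}$ for $j\ge n$ gives $\mathrm{ord}\,\phi_n(\zeta,\zeta)=-1$, and the parallel computation on the antidominant side gives order $0$; this correctly yields the thresholds $r\in\mathbb N^*$ and $r\in\mathbb N$. (One small point of translation: the ``$T^r$'' in the lemma is, as you treat it, shorthand for the uniformizer $(T-1)^r$; the same abbreviation appears in the proof of \lemref{lemma1.1}, where the authors write ``mod $T$'' and ``$q=u_0t^k$'' for order considerations.)

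The one genuine soft spot is the one you flag at the end, and I would not call it secondary --- it is the crux of why the order $r$ alone classifies. The equivalence in this section is $\chi(a,b)=\chi'(\kappa a,\kappa b)$ with $\kappa\in\Aut_{\ru}M\cong R^\times$, so an automorphism only rescales a form by the \emph{square} of a unit. Your phrase ``together with a scalar rescaling of $\phi_n$'' is not an allowed move under this definition and cannot be used to finish. What is actually needed is that every unit of $R=\mathbb k(v)[[T-1]]$ is a square; since $1+(T-1)R$ is uniquely $2$-divisible, this reduces to $\mathbb k(v)^\times=(\mathbb k(v)^\times)^2$, which fails for a generic $\mathbb k$ (even $\mathbb C(v)$ lacks $\sqrt v$). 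The paper tacitly makes the same move in the proof of \lemref{lemma1.1}, choosing ``square roots $u$ and $v$ with $u^2=u_0$, $v^2=v_1$'' and speaking of leading coefficients as ``complex numbers,'' so the ambient working assumption is evidently a coefficient field closed under the needed square roots. You should state that hypothesis explicitly, or else weaken the conclusion to say that $r$ is a complete invariant only after passing to $R^\times/(R^\times)^2$.
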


\subsection{}
The indecomposible module $P_n=P(\ru,m+n)$ was
constructed as a submodule $P_n\subset M_n\oplus M_{-n}$ where we set
$M_n=M(\ru,m+n)$ and $M_{-n}=M(\ru,m-n)$.  Therefore any
invariant form on $P$ is the restriction of the orthogonal sum of a
form on
$M_n$ and one on $M_{-n}$. For scalars $q$ and $r$ let
$\phi_{n,q,r}=q\phi_n\oplus r\phi_{-n}$ denote such an orthogonal sum
of forms.
\begin{lem} \label{lemma1.1}  The equivalence classes of invarant forms on
$P_n$ are represented by the degenerate forms:
$\phi_{_{n,T^b,0}}$ and $
\phi_{_{n,0,T^b}},\ b\in \mathbb N$ and the nondegenerate forms:
$\phi_{_{n,T^l,uT^k}}$ where $u$ is nonzero complex number and
$k,l\in
\mathbb N$.

\end{lem}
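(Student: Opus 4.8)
The plan is to combine three ingredients: the parametrization of invariant forms by pairs $(q,r)$ recorded in the discussion above, an explicit description of $\Aut_{\ru}(P_n)$, and the classification of invariant forms on the two Verma constituents $M_n=M(\ru,m+n)$ and $M_{-n}=M(\ru,m-n)$ already established. By the remarks preceding the statement, every invariant form on $P_n$ is the restriction of some $\phi_{n,q,r}=q\phi_n\oplus r\phi_{-n}$, where $q,r$ lie a priori in the fraction field $\mathcal K$, so the first step is a lattice computation determining exactly which $(q,r)$ give an $R$-valued form on $P_n$. I would carry this out with the explicit $R$-basis of $P_n$ supplied by \eqnref{eqn1}--\eqnref{eqn2} --- the ``diagonal'' vectors $\mathbf z_j=\mathbf v_j+\mathbf v_{j,s}$ together with the vectors $[T;0]\mathbf v_j$ of \lemref{locstructure} --- and the values of $\phi_{\pm n}$ on weight vectors read off from \lemref{binomiallemma}, \eqnref{normalizedforms}, and the normalization \eqnref{normalization}. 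The pairings of the $[T;0]\mathbf v_j$ confine $q$ (and, symmetrically, $r$) to a fixed fractional ideal, while the pairings of the $\mathbf z_j$ impose an interlocking congruence between $q$ and $r$ modulo a power of $T-1$ that encodes how $M_n$ and $M_{-n}$ are glued inside $P_n$; together with the Gram-matrix computation on the rank-two weight space $\mathfrak L$ recorded in the section on filtrations, this separates the degenerate forms ($q=0$ or $r=0$) from the nondegenerate ones and exhibits the admissible families.

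Next I would determine $\Aut_{\ru}(P_n)$. By \lemref{binomiallemma} neither $M_n$ nor $M_{-n}$ has a singular vector other than its highest weight vector, so their $\mathcal K$-forms are non-isomorphic simple $\mathcal K$-modules and $\Hom_{\ru}(M_n,M_{-n})=\Hom_{\ru}(M_{-n},M_n)=0$. Restricting an endomorphism of $P_n$ to the submodules $(T-1)M_n\cong M_n$ and $(T-1)M_{-n}\cong M_{-n}$ therefore shows it acts by a scalar $a\in R$ on the ``$M_n$-direction'' and a scalar $b\in R$ on the ``$M_{-n}$-direction''; since $P_n$ is $(T-1)$-torsion-free and $P_n/(T-1)(M_n\oplus M_{-n})$ is killed by $T-1$, the endomorphism in fact equals $\mathrm{diag}(a,b)$ on all of $P_n$, and evaluating on the cyclic generator $\mathbf z$ forces $a\equiv b\pmod{T-1}$. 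Hence $\Aut_{\ru}(P_n)=\{(a,b):a,b\in R^\times,\ a\equiv b\pmod{T-1}\}$, acting by $\phi_{n,q,r}\mapsto\phi_{n,a^2q,\,b^2r}$; in particular there is no automorphism interchanging the two directions, so the two degenerate families will be inequivalent to one another.

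It then remains to compute the orbits on the admissible pairs. For a degenerate form, say $\phi_{n,q,0}$, the orbit is controlled entirely by the orbit of $q$ under $q\mapsto a^2q$ with $a\in R^\times$, which is precisely the classification of invariant forms on $M_n$ recorded in the preceding lemma; this gives the representatives $\phi_{_{n,T^b,0}}$, and symmetrically $\phi_{_{n,0,T^b}}$. For a nondegenerate form one uses $a$ to bring $q$ to the normal form $T^l$ allowed on $M_n$; then, $a$ being fixed, the residual freedom $b\in a\,(1+(T-1)R)$ scales $r$ by squares of units congruent to $a^2$, which brings $r$ to a form $uT^k$ whose leading coefficient $u$ (a nonzero complex number) and whose exponent $k$, along with $l$, can no longer be altered. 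Distinctness of the resulting list I would confirm from the discrete invariants of $(P_n,\chi)$: the filtration type of the pair, obtained from the matrix lemma in the section on filtrations and its corollaries, detects the orders of $q$ and $r$ and hence $k$ and $l$, while the residue of the Gram determinant detects $u$ up to squares.

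The main obstacle is running the two Verma-module reductions of the preceding lemma simultaneously under the single coupling $a\equiv b\pmod{T-1}$: one must show that the non-triviality of square classes in $R^\times$ leaves precisely the parameter $b$ in the degenerate case and precisely the triple $(l,k,u)$ in the nondegenerate case, no coarser and no finer, which is what forces the exact list in the statement. Pinning down the admissibility ideal and the congruence level in the first step is the other delicate point, since it dictates which powers of $T-1$ and which units $q$ and $r$ are allowed to carry, and hence the precise shape of the representatives.
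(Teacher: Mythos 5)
Your proposal is correct and takes essentially the same route as the paper: both parametrize forms on $P_n$ by the pair $(q,r)$ via $\phi_{n,q,r}=q\phi_n\oplus r\phi_{-n}$, identify the automorphism group of $P_n$ with pairs of units $(a,b)$ subject to $a\equiv b\ \mathrm{mod}\ (T-1)$ acting by $(q,r)\mapsto(a^2q,b^2r)$, and then compute the orbits. The paper's proof is much terser --- it simply invokes the automorphism $\kappa$ ``as in the proof'' of the preceding lemma and runs the square-root normalization of $(u_0,v_0)$ to land on $(T^k,cT^l)$, without spelling out the admissibility question, the structure of $\mathrm{Aut}_{\ru}(P_n)$, or the degenerate case --- whereas you re-derive the endomorphism ring from the vanishing of $\mathrm{Hom}$ between the two Verma constituents and add a distinctness check via the filtration-type and Gram-determinant invariants; these are useful amplifications rather than a different method.
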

\begin{proof}  Recall the automorphism $\kappa$ of $P_n$ determined by
the units $u$ and $v$ with $u\equiv v$ mod $T$, as in the proof of
 \lemref{lemma1.1}. Then by
$\kappa$, we see that $\phi_{n,q,r}$ and $\phi_{n,u^2q,v^2r}$ are
equivalent. Choose integers $k$ and $l$ and units $u_0$ and $v_0$
with $q=u_0t^k$ and $r=v_0T^l$. Let $c$ be the complex number which
is the ratio of the constant term of $v_0$ by that of $u_0$. Then
$r=v_1cT^l$ with $v_1$ a unit and $U_0\equiv v_1$ mod $T$. Choose
square roots $u$ and $v$ with $u^2=u_0$,$v^2=v_1$ and $U\equiv v$
mod $T$. Then with $\kappa$ defined as above, we find that $
\phi_{n,q,r}$ is equivalent to $\phi_{n,T^k,cT^l}$. 
\end{proof}
\subsection{}  Suppose $\Psi$ is a cycle on $P_n$ and the basis is
chosen as in Lemma and Corollary 4.3. Set $v_\pm=v_{\pm n,-n-1}$.
Then $\ov\G v_\pm=v_\mp$.
\begin{lem}    Suppose $\chi$ is an invariant form on $P_n$
which is also $\mathbb Z_2$-invariant. Then, for some $q\in R$,
$\chi=\phi_{n,q,sq}$ and $\chi$ is equivalent to one of the $\mathbb Z_2$-invariant forms $\phi_{n,T^d,(-1)^dT^d}$, for $d\in \mathbb N$.
\end{lem}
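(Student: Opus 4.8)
The plan is to combine the structural description of invariant forms on $P_n$ from \S7.2 with the behaviour of the lift $\sharp$ under the $s$-linear map $\overline{\G}$, and then to reduce the resulting form to a normal form using the automorphisms of $P_n$ from \lemref{lemma1.1}. Since $P_n$ was built as a submodule of $M_n\oplus M_{-n}$, every invariant form on $P_n$ is the restriction of an orthogonal sum $\phi_{n,q,r}=q\phi_n\oplus r\phi_{-n}$, so I would write $\chi=\phi_{n,q,r}$ with $q,r$ a priori in the fraction field $\mathcal K$. The assertion then splits into: (a) $\mathbb Z_2$-invariance forces $r=sq$; (b) $q\in R$; (c) every such $\phi_{n,q,sq}$ is $\kappa$-equivalent to one of the forms $\phi_{n,(T-1)^d,(-1)^d(T-1)^d}$ (this is the ``$\phi_{n,T^d,(-1)^dT^d}$'' of the statement, $T^d$ abbreviating the $d$-th power of the uniformizer).

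For (a): the lift is $\chi^\sharp=\chi_{s\beta,\phi}$, and on $P_n$ its effect is implemented by the $s$-linear involution $\overline{\G}$, which by the lemma immediately preceding this one interchanges the two Verma components, $\overline{\G}v_\pm=v_\mp$ with $v_\pm=\mathbf v_{\pm n,-n-1}$. Feeding $a=b=v_+$ into the identity $\chi^\sharp(\overline{\G}a,\overline{\G}b)=u_\epsilon\,s\chi(a,b)$ of \S4.4, using $\chi^\sharp=\chi$ and the fact that \eqnref{normalization} makes $\phi_n(v_+,v_+)=\phi_{-n}(v_-,v_-)=1$ (this is the purpose of the ``not the obvious'' normalization), one gets $r=u_1\,sq$ with $u_1\equiv 1\bmod(T-1)$ a unit; a single automorphism $\kappa$ from \lemref{lemma1.1}, chosen to absorb $u_1$ (it can, because $u_1\equiv 1$ and the two scalings are only required to agree mod $(T-1)$), then moves $\chi$ to the form $\phi_{n,q',sq'}$ with $\mathrm{ord}(q')=\mathrm{ord}(q)$. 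Thus after renaming $\chi=\phi_{n,q,sq}$; equivalently, $\sharp$ sends $\phi_{n,q,r}$ to $\phi_{n,sr,sq}$ up to such a $\kappa$, and $\chi^\sharp=\chi$ gives $r=sq$.

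For (b): restrict $\chi$ to the ``top'' $(T-1)M_n\subset P_n$, i.e.\ to the weight spaces above $m-n-1$ where $M_{-n}$ does not contribute; there $\chi=(T-1)^2q\,\phi_n$, and in the normalization \eqnref{normalization} the form $\phi_n$ carries a simple pole on these spaces, coming from the singular vector of $M_n$, so $R$-integrality of $\chi$ forces $\mathrm{ord}(q)\ge -1$. To improve this, evaluate $\chi$ on the weight $m-n-3$ basis vector $z=\mathbf v_j+\mathbf v_{j,s}$ supplied by \S7.3: here $\chi(z,z)=q\,\phi_n(\mathbf v_j,\mathbf v_j)+r\,\phi_{-n}(\mathbf v_{j,s},\mathbf v_{j,s})$ with both Shapovalov values now units but with distinct leading coefficients (their ratio at $T=1$ is $[n+1]\ne 1$), so the two terms cannot cancel and $\mathrm{ord}(\chi(z,z))=\mathrm{ord}(q)$; integrality then forces $\mathrm{ord}(q)\ge 0$, i.e.\ $q\in R$, and hence $r=sq\in R$ as well.

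For (c): put $d=\mathrm{ord}(q)$. By \lemref{lemma1.1} the automorphisms $\kappa$ act on the parameter pair by $(q,sq)\mapsto(a^2q,\,b^2sq)$ for units $a\equiv b\bmod(T-1)$; taking $b=sa$ keeps us in the family $\phi_{n,q',sq'}$ and replaces $q$ by $a^2q$, so after absorbing a square of a unit we may take $q=(T-1)^d$. Then $sq=s((T-1)^d)=(-1)^d(T-1)^dT^{-d}$ because $s(T-1)=-(T-1)T^{-1}$, and absorbing the unit square $T^{-d}$ into the second slot by a last $\kappa$ (with $a=1$, $b^2=T^{d}$) yields $\chi\sim\phi_{n,(T-1)^d,(-1)^d(T-1)^d}$, one of the listed representatives. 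The sign $(-1)^d$ is the one feature that cannot be removed: it is produced by $s$ negating the uniformizer, and the constraint $a\equiv b\bmod(T-1)$ blocks any correction when $d$ is odd. I expect the main work to lie in (b) — checking that the normalized Shapovalov form on $M_n$ has pole order exactly one on the relevant weight spaces, and that $\phi_n$ and $\phi_{-n}$ genuinely differ in leading coefficient on the weight space of weight $m-n-3$ — together with the verification in (a) that \eqnref{normalization} is precisely the normalization under which the stray unit $u_1$ in the $\overline{\G}$-symmetry identity is harmless.
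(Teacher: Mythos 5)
Your proposal follows the same overall architecture as the paper's proof — write $\chi=\phi_{n,q,r}$, use $\mathbb Z_2$-invariance to force $r=sq$, then reduce by a $\kappa$-automorphism — but it is considerably more careful than the paper's four-line argument, which simply asserts ``$\mathbb Z_2$-invariance gives $r=sq$,'' writes $q=uT^d$, and applies a single $\kappa$ with $(u_1,su_1)$, $u_1^2=u$. Two places where you add genuine content: your step (a) actually derives $r=u_1\,sq$ from the $\overline\G$-identity of \S4.4 and observes that the stray unit $u_1$ (which the paper never mentions) is killable by a $\kappa=(a,b)$ with $a\equiv b\bmod(T-1)$ because $u_1\equiv 1$; and your step (b) supplies the integrality argument for $q\in R$, which the paper takes entirely for granted (its proof begins ``Write $q=uT^d$ with $u$ a unit,'' silently assuming $d\in\mathbb N$). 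You also correctly diagnose the $T^d$-for-$(T-1)^d$ abuse of notation that runs through \S8.

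Two small cautions. First, in step (a) what you actually establish is that $\chi$ is \emph{equivalent} to a form $\phi_{n,q',sq'}$, not that the original $\chi$ literally equals $\phi_{n,q,sq}$; the lemma as stated asserts equality, so either the unit $u_\epsilon$ in the $\overline\G$-identity really is $1$ under the normalization \eqnref{normalization} (which your proof does not verify), or the paper's first assertion should be read as ``equivalent to.'' You flag this, so it is not a gap, but it is worth being explicit that the two statements differ. Second, in step (b) the leading-coefficient ratio $A(1)/B(1)$ at the weight one step below the cusp is not $[n+1]$: from $\phi_n(\text{depth }n+1)=[T;-1]/[n+1]$ and $\phi_{-n}(\text{depth }1)=[T;-n-1]$ (both normalized to $1$ at depth $n$, resp.\ $0$) one gets $A(1)/B(1)=1/[n+1]^2$. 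This does not hurt the argument — all you need is $A(1)\neq B(1)$ so that a putative $q$ of order $-1$, whose $s$-image has the \emph{opposite} leading coefficient, cannot make $qA+sq\,B$ integral — but the numerology should be corrected.
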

\begin{proof}   Choose $q$ and $r$ with $\chi=\phi_{n,q,r}$. Then
$\mathbb Z_2$-invariance gives $r=sq$. Write $q=uT^d$ with $u$ a unit.
Then $r=(-1)^dsu\ T^d$. Let $\kappa$ be the automorphism of $P_n$ which
equals $u_1$ on $M(\ru,m+n)$ and $su_1$ on $M(\ru,m-n)$ where
$u_1^2=u$. Then via $\kappa$, $\phi_{n,q,r}$ is equivalent to
$\phi_{n,T^d,(-1)^dT^d}$.
\end{proof}

Let us consider the identity
 \begin{equation}
\Theta\circ \beta(a\otimes b)=\beta(L
(\Theta_{\mathcal E}a\otimes \Theta_{\mathcal E}b))\ .
\end{equation}
Here $\Theta$ on the left is assumed to be the module homomorphism.
The motivation for this comes from \cite[5.3.4]{MR1227098}.  Let $v\in (\mathcal
E\otimes \mathcal F)^m$ be a highest weight vector, then
$\beta(v)\in F(\bu)^m$ is a highest weight vector.  Then
\begin{align*}
\beta(L
\circ(\Theta_{\mathcal E}\otimes \Theta_{\mathcal F})(F^{(j)}v))
  & =\beta(\Theta_{\mathcal E\otimes \mathcal F}(F^{(j)}v)) =\beta((-1)^jv^{e(jh+j)}F^{(h)}v) \\
  &=(-1)^jv^{e(jh+j)}F^{(h)}\beta(v) =T_{-1}'\beta(v) 
\end{align*}
where $h+j=m$.

\begin{proof}  Fix $m\in \mathbb N^*$ and let $D$ denote the $T+b-2m$
weight space in $\mathcal E_R\otimes M(\ru,m+b)$. Set $\overline {\mathbf v}=\mathbf v_{b,b-1-2m}$.
Then $D$ has basis $\{d_+,d_-\}$ where $d_+=E\otimes F\overline{\mathbf v},b_-=FK\otimes \overline{\mathbf v}$ and the form $\delta\otimes \phi_b$ on $D$ is
given by:
\begin{equation}
\begin{pmatrix}
 -1&0\\0&-m(T+b-m)
\end{pmatrix}
\phi_b(\overline v,\overline v).
\end{equation}

Suppose $b\ne 0$. Then $\mathcal E_R\otimes M_b\equiv M_{b-1}\oplus M_{b+1}$
and if we look at the weight space $D$ for $m\ne b$ we conclude:
$\delta\otimes \phi_b=q\phi_{b-1}+r\phi_{b+1}$ where both $q$ and $r$
equal $\phi_b(\overline {\mathbf v},\overline{\mathbf v})$ times a unit. Inturn this gives the
formula for $b\ne 0$. For $b=0$ note that $\delta\otimes \phi_b$ is
$\mathbb Z^2$-invariant and since $\mathcal E_R\otimes M_0\equiv P(\ru,m+1)$,
$\delta\otimes \phi_0=\phi_{1,q,sq}$ for some $q\in R$. Write $q=uT^d$
for some unit $u$. To determine the integer $d$ we need only
compare the values of the forms on an $R$-basis vector for the
highest weight space. But $\delta\otimes \phi_0(e_+\otimes\mathbf  v_{0,-1},
e_+\otimes \mathbf v_{0,-1})=-\phi_0(\mathbf v_{0,-1},\mathbf v_{0,-1})=-1$.
\end{proof}

\section{Change of Coordinates for Induced Forms}

\subsection{} We continue with the notation from the earlier sections.. Set $M$ (resp. $M_\pm$) equal to
the $\ru$-Verma module with highest weight $T-1$ (resp. $T,T-2$).
Let $\phi$ and $\phi_\pm$ be the canonical invariant forms on these
Verma modules.
Let $\mathcal E $ (resp. $\mathcal F$) be an irreducible $\mathbf U_i$-module of dimension
$d+1$ (resp. $d+2$). Then as $\mathbf B$-modules we have the short exact
sequence:
\begin{equation}\label{nonsplit}
0\ra \mathcal E \otimes R_{T}\ra \mathcal F\otimes R_{T-1}\ra R_{T-d-1}\ra 0\ .
\end{equation}
Inducing up to $\ru$ we obtain:
\begin{equation}
0\ra \mathcal E_R \otimes M_+\ra \mathcal F_R\otimes M\ra M(\ru,T-d)\ra 0\ .
\end{equation}
From \cite{} we know that restriction from $\mathcal E_R\otimes M(\ru,T+b)$ to
$\mathcal E_R \otimes R_{T+b-1}$ gives an isomorphism of $\ru$-invariant forms
on $\mathcal E_R \otimes M(\ru,T+b)$ to $\mathbf U_0$-invariant forms on
$\mathcal E_R\otimes R_{T+b-1}$. Since \eqnref{nonsplit} splits as an $\mathbf U_0$-module, each
$\mathbf U_0$-invariant form on $\mathcal E_R \otimes R_{T}$ can be extended uniquely to
$\mathcal F_R\otimes R_{T-1}$ with it equaling zero on the weight space for
$T-d-1$. With this convention we obtain a map $\phi _{\mathcal E}$  from invariant
forms on $\mathcal E _R\otimes M_+$ to those on $ \mathcal F_R\otimes M$.

\subsection{}  Fix a nonnegative integer $d$ and suppose  $\beta$ is an
$\ru$-module homomorphism $\beta:\mathcal E _R^\Theta \otimes \mathcal E _R\to\ru$ with
image equal to the irreducible with highest weight vector $E^{(d)}$
and normalized so that $\beta(f_d \otimes f_{-d})=\text{ad}(F^{(d)})(E^{(d)})$.
Normalize the basis of $\mathcal E _R$ by setting $\overline f_{d-2j}=
\frac{1}{ j!}
f_{d-2j}$. Then $\Theta_{\mathcal E}(\overline f_{d-2j})=(-1)^d \overline f_{-d+2j}.$

\begin{lem}\label{lastlemma}
Let $\gamma$ be the invariant form on
$M_b=M(\ru,T+b+1)$ normalized by $\gamma(v_{b-1,b},v_{b-1,b})=1$ and 
$\chi_{\beta,\gamma}$ the induced form on
${\mathcal E}_R\otimes M_b$.
For
$0\le j\le d$, define complex constants by $\beta(\Theta_{\mathcal E}
f_{d-2j}\otimes f_{d-2j})=c_j\ \beta(f_d \otimes
f_{-d})$ and define $p_d\in \mathbf U_0$ by $p_d(T)=
\prod_{1\le t\le d} \qbinom{T;d-2t}{2t}$. Then with respect to the basis
$\overline f_{d,d-2j}\otimes 1$, the restriction of the induced form
$\chi_{\beta,\phi}$ to $\mathcal E_R\otimes R_{T+b}$ is given by the diagonal
matrix 
\begin{equation}\label{matrixexp}
\begin{pmatrix}
c_0& & \\ &\ddots & \\ & & c_d
\end{pmatrix} p_c(T+b).
\end{equation}

Moreover, the constants have the symmetry:
$c_{d-j}=(-1)^dc_j,\ 0\le j\le d$.

\end{lem}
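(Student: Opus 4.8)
The plan is to read the restricted form off the definition \eqnref{inducedpairing} and to push the bulk of the arithmetic onto the closed-form harmonic evaluation in \propref{betaonabasis}.

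\emph{Diagonality, and what remains.} First I would observe that the restriction of $\chi_{\beta,\gamma}$ to $\mathcal E_R\otimes R_{T+b}$ — that is, to the span of $\overline f_{d-2j}\otimes 1$ for $0\le j\le d$, where $1$ spans the highest weight line of $M_b$ — is a $\bu^0$-invariant $R$-bilinear form: since $\beta$ is a $\bu$-module homomorphism and $\gamma$ is $\brho$-invariant, the $K$-part of the invariance identity forces $\chi_{\beta,\gamma}(\overline f_{d-2i}\otimes 1,\overline f_{d-2j}\otimes 1)=0$ unless the two arguments have equal weight, i.e.\ unless $i=j$. (This is precisely the isomorphism between $\ru$-invariant forms and $\bu^0$-invariant forms recalled at the start of the present section.) So the Gram matrix is diagonal, and it only remains to show its $(j,j)$ entry equals $c_j\,p_d(T+b)$, with all of the $j$-dependence carried by $c_j$.

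\emph{Reduction to \propref{betaonabasis}.} Unwinding \eqnref{inducedpairing} — after transporting the ordering $\mathcal E_R\otimes M_b$ into $M_b\otimes\mathcal E_R$ by the $R$-matrix ${}_{f}\mathcal{R}$ (equivalently, commuting the intertwiner $L$ of \eqnref{L}, or its Verma-side substitute $\mathcal L$ of \eqnref{mathcalL}, past $\gamma$, which is legitimate because $F$ acts locally nilpotently on $\mathcal E_R$), and after the duality between $M_b$ and its lowest-weight counterpart furnished by the cycle maps $\Psi$ — the $(j,j)$ entry takes exactly the form $\gamma\!\left(\eta,\ \rho_1\!\left(\beta^{d,d}_{2d}(u^{(d)}_j\otimes u^{(d)}_j)\right)F^{(-c)}\eta\right)$ that \propref{betaonabasis} evaluates in closed form. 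Here $\beta$ factors through the top Clebsch--Gordan summand of \corref{CG2} because its image is the single harmonic $\mathcal H_{2d}$, and the passage from $\beta$ to the normalised $\beta^{d,d}_{2d}$, together with the rescaling $\overline f_{d-2j}=\tfrac1{j!}f_{d-2j}$ and the relation $\Theta_{\mathcal E}(\overline f_{d-2j})=(-1)^d\overline f_{-d+2j}$, is exactly what the constants $c_j$ record. Specialising \propref{betaonabasis} to $m=n=d$, $2r=2d$, $i=j$, and the relevant $c$, and contracting against $\gamma$ — whose values on the weight basis of $M_b$ are the $q$-binomial Shapovalov norms of \lemref{binomiallemma} — collapses the $(j,j)$ entry to $c_j$ times one and the same sum of products of $q$-binomial coefficients.

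\emph{The $q$-binomial collapse and the symmetry.} That residual sum must then be brought into the product $p_d(T+b)=\prod_{1\le t\le d}\qbinom{T+b;d-2t}{2t}$ by repeated use of the $q$-Vandermonde identities \eqnref{Ma1} and \eqnref{Ma2}, the same telescoping that produced the closed forms for the norms $(u^{(m+n-2p)},u^{(m+n-2p)})$ in the Clebsch--Gordan section. I expect this to be the main obstacle: keeping the several normalisations straight (the rescaled basis, the twist $\Theta_{\mathcal E}$, the $R$-matrix, and the lowest-weight duality), and, above all, recognising the telescoping that converts a single $q$-binomial sum into the $d$-fold product $p_d$. For the last assertion $c_{d-j}=(-1)^d c_j$ I would use the $\Theta$-equivariance of $\beta$, i.e.\ the identity $\Theta\circ\beta(a\otimes b)=\beta\!\left(L(\Theta_{\mathcal E}a\otimes\Theta_{\mathcal E}b)\right)$ established just before the proof: applying it to the highest weight vector of the $\mathcal F_{2d}$-component, and using \lemref{symmetries} — which fixes the middle weight vector $\text{ad}\,F^{(d)}(E^{(d)}K^{-d})$ of $\mathcal H_{2d}$ up to the sign $(-1)^d$ — together again with $\Theta_{\mathcal E}(\overline f_{d-2j})=(-1)^d\overline f_{-d+2j}$, converts $c_j$ into $c_{d-j}$ with precisely the sign $(-1)^d$.
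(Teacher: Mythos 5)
Your proof is organized around a long detour that the paper's own proof avoids entirely, and the detour contains an unverified step that is in fact the crux.

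The paper's proof works entirely inside $\mathbf U_0$. It introduces the projection $\pi$ of $\ru$ onto $\mathbf U_0$ killing $F\cdot\mathbf U_i+\mathbf U_i\cdot E$, observes that since $1_{T+b}$ is a highest weight vector with $\gamma(1_{T+b},1_{T+b})=1$, the $(j,j)$ entry of the restricted Gram matrix is literally $\bigl(\pi\beta(\Theta_{\mathcal E}\overline f_{d-2j}\otimes\overline f_{d-2j})\bigr)(T+b)=c_j\,\bigl(\pi\beta(f_d\otimes f_{-d})\bigr)(T+b)$, and then computes $\pi\,\text{ad}(F^{(d)})(E^{(d)}K^{-d})$ directly from the $q$-binomial rules for $E^{(r)}F^{(s)}$ in \lemref{binomiallemma}. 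Nowhere does it invoke \propref{betaonabasis}, the Clebsch--Gordan decomposition, the $R$-matrix or $L$ or $\mathcal L$, or the $q$-Vandermonde identities \eqnref{Ma1}--\eqnref{Ma2}. Your opening observation that the Gram matrix is diagonal by weight considerations is correct (and the paper simply takes it for granted), and your symmetry argument at the end is essentially the paper's own, using $\Theta\circ\beta=\beta\circ(\Theta_{\mathcal E}\otimes\Theta_{\mathcal E})$, the identity $\Theta_{\mathcal E}(\overline f_{d-2j})=(-1)^d\overline f_{-d+2j}$, and the $(-1)^d$ action on the middle weight vector of $\mathcal H_{2d}$.

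The concrete gap is the step you yourself flag: you do not exhibit the ``telescoping that converts a single $q$-binomial sum into the $d$-fold product $p_d$.'' That is not a cleanup item; in your framework it is the entire content of the formula. Specializing \propref{betaonabasis} to $m=n=d$, $2r=2d$, $i=j$ produces a genuine sum over $l$ of products of three $q$-binomials and a $T$-power, and contracting against the Shapovalov norms from \lemref{binomiallemma} adds more $q$-binomials; there is no visible reason this collapses to $\prod_{t=1}^d\qbinom{T+b;d-2t}{2t}$ with all $j$-dependence isolated in a scalar. There is also a second unresolved translation: \propref{betaonabasis} evaluates $\rho_1(\beta^{m,n}_{2r}(\cdot))F^{(-c)}\eta$ in a \emph{lowest}-weight module, while the lemma concerns the highest weight line of the Verma module $M_b$. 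Passing between the two requires the cycle $\Psi$ (and hence $\mathcal L$, $s$-twists, and the localization $M_F$), which is exactly the bookkeeping you say you are worried about and which reintroduces the normalizations the paper's direct $\mathbf U_0$-projection never touches. In short: the approach could perhaps be forced through, but as written it both omits its hardest step and adds an unnecessary layer of machinery; the paper's argument is that, restricted to the highest weight line, the induced form \emph{is} the $\mathbf U_0$-component of a single harmonic, full stop.
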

\begin{proof}  Let $\pi$ be the
projection of ${_R}\mathbf U_i$ onto $\mathbf U_0$ which is zero on  
$F\cdot \mathbf U_i+\mathbf U_i\cdot E$. First we evaluate $\pi
\beta(f_d\otimes f_{-d})$. Since
\begin{equation}
\text{ad} F^{(p)}(E^{(d)}_j)=E^{(d)}_{j+p},\end{equation}
by \eqnref{irred},
we conclude using \eqnref{hw1}
\begin{equation}
\text{ad}(F^{(d-2j)})E^{(d-2j)}
=\qbinom{T;d-2j}{ d}E^{(d-2j)},
\end{equation}
that
\begin{align}
\beta\pi(\text{ad}(F^{(d-2j)})(E^{(d-2j)}))=
\qbinom{T;d-2j}{ d-2j}
\end{align}

Now to obtain the matrix expression in \eqnref{matrixexp} we observe
\begin{align*}
\chi_{\beta,\gamma}(\overline f_{d-2j}\otimes 1,\overline f_{d-2j}\otimes 1)= 
\beta(\Theta_{\mathcal E}\overline f_{d-2j}\otimes \overline f_{d-2j})\cdot 1_{T+b}
&=c_j
\prod_{1\le t\le d} \qbinom{T;d-2t}{2t}.
\end{align*}

Finally regarding the symmetry, $\beta\circ (\Theta_{\mathcal E}\otimes \Theta_{\mathcal E})=\Theta \circ \beta$. So by the basis given in \secref{basisandsymmetries}, gives us
\begin{align*}
(-1)^dc_j\beta(\overline f_d\otimes \overline f_{-d})&=
\Theta\beta(\Theta_{\mathcal E} \overline f_{d-2j}\otimes \overline f_{d-2j})= \beta(\Theta_{\mathcal E}^2\overline f_{d-2j}\otimes \Theta_{\mathcal E} \overline f_{d-2j}) \\
&=\beta(\Theta_{\mathcal E}\overline f_{-d+2j}\otimes  \overline f_{-d+2j})=c_{d-j} \beta(\overline f_d\otimes \overline f_{-d}).\notag
\end{align*}
This gives the symmetry of the constants and completes
the proof.
\end{proof}

Let $\mathbb D$ denote the diagonal matrix with constants $c_j$ on the
diagonal. Then \lemref{lastlemma} asserts that the induced form
$\chi_{\beta,\gamma}$ is determined by the matrix $\mathbb D\cdot p_d(T+b)$.
We shall call $\mathbb D$ the coefficient matrix associated to 
$\beta$ and  $\mathbb D\cdot p_d(T+b)$ the full matrix associated to the
form $\chi_{\beta,\gamma}$.

We plan on describing the coefficient matrix for particular cases in a future publication. 

\section{Aknowledgements} The first author has been supported in part by a Simons Collaboration Grant \#319261. The second author has been supported in part by NSF Grant
DMS92-06941.

\def\cprime{$'$}

%
%

\end{document}